\numberwithin{equation}{section}
\newtheorem{theorem}{Theorem}[section]
\newtheorem{Lemma}{Lemma}[section]
\newtheorem{Prop}{Propositon}[section]
\newtheorem{coro}{Corollary}[theorem]
\newtheorem{remark}{Remark}[section]
\newtheorem{definition}{Definition}[section]
\newcommand*{\LargerCdot}{\raisebox{-0.25ex}{\scalebox{1.3}{$\cdot$}}}
\def\Diff{\text{\rm Diff}}
\def\diff{\text{\rm diff}}
\def\SDiff{\text{\rm SDiff}}
\def\ap{\text{\rm ap}}
\def\id{\text{\rm id}}
\def\Div{\mathop{\rm div}}
\def\Exp{\text{\rm Exp}}
\def\m{{\rm m}}
\def\R{\mathbb{R}}
\def\Z{\mathbb{Z}}
\def\C{\mathbb{C}}
\DeclareMathOperator{\g}{\mathfrak{g}}
\begin{document}

\title[On the group of almost periodic diffeomorphisms]{On the group of almost periodic diffeomorphisms and its
exponential map}

\author{Xu Sun}
\address{Department of Mathematics, Northeastern University, Boston, MA 02115}
\email{sun.xu@husky.neu.edu}

\author{Peter Topalov}
\address{Department of Mathematics, Northeastern University, Boston, MA 02115}
\email{p.topalov@northeastern.edu}
\thanks{P.T. is partially supported by the Simons Foundation, Award \#526907}

%\subjclass[2000]{Primary 54C40, 14E20; Secondary 46E25, 20C20}

\date{\today}

%\dedicatory{This paper is dedicated to...}

%\keywords{Differential geometry, partial differential equations}

\begin{abstract}
We define the group of almost periodic diffeomorphisms on
$\mathbb{R}^n$ and on an arbitrary Lie group. We then study the
properties of its Riemannian and Lie group exponential maps and provide
applications to fluid equations. In particular, we show that there exists a geodesic
of a weak Riemannian metric on the group of almost periodic diffeomorphisms of the line
that consists entirely of conjugate points.
\end{abstract}

\maketitle

%\tableofcontents

\section{Introduction}\label{Introduction}
The notion of an almost periodic function was first introduced by Bohr \cite{Bohr1,Bohr2}.
Bohr's idea was to generalize periodic functions of one variable. By definition, a continuous function
$f : \R\to\R$ is {\em almost periodic} (or, following Bohr's original terminology, {\em uniformly almost periodic})
if for any given $\varepsilon>0$ there exists $L_\varepsilon>0$ such that in any interval of length $L_\varepsilon$ there
exists $T\in\R$ so that for any $x\in\R$,
\[
|f(x+T)-f(x)|\le\varepsilon.
\]
The number $T$ is called an $\varepsilon$-{\em almost period}. Almost periodic functions were further studied by Besicovitch,
Bochner, von Neumann, Wiener, Weyl and many others. These functions feature many remarkable 
properties. For example, as in the periodic case, almost periodic functions have Fourier expansions but their Fourier exponents
are not necessarily multiples of a given number. Moreover, in contrast to the space of periodic functions on 
the line, the space of almost periodic functions is {\em not} separable. Almost periodic functions can be defined 
in a similar way on $\R^n$ (see Definition \ref{def:bohr'}) as well as on abstract groups (see \cite{vonNeumann}).
We refer to Section \ref{sec:spaces} for a detailed account on almost periodic functions.

Our interest in almost periodic functions stems from the study of non-linear partial differential equations in
spaces of functions on $\R^n$ with non-vanishing conditions at infinity. Such an investigation was initiated in
\cite{McOT1,McOT2,McOT3}, where the case of functions with asymptotic expansions at infinity was studied.
From this point of view, almost periodic functions are interesting since they are not $L^p$-summable ($0<p<\infty$) but 
display features of strict ergodicity that allow averaging over non-compact spaces. In this way, almost periodic quantities
may have finite averaged energy and conservation laws (see below). Many natural phenomena in physics, fluid and gas dynamics 
exhibit almost periodic behavior: we only mention the theory of disordered systems, almost periodic fluids, Vlasov systems, 
cellular automata (see e.g. \cite{Romerio,HPP,Spohn,JM,HK,KR}). In addition, almost periodic functions appear naturally in
completely integrable Hamiltonian systems. In fact, by Arnold-Liouville theorem, the solutions of a completely integrable
Hamiltonian system that lie on regular Liouville tori are almost periodic curves in the phase space. The same phenomenon 
appears in infinite dimensions (see e.g. \cite[Proposition 4.6]{KT1} for the almost periodic behavior of the solutions of the 
Korteweg-de Vries equation). Moreover, by the Kolmogorov-Arnold-Moser theory, small perturbations of Liouville tori with
Diophantine frequency vectors are preserved and the corresponding solutions have almost periodic behavior.
This persistence of almost periodic solutions under perturbations indicates that the phenomenon of almost periodicity is not isolated.

The main purpose of this paper is to define the groups of almost periodic diffeomorphisms and to study their main properties. 
We provide several applications to fluid equations.

For any real exponent $m\ge 0$ we define in Section \ref{sec:spaces} the space of $C^m$-almost periodic functions 
$C^m_\ap(\R^n,\R)$ that consists of functions $f$ in the H{\"o}lder space $C^m_b(\R^n,\R)$ such that
the set $\mathcal{S}_f=\{f_c(x)=f(x+c)\}_{c\in\R^n}$ is precompact in $C^m_b(\R^n,\R)$.
The space $C^m_\ap(\R^n,\R)$ is a Banach algebra with respect to pointwise multiplication of functions and
inherits many of the properties of the classical almost periodic functions. The reason we consider non-integer 
H{\"o}lderian exponents $m\ge 0$ stems from the properties of the Laplace operator on $\R^n$ which are crucial for
our applications to fluid flows. Inspired by Arnold's approach to fluid dynamics \cite{Arnold,EM}, we define in 
Section \ref{sec:groups_almost_periodic} the group $\Diff^m_{\ap}(\R^n)$ of $C^m$-almost periodic
diffeomorphisms of $\R^n$ with H{\"o}lderian exponent $m\ge 1$. Generally speaking,  $\Diff^m_{\ap}(\R^n)$ consists 
of $C^m$-diffeomorphisms of $\R^n$ whose infinitesimal generators are almost periodic vector fields of class 
$C^m_\ap(\R^n,\R^n)$ -- see Section \ref{sec:groups_almost_periodic} for detail.
The group $\Diff^m_{\ap}(\R^n)$ is a Banach manifold modeled on $C^m_\ap(\R^n,\R^n)$.
We prove that $\Diff^m_{\ap}(\R^n)$ is a topological group with respect to the composition of diffeomorphisms 
(Theorem \ref{th:group_regularity}). The composition in $\Diff^m_{\ap}(\R^n)$ has additional regularity properties 
that are similar to the regularity properties of  the groups of diffeomorphisms of Sobolev class $H^s$ (cf. \cite{EM,IKT}).
This allows us to define the Fr{\'e}chet-Lie group of $C^\infty$-almost periodic diffeomorphisms
$\Diff^\infty_\ap(\R^n)=\bigcap_{m\ge 1}\Diff^m_\ap(\R^n)$ equipped with the Fr{\'e}chet topology induced by 
the norms in $C^m_\ap(\R^n,\R^n)$. We prove that $\Diff^\infty_\ap(\R^n)$ has a well-defined 
$C^1_F$-smooth\footnote{The symbol $C^1_F$ refers to $C^1$-maps in Fr{\'e}chet calculus -- see Appendix C.}
{\em Lie group exponential map}
\begin{equation}\label{eq:exp_lg_introduction}
\Exp_{\rm LG} : C^\infty_\ap(\R^n,\R^n)\to\Diff^\infty_\ap(\R^n)
\end{equation}
such that $d_0\Exp_{\rm LG}=\id_{C^\infty_\ap}$ and for any vector field $c$ on $\R^n$ with constant components, the differential
$d_c\Exp_{\rm LG}$ of \eqref{eq:exp_lg_introduction} at $c$ has a non-trivial kernel in $C^\infty_\ap(\R^n,\R^n)$. 
This implies that the Lie group exponential map in $\Diff^\infty_\ap(\R^n)$ is {\em not} a $C^1_F$-smooth diffeomorphism
onto its image when restricted to any open neighborhood of zero in $C^\infty_\ap(\R^n,\R^n)$ 
(see Corollary \ref{coro:lie_group_exponential_map}). In contrast to the case of $C^\infty$-smooth diffeomorphisms 
on the circle considered in \cite[Counterexample 5.5.2]{Hamilton} the set of singular points of the map 
\eqref{eq:exp_lg_introduction} is {\em not} countable. A Riemannian counterpart of this example is given in 
Proposition \ref{eq:burgers_special_geodesics} (cf. Remark \ref{rem:burgers} below), where it is proved that 
{\em there exists a geodesic of a weak Riemannian metric on the group of almost periodic diffeomorphisms $\Diff^\infty_\ap(\R)$ that 
consists {\em entirely} of points conjugate to the identity}.

In Section \ref{sec:exponential_map} we consider the weak Riemannian metric $\nu_\alpha$ on $\Diff^\infty_\ap(\R^n)$
obtained by right translations of the scalar product defined on the tangent space to identity 
$T_\id\big( \Diff^\infty_\ap(\R^n)\big)\equiv C^\infty_\ap(\R^n,\R^n)$ by the averaged quantity
\begin{equation}\label{eq:scalar_product_introduction}
\langle u,v\rangle_\alpha:=\lim_{T\to\infty}\frac{1}{(2T)^n}\int_{[-T,T]^n}\big((I-\alpha^2\Delta)u,v\big)_{\R^n}\,dx,
\end{equation}
where $(\cdot,\cdot)_{\R^n}$ is the Euclidean scalar product in $\R^n$, $\Delta$ is the Euclidean Laplacian
$\Delta=\frac{\partial^2}{\partial x_1^2}+...+\frac{\partial^2}{\partial x_n^2}$ acting component-wise on vector fields on $\R^n$, 
$I$ is the $n\times n$ identity matrix and $\alpha\ge 0$ is a given constant. 
Note that the limit in \eqref{eq:scalar_product_introduction} exists since the integrand is an almost periodic function 
(see Section \ref{sec:spaces}). The least action principle applied to the energy functional
\[
\mathcal{E}(\varphi)=\frac{1}{2}\int_0^1\nu_\alpha(\dot\varphi,\dot\varphi)\,dt,\quad
\varphi\in C^2_F\big([0,1],\Diff^\infty_\ap(\R^n)\big),
\]
leads to a geodesic equation on $T\big(\Diff^\infty_\ap(\R^n)\big)\equiv\Diff^\infty_\ap(\R^n)\times C^\infty_\ap(\R^n,\R^n)$ which,
when written in Eulerian coordinates $u={\dot\varphi}\circ\varphi^{-1}$, takes the form
\begin{equation}\label{eq:ch_introduction}
\left\{
\begin{array}{l}
\m_t+(\nabla\m)\cdot u+\big((\Div u)\cdot I+(\nabla u)^T\big)\cdot\m=0,\quad\m := (1-\alpha^2\Delta)u\\
u|_{t=0}=u_0
\end{array}
\right. 
\end{equation}
where $\nabla u$ denotes the Jacobian matrix of $u\in C^\infty_\ap(\R^n,\R^n)$. Our main result in 
Section \ref{sec:exponential_map} is the following

\begin{theorem}\label{th:exp_local_diffeomorphism}
There exist an open neighborhood $V$ of zero in $C^\infty_\ap(\R^n,\R^n)$ and an open neighborhood $U$ of the identity
in $\Diff^\infty_\ap(\R^n)$ such that the {\em Riemannian exponential map} of $\nu_\alpha$ with $\alpha\ne 0$,
\[
\Exp : V\to U,
\] 
is well-defined and, in contrast to the Lie group exponential map \eqref{eq:exp_lg_introduction}, is a $C^1_F$-diffeomorphism.
\end{theorem}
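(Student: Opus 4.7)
The plan is to follow the Ebin-Marsden paradigm adapted to the almost periodic setting: first realize the Riemannian exponential at the level of the Banach manifold $\Diff^m_\ap(\R^n)$ for each sufficiently large finite $m$, and then lift the result to the Fr\'echet category. The crucial role of the hypothesis $\alpha\ne 0$ is that the Helmholtz operator $\Lambda_\alpha:=I-\alpha^2\Delta$ becomes an isomorphism $C^{m+2}_\ap(\R^n,\R^n)\to C^m_\ap(\R^n,\R^n)$ (which itself requires developing an elliptic theory for $\Lambda_\alpha$ on almost periodic H\"older spaces), and this permits rewriting the geodesic equation as an ODE with no loss of derivatives.

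Substituting $u=\dot\varphi\circ\varphi^{-1}$ and $\m=\Lambda_\alpha u$ into \eqref{eq:ch_introduction} and inverting $\Lambda_\alpha$ gives, after a short manipulation, an equation of the form
\[
u_t+u\cdot\nabla u=-\Lambda_\alpha^{-1}Q(u,u),
\]
where $Q$ is an explicit bilinear second-order differential expression in $u$. Since $\Lambda_\alpha^{-1}$ gains two derivatives, $\Lambda_\alpha^{-1}Q$ is a bounded bilinear map $C^m_\ap\times C^m_\ap\to C^m_\ap$. Composing on the right by $\varphi$ and using Theorem~\ref{th:group_regularity} to control composition in $\Diff^m_\ap(\R^n)$, the equation takes the Arnold form
\[
\ddot\varphi=F_\alpha(\varphi,\dot\varphi)
\]
of a second-order ODE on $T\Diff^m_\ap(\R^n)$ with smooth right-hand side. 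Standard ODE theory on Banach manifolds then yields a smooth local geodesic flow; the Riemannian exponential $\Exp^{(m)}:V_m\to U_m$ is defined in the usual way, and a direct differentiation gives $d_0\Exp^{(m)}=\id$, so by the Banach inverse function theorem (after shrinking $V_m,U_m$) it is a $C^\infty$-diffeomorphism between open neighborhoods of $0\in C^m_\ap(\R^n,\R^n)$ and $\id\in\Diff^m_\ap(\R^n)$.

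To pass to $\Diff^\infty_\ap(\R^n)$ I would invoke the standard no-loss-of-regularity observation: since $F_\alpha$ has the same form at every level $m$, ODE uniqueness forces the geodesic flow to preserve $C^\infty$-regularity, and the maps $\Exp^{(m)}$ at different values of $m$ agree on smooth initial data. Setting $V=\bigcap_m V_m\cap C^\infty_\ap(\R^n,\R^n)$ and $U=\bigcap_m U_m\cap\Diff^\infty_\ap(\R^n)$, this produces a bijection $\Exp:V\to U$. Its continuity in the Fr\'echet topology, which is the projective limit of the $C^m_\ap$-topologies, follows directly from continuity at each Banach level, and the $C^1_F$-smoothness in the sense of Appendix~C reduces to a uniform-in-$m$ version of the Banach-level derivative estimates already obtained.

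The main obstacle is verifying that $F_\alpha$ is genuinely smooth on $T\Diff^m_\ap(\R^n)$. This requires showing that the conjugated operator $\varphi\mapsto R_\varphi\circ\Lambda_\alpha^{-1}\circ R_{\varphi^{-1}}$, where $R_\varphi$ denotes right-composition by $\varphi$, depends smoothly on $\varphi\in\Diff^m_\ap(\R^n)$ and takes values in bounded operators into $C^m_\ap(\R^n,\R^n)$. Equivalently, one has to invert the pull-back operator $R_\varphi\circ\Lambda_\alpha\circ R_{\varphi^{-1}}$, a variable-coefficient elliptic operator with coefficients built polynomially from $\nabla\varphi$ and $(\nabla\varphi)^{-1}$, on almost periodic H\"older spaces. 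The Schauder-type estimates underpinning this inversion must be proved in a form that accommodates both the non-compactness of $\R^n$ and the failure of $L^p$-summability of almost periodic functions; the ergodic averaging intrinsic to the almost periodic structure is what ultimately makes these estimates uniform and hence allows the construction to close.
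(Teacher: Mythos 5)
Your finite-$m$ outline is broadly consistent with what the paper does (reformulate the geodesic flow as a Banach ODE via $\Lambda_\alpha^{-1}$, get $\Exp^{(m)}$ with $d_0\Exp^{(m)}=\id$, apply the Banach inverse function theorem), but the passage to $\Diff^\infty_\ap(\R^n)$ is where the real content of the theorem lies, and your argument there has a genuine gap. Setting $V=\bigcap_m V_m\cap C^\infty_\ap$ does not obviously give an \emph{open} (or even nonempty) neighborhood: the neighborhoods produced by the inverse function theorem at each level $m$ may shrink as $m\to\infty$. The paper fixes a single neighborhood $V_{m_3}$ at the lowest level $m_3=3+\gamma$ and proves a persistence result (Proposition \ref{prop:ch_extension}) showing one may take $V_m=V_{m_3}\cap C^m_\ap$ for all higher $m$. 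More seriously, the statement that ``ODE uniqueness forces the geodesic flow to preserve $C^\infty$-regularity'' only gives \emph{forward} regularity of $\Exp$; to conclude that $\Exp:V\to U$ is a bijection onto $U=U_{m_3}\cap\Diff^\infty_\ap$ and that its inverse is $C^1_F$, you need the \emph{backward} statements: that $\Exp_{m_3}^{-1}$ maps $U_{m_3}\cap\Diff^m_\ap$ into $C^m_\ap$ (Proposition \ref{prop:onto}) and that $d_{u_0}\Exp$ preserves the sets $C^m_\ap\setminus C^{m+1}_\ap$ (Proposition \ref{prop:exact_regularity}), which are exactly the hypotheses of the Fr\'echet inverse function theorem the paper invokes (Theorem \ref{th:inverse_function_theorem}). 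These are proved by a specific device you do not supply: the two-point map $\widetilde{\mathcal{F}}$ built from the geodesic flow, its equivariance \eqref{eq:F_symmetry} under right translations, and differentiation along the translation flows $\tau_\xi(s)$ to convert smoothness of $\widetilde{\mathcal{F}}$ in the $\Diff^{m_3}_\ap$-topology into extra spatial derivatives of $u_0$ and $\delta u_0$. Without some substitute for this step, ``uniform-in-$m$ derivative estimates'' is not a proof.

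A secondary point: your proposed route to the smoothness of $\varphi\mapsto R_\varphi\circ\Lambda_\alpha^{-1}\circ R_{\varphi^{-1}}$ --- inverting the variable-coefficient conjugated operator by Schauder estimates on almost periodic H\"older spaces --- is both harder than necessary and not what the paper does; the introduction explicitly notes the proof is \emph{not} based on elliptic or tame estimates. The paper only needs the constant-coefficient isomorphism $A_\alpha:C^{m+2}_\ap\to C^{m}_\ap$ (Muhamadiev's theorem plus translation invariance, valid for $m\notin\Z$), shows the \emph{forward} map $(\varphi,v)\mapsto\bigl(\varphi,R_\varphi\circ A_\alpha\circ R_{\varphi^{-1}}(v)\bigr)$ is $C^1$ with invertible differential by the Banach algebra property and Lemma \ref{lem:division}, and then obtains smoothness of the inverse from the Banach inverse function theorem. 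Your appeal to ``ergodic averaging'' to make variable-coefficient Schauder estimates uniform is not needed and is not a substitute for this argument.
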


\begin{remark}
In fact, $\Exp : V\to U$ is a $C^k_F$-smooth map for any $k\ge 1$.
\end{remark}

This Theorem generalizes the main results in \cite{KLT,CK,CKKT} to the case of almost periodic diffeomorphisms of $\R^n$.
As a side result we also obtain that for non-integer exponents $m\ge 3$, $m\notin\Z$, the geodesic equation \eqref{eq:ch_introduction} 
is locally well-posed in the space of $C^m$-almost periodic functions (see Theorem \ref{th:ch_m} in Section \ref{sec:exponential_map}). 
In particular, by taking $n=1$, we obtain from Theorem \ref{th:ch_m} and Remark \ref{rem:n=1} that 
the Camassa-Holm equation \cite{CH,CE,Con,Mis,McOT1} is locally well-posed in the space of almost periodic functions $C^m_\ap(\R,\R)$ 
for any real $m\ge 3$. More specifically, we have 

\begin{theorem}\label{th:ch}
For any $u_0\in C^m_\ap(\R,\R)$ with $m\ge 3$ there exists $T>0$ and a unique solution 
$u\in C\big([0,T],C^m_\ap(\R,\R)\big)\cap C^1\big([0,T],C^{m-1}_\ap(\R,\R)\big)$ of the Camassa-Holm equation that depends 
continuously on the initial data $u_0\in C^m_\ap(\R,\R)$ (cf. Theorem \ref{th:ch_m}).
\end{theorem}

An analog of this Theorem in $C_\ap^\infty(\R,\R)$ follows immediately from Theorem \ref{th:exp_local_diffeomorphism}. 
We have

\begin{theorem}\label{th:ch_frechet}
For any $u_0\in C_\ap^\infty(\R,\R)$ there exists $T>0$ and a unique solution $u\in C^1_F\big([0,T],C^\infty_\ap(\R,\R)\big)$
of the Camassa-Holm equation that depends continuously on the initial data $u_0\in C^\infty_\ap(\R,\R)$.
\end{theorem}

Note that in contrast to \cite{KLT} the method of proof of Theorem \ref{th:exp_local_diffeomorphism} is not based on elliptic
or tame estimates (\cite{Hamilton,Omori}).
As a direct consequence of Theorem \ref{th:exp_local_diffeomorphism} and the right invariance of the weak Riemannian metric
$\nu_\alpha$ we also obtain the following

\begin{coro}\label{coro:gauss_lemma}
There exists an open neighborhood $U$ of the identity in $\Diff^\infty_\ap(\R^n)$ such that if $\varphi\circ\psi^{-1}\in U$ then 
there exists a unique minimal among the curves lying in $U\circ\psi$ geodesic $\zeta$ of the weak Riemannian metric $\nu_\alpha$ with 
$\alpha\ne 0$ that connects the points $\varphi$ and $\psi$. Any other $C^1_F$-smooth curve minimal among the curves lying in 
$U\circ\psi$ and connecting these two points coincides with $\zeta$ when considered as a set in $\Diff^\infty_\ap(\R^n)$.
\end{coro}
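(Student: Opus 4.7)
The plan is to use right-invariance of $\nu_\alpha$ to reduce the corollary to a statement about curves emanating from the identity, and then apply Theorem \ref{th:exp_local_diffeomorphism} together with a Gauss-lemma-type argument. First I shrink, if necessary, the neighborhood $V\subset C^\infty_\ap(\R^n,\R^n)$ from Theorem \ref{th:exp_local_diffeomorphism} so that it becomes star-shaped around $0$, and set $U:=\Exp(V)$. Set $\eta:=\varphi\circ\psi^{-1}\in U$. Because $\nu_\alpha$ is obtained from $\langle\cdot,\cdot\rangle_\alpha$ by right translation, the right multiplication $R_{\psi^{-1}}:\gamma\mapsto\gamma\circ\psi^{-1}$ is an isometry of $\nu_\alpha$ on $\Diff^\infty_\ap(\R^n)$. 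Hence finding a minimal curve from $\psi$ to $\varphi$ is equivalent to finding one from $\id$ to $\eta$; lengths and geodesic character are preserved under $R_{\psi^{-1}}$.

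Since $\Exp:V\to U$ is a $C^1_F$-diffeomorphism by Theorem \ref{th:exp_local_diffeomorphism}, there exists a unique $u_0\in V$ with $\Exp(u_0)=\eta$. Because $V$ is star-shaped, the map $t\mapsto tu_0$ takes $[0,1]$ into $V$ and $\zeta_0(t):=\Exp(tu_0)$ is the geodesic starting at the identity with initial velocity $u_0$ and ending at $\eta$ when $t=1$. Right-translating, $\zeta(t):=\zeta_0(t)\circ\psi$ is a geodesic of $\nu_\alpha$ connecting $\psi$ to $\varphi$. Uniqueness of $u_0$ gives uniqueness of the geodesic among those whose initial velocity lies in $V$: any $C^1_F$-smooth geodesic from $\id$ to $\eta$ that stays inside $U$ is of the form $t\mapsto\Exp(tu)$ for some $u\in V$, and the condition $\Exp(u)=\eta$ forces $u=u_0$.

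The minimality assertion, together with the claim that equality of length characterizes $\zeta$ up to reparametrization, is the Gauss lemma for the weak Riemannian metric $\nu_\alpha$. Given any $C^1_F$-smooth curve $\gamma:[0,1]\to U$ with $\gamma(0)=\id$ and $\gamma(1)=\eta$, write $\gamma(t)=\Exp(u(t))$ where $u(t):=\Exp^{-1}(\gamma(t))\in V$ is $C^1_F$ by Theorem \ref{th:exp_local_diffeomorphism}, and set $r(t):=\sqrt{\langle u(t),u(t)\rangle_\alpha}$. The Gauss lemma, proved in the standard way by differentiating along the two-parameter family $h(t,s):=\Exp\bigl(t\,u(s)\bigr)$ and using the geodesic equation for radial lines to show that $\nu_\alpha(\partial_t h,\partial_s h)$ is independent of $t$ (and vanishes at $t=0$), implies that $d_{u(t)}\Exp$ preserves the $\langle\cdot,\cdot\rangle_\alpha$–inner product of the radial direction $u(t)/r(t)$ with every tangent vector. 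Consequently $\nu_\alpha(\dot\gamma,\dot\gamma)^{1/2}\ge|\dot r(t)|$, so $\mathrm{length}(\gamma)\ge r(1)-r(0)=\|u_0\|_\alpha=\mathrm{length}(\zeta_0)$, with equality precisely when $\dot u(t)$ is proportional to $u(t)$ for a.e. $t$, i.e. when $\gamma$ is a reparametrization of $\zeta_0$. Pulling this conclusion back to $\psi$ and $\varphi$ via $R_\psi$ completes the proof.

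The main obstacle is the Gauss-lemma step: in our setting $\nu_\alpha$ is a \emph{weak} Riemannian metric on a Fréchet manifold, so the usual Levi-Civita/normal-coordinates machinery is not available off-the-shelf. Fortunately, the classical variational proof of Gauss's lemma only requires that $\Exp$ be $C^2_F$ and that the inner product $\langle\cdot,\cdot\rangle_\alpha$ be Fréchet-continuous; both are ensured by Theorem \ref{th:exp_local_diffeomorphism} and the remark following it (the Riemannian exponential is in fact $C^k_F$ for every $k\ge 1$), so the argument goes through essentially verbatim.
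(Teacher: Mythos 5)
Your overall architecture matches the paper's: reduce to curves issuing from the identity via right-invariance, use Theorem \ref{th:exp_local_diffeomorphism} to get a unique $u_0$ with $\Exp(u_0)=\varphi\circ\psi^{-1}$, establish a Gauss lemma for $\nu_\alpha$, and then run the Milnor-style length comparison. The gap is in the one step you yourself flag as the main obstacle. The proof of the Gauss lemma you sketch --- differentiating the two-parameter family $h(t,s)=\Exp(t\,u(s))$ and showing $\nu_\alpha(\partial_t h,\partial_s h)$ is independent of $t$ --- is the \emph{connection-based} argument: it needs a Levi-Civita covariant derivative for $\nu_\alpha$, the symmetry $\nabla_t\partial_s h=\nabla_s\partial_t h$, and metric compatibility. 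For a weak Riemannian metric on a Fr\'echet manifold none of this is available ``off-the-shelf,'' and your closing claim that the argument ``goes through essentially verbatim'' because $\Exp$ is $C^2_F$ and $\langle\cdot,\cdot\rangle_\alpha$ is continuous does not supply the missing covariant calculus. (Also, as stated, ``independent of $t$ and vanishing at $t=0$'' would force $\nu_\alpha(\partial_t h,\partial_s h)\equiv 0$, which is the orthogonality form of the lemma valid only for variations $u(s)$ of constant $\langle\cdot,\cdot\rangle_\alpha$-norm; you need to either restrict to such variations or prove the full identity $\nu_\alpha\big(d_u\Exp(u),d_u\Exp(\delta u)\big)=\langle u,\delta u\rangle_\alpha$.)

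The paper circumvents exactly this difficulty by a purely Lagrangian computation: it writes out the first variation of the energy functional $\mathcal{E}(\zeta_s)$ for the family $\zeta_s(t)=\Exp\big(t(u+s\,\delta u)\big)$, kills the interior term with the Euler--Lagrange equation so that only the boundary term $\nu_\alpha\big(d_u\Exp(u),d_u\Exp(\delta u)\big)$ survives, and separately proves conservation of energy along geodesics by the reparametrization trick $\varphi(s,t)=\varphi_0(t+s)$ (Remark \ref{rem:conservation_of_energy}), which gives $\mathcal{E}(\zeta_s)=\tfrac12\langle u+s\,\delta u,u+s\,\delta u\rangle_\alpha$ and hence the Gauss identity by differentiating in $s$. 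No covariant derivative is ever invoked. To repair your proof you should replace the $\partial_t\nu_\alpha(\partial_t h,\partial_s h)$ computation with this first-variation-of-energy argument (or prove the existence and the required properties of a Levi-Civita connection for $\nu_\alpha$, which is considerably more work). The rest of your write-up --- star-shaping $V$, the right-translation reduction, uniqueness of $u_0$, and the length comparison via $r(t)$ --- is fine and agrees with the paper.
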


Let $U$ be an open neighborhood in $\Diff^\infty_\ap(\R^n)$.
A $C^1_F$-smooth curve lying in $U$ and connecting two given points in $U$ is called {\em minimal
among the curves lying in $U$} if its length is less than or equal to the length of any other $C^1_F$-smooth curve lying in $U$ and 
connecting the points. Here we refer to \cite{BBHM} and \cite{EK} for examples of weak Riemannian metrics with smooth 
Riemannian exponential map but vanishing geodesic distance.

In Section \ref{sec:group_almost_periodic_general} we generalize the construction of the group of almost periodic diffeomorphisms 
from $\R^n$ to finite dimensional Lie groups. This generalization is very natural and requires a separate study. 
Note also that although almost periodic functions have been studied for almost a century, to our best knowledge the corresponding 
groups of almost periodic transformations have not been considered earlier.

\medskip

Several additional remarks on the case when $\alpha=0$ are in order.

\begin{remark}\label{rem:burgers}
If we set $\alpha=0$ and $n=1$ in equation \eqref{eq:ch_introduction} then we obtain the inviscid Burgers equation $($\cite{Burgers}$)$
$u_t+3 u_x u=0$, $u|_{t=0}=u_0$, where $u_0\in C^\infty_\ap(\R,\R)$. Then, the method of characteristics applies,
and one sees that
\begin{equation}\label{eq:burgers_solution}
u(t)=u_0\circ\psi(t)^{-1}
\end{equation}
where $\psi(t,x)=x+3 u_0(x) t$ and $\psi\in C^1_F\big([0,T],\Diff^\infty_\ap(\R)\big)$ for $T>0$ sufficiently 
small.\footnote{In fact, the maximal time of existence is equal to $1/\rho_0$ where $\rho_0=-\min\big\{0,\inf_\R(3 u_{0x})\big\}$.}
By Theorem \ref{th:ode} and Remark \ref{rem:ode_dependence_on_u}, the equation ${\dot\varphi}=u(t)\circ\varphi$, 
$\varphi|_{t=0}=\id$, has a unique solution $\varphi\in C^1_F\big([0,T],\Diff^\infty_\ap(\R)\big)$ 
such that $\varphi(T)$ depends $C^1_F$-smoothly on the initial data $u_0\in C^\infty_\ap(\R,\R)$. 
This implies that there exist an open neighborhood $U$ of zero in $C^\infty_\ap(\R,\R)$
and a well-defined $C^1_F$-smooth Burgers' Riemannian exponential map
\begin{equation}\label{eq:exp_burgers}
\Exp : U\to\Diff^\infty_\ap(\R)
\end{equation}
associated to the weak Riemannian metric $\nu_0$ on $\Diff^\infty_\ap(\R)$.
By arguing as in the proof of Theorem \ref{th:lie_group_exponential_map} $($cf. \cite{CK,CKKT}$)$, one then sees that 
$d_0\Exp=\id_{C^\infty_\ap}$ and for any $c\in\R$, $c\ne 0$, the differential $d_c\Exp$ of \eqref{eq:exp_burgers} at $c$
has a non-trivial kernel in $C^\infty_\ap(\R,\R)$ (see the discussion after Corollary \ref{coro:lie_group_exponential_map} 
and Proposition \ref{eq:burgers_special_geodesics} in Section \ref{sec:groups_almost_periodic} for details). 
This implies that for any $0<t\le T$ the point $\Exp(c t)=\id+c t$ is conjugate to $\id$. 
Hence, {\em the whole geodesic $\Exp(c t)$, $0<t\le T$, consists of points conjugate to identity}. 
This is in sharp contrast with the Burgers' Riemannian exponential map on the group of diffeomorphisms $\Diff^\infty(\mathbb{T})$ 
of the circle $($cf. \cite{CK,CKKT}$)$ where the set of conjugate points along the geodesic $\Exp(c t)$, $0< t\le T$, is countable.
Finally, note that it follows from \eqref{eq:burgers_solution}, Theorem \ref{th:group_regularity} and Lemma \ref{lem:general}
in Appendix A, that the solution of the Burgers equation belongs to 
$C\big([0,T],C^m_\ap(\R,\R)\big)\cap C^1\big([0,T],C^{m-1}_\ap(\R,\R)\big)$
and depends continuously on $u_0\in C^m_\ap(\R,\R)$ for any real $m\ge 2$.
\end{remark}

\begin{remark}\label{rem:euler}
For any real $m\ge 1$, $n\ge 2$, define the group of {\em  volume preserving almost periodic diffeomorphisms} 
\[
\SDiff^m_\ap(\R^n):=\big\{\varphi\in\Diff^m_\ap(\R^n)\,\big|\,\forall x\in\R^n,\,\det[d_x\varphi]=1\big\}.
\]
Then, the least action principle applied to the restriction of the weak Riemannian metric $\nu_\alpha$ with $\alpha=0$ 
to $\SDiff^m_\ap(\R^n)$ leads, in Eulerian coordinates, to the {\em incompressible Euler equation} on the space of $
C^m$-almost periodic vector fields $C^m_\ap(\R^n,\R^n)$. Many of the results proved in this paper hold also in this case. 
%In particular, for non-integer $m\ge 1$, $m\notin\Z$, one can obtain as in \cite{McOT3} a local well-posedness result 
%for the incompressible Euler equation in $C^m_\ap(\R^n,\R^n)$ as well as an analog of Theorem \ref{th:exp_local_diffeomorphism}. 
Note also that the solutions of the Euler equation have bounded averaged energy
\[
E(u):=\lim_{T\to\infty}\frac{1}{(2T)^n}\int_{[-T,T]^n}\big(u,u\big)_{\R^n}\,dx
\]
that is independent of $t$ in the domain of definition of $u$. We will treat the Euler case in a separate work
$($cf. \cite{Giga} for related results for the Navier-Stokes equation$)$.
\end{remark}

The results above can be readily translated to the group $\diff^m_b(\R^n)$, $m\ge 1$, that consists of $C^m$-diffeomorphisms
of $\R^n$ whose infinitesimal generators are vector fields of class $c^m_b(\R^n,\R^n)$ where $c^m_b(\R^n,\R^n)$
denotes the closure of $C^\infty_b(\R^n,\R^n)$ in $C^m_b(\R^n,\R^n)$ (see Section \ref{sec:groups_almost_periodic} and Appendix B 
for details). Note, however, that the averaged scalar product \eqref{eq:scalar_product_introduction} in this case is not well-defined, 
and hence the corresponding equations are {\em not} necessarily geodesic equations of a weak Riemannian metric on $\diff^m_b(\R^n)$.

\medskip

\begin{remark}
Several groups of diffeomorphisms of $\R^n$ were considered in the literature (see e.g. \cite{Cantor,Con,IKT,McOT2,MichorMamford} and 
the references therein). However, for the most part these groups of diffeomoephisms of $\R^n$ are generated by vector fields that decay at infinity 
and, consequently, do not contain shifts. Although shifts, and even more general asymptotic terms, can be included (see for example the group of 
asymptotic diffeomorphisms defined in \cite{McOT1,McOT2,McOT3}), these groups do not have infinitesimal generators with oscillatory behavior 
at infinity. In contrast, the groups of almost periodic diffeomorphisms $\Diff^m_\ap(\R^n)$ and $\Diff^\infty_\ap(\R^n)$ 
considered in this paper have periodic infinitesimal generators for any a priory given period and direction. 
This property allows the existence of  weak Riemannian metrics and geodesics that consist entirely of points conjugate to the identity. 
We refer to Remark \ref{rem:quantization_condition} and Proposition \ref{eq:burgers_special_geodesics} in Section \ref{sec:groups_almost_periodic} for 
detailed discussion of this phenomenon. We would like also to point out that H{\"o}lder regularity with non-integer exponents is especially well suited 
for solving PDEs involving the Euclidean Laplacian on $\R^n$. An important feature of the groups of almost periodic diffeomorphisms is that their 
infinitesimal generators can be averaged over $\R^n$ and hence energy functionals and other conserved quantities can be defined. Finally, we mention 
the recent paper \cite{NenRainer} where (different) groups of diffeomorphisms of H{\"o}lder regularity were studied.
\end{remark}

\medskip

For the convenience of the reader, we conclude this work with three Appendices where we collect several technical 
results used in the main body of the paper. 

\medskip

\noindent{\em Acknowledgments:} The authors of this paper are thankful to the anonymous referees for many valuable comments and 
suggestions that significantly improved the quality of the paper.

%%%%%%%%%%%%%%%%%%%%%%%%%%%%%%%%%%%%%%%%%%%%
\section{Spaces of almost periodic functions with H{\"o}lderian exponents}\label{sec:spaces}
In this Section we collect several classical facts about almost periodic functions and define the spaces of
almost periodic functions used in this paper. As mentioned in the Introduction, the following definition is given by Bohr
\cite{Bohr1,Bohr2}.

\begin{definition}\label{def:bohr}
A continuous function $f\in C(\R,\R)$ is called {\em almost periodic} (in the sense of Bohr) if 
for any $\varepsilon>0$ there exists $L=L_\varepsilon>0$ such that in any interval in
$\R$ of length $L$ there exists $T\equiv T_\varepsilon\in\R$ (called an $\varepsilon$-almost period of $f$) such that
\[
\big|f(x+T)-f(x)\big|\le\varepsilon.
\]
for any $x\in\R$.
\end{definition}
It is not difficult to see from this definition that any almost periodic function is in fact bounded and uniformly continuous on 
$\R$ (see e.g. \cite{Levitan,levitan1982almost}).
Denote by $C_b(\R,\R)$ the Banach space of bounded continuous functions $f : \R\to\R$ equipped with the
the {\em $L^\infty$-norm} $|f|_\infty:=\sup_{x\in\R}|f(x)|$. The following important characterization of 
almost periodic functions was given by Bochner.

\begin{theorem}[Bochner]\label{th:bochner_characterization}
A continuous function $f\in C(\R,\R)$ is almost periodic if and only if the set of functions
\[
\mathcal{S}_f:=\big\{f_c\,\big|\,f_c(x):=f(x+c)\big\}_{c\in\R}
\]
is precompact in $C_b(\R,\R)$.
\end{theorem}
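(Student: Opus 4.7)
The plan is to exploit completeness of $C_b(\mathbb{R},\mathbb{R})$ (so precompactness equals total boundedness) and match Bohr's relative density of almost periods with the existence of finite $\varepsilon$-nets for $\mathcal{S}_f$. I would treat the two implications separately.

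For the direction from Bohr to Bochner, I would fix $\varepsilon > 0$, set $L := L_{\varepsilon/3}$ as provided by Definition~\ref{def:bohr}, and observe that for each $c \in \mathbb{R}$ there is an $(\varepsilon/3)$-almost period $T_c$ lying in the interval $[-c, -c+L]$; writing $c' := c + T_c \in [0, L]$, the substitution $y = x + c$ in the defining supremum yields $|f_c - f_{c'}|_\infty \le \varepsilon/3$. Since $f$ is uniformly continuous (noted in the excerpt as an immediate consequence of Bohr's definition), the map $c' \mapsto f_{c'}$ is continuous from $[0, L]$ into $C_b(\mathbb{R},\mathbb{R})$, so its image is compact and admits a finite $(\varepsilon/3)$-net. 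The concentric balls of radius $\varepsilon$ then cover $\mathcal{S}_f$, proving total boundedness and hence precompactness.

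For the converse direction, I would fix $\varepsilon > 0$ and choose a finite $(\varepsilon/2)$-net $f_{c_1}, \dots, f_{c_N}$ for $\mathcal{S}_f$ in $C_b(\mathbb{R},\mathbb{R})$. For any $c \in \mathbb{R}$, selecting $i$ with $|f_c - f_{c_i}|_\infty \le \varepsilon/2$ and substituting $y = x + c_i$ gives
\[
\sup_{y \in \mathbb{R}} \bigl| f(y + (c - c_i)) - f(y) \bigr| \le \varepsilon/2,
\]
so $T := c - c_i$ is an $\varepsilon$-almost period of $f$. Setting $L := 2\max_i|c_i| + 1$ and, for any prescribed interval $[a, a+L]$, choosing $c := a + L/2$ together with the corresponding index $i$, the resulting $T = a + L/2 - c_i$ automatically lies in $(a, a+L)$ because $|c_i| < L/2$. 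This delivers Bohr's relative density of $\varepsilon$-almost periods and, together with continuity, yields almost periodicity.

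The main obstacle, in my view, is the second direction and specifically ensuring that every interval of length $L$ actually contains an almost period. Since only the difference $c - c_i$ is forced to be small while $c$ itself ranges over all of $\mathbb{R}$, the key trick is to parametrize $c$ by the midpoint of the target interval and absorb the finite diameter of $\{c_1,\dots,c_N\}$ into the choice of $L$; a naive argument selecting $c$ from a finite set would produce only a discrete collection of almost periods that could generically miss prescribed intervals of fixed length.
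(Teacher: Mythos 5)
The paper does not prove this statement: it is quoted as Bochner's classical characterization, with the surrounding facts (boundedness, uniform continuity) referred to the literature such as \cite{Levitan,levitan1982almost}. Your argument is the standard textbook proof and it is correct. In the Bohr-to-Bochner direction, translating each $f_c$ to within $\varepsilon/3$ of some $f_{c'}$ with $c'\in[0,L]$ and then using uniform continuity to get compactness of the image of $[0,L]$ is exactly the usual route; you legitimately invoke uniform continuity since the paper asserts it as a known consequence of Definition~\ref{def:bohr} (its own proof uses the same almost-period trick, so there is no circularity). In the converse direction, your observation that $|f_c-f_{c_i}|_\infty\le\varepsilon/2$ converts to the statement that $c-c_i$ is an $\varepsilon$-almost period, and your choice $L=2\max_i|c_i|+1$ with $c$ taken to be the midpoint of the prescribed interval, correctly delivers relative density; this is precisely the point where naive arguments fail, and you handle it properly. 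The only cosmetic remark is that the finite $(\varepsilon/2)$-net should be taken with centers inside $\mathcal{S}_f$ itself, which is always possible for a totally bounded set; you implicitly assume this by writing the net as $f_{c_1},\dots,f_{c_N}$, and it costs nothing to justify.
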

An alternative characterization of almost periodic functions was given by Wiener.
Recall that a trigonometric polynomial is an expression of the form
\begin{equation}\label{eq:trigonometric_polynomial}
P(x)=\sum_{k=1}^N c_k e^{i \lambda_k x}
\end{equation}
where $c_1,...,c_N$ are complex numbers and $\lambda_1,...,\lambda_N$ are real constants.
One has the following

\begin{theorem}[Wiener]\label{th:wiener_characterization}
A function $f : \R\to\R$ is almost periodic if and only if there exists a sequence of trigonometric polynomials
$(P_k)_{k\ge 1}$ such that $P_k\to f$ in $C_b(\R,\R)$.
\end{theorem}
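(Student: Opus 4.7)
The plan is to prove the two implications separately.

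\textbf{Sufficiency} ($\Leftarrow$). I would reduce to two facts: (a) every trigonometric polynomial \eqref{eq:trigonometric_polynomial} is almost periodic, and (b) the uniform limit of almost periodic functions is almost periodic. For (a), each elementary term $e^{i\lambda_k x}$ is either constant (if $\lambda_k=0$) or periodic, hence almost periodic; Bochner's Theorem \ref{th:bochner_characterization} then makes closure under addition immediate since $\mathcal{S}_{f+g}$ is the continuous image of the diagonal in $\mathcal{S}_f\times\mathcal{S}_g$ under pointwise addition, which is precompact whenever $\mathcal{S}_f$ and $\mathcal{S}_g$ are. For (b), given $\varepsilon>0$, pick $N$ with $|P_N-f|_\infty<\varepsilon/3$; any $(\varepsilon/3)$-almost period of $P_N$ is then an $\varepsilon$-almost period of $f$ by a triangle inequality, and $L_\varepsilon$ for $f$ can be taken to be $L_{\varepsilon/3}$ for $P_N$.

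\textbf{Necessity} ($\Rightarrow$). Here I would follow the Bochner--Fej\'er approximation scheme. First, I would establish existence of the mean value
\[
M\{f\}:=\lim_{T\to\infty}\frac{1}{2T}\int_{-T}^{T}f(x)\,dx
\]
for every almost periodic $f$: given $\varepsilon>0$, translation by an $\varepsilon$-almost period $T_0$ together with Definition \ref{def:bohr} shows that the averages over $[-T,T]$ and $[-T',T']$ differ by $O(\varepsilon)$ once $T,T'$ are large compared to $L_\varepsilon$. Next, for each $\lambda\in\R$ define the Fourier coefficient $a(\lambda):=M\{f(\cdot)e^{-i\lambda\,\cdot}\}$. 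Expanding $0\le M\{|f-\sum_k a(\lambda_k)e^{i\lambda_k\cdot}|^2\}$ over any finite set $\{\lambda_k\}$ yields the Bessel-type inequality $\sum_{\lambda\in\R}|a(\lambda)|^2\le M\{|f|^2\}$, which forces the spectrum $\Lambda_f:=\{\lambda : a(\lambda)\ne 0\}$ to be countable.

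Finally, I would construct explicit trigonometric polynomials. Enumerating $\Lambda_f=\{\lambda_1,\lambda_2,\ldots\}$, for each $N$ I pick a $\mathbb{Q}$-basis $\beta_1,\ldots,\beta_{r_N}$ of the additive subgroup generated by $\lambda_1,\ldots,\lambda_N$ and form a product of one-dimensional Fej\'er kernels on the $r_N$-torus indexed by this basis. This yields nonnegative weights $K_N(\lambda)$ with finite support, $0\le K_N\le 1$, and $K_N(\lambda)\to 1$ pointwise. Setting
\[
B_N(x):=\sum_{\lambda}K_N(\lambda)\,a(\lambda)\,e^{i\lambda x},
\]
one can write $B_N(x)$ as the mean value of translates of $f$ integrated against a nonnegative approximate identity on the torus. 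Uniform convergence $B_N\to f$ then follows by combining the equicontinuity of the orbit $\mathcal{S}_f$ furnished by Theorem \ref{th:bochner_characterization} with the approximate-identity property of the Fej\'er kernels in each finite-dimensional direction.

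\textbf{Main obstacle.} The technical heart of the argument is the last step: arranging the weights $K_N(\lambda)$ so that convergence holds \emph{uniformly}, rather than merely in the $L^2(M)$-sense provided by Bessel. This is where the combinatorial choice of a rational basis inside the countable spectrum $\Lambda_f$ and the positivity of the Fej\'er kernels are both essential, and it is the only place in the proof that uses the full strength of the almost periodicity hypothesis through Bochner's characterization.
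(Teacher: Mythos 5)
The paper offers no proof of this statement: Theorem \ref{th:wiener_characterization} is quoted as a classical result of Wiener, and even for its $\R^n$ analogue (Theorem \ref{th:almost_periodic_functions_equivalence}) the authors simply refer to \cite[Theorem 5.13A.2]{simon2010szegHo}. So there is no in-paper argument to compare yours against; what follows is an assessment of your proposal on its own terms.

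Your sufficiency direction is complete and correct, and it runs parallel to arguments the paper does carry out elsewhere: the precompactness of $\mathcal{S}_{f+g}$ via the continuous image of $\mathcal{S}_f\times\mathcal{S}_g$ is exactly the mechanism of Proposition \ref{prop:banach_algebra} and Corollary \ref{coro:trigonometric_polynomials}, and your $\varepsilon/3$ almost-period argument for uniform limits is the standard alternative to the diagonal argument by which the paper proves closedness of $C^m_\ap$ in Theorem \ref{th:almost_periodic_functions_equivalence_m}.

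The necessity direction, however, has a genuine gap precisely at the step you label the ``main obstacle,'' and the one-sentence justification you give there does not close it. Two concrete problems. First, you establish only Bessel's \emph{inequality} $\sum_\lambda|a(\lambda)|^2\le M\{|f|^2\}$, which yields countability of the spectrum but nothing more; to prove that the Bochner--Fej\'er sums $B_N$ converge to $f$ one needs Parseval's \emph{equality}, or equivalently the uniqueness theorem (all $a(\lambda)=0$ implies $f\equiv 0$). Without one of these, even if you extract a uniformly convergent subsequence of the uniformly bounded family $(B_N)$, you cannot identify its limit with $f$. Second, the ``approximate identity'' heuristic does not transfer from the circle to the mean value on $\R$: the composite Fej\'er kernel $K_N$ is not concentrated near $t=0$ in the $M_t$-sense but near the set of $t$ at which the finitely many basis exponentials $e^{i\beta_j t}$ are simultaneously close to $1$, and concluding that $f(x+t)$ is close to $f(x)$ for such $t$ requires relating common almost periods of finitely many spectral exponentials to almost periods of $f$ itself --- a nontrivial theorem, and in any case false for fixed $N$ since the module generated by $\lambda_1,\dots,\lambda_N$ misses the rest of the spectrum, forcing a splitting of the error into a finite-frequency part and a tail that can only be controlled by Parseval. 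The standard repair is: show $|B_N|_\infty\le|f|_\infty$ and that every $\varepsilon$-almost period of $f$ is an $\varepsilon$-almost period of each $B_N$, use Theorem \ref{th:bochner_characterization} to extract a uniformly convergent subsequence, and identify the limit with $f$ via the uniqueness theorem; you would need to supply Parseval (or uniqueness) and this compactness step for the argument to be complete.
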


In this way the space of almost periodic functions $C_\ap(\R,\R)$ is the closure of the linear space of
trigonometric polynomials in $C_b(\R,\R)$.
The notion of almost periodic functions can be generalized to functions on $\R^n$ with $n\ge 2$.
(We refer to Section \ref{sec:group_almost_periodic_general} where almost periodic functions on 
finite dimensional Lie group are considered.)

\begin{definition}\label{def:bohr'}
A continuous function $f\in C(\R^n,\R)$ is called almost periodic (in the sense of Bohr) 
if for any $\varepsilon>0$ there exists an $n$-dimensional closed cube $K\equiv K_\varepsilon$ in $\R^n$ such that 
for any $x_0\in\mathbb{R}^n$ there exists $T\equiv T_\varepsilon\in x_0+K$ such that 
\[
\big|f(x+T)-f(x)\big|\le\varepsilon
\]
for any $x\in\R^n$.
\end{definition}

As in the one-dimensional case, any almost periodic function is {\em bounded and uniformly continuous} in $\R^n$.
Moreover, there are analogs of Bochner's and Wiener's characterizations that we
summarize in the following

\begin{theorem}\label{th:almost_periodic_functions_equivalence}
Let $f$ be a bounded continuous function in $\R^n$. Then the following three statements are
equivalent:
\begin{enumerate}
\item[(i)] $f$ is almost periodic;
\item[(ii)] The set $\mathcal{S}_f:=\big\{f_c\,\big|\,f_c(x):=f(x+c)\big\}_{c\in\R^n}$ is precompact in $C_b(\mathbb{R}^n,\R)$;
\item[(iii)] $f$ is a limit of a sequence of trigonometric polynomials in $C_b(\mathbb{R}^n,\R)$;
\end{enumerate}
Moreover, if $f\in C(\R^n,\R)$ is an almost periodic function then it has a well-defined {\em mean value},
\[
{\bar f}:=\lim_{T\to\infty}\frac{1}{(2T)^n}\int_{[-T,T]^n}f(x)\,dx.
\]
\end{theorem}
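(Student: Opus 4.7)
The plan is to establish the cyclic chain (iii)$\Rightarrow$(i)$\Rightarrow$(ii)$\Rightarrow$(iii) and then deduce the mean-value statement from (iii).

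\textbf{Step 1 (iii)$\Rightarrow$(i).} First I would check that each exponential $x\mapsto e^{i\lambda\cdot x}$ is almost periodic in the Bohr sense: given $\varepsilon>0$, simultaneous Diophantine approximation (Kronecker's theorem applied to the components of finitely many frequencies) produces a relatively dense set of $T\in\R^n$ with $|e^{i\lambda_k\cdot T}-1|\le\varepsilon$. Then a finite-sum argument shows that trigonometric polynomials are almost periodic, and the closure of the class of almost periodic functions in $C_b(\R^n,\R)$ under uniform convergence follows from a standard three-$\varepsilon$ argument (if $P_k\to f$ uniformly and $P_k$ has $\varepsilon$-almost period cube, then $f$ does so with $3\varepsilon$).

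\textbf{Step 2 (i)$\Rightarrow$(ii).} This is the multi-dimensional Bochner direction. As in the one-dimensional case (cf. \cite{Levitan}), Bohr's definition immediately forces $f$ to be bounded and uniformly continuous; hence $\mathcal{S}_f$ is bounded and equicontinuous. A diagonal Arzel\`a--Ascoli argument on the cubes $[-R,R]^n$ with $R\to\infty$ extracts from any sequence $\{f_{c_k}\}\subset\mathcal{S}_f$ a subsequence $\{f_{c_{k_j}}\}$ converging uniformly on every compact set. To upgrade this to global uniform convergence I would use the almost-period cube $K_\varepsilon$: for each $j$, pick $T_j\in -c_{k_j}+K_\varepsilon$ so that $c_{k_j}+T_j\in K_\varepsilon$, and for any $x\in\R^n$ exploit the estimate $|f(x+T_j)-f(x)|\le\varepsilon$ to reduce the comparison $|f_{c_{k_j}}(x)-f_{c_{k_\ell}}(x)|$ to a comparison on the fixed compact set $K_\varepsilon$.

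\textbf{Step 3 (ii)$\Rightarrow$(iii).} This is the main obstacle. I would form the hull $H:=\overline{\mathcal{S}_f}\subset C_b(\R^n,\R)$, which is compact by assumption, and verify that:
\begin{enumerate}
\item[(a)] the $\R^n$-action by translations extends continuously to $H$;
\item[(b)] the composition law $g\cdot g':=\lim_{k,\ell} f_{c_k+c'_\ell}$ (for $f_{c_k}\to g$, $f_{c'_\ell}\to g'$) is well-defined and continuous, giving $H$ the structure of a compact Abelian topological group.
\end{enumerate}
Once (a)--(b) are in place, Pontryagin duality for compact Abelian groups combined with Stone--Weierstrass shows that the continuous characters of $H$ span a uniformly dense subspace of $C(H)$; pulling characters back along the continuous map $\R^n\to H$, $c\mapsto f_c$, yields exponentials $e^{i\lambda\cdot x}$, and their finite linear combinations approximate $f$ uniformly on $\R^n$. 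Establishing (b) — i.e.\ that the translation semigroup closes up into a topological group on the hull — is the delicate point; the one-dimensional treatments in \cite{Levitan,levitan1982almost} transfer to $\R^n$ with only notational changes, so I would essentially adapt that argument.

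\textbf{Step 4 (mean value).} Granted (iii), write $f$ as the uniform limit of trigonometric polynomials $P_k$. Each $P_k$ has an elementary mean
\[
M(P_k)=\lim_{T\to\infty}\frac{1}{(2T)^n}\int_{[-T,T]^n}P_k(x)\,dx
\]
equal to the coefficient of the zero frequency in $P_k$. The uniform bound $|P_k-f|_\infty\to 0$ shows that $\{M(P_k)\}$ is Cauchy and that its limit coincides with $\lim_{T\to\infty}(2T)^{-n}\int_{[-T,T]^n}f\,dx$, giving the existence of $\bar f$.
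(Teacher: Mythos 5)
The paper itself does not prove Theorem \ref{th:almost_periodic_functions_equivalence}: it is recorded as a classical fact with a pointer to \cite[Theorem 5.13A.2]{simon2010szegHo}, so there is no in-paper argument to measure yours against. Your cyclic chain (iii)$\Rightarrow$(i)$\Rightarrow$(ii)$\Rightarrow$(iii) is the standard Bohr--Bochner--Weyl route and is correct as an outline. Steps 1, 2 and 4 are routine: in Step 2 you could even bypass Arzel\`a--Ascoli, since the almost-period cube places $\mathcal{S}_f$ inside the $\varepsilon$-neighborhood of the set $\{f_d\}_{d\in K_\varepsilon}$, which is compact in $C_b$ because $d\mapsto f_d$ is continuous by uniform continuity of $f$; total boundedness of $\mathcal{S}_f$ follows at once. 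The genuine content sits in Step 3, and you correctly flag where: well-definedness and continuity of the product on the hull $H=\overline{\mathcal{S}_f}$. The fact that rescues (b) is that translations act on $C_b$ by isometries, so $|f_{c_k+d}-f_{c'_k+d}|_\infty=|f_{c_k}-f_{c'_k}|_\infty$ uniformly in $d$, which makes the iterated limits independent of the approximating sequences; you should also record that the characters of the compact abelian group $H$ separate points (Peter--Weyl) before invoking Stone--Weierstrass, and symmetrize at the end to return from complex characters to the real-valued $f$. An alternative, and the one taken in the classical sources the paper leans on, is the Bochner--Fej\'er construction, which manufactures the approximating polynomials explicitly from the Fourier--Bohr coefficients and avoids Pontryagin duality entirely; your hull argument is conceptually cleaner but delivers the same statement. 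Note finally that the paper's own Theorem \ref{th:almost_periodic_functions_equivalence_m} generalizes this result to $C^m$ and is proved by reducing to the $m=0$ case, so nothing there substitutes for the argument you supply here.
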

For the proof we refer e.g. to \cite[Theorem 5.13A.2]{simon2010szegHo}).
A trigonometric polynomial on $\R^n$ is a function that can be written as $P(x)=\sum_{k=1}^N c_k e^{i (\Lambda_k,x)_{\R^n}}$
where $c_1,...,c_N$ are complex numbers, $\Lambda_1,...,\Lambda_N\in\R^n$ are given vectors, and where 
$(\cdot,\cdot)_{\R^n}$ is the Euclidean scalar product in $\R^n$.
Denote by $C_{\ap}(\mathbb{R}^n,\R)$ the space of almost periodic functions on $\mathbb{R}^n$.
In view of Theorem \ref{th:almost_periodic_functions_equivalence} the space $C_{\ap}(\mathbb{R}^n,\R)$ is a closed 
subspace of the Banach space of bounded continuous functions $C_b(\R^n,\R)$ supplied with the maximum norm.

\medskip

Take a real exponent $m\ge 0$. For $m$ {\em integer}, denote by $C^m_b(\mathbb{R}^n,\R)$ 
the space of bounded continuously differentiable functions on $\mathbb{R}^n$ whose partial derivatives of order $\le m$ 
are continuous and bounded in $\mathbb{R}^n$. The space $C^m_b(\mathbb{R}^n,\R)$ is equipped with the norm
\begin{equation}\label{eq:C^m-norm}
|f|_m:=\max_{0\le|\beta|\le m}|\partial^\beta f|_\infty
\end{equation}
where $\beta\in\Z_{\ge 0}^n$ is a multi-index. 
In the case when the exponent $m\ge 0$ is {\em not} integer, 
$m=k+\gamma$ with $k\ge 0$ integer and $0<\gamma<1$, 
$C^m_b(\mathbb{R}^n,\R)$ will denote the H{\"o}lder space of functions $f\in C^k_b(\R^n,\R)$ whose derivatives 
of order $k$ are H{\"o}lder continuous with exponent $\gamma$, equipped with the standard H{\"o}lder norm
\begin{equation}\label{eq:holder_norm}
|f|_m:=|f|_k+\max_{|\beta|=k}[\partial^\beta f]_\gamma\quad\text{where}\quad
[g]_\gamma:=\sup_{x\ne y}\frac{\big|g(x)-g(y)\big|}{|x-y|^\gamma}\,.
\end{equation}
It is a well-known fact that for any real $m\ge 0$ the space $C^m_b(\mathbb{R}^n,\R)$ is a Banach algebra with respect to 
pointwise multiplication of functions (see e.g. \cite[Proposition 2.1.1]{AlinhacGerard}). 

\begin{remark}
The definition of a Banach algebra we adopt in this paper does {\em not} require the absolute constant $C>0$ in 
the inequality $|fg|_m\le C|f|_m |g|_m$ to be equal to one.
\end{remark}

In what follows we will call the norm in 
$C^m_b(\mathbb{R}^n,\R)$ a {\em $C^m$-norm} independently of whether the exponent $m\ge 0$ is integer or not.
Denote by $[m]$ the integer part of $m$, i.e. the greatest integer $k$ such that $k\le m$.

\medskip

For a given $f\in C^m_b(\mathbb{R}^n,\R)$ with $m\ge 0$ real consider the family of functions in
$C^m_b(\mathbb{R}^n,\R)$,
\begin{equation}\label{eq:S_f}
\mathcal{S}_f:=\big\{f_c\in C^m_b(\R^n,\R)\,\big|\,f_c(x):=f(x+c)\big\}_{c\in\R^n}.
\end{equation}
Consider the set of {\em $C^m$-almost periodic} functions
\begin{equation}\label{eq:C^m_ap}
C^m_{\ap}(\mathbb{R}^n,\R):=\big\{f\in C^m_b(\mathbb{R}^n,\R)\,
\big|\,\mathcal{S}_f\,\text{is precompact in}\,\,C^m_b(\R^n,\R)\big\}.
\end{equation}
We will equip $C^m_{\ap}(\mathbb{R}^n,\R)$ with the $C^m$-norm topology inherited from $C^m_b(\R^n,\R)$.
Clearly, $C^{m_2}_{\ap}(\mathbb{R}^n,\R)\subseteq C^{m_1}_{\ap}(\mathbb{R}^n,\R)$
for $0\le m_1\le m_2$ and the inclusion is continuous. By Theorem \ref{th:almost_periodic_functions_equivalence}, 
$C^0_\ap(\mathbb{R}^n,\R)$ coincides with the space $C_\ap(\R^n,\R)$ of almost periodic functions in the sense of Bohr. 
The main result of this Section is Theorem \ref{th:almost_periodic_functions_equivalence_m} below which generalizes 
Theorem \ref{th:almost_periodic_functions_equivalence}.

Note that in contrast to the space $C^m_b(\mathbb{R}^n,\R)$ with integer exponent, the elements of the space
$C^m_b(\mathbb{R}^n,\R)$ with non-integer H{\"o}lderian exponent $m\ge 0$ are {\em not} necessarily approximated in 
the $C^m$-norm by elements from $C^\infty_b(\R^n,\R)=\bigcap_{k\ge 1} C^k_b(\R^n,\R)$.
More specifically, consider the {\em little H{\"o}lder space} $c^m_b(\R^n,\R)$ that is the closure of $C^\infty_b(\R^n,\R)$ in
$C^m_b(\R^n,\R)$. For $m\ge 0$ non-integer $c^m_b(\R^n,\R)$ is a proper closed subspace in 
$C^m_b(\R^n,\R)$ (see Appendix B for details). 
Since $C^m_b(\R^n,\R)$ is a Banach algebra one concludes that $c^m_b(\R^n,\R)$ is also a Banach algebra.
Note that for $m\ge 0$ integer $c^m_b(\R^n,\R)\equiv C^m_b(\R^n,\R)$ (cf. Remark \ref{rem:integer_exponents} in Appendix B). 

\begin{remark}\label{rem:the_lipschitz_case}
For $k\ge 0$ integer we can also consider the space $C_b^{k,1}(\R^n,\R)$ of functions $f$ in $C^k_b(\R^n,\R)$ whose derivatives
$\partial^\beta f$, $|\beta|\le k$, are Lipschitz continuous. When equipped with the norm \eqref{eq:holder_norm} with $\gamma=1$
the space $C_b^{k,1}(\R^n,\R)$ becomes a Banach algebra. The theory of almost periodic functions and diffeomorphisms 
developed in Section \ref{sec:spaces}, Section \ref{sec:groups_almost_periodic}, and Appendix B extends without changes also to this case. 
(Note, however, that the characterization of the corresponding little H{\"o}lder space in Lemma \ref{lem:characterization_little_holder*} in 
Appendix B fails.) Unfortunately, the scale of spaces $C_b^{k,1}(\R^n,\R)$ is not well suited for our main applications since the important result of 
Muhamadiev \cite[Theorem 1]{Muham1} used in Section \ref{sec:exponential_map} as well as analogous results for the ``pure'' Laplace 
operator on $\R^n$ do not hold in this case. For this reason we do not include the case of $C_b^{k,1}(\R^n,\R)$ in the present paper.
\end{remark}

First, we prove the following important

\begin{Prop}\label{prop:approximation}
If $f$ belongs to $C^m_\ap(\mathbb{R}^n,\R)$ with $m\ge 0$ real then there exists a sequence of functions $(f_k)_{k\ge 1}$ in 
$C^\infty_b(\R^n,\R)$ such that $f_k\stackrel{C^m_b}{\to}f$ as $k\to\infty$. In other words, $C_\ap^m(\R^n,\R)\subseteq c_b^m(\R^n,\R)$.
\end{Prop}

\begin{proof}[Proof of Proposition \ref{prop:approximation}]
The case when $m$ is a non-negative integer is trivial since $c^m_b(\R^n,\R)=C^m_b(\R^n,\R)$.
Assume that $m\notin\Z$ and $m\ge 0$.
Take $f\in C^m_b(\mathbb{R}^n,\R)$ and assume that $\mathcal{S}_f$ is precompact in
$C^m_b(\mathbb{R}^n,\R)$. Then $f\in C^{[m]}_b(\R^n,\R)$ and for any multi-index $|\beta|=[m]$
we have $\partial^\beta f\in C^\gamma_b(\R^n,\R)$ where $\gamma:=m-[m]>0$.
Then, in view of Lemma \ref{lem:charachterization_little_holder} in Appendix B, the Proposition will follow once we prove that 
\begin{equation}\label{eq:partial_f}
\partial^\beta f\in c^\gamma_b(\R^n,\R)
\end{equation}
for any $|\beta|=[m]$. To this end, we first note that for any multi-index $|\beta|=[m]$ one has that
$\mathcal{S}_{\partial^\beta f}=\partial^\beta\big(\mathcal{S}_f\big)$.
This, together with the continuity of the map $\partial^\beta : C^m_b(\R^n,\R)\to C^\gamma_b(\R^n,\R)$,
then implies that $\mathcal{S}_{\partial^\beta f}$ is precompact in $C^\gamma_b(\R^n,\R)$.
Hence, \eqref{eq:partial_f} will follow once we prove the statement of the Proposition for $m=\gamma$ where $0<\gamma<1$.
Let us prove it. Assume that $f\in C^\gamma_b(\mathbb{R}^n,\R)$ and that $\mathcal{S}_f$ is precompact in 
$C^\gamma_b(\mathbb{R}^n,\R)$. We will first prove that the map
\begin{equation}\label{eq:the_shift_curve}
c\mapsto f_c,\quad\R^n\to C^\gamma_b(\R^n,\R)
\end{equation}
is continuous. It is enough to prove the claimed continuity at $c=0$.
Assume that the curve above is not continuous at $c=0$. Then there exist $\varepsilon_0>0$ and a sequence 
$(c_j)_{j\ge 1}$ of real numbers such that $c_j\to 0$ as $j\to\infty$ and
\begin{equation}\label{eq:away}
|f_{c_j}-f|_\gamma>\varepsilon_0>0.
\end{equation}
Since $\mathcal{S}_f$ is precompact in $C^\gamma_b(\R^n,\R)$ there exist ${\tilde f}\in C^\gamma_b(\R^n,\R)$ and 
a subsequence $(c_j')_{j\ge 1}$ of $(c_j)_{j\ge 1}$ such that $f_{c_j'}\stackrel{C^\gamma_b}{\to}{\tilde f}$ as $j\to\infty$.
On the other side, the pointwise continuity of $f$ implies that pointwise $f_{c_j'}\to f$ as $j\to\infty$.
This implies that $f\equiv{\tilde f}$ and hence,
\[
f_{c_j'}\stackrel{C^\gamma_b}{\to}f\quad\text{as}\quad j\to\infty.
\]
Since this contradicts \eqref{eq:away}, we conclude that the map \eqref{eq:the_shift_curve} is continuous.
In particular, $[f_c-f]_\gamma\to 0$ as $c\to 0$ or equivalently,
\begin{equation}\label{eq:c-limit}
\sup\limits_{x\in\R^n,h\ne 0}\frac{\big|\big(f(x+c+h)-f(x+c)\big)-\big(f(x+h)-f(x)\big)\big|}{|h|^\gamma}\to 0
\quad\text{as}\quad c\to 0.
\end{equation}
Let $\chi\in C^\infty_c(\R^n,\R)$ be a cut-off function such that $\chi\ge 0$, 
$\mathop{\rm supp}\chi\subseteq\big\{x\in\R^n\,\big|\,|x|\le 1 \big\}$, and $\int_{\R^n}\chi(x)\,dx=1$. 
For any $\varepsilon>0$ denote $\chi_\varepsilon(x):=\chi(x/\varepsilon)/\varepsilon^n$.
We have (see \eqref{eq:holder_norm}),
\[
\begin{array}{l}
[f*\chi_\varepsilon-f]_\gamma=\Big[\int_{\R^n}\big[f(x+y)-f(x)\big]\chi_\varepsilon(-y)\,dy\Big]_\gamma\\
=\sup\limits_{x\in\R^n,h\ne 0}\left|\int_{\R^n}\frac{\big(f(x+y+h)-f(x+y)\big)-\big(f(x+h)-f(x)\big)}{|h|^\gamma}
\,\chi_\varepsilon(-y)\,dy\right|\\
\le\sup\limits_{x\in\R^n,h\ne 0,|y|<\varepsilon}
\frac{\left|\big(f(x+y+h)-f(x+y)\big)-\big(f(x+h)-f(x)\big)\right|}{|h|^\gamma}.
\end{array}
\]
By comparing this with \eqref{eq:c-limit} we see that $[f*\chi_\varepsilon-f]_\gamma\to 0$ as $\varepsilon\to 0+$.
In addition, by the properties of the mollifiers and the fact that $f$ is uniformly continuous (note that $\gamma>0$), 
$f*\chi_\varepsilon\stackrel{C_b}{\to} f$ as $\varepsilon\to 0+$. Hence
\[
f*\chi_\varepsilon\stackrel{C^\gamma_b}{\to}f\quad\text{as}\quad\varepsilon\to 0+.
\]
Since $f*\chi_\varepsilon\in C^\infty_b(\R^n,\R)$ for any $\varepsilon>0$, we conclude the proof
of Proposition \ref{prop:approximation}.
\end{proof}

\begin{remark}
Proposition \ref{prop:approximation} is used in a crucial way in the proof of the continuity of the composition
in Theorem \ref{th:group_regularity} -- see Section \ref{sec:groups_almost_periodic}.
\end{remark}

\begin{remark}
For {\em integer} exponents $m\ge 1$ spaces equivalent to $C^m_\ap(\R^n)$ were studied in the literature 
-- see e.g. \cite{shubin:survey1978}. 
%In view of Theorem \ref{th:almost_periodic_functions_equivalence_m}
%below,  for any real $m\ge 0$ the space $C^m_\ap(\R^n,\R)$ can be equivalently defined as the closure in $C^m_b(\R^n,\R)$ of 
%the set of trigonometric polynomials.
\end{remark}

\begin{remark}\label{rem:approximation_by_mollifiers}
It follows from Proposition \ref{prop:approximation} and Lemma \ref{lem:charachterization_little_holder} in Appendix B that 
for any $f\in C^m_\ap(\R^n,\R)$ we have that $f*\chi_\varepsilon\stackrel{C_b^m}{\to}f$ as $\varepsilon\to 0+$ where 
$\chi_\varepsilon$ is the cut-off function from the proof of Proposition \ref{prop:approximation} and 
$f*\chi_\varepsilon\in C_b^\infty(\R^n,\R)$.  (For integer exponents $m\ge 0$ this follows directly form the properties of 
the mollifiers and the uniform continuity of (classical) almost periodic functions in $C_\ap(\R^n,\R)$.) 
By Remark \ref{rem:approximation_by_mollifiers*} below, 
$f*\chi_\varepsilon\in\bigcap\limits_{k\ge 1, k-\text{integer}}C_\ap^k(\R^n,\R)$.
\end{remark}

We have

\begin{Prop}\label{prop:banach_algebra}
The set of $C^m$-almost periodic functions $C^m_{\ap}(\mathbb{R}^n,\R)$ is a normed linear space with the following 
additional properties:
\begin{itemize} 
\item[(i)] For any real $m\ge 0$ and for any multi-index $\beta\in\Z_{\ge 0}$ such that $0\le|\beta|\le m$ the map 
\[
\partial^{\beta}: C^m_{\ap}(\mathbb{R}^n,\R)\to C^{m-|\beta|}_{\ap}(\mathbb{R}^n,\R),\quad 
f\mapsto\partial^{\beta}f,
\]
is well-defined and continuous.

\item[(ii)] For any real exponents $m_1, m_2\ge 0$ the pointwise multiplication
\[
C^{m_1}_{\ap}(\mathbb{R}^n,\R)\times C^{m_2}_{\ap}(\mathbb{R}^n,\R)\to
C^{\min(m_1, m_2)}_{\ap}(\mathbb{R}^n,\R),\quad (f, g)\mapsto f g,
\]
is a continuous map. 
\end{itemize}
\end{Prop}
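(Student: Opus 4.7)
The plan is to derive everything from one structural fact: the set $\mathcal{S}_f$ transforms in the expected way under translation–equivariant operations, and continuous (multi)linear maps send precompact sets to precompact sets. I would begin by establishing that $C^m_\ap(\R^n,\R)$ is closed under addition and scalar multiplication, so that it is indeed a normed linear subspace of $C^m_b(\R^n,\R)$ with the induced norm. Concretely, since $(f+g)_c=f_c+g_c$ and $(\lambda f)_c=\lambda f_c$, one has $\mathcal{S}_{f+g}\subseteq \mathcal{S}_f+\mathcal{S}_g$ and $\mathcal{S}_{\lambda f}=\lambda\,\mathcal{S}_f$. Since addition and scalar multiplication are continuous on $C^m_b$, both $\mathcal{S}_f+\mathcal{S}_g$ and $\lambda\,\mathcal{S}_f$ are precompact, and hence so are the subsets $\mathcal{S}_{f+g}$ and $\mathcal{S}_{\lambda f}$.

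For item (i), I would first observe that $\partial^\beta f\in C^{m-|\beta|}_b(\R^n,\R)$ is automatic from the definition of the $C^m$-norm. The key identity is
\[
\mathcal{S}_{\partial^\beta f}=\partial^\beta(\mathcal{S}_f),
\]
which holds because partial derivatives commute with translations. Since the linear map $\partial^\beta:C^m_b(\R^n,\R)\to C^{m-|\beta|}_b(\R^n,\R)$ is bounded (with norm $\le 1$ by \eqref{eq:C^m-norm}–\eqref{eq:holder_norm}), it maps the precompact set $\mathcal{S}_f$ to a precompact set in $C^{m-|\beta|}_b(\R^n,\R)$. Thus $\partial^\beta f\in C^{m-|\beta|}_\ap(\R^n,\R)$, and the continuity of $\partial^\beta$ as a map between the ap-spaces is inherited from its continuity as a map between the $C^m_b$-spaces (the $\ap$-spaces carry the subspace topology).

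For item (ii), I would first reduce to the case $m_1=m_2=:m$, where $m=\min(m_1,m_2)$. The continuous embedding $C^{m_2}_b(\R^n,\R)\hookrightarrow C^{m_1}_b(\R^n,\R)$ for $m_1\le m_2$ sends the precompact $\mathcal{S}_g$ to a precompact subset of $C^{m_1}_b$, so $g\in C^{m_1}_\ap$; similarly whenever the indices differ one drops to the smaller exponent. In the equal–exponent case $fg\in C^m_b$ by the Banach-algebra property, and the identity $(fg)_c=f_c\,g_c$ shows
\[
\mathcal{S}_{fg}\subseteq \mu\bigl(\mathcal{S}_f\times\mathcal{S}_g\bigr),
\]
where $\mu:C^m_b\times C^m_b\to C^m_b$ is the (continuous) pointwise multiplication. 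Since $\mathcal{S}_f\times\mathcal{S}_g$ is precompact in the product and $\mu$ is continuous, the right–hand side is precompact, hence so is $\mathcal{S}_{fg}$. Continuity of the bilinear map $(f,g)\mapsto fg$ between the ap–spaces then follows from its continuity on the ambient $C^m_b$–spaces together with the bilinear bound $|fg|_m\le C\,|f|_m|g|_m$.

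There is no serious obstacle here; the only points requiring care are the bookkeeping identity $\mathcal{S}_{\partial^\beta f}=\partial^\beta\mathcal{S}_f$ (and its multiplicative analogue), and the fact that being a subset of a relatively compact set in a Banach space is itself relatively compact. Everything else is a formal consequence of the algebraic and topological structure on $C^m_b(\R^n,\R)$ recalled in Section \ref{sec:spaces}.
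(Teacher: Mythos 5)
Your proposal is correct and follows essentially the same route as the paper: establish the linear-space structure via $\mathcal{S}_{f+g}\subseteq\mathcal{S}_f+\mathcal{S}_g$, and reduce (i) and (ii) to the corresponding statements for $C^m_b$ by checking that the relevant translation-equivariant continuous maps send the precompact set $\mathcal{S}_f$ (resp.\ $\mathcal{S}_f\times\mathcal{S}_g$) onto a set containing $\mathcal{S}_{\partial^\beta f}$ (resp.\ $\mathcal{S}_{fg}$). The only cosmetic difference is that you first reduce (ii) to equal exponents, whereas the paper applies the multiplication map directly from $C^{m_1}_b\times C^{m_2}_b$ to $C^{\min(m_1,m_2)}_b$; both are fine.
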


\begin{remark}\label{rem:banach_algebra_C^n_b}
Items (i) and (ii) obviously hold for the larger spaces $c^m_b(\R^n,\R)$ and $C^m_b(\R^n,\R)$
-- see e.g. Proposition 2.1.1 in \cite{AlinhacGerard}. 
\end{remark}

\begin{proof}[Proof of Proposition \ref{prop:banach_algebra}]
First, recall that the image of a precompact set under a continuous map is also precompact.
With this in mind, the statement that $C^m_\ap(\R^n,\R)$ is a linear space can be proven as follows:
First, note that for any $f,g\in C^m_\ap(\R^n,\R)$ we have
\begin{equation}\label{eq:S-relation_sum}
\mathcal{S}_{f+g}\subseteq\mathcal{S}_f+\mathcal{S}_g.
\end{equation}
Since $\mathcal{S}_f+\mathcal{S}_g$ is the image of the precompact set $\mathcal{S}_f\times\mathcal{S}_g$ in $
C^m_b(\R^n,\R)\times C^m_b(\R^n,\R)$ under the continuous map
\[
C^m_b(\R^n,\R)\times C^m_b(\R^n,\R)\to C^m_b(\R^n,\R),\quad (f,g)\mapsto f+g,
\] 
we conclude from \eqref{eq:S-relation_sum} that $\mathcal{S}_{f+g}$ is contained in a precompact set in 
$C^m_b(\R^n,\R)$. Hence $\mathcal{S}_{f+g}$ is precompact in $C^m_b(\R^n,\R)$. 
This shows that $f+g\in C^m_\ap(\R^n,\R)$. By arguing in a similar way, one also sees that $C^m_\ap(\R^n,\R)$ is 
invariant under multiplication by scalars. Thus, $C^m_\ap(\R^n,\R)$ is a linear space.
The remaining part of the Proposition can be proven by using similar arguments:
By Remark \ref{rem:banach_algebra_C^n_b}, items (i) and (ii) hold for the larger space $C^m_b(\R^n,\R)$.
Since the space $C^m_{\ap}(\R^n,\R)$ is continuously embedded in $C^m_b(\R^n,\R)$, items (i) and (ii) of 
Proposition \ref{prop:banach_algebra} will follow once we show that the image of the map in (i) consists of $
C^{m-|\beta|}$-almost periodic functions and the image of the map in (ii) consists of $C^{\min(m_1,m_2)}$-almost 
periodic functions. The proofs of these two statements are similar and we will prove only the second one of them.
Take $f\in C^{m_1}_{\ap}(\R^n,\R)$ and $g\in C^{m_2}_{\ap}(\R^n,\R)$ with $m_1, m_2\ge 0$.
Then, 
\begin{equation}\label{eq:S-relation_product}
\mathcal{S}_{f g}\subseteq S_f\cdot S_g:=\big\{F G\,\big|\,F\in\mathcal{S}_f,\,G\in\mathcal{S}_g\big\}.
\end{equation}
The set $S_f\cdot S_g$ is precompact in $C^{\min(m_1,m_2)}_b(\R^n,\R)$ since it is
the image of the precompact set $\mathcal{S}_f\times \mathcal{S}_g$ in 
$C^{m_1}_b(\mathbb{R}^n,\R)\times C^{m_2}_b(\mathbb{R}^n,\R)$ 
under the continuous map
\[
C^{m_1}_b(\mathbb{R}^n,\R)\times C^{m_2}_b(\mathbb{R}^n,\R)\to
C^{\min(m_1, m_2)}_b(\mathbb{R}^n,\R),\quad (F, G)\mapsto F G.
\]
This together with \eqref{eq:S-relation_product} then implies that $\mathcal{S}_{f g}$ is precompact in $C^m_b(\R^n,\R)$,
which completes the proof that $fg\in C^{\min(m_1,m_2)}_{\ap}(\R^n,\R)$.
\end{proof}

\begin{coro}\label{coro:trigonometric_polynomials}
For any real $m\ge 0$ the set of trigonometric polynomials is contained in $C^m_\ap(\R^n,\R)$.
\end{coro}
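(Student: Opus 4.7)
The plan is to reduce to elementary building blocks and then use linearity. By Proposition \ref{prop:banach_algebra}, $C^m_\ap(\R^n,\R)$ is a linear space, so it suffices to check that every constant function and every function of the form $\cos\bigl((\Lambda,x)_{\R^n}\bigr)$ or $\sin\bigl((\Lambda,x)_{\R^n}\bigr)$ with $\Lambda\in\R^n$ lies in $C^m_\ap(\R^n,\R)$. (A real-valued trigonometric polynomial is a finite real linear combination of such pieces via $e^{i(\Lambda,x)}=\cos(\Lambda,x)+i\sin(\Lambda,x)$; equivalently, if one prefers to work with complex coefficients, the identical argument applies to the complex-valued analogue of $C^m_\ap$ and then one takes real parts.)

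First I would check the $C^m_b$ regularity. For any $\Lambda\in\R^n$ the functions $e_\Lambda^{\pm}(x):=\cos(\Lambda,x)_{\R^n},\,\sin(\Lambda,x)_{\R^n}$ are smooth, and each partial derivative $\partial^\beta e_\Lambda^{\pm}$ is a polynomial in the components of $\Lambda$ times a sine or cosine, hence bounded on $\R^n$. Therefore $e_\Lambda^{\pm}\in C^\infty_b(\R^n,\R)\subseteq C^m_b(\R^n,\R)$ for every real $m\ge 0$.

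Next I would verify that the translation orbit is precompact. The angle-sum identity gives, for every $c\in\R^n$,
\[
\cos\bigl((\Lambda,x+c)_{\R^n}\bigr)=\cos(\Lambda,c)_{\R^n}\,\cos(\Lambda,x)_{\R^n}-\sin(\Lambda,c)_{\R^n}\,\sin(\Lambda,x)_{\R^n},
\]
and similarly for $\sin$. Consequently $\mathcal{S}_{e_\Lambda^{\pm}}$ is contained in the image of the compact set $S^1\subset\R^2$ under the continuous linear map
\[
\R^2\to C^m_b(\R^n,\R),\qquad (\alpha,\beta)\mapsto \alpha\cos(\Lambda,\cdot)_{\R^n}+\beta\sin(\Lambda,\cdot)_{\R^n},
\]
(continuity follows from the boundedness of all partial derivatives of $\cos(\Lambda,\cdot)_{\R^n}$ and $\sin(\Lambda,\cdot)_{\R^n}$ noted above). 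The image of a compact set under a continuous map is compact, so $\mathcal{S}_{e_\Lambda^{\pm}}$ is precompact in $C^m_b(\R^n,\R)$. For a constant function $\mathcal{S}$ is a single point, trivially precompact.

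Combining these observations with the linearity of $C^m_\ap(\R^n,\R)$ established in Proposition \ref{prop:banach_algebra}, every trigonometric polynomial lies in $C^m_\ap(\R^n,\R)$. There is no real obstacle here; the only substantive point is that the translation action on a single exponential is implemented by a one-parameter family of scalars lying in the compact group $S^1$, which makes the precompactness of $\mathcal{S}$ transparent.
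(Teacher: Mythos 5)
Your proof is correct and uses essentially the same idea as the paper: translation of a single frequency acts by a point on the compact unit circle (the paper writes $e^{i(\Lambda,x+c)}=e^{i(\Lambda,c)}\cdot e^{i(\Lambda,x)}$ and multiplies the fixed function by a precompact set of unimodular constants, while you express the same rotation in real coordinates via the angle-sum formula), so the orbit $\mathcal{S}$ is the continuous image of a compact set, and linearity of $C^m_\ap$ finishes the argument. The only cosmetic difference is your reduction to real sines and cosines versus the paper's direct treatment of the complex exponential in $C^m_\ap(\R^n,\C)$.
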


\begin{proof}[Proof of Corollary \ref{coro:trigonometric_polynomials}]
Since, by Proposition \ref{prop:banach_algebra}, $C^m_\ap(\R^n,\R)$ is a linear space, it is enough to show that 
for any given vector $\Lambda\in\R^n$ the function $x\mapsto e^{i(\Lambda,x)_{\R^n}}$ belongs to $C^m_\ap(\R^n,\C)$. 
In order to prove this we first note that the image of the map $\R^n\to\C$, $c\mapsto e^{i(\Lambda,c)_{\R^n}}$,
is contained inside the unit circle in $\C$, and hence it is precompact in $\C$.
On the other side, the set of constants $\C$ is continuously embedded in $C^m_b(\R^n,\C)$. This implies that
the image of the map $\R^n\to C^m_b(\R^n,\C)$, $c\mapsto e^{i(\Lambda,c)_{\R^n}}$,
is precompact in $C^m_b(\R^n,\C)$. Since the pointwise multiplication of functions in $C^m_b(\R^n,\C)$ is 
continuous, we then conclude from
\[
e^{i(\Lambda,x+c)_{\R^n}}=e^{i(\Lambda,c)_{\R^n}}\cdot e^{i(\Lambda,x)_{\R^n}}
\]
that the image of the map
\[
\R^n\to C^m_b(\R^n,\C),\quad c\mapsto\big[x\mapsto e^{i (\Lambda,x+c)_{\R^n}}\big],
\]
is precompact in $C^m_b(\R^n,\C)$. This, together with the fact that trigonometric polynomials are elements of 
$C^\infty_b(\R^n,\R)$ then implies that the function $x\mapsto e^{i(\Lambda,x)_{\R^n}}$ belongs to $C^m_\ap(\R^n,\C)$.
\end{proof}

We have the following generalization of Theorem \ref{th:almost_periodic_functions_equivalence}.

\begin{theorem}\label{th:almost_periodic_functions_equivalence_m}
For any $n\ge 1$ and for any real exponent $m\ge 0$ the space of $C^m$-almost periodic functions 
$C^m_{\ap}(\R^n,\R)$ is a closed subspace in the Banach space $C^m_b(\R^n,\R)$ such that
$C^m_\ap(\R^n,\R)\subseteq c^m_b(\R^n,\R)\subseteq C^m_b(\R^n,\R)$. Moreover, 
for a given $f\in C^m_b(\R^n,\R)$ the following three statements are equivalent:
\begin{itemize}
\item[(i)] $f\in C^m_{\ap}(\mathbb{R}^n,\R)$;
\item[(ii)]  for any multi-index $\beta\in\mathbb{Z}_{\ge 0}^n$ such that 
$0\le |\beta|\le m$ one has that $\partial^{\beta}f\in C_{\ap}^\gamma(\mathbb{R}^n,\R)$
where $\gamma:=m-[m]$;
\item[(iii)] $f$ is a limit in $C^m_b(\R^n,\R)$ of a sequence of trigonometric polynomials.
\end{itemize}
\end{theorem}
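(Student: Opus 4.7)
The plan is to establish the cycle $(iii)\Rightarrow(i)\Rightarrow(ii)\Rightarrow(iii)$, taking closedness of $C^m_\ap$ in $C^m_b$ and the inclusion $C^m_\ap\subseteq c^m_b$ as preliminary observations. Closedness follows from a standard $3\varepsilon$ totally-bounded argument: if $f_k\to f$ in $C^m_b$ with $f_k\in C^m_\ap$, the identity $|f_c-(f_k)_c|_m=|f-f_k|_m$ converts any finite $\varepsilon$-net of the precompact set $\mathcal{S}_{f_k}$ into a finite $2\varepsilon$-net for $\mathcal{S}_f$, so $\mathcal{S}_f$ is totally bounded in the Banach space $C^m_b$ and hence precompact. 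The inclusion $C^m_\ap\subseteq c^m_b$ is exactly the content of Proposition \ref{prop:approximation}.

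The easy directions are immediate. For $(iii)\Rightarrow(i)$: Corollary \ref{coro:trigonometric_polynomials} places every trigonometric polynomial in $C^m_\ap$, which is closed, so every $C^m_b$-limit of trigonometric polynomials also lies in $C^m_\ap$. For $(i)\Rightarrow(ii)$: for each multi-index $\beta$ with $|\beta|\le[m]$ the differentiation map $\partial^\beta:C^m_b\to C^{m-|\beta|}_b$ is continuous, hence $\mathcal{S}_{\partial^\beta f}=\partial^\beta(\mathcal{S}_f)$ is precompact in $C^{m-|\beta|}_b$ and, by the continuous embedding $C^{m-|\beta|}_b\hookrightarrow C^\gamma_b$ with $\gamma:=m-[m]$, also in $C^\gamma_b$; thus $\partial^\beta f\in C^\gamma_\ap$.

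The substantive step is $(ii)\Rightarrow(iii)$, which I would handle by mollification followed by classical Bochner--Fej\'er summation. Let $\chi_\varepsilon$ be the mollifier from the proof of Proposition \ref{prop:approximation}. Applied derivative by derivative to each $\partial^\beta f\in C^\gamma_\ap$ with $|\beta|\le[m]$, the argument at the end of that proof gives $(\partial^\beta f)*\chi_\varepsilon\to\partial^\beta f$ in $C^\gamma_b$, and therefore $f_\varepsilon:=f*\chi_\varepsilon\to f$ in $C^m_b$. Each $f_\varepsilon\in C^\infty_b$ has, for every multi-index $\beta$ of any order, a derivative $\partial^\beta f_\varepsilon=f*\partial^\beta\chi_\varepsilon$ that is the convolution of the bounded almost periodic function $f$ with a compactly supported $L^1$-kernel, and is therefore itself a classical almost periodic function. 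The Bochner--Fej\'er construction then produces trigonometric polynomials $B_N(f_\varepsilon)$ satisfying $\partial^\beta B_N(f_\varepsilon)=B_N(\partial^\beta f_\varepsilon)$ (both sides share the same Fourier exponents weighted by $(i\lambda)^\beta$), so that $\partial^\beta B_N(f_\varepsilon)\to\partial^\beta f_\varepsilon$ uniformly for every $|\beta|\le[m]+1$. Combined with the elementary interpolation bound
\[
[g]_\gamma\le C\max\bigl(|g|_\infty,|\nabla g|_\infty\bigr)\qquad(0<\gamma<1),
\]
applied to the top-order derivatives, this upgrades uniform convergence to $B_N(f_\varepsilon)\to f_\varepsilon$ in $C^m_b$. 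A diagonal choice $\varepsilon_k\to 0$ and $N_k\to\infty$ then yields trigonometric polynomials converging to $f$ in $C^m_b$, proving (iii).

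The main obstacle is precisely this last stage: upgrading the classical sup-norm Bochner--Fej\'er theorem to convergence in the full $C^m_b$-norm, including the H\"older seminorm when $m\notin\Z$. Mollification is the key device, because it promotes the hypothesis (ii) -- which only gives H\"older-regular almost periodic derivatives of order $[m]$ -- to a $C^\infty$ object whose derivatives of every order are classically almost periodic, after which the commutation $\partial^\beta B_N=B_N\partial^\beta$ reduces higher-norm approximation to the classical sup-norm result applied derivative by derivative.
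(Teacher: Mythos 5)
Your proof is correct, but it is organized differently from the paper's in two substantive places. First, for closedness you use the translation-invariance identity $|f_c-(f_k)_c|_m=|f-f_k|_m$ to transport a finite $\varepsilon$-net of $\mathcal{S}_{f_k}$ to a $2\varepsilon$-net of $\mathcal{S}_f$; the paper instead runs a nested diagonal subsequence extraction, producing limits $\tilde f_k$ of $(f_k)_{\alpha^k_j}$ and showing they converge to a limit of shifts of $f$. Your total-boundedness argument is shorter and buys the same conclusion. Second, and more importantly, you prove the cycle $(iii)\Rightarrow(i)\Rightarrow(ii)\Rightarrow(iii)$, whereas the paper proves $(i)\Leftrightarrow(ii)$ and $(i)\Leftrightarrow(iii)$ separately. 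The paper's $(ii)\Rightarrow(i)$ is a short, purely topological argument: the map $\imath:f\mapsto(\partial^\beta f)_{0\le|\beta|\le m}$ is an isomorphism of $C^m_b$ onto a closed subspace of $[C^\gamma_b]^{N_0}$, and $\imath(\mathcal{S}_f)$ sits inside the product $\bigtimes_\beta\mathcal{S}_{\partial^\beta f}$ of precompact sets; you instead route this implication through the harmonic-analytic step $(ii)\Rightarrow(iii)$, which makes the Bochner--Fej\'er machinery load-bearing for the $(i)\Leftrightarrow(ii)$ equivalence as well. For the density of trigonometric polynomials the two arguments share the mollification step, but the paper then defers the integer-exponent case to Shubin's result and treats the non-integer case by passing through $C^{[m]+1}_\ap$, while you carry out the approximation explicitly via Bochner--Fej\'er sums, the commutation $\partial^\beta B_N=B_N\partial^\beta$, and the interpolation bound $[g]_\gamma\le C\max(|g|_\infty,|\nabla g|_\infty)$ to control the H\"older seminorm. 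Your version is more self-contained and, incidentally, makes explicit a point the paper's write-up glosses over: the $C^\infty_b$ approximants that get expanded in trigonometric polynomials must themselves be almost periodic of all orders, which you justify by observing that $\partial^\beta(f*\chi_\varepsilon)=f*\partial^\beta\chi_\varepsilon$ is the convolution of a Bohr almost periodic function with an $L^1$ kernel.
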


As a consequence of Theorem \ref{th:almost_periodic_functions_equivalence_m} and Proposition \ref{prop:banach_algebra} 
we obtain

\begin{coro}
The space $C^m_{\ap}(\mathbb{R}^n,\R)$ is a Banach algebra.
\end{coro}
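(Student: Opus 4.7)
The plan is to assemble this corollary directly from the two inputs established just above: completeness (from Theorem \ref{th:almost_periodic_functions_equivalence_m}) and closure under multiplication with a continuity estimate (from Proposition \ref{prop:banach_algebra}). The ambient space $C^m_b(\R^n,\R)$ is already known to be a Banach algebra under the $C^m$-norm (as noted in Remark \ref{rem:banach_algebra_C^n_b}, via Proposition 2.1.1 of \cite{AlinhacGerard}), so the task is to show that the subspace $C^m_\ap(\R^n,\R)$ inherits this structure.

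First, I would invoke Theorem \ref{th:almost_periodic_functions_equivalence_m}, which asserts that $C^m_\ap(\R^n,\R)$ is a \emph{closed} subspace of the Banach space $C^m_b(\R^n,\R)$. Closed subspaces of Banach spaces are Banach spaces in the inherited norm, so $\bigl(C^m_\ap(\R^n,\R),|\cdot|_m\bigr)$ is complete.

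Second, I would take the $m_1=m_2=m$ case of Proposition \ref{prop:banach_algebra}(ii) to conclude that pointwise multiplication restricts to a well-defined map $C^m_\ap(\R^n,\R)\times C^m_\ap(\R^n,\R)\to C^m_\ap(\R^n,\R)$. The submultiplicative estimate $|fg|_m\le C\,|f|_m\,|g|_m$ is just the Banach-algebra estimate of the ambient $C^m_b(\R^n,\R)$ restricted to the subspace, so no new estimate has to be proved. (If one wishes a true algebra norm with constant $1$, one may replace $|\cdot|_m$ by the equivalent norm $C\,|\cdot|_m$.) Combining the two steps yields the corollary.

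There is no real obstacle here: every required ingredient has been established. The only minor subtlety to flag is that the constant in the product estimate depends on $m$ and $n$ and is not in general equal to $1$, but this does not affect the Banach-algebra status (and is standard for H\"olderian $C^m_b$-type algebras). I would state this explicitly in one sentence to preempt confusion, and then the proof is complete.
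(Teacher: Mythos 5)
Your proof is correct and follows exactly the route the paper intends: the corollary is stated there as an immediate consequence of Theorem \ref{th:almost_periodic_functions_equivalence_m} (closedness, hence completeness) combined with Proposition \ref{prop:banach_algebra}(ii) for closure under multiplication, with the submultiplicative estimate inherited from $C^m_b(\R^n,\R)$. Your remark about the constant $C$ not being $1$ matches the paper's own remark following the corollary.
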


\begin{proof}[Proof of Theorem \ref{th:almost_periodic_functions_equivalence_m}]
We will first prove that $C^m_{\ap}(\R^n,\R)$ is a closed subspace in the Banach space $C^m_b(\R^n,\R)$.
Take a sequence $(f_k)_{k\ge 1}$ in $C^m_{\ap}(\R^n,\R)$ such that
\begin{equation}\label{eq:approximation_f}
f_k\stackrel{C^m_b}{\to} f\quad\text{as}\quad k\to\infty
\end{equation}
for some $f\in C^m_b(\R^n,\R)$. We will prove that $f\in C^m_\ap(\R^n,\R)$. 
To this end, take a sequence $(\alpha_j)_{j\ge 1}$ of real numbers.
It follows from the definition \eqref{eq:C^m_ap} that for any $k\ge 1$ there exists a subsequence 
$(\alpha^k_j)_{j\ge 1}$ of $(\alpha_j)_{j\ge 1}$ and a function ${\tilde f}_k\in C^m_b(\R^n,\R)$ such that
\begin{equation}\label{eq:sequence_c}
(f_k)_{\alpha^k_j}\stackrel{C^m_b}{\to}{\tilde f}_k\quad\text{as}\quad j\to\infty
\end{equation}
with the property that for any $k\ge 1$ the sequence $(\alpha^{k+1}_j)_{j\ge 1}$ is a subsequence of $(\alpha^k_j)_{j\ge 1}$
and $(\alpha^1_j)_{j\ge 1}$ is a subsequence of $(\alpha_j)_{j\ge 1}$.\footnote{Within this proof we will write $(f)_c$ instead of
$f_c$ (cf. \eqref{eq:S_f} for the definition of $f_c$).}
Since the $C^m$-norm is translation invariant (cf. \eqref{eq:C^m-norm} and \eqref{eq:holder_norm})
we conclude from \eqref{eq:sequence_c} that for any given $k, l\ge 1$ such that  $k\ge l$ we have
\begin{equation}\label{eq:isometry}
|{\tilde f}_k-{\tilde f}_l|_m=\lim_{j\to\infty}\big|(f_k)_{\alpha^k_j}-(f_l)_{\alpha^k_j}\big|_m=|f_k-f_l|_m.
\end{equation}
Using that $(f_k)_{k\ge 1}$ is a Cauchy sequence in $C^m_b(\R^n,\R)$ we obtain from \eqref{eq:isometry} that 
there exists ${\tilde f}\in C^m_b(\R^n,\R)$ such that
\begin{equation}\label{eq:sequence_tilde}
{\tilde f}_k\stackrel{C^m_b}{\to}{\tilde f}\quad\text{as}\quad k\to\infty.
\end{equation}
Our last observation is that for any $k, j\ge 1$ we have $|(f_k)_{\alpha^k_j}-(f)_{\alpha^k_j}|_m=|f_k-f|_m$,
which implies that {\em uniformly} in $j\ge 1$,
\begin{equation}\label{eq:sequence_norms}
|(f_k)_{\alpha^k_j}-(f)_{\alpha^k_j}|_m\to 0\quad\text{as}\quad k\to\infty.
\end{equation}
For any given $k, j\ge 1$ we have
\[
|{\tilde f}-(f)_{\alpha^k_j}|_m\le|{\tilde f}-{\tilde f}_k|_m+|{\tilde f}_k-(f_k)_{\alpha^k_j}|_m+
|(f_k)_{\alpha^k_j}-(f)_{\alpha^k_j}|_m.
\]
This, together with \eqref{eq:sequence_c}, \eqref{eq:sequence_tilde}, and
\eqref{eq:sequence_norms}, then implies that for any given integer $p\ge 1$ there exist $k_p, j_p\ge 1$ such that
$|{\tilde f}-(f)_{\alpha^{k_p}_{j_p}}|_m\le 1/p$ with $k_{p+1}>k_p$ and $j_{p+1}>j_p$. 
This shows that the function $\tilde f$ can be approximated in  $C^m_b(\R^n,\R)$ by a sequence of the form 
$(f_{\tilde{\alpha}_j})_{j\ge 1}$ with $\tilde{\alpha}_j\in\R$ where $(\tilde{\alpha}_j)_{j\ge 1}$ is a subsequence of 
$(\alpha_j)_{j\ge 1}$. Hence, the set ${\mathcal S}_f$ is precompact in $C^m_b(\R^n,\R)$, and therefore $f\in C^m_\ap(\R^n,\R)$. 
This completes the proof of that $C^m_\ap(\R^n,\R)$ is a closed subspace in $C^m_b(\R^n,\R)$.

Let us now prove that statements (i) and (ii) are equivalent. Consider the map
\begin{equation}\label{eq:i}
\imath : C^m_b(\R^n,\R)\to\big[C_b^\gamma(\R^n,\R)\big]^{N_0},\quad
f\mapsto\big(\partial^\beta f\big)_{0\le|\beta|\le m},
\end{equation}
where $N_0$ is the number of different multi-indexes $\beta\in\Z_{\ge 0}^n$ with $0\le|\beta|\le m$. 
In view of the definition of the $C^m$-norm, the map \eqref{eq:i} is a linear isomorphism of normed spaces onto its 
image equipped with the norm coming from $[C_b^\gamma(\R^n,\R)\big]^{N_0}$.
In particular, this implies that the image of \eqref{eq:i} is a closed linear subspace in $\big[C_b^\gamma(\R^n,\R)\big]^{N_0}$.
In addition to the map \eqref{eq:i}, for any multi-index $\beta$ with $0\le|\beta|\le m$ consider the map
\begin{equation}\label{eq:i_beta}
\imath_\beta : C^m_b(\R^n,\R)\to C_b^\gamma(\R^n,\R),\quad
f\mapsto\partial^\beta f.
\end{equation}
Now, assume that $f\in C^m_{\ap}(\R^n,\R)$. Then, by the definition \eqref{eq:C^m_ap}, 
the set $\mathcal{S}_f$ is precompact in $C^m_b(\R^n,\R)$. 
Since for any given multi-index $\beta$ the map \eqref{eq:i_beta} is continuous,
the set $\imath_\beta\big(\mathcal{S}_f\big)$ is precompact in $C_b^\gamma(\R^n,\R)$. On the other side, 
one easily sees that
\begin{equation}\label{eq:relation1}
\imath_\beta\big(\mathcal{S}_f\big)=\mathcal{S}_{\partial^\beta f}.
\end{equation}
This implies that $\mathcal{S}_{\partial^\beta f}$ is precompact in $C_b^\gamma(\R^n,\R)$, and hence
$\partial^\beta f\in C_{\ap}^\gamma(\R^n,\R)$ by the definition of the space $C_{\ap}^\gamma(\R^n,\R)$.
This shows that (i) implies (ii).
Now assume that item (ii) holds. Then, for any multi-index $\beta$ such that $0\le|\beta|\le m$ the sets 
$\mathcal{S}_{\partial^\beta f}$ are precompact in $C_b^\gamma(\R^n,\R)$. 
Consider the image $\imath\big(\mathcal{S}_f\big)$ of the set 
$\mathcal{S}_f$ under the map \eqref{eq:i}. It follows from \eqref{eq:relation1} that
\begin{equation}\label{eq:inclusion1}
\imath\big(\mathcal{S}_f\big)\subseteq\bigtimes_{0\le|\beta|\le m}\mathcal{S}_{\partial^\beta f}.
\end{equation}
Since the set on the right hand side of \eqref{eq:inclusion1} is precompact in $\big[C_b^\gamma(\R^n,\R)\big]^{N_0}$ 
(as it is a direct product of precompact sets), we conclude from \eqref{eq:inclusion1} that $\imath\big(\mathcal{S}_f\big)$ is 
precompact in $\big[C_b^\gamma(\R^n,\R)\big]^{N_0}$.
Since \eqref{eq:i} is an isomorphism onto its closed image, we see that $\mathcal{S}_f$ is precompact in
$C^m_b(\R^n,\R)$. Hence, the function $f$ is $C^m$-almost periodic, 
$f\in C^m_{\ap}(\R^n,\R)$. This proves the equivalence of (i) and (ii).

\begin{remark}\label{rem:approximation_by_mollifiers*}
Since for any $m\ge 0$ real the space $C^m_\ap(\R^n,\R)$ is a closed subspace in $C^m_b(\R^n,\R)$ and since for any 
$f\in C^m_\ap(\R^n,\R)$ the map $c\mapsto f\big((\cdot)+c\big)$, $\R^n\to C^m_\ap(\R^n,\R)$, is continuous 
(see the proof of Proposition \ref{prop:approximation}), we conclude that for any multi-index 
$\beta\in\Z_{\ge 0}^n$ the mollifier
\[
\partial^\beta(f*\chi_\varepsilon)=f*(\partial^\beta\chi_\varepsilon)=
\int_{\R^n}f\big((\cdot)-y\big)(\partial^\beta\chi_\varepsilon)(y)\,dy
\]
belongs to $C_\ap^m(\R^n,\R)$ in view of the convergence in $C_\ap^m(\R^n,\R)$ of the Riemann sums in the integral. 
By the equivalence of (i) and (ii) we obtain that
\[
f*\chi_\varepsilon\in C^\infty_\ap(\R^n,\R^n)\equiv\bigcap_{k\ge 1, k\text{-integer}} C^k_\ap(\R^n,\R).
\] 
In particular, we see that the space $C^\infty_\ap(\R^n,\R^n)$ is dense in $C_\ap^m(\R^n,\R)$.
\end{remark}

Finally, we will prove that items (i) and (iii) are equivalent.
The implication $(iii)\Rightarrow(i)$ follows from the closedness of $C^m_\ap(\R^n,\R)$ and 
Corollary \ref{coro:trigonometric_polynomials}. Let us prove that $(i)$ implies $(iii)$.
Take $f\in C^m_\ap(\R^n,\R)$. If $m\ge 0$ is integer, then (iii) follows from \cite[Theorem 2.2]{shubin:survey1978}.
If $m\ge 0$ is not integer we argue as follows: By Remark \ref{rem:approximation_by_mollifiers*} $f$ can be approximated in 
$C^m_\ap(\R^n,\R)$ by a sequence of functions in $C^\infty_\ap(\R^n,\R)$. On the other side, since any element of 
$C^\infty_\ap(\R^n,\R)$ can be approximated in $C^{[m]+1}_\ap(\R^n,\R)$ by a sequence of trigonometric polynomials and since 
the inclusion  $C^{[m]+1}_\ap(\R^n,\R)\subseteq C^m_\ap(\R^n,\R)$ is bounded, we see that any element of 
$C^\infty_\ap(\R^n,\R)$ can be approximated in $C^m_\ap(\R^n,\R)$ by a sequence of trigonometric polynomials.
This shows that the set of trigonometric polynomials is dense in $C^m_\ap(\R^n,\R)$.
Conversely, assume that $f$ is a limit in $C^m_b(\R^n,\R)$ of a sequence of trigonometric polynomials.
Then, $f\in C^m_\ap(\R^n,\R)$ since  $C^m_\ap(\R^n,\R)$ is a closed subspace in $C^m_b(\R^n,\R)$ and
since, by Corollary \ref{coro:trigonometric_polynomials}, the set of trigonometric polynomials lies in $C^m_\ap(\R^n,\R)$.
\end{proof}

Our last result in this Section is the following

\begin{Lemma}\label{lem:division}
Assume that for $f\in C^m_{\ap}(\mathbb{R}^n,\R)$ there exists $\varepsilon>0$ so that 
$|f(x)|>\varepsilon$ for any $x\in\R^n$. Then $1/f\in C^m_{\ap}(\R^n,\R)$ and there exists an open 
neighborhood $U$ of zero in $C^m_{\ap}(\R^n,\R)$ so that for any $g\in U$ one has that $|f+g|>\varepsilon/2$ and the map
\begin{equation}\label{eq:division}
U\to C^m_{\ap}(\R^n,\R),\quad g\mapsto1/(f+g),
\end{equation}
is real analytic.
\end{Lemma}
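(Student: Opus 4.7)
The plan is to proceed in three steps: first show $1/f \in C^m_\ap(\R^n,\R)$, then identify a suitable neighborhood $U$, and finally establish real analyticity via a Neumann-type expansion in the Banach algebra $C^m_\ap(\R^n,\R)$.

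For the first step, I would invoke the characterization \eqref{eq:C^m_ap}. The translates satisfy $(1/f)_c = 1/f_c$, so $\mathcal{S}_{1/f} = \Phi\bigl(\mathcal{S}_f\bigr)$, where $\Phi : W \to C^m_b(\R^n,\R)$, $h \mapsto 1/h$, is defined on the open set $W := \bigl\{h \in C^m_b(\R^n,\R) : \inf_x |h(x)| > \varepsilon/2\bigr\}$. Every $f_c \in \mathcal{S}_f$ lies in $W$ (in fact $|f_c| > \varepsilon$ pointwise), and since convergence in $C^m_b$ implies uniform convergence, the compact closure $\overline{\mathcal{S}_f}$ in $C^m_b(\R^n,\R)$ remains inside $W$. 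Continuity (indeed real analyticity) of $\Phi$ on $W$ follows from the Banach algebra structure of $C^m_b(\R^n,\R)$ via a Neumann series argument, so $\Phi\bigl(\mathcal{S}_f\bigr)$ is precompact in $C^m_b(\R^n,\R)$, and therefore $1/f \in C^m_\ap(\R^n,\R)$.

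For the remaining two steps I would first note that the sup-norm is dominated by the $C^m$-norm, so $|g|_m < \varepsilon/2$ forces $|f(x)+g(x)| \geq |f(x)| - |g(x)| > \varepsilon/2$ uniformly in $x$. I would then formally expand
\[
\frac{1}{f+g} \;=\; \frac{1}{f}\cdot\frac{1}{1+g/f} \;=\; \sum_{k=0}^\infty (-1)^k\,\frac{g^k}{f^{k+1}},
\]
and invoke Proposition \ref{prop:banach_algebra}(ii) to fix a constant $C \geq 1$ with $|h_1 h_2|_m \leq C|h_1|_m|h_2|_m$ on $C^m_\ap(\R^n,\R)$. Iterating this bound yields $\bigl|g^k/f^{k+1}\bigr|_m \leq C^{2k}\,|g|_m^k\,|1/f|_m^{k+1}$. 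Choosing
\[
U := \bigl\{g \in C^m_\ap(\R^n,\R) \,:\, |g|_m < \min\bigl(\varepsilon/2,\;1/(2C^2|1/f|_m)\bigr)\bigr\},
\]
the series converges absolutely and uniformly on $U$ inside the Banach algebra $C^m_\ap(\R^n,\R)$, exhibiting \eqref{eq:division} as a convergent power series in $g$, hence real analytic.

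The only delicate point, which underpins both the containment $1/f \in C^m_\ap(\R^n,\R)$ and the real analyticity statement, is the continuity of the inversion $h \mapsto 1/h$ on the open set of functions bounded away from zero. This is a standard consequence of the Banach algebra structure established in Proposition \ref{prop:banach_algebra} and Theorem \ref{th:almost_periodic_functions_equivalence_m}, so no genuine obstacle arises; the proof reduces to two applications of essentially the same Neumann series.
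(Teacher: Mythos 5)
Your proposal is correct and follows essentially the same route as the paper's proof: precompactness of $\mathcal{S}_{1/f}$ via continuity of the inversion map on the open set of functions bounded away from zero, followed by the geometric (Neumann) series expansion of $1/(f+g)$ in the Banach algebra $C^m_\ap(\R^n,\R)$. The only difference is cosmetic — you make the neighborhood $U$ quantitatively explicit via the Banach algebra constant, whereas the paper defines $U$ by the conditions $|g/f|_m<1/2$ and $|f+g|>\varepsilon/2$ directly.
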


\begin{remark}
The proof of Lemma \ref{lem:division} carries over without essential changes to $c^m_b(\R^n,\R)$ and $C^m_b(\R^n,\R)$.
Note that Lemma \ref{lem:division} does {\em not} hold without the assumption that $|f(x)|>\varepsilon>0$ for any $x\in\R^n$.
\end{remark}

\begin{proof}[Proof of Lemma \ref{lem:division}]
Assume that $f\in C^m_{\ap}(\mathbb{R}^n,\R)$ and that there exists $\varepsilon>0$ so that 
$|f(x)|>\varepsilon$ for any $x\in\R^n$. We will first prove that $1/f\in C^m_{\ap}(\R^n,\R)$. 
To this end consider the open neighborhood of $f$ in $C^m_b(\R^n,\R)$, 
\[
V(f):=\big\{F\in C^m_b(\R^n,\R)\,\big|\,|F(x)|>\varepsilon/2\big\},
\]
and note the the Banach algebra property of $C^m_b(\R^n,\R)$ and the product rule easily imply that the map
\[
\mu : V(f)\to C^m_b(\R^n,\R),\quad F\mapsto 1/F,
\]
is continuous. One sees from \eqref{eq:S_f} that the set $\mathcal{S}_f$ and its (compact) closure in 
$C^m_b(\R^n,\R)$ are contained in $V(f)$. This shows that the image $\mu\big(\mathcal{S}_f\big)$ of
$\mathcal{S}_f$ under the continuous map $\mu$ above is precompact in $C^m_b(\R^n,\R)$. On the other side, since
\[
\mathcal{S}_{1/f}=\mu\big(\mathcal{S}_f\big),
\]
we conclude that $\mathcal{S}_{1/f}$ is precompact in $C^m_b(\R^n,\R)$. This proves that
$1/f\in C^m_{\ap}(\R^n,\R)$.
In order to prove the second statement of the Lemma consider the open neighborhood of zero 
in $C^m_{\ap}(\R^n,\R)$
\[
U:=\big\{g\in C^m_{\ap}(\R^n,\R)\,\big|\,|g/f|_m<1/2,\,|f+g|>\varepsilon/2\big\}
\] 
and note that
\[
\frac{1}{f+g}=(1/f)\,\frac{1}{1+(g/f)}=(1/f)\sum_{k=0}^\infty(-1)^k(g/f)^k
\]
where the series converges in $C^m_{\ap}(\R^n,\R)$ uniformly in $g\in U$.
Since by the Banach algebra property of $C^m_\ap(\R^n,\R)$ the terms in the series above are polynomial maps 
\[
C^m_{\ap}(\R^n,\R)\to C^m_{\ap}(\R^n,\R),\quad g\mapsto (-1)^k(1/f)(g/f)^k,
\] 
we conclude that the map \eqref{eq:division} is real analytic.
\end{proof}

\medskip  

Recall that
\begin{equation}\label{eq:C^infty_ap}
C^\infty_\ap(\R^n,\R):=\bigcap_{k\ge 1, k\text{-integer}} C^k_\ap(\R^n,\R).
\end{equation}
We will equip $C^\infty_\ap(\R^n,\R)$ with the Fr\'echet topology induced by the norms $|\cdot|_k$ with $k\ge 1$ integer
(see Appendix C where we collect basic facts from the calculus in Fr\'echet spaces). 
We will also need the larger Fr\'echet space 
\begin{equation}\label{eq:C^infty_b}
C^\infty_b(\R^n,\R):=\bigcap_{k\ge 1, k\text{-integer}} C^k_b(\R^n,\R).
\end{equation}

\begin{remark}\label{rem:equivalent_systems_of_norms}
Note that we will obtain the same Fr\'echet space if we replace the space $C^k_\ap(\R^n,\R)$ in formula \eqref{eq:C^infty_ap} with 
$C^{k+\gamma}_\ap(\R^n,\R)$ for some given $0<\gamma<1$ chosen independent of $k\ge 1$. 
The reason is that the system of norms $\big\{|\cdot|_{k+\gamma}\big\}_{k\ge 1}$ induces the same Fr\'echet topology
on $C^\infty_\ap(\R^n,\R)$. Similarly, the replacement of $C^k_b(\R^n,\R)$ in \eqref{eq:C^infty_b} by 
$C^{k+\gamma}_b(\R^n,\R)$ (or $c^{k+\gamma}_b(\R^n,\R)$) leads to the same Fr\'echet space.
\end{remark}

One has the following characterization of $C^\infty_\ap(\R^n,\R)$.

\begin{Lemma}\label{lem:characterization_smooth}
A function $f\in C^\infty_b(\R^n,\R)$ belongs to $f\in C^\infty_\ap(\R^n,\R)$ if and only if the set $\mathcal{S}_f$ is 
precompact in $C^\infty_b(\R^n,\R)$.
\end{Lemma}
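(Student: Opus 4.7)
The plan is to prove the two implications separately, using in both directions that the Fr\'echet topology on $C^\infty_b(\R^n,\R)$ is the initial topology induced by the continuous inclusions $C^\infty_b(\R^n,\R)\hookrightarrow C^k_b(\R^n,\R)$ for $k\ge 1$ integer, and that precompactness in a metrizable complete space is equivalent to sequential precompactness.

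For the easier direction $(\Leftarrow)$, suppose $\mathcal{S}_f$ is precompact in $C^\infty_b(\R^n,\R)$. For every integer $k\ge 1$ the inclusion $C^\infty_b(\R^n,\R)\hookrightarrow C^k_b(\R^n,\R)$ is a continuous linear map, so the image of the precompact set $\mathcal{S}_f$ is precompact in $C^k_b(\R^n,\R)$. Since this image is literally the same set of functions $\mathcal{S}_f$, we conclude that $f\in C^k_\ap(\R^n,\R)$ for every integer $k\ge 1$, and therefore $f\in C^\infty_\ap(\R^n,\R)$ by the definition \eqref{eq:C^infty_ap}.

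For the forward direction $(\Rightarrow)$, assume $f\in C^\infty_\ap(\R^n,\R)$, so that for each integer $k\ge 1$ the set $\mathcal{S}_f$ is precompact in $C^k_b(\R^n,\R)$. To show precompactness of $\mathcal{S}_f$ in the Fr\'echet space $C^\infty_b(\R^n,\R)$, since this space is metrizable and complete, it suffices to show that every sequence $(f_{c_j})_{j\ge 1}\subset\mathcal{S}_f$ has a subsequence that converges in $C^k_b(\R^n,\R)$ for every $k\ge 1$. I would use a standard diagonal argument: using precompactness in $C^1_b(\R^n,\R)$, extract a subsequence $(c_j^{(1)})$ such that $(f_{c_j^{(1)}})$ converges in $C^1_b(\R^n,\R)$; then from $(c_j^{(1)})$ extract a further subsequence $(c_j^{(2)})$ such that $(f_{c_j^{(2)}})$ converges in $C^2_b(\R^n,\R)$; continue inductively, and consider the diagonal sequence $(c_j^{(j)})$. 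For every fixed $k\ge 1$ the tail $(c_j^{(j)})_{j\ge k}$ is a subsequence of $(c_j^{(k)})$, so $(f_{c_j^{(j)}})$ converges in $C^k_b(\R^n,\R)$; by the continuous embedding $C^k_b\hookrightarrow C^{k-1}_b$ the limits in different $C^k_b$ spaces agree as functions, so the diagonal sequence converges in $C^\infty_b(\R^n,\R)$.

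The main (and essentially only) point requiring care is the diagonal extraction in the forward direction, together with the observation that the limits in successive $C^k_b$ spaces coincide, which guarantees that the diagonal subsequence converges with respect to every seminorm simultaneously and hence in the Fr\'echet topology on $C^\infty_b(\R^n,\R)$.
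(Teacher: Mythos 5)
Your proof is correct and follows exactly the route the paper intends: the paper's own proof is just the one-line remark that the Lemma ``follows directly from the definition \eqref{eq:C^m_ap} and a standard diagonal argument,'' and your write-up fills in precisely that argument (continuity of the inclusions $C^\infty_b\hookrightarrow C^k_b$ for the easy direction, and the diagonal extraction plus metrizability of the Fr\'echet topology for the converse).
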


\begin{proof}[Proof of Lemma \ref{lem:characterization_smooth}.]
The proof of this Lemma follows directly from the definition \eqref{eq:C^m_ap} and a standard diagonal argument.
\end{proof}

%%%%%%%%%%%%%%%%%%%%%%%%%%%%%%%%%%%%%%%%
\section{Groups of almost periodic diffeomorphisms}\label{sec:groups_almost_periodic}
For $m\ge 1$ real, denote by $C^m_{\ap}(\mathbb{R}^n,\mathbb{R}^n)$ the space of $C^m$-almost periodic vector fields on 
$\mathbb{R}^n$ whose components lie in the space $C^m_{\ap}(\R^n,\R)$. 
In what follows we often write $C^m_{\ap}$ regardless of whether we consider spaces of functions or tensor fields on $\R^n$. 
Consider the following set of maps
\begin{equation}\label{eq:Diff^m_ap}
{\Diff}^m_{\ap}(\mathbb{R}^n):=\Big\{\varphi(x)=x+f(x)\,\Big|\,
f\in C^m_\ap\,\,\text{s.t.}\,\det\big(I+[d_xf]\big)>\varepsilon\,\,\text{for some}\,\,\varepsilon>0\Big\}
\end{equation}
where $I$ is the $n\times n$ identity matrix, $[d_xf]$ is the Jacobian matrix of $f$ at $x\in\mathbb{R}^n$, and
the inequality in \eqref{eq:Diff^m_ap} is satisfied uniformly in $x\in\R^n$.
In particular, by Hadamard-Levy's theorem (see e.g. \cite[Supplement 2.5D]{abraham2012manifolds}), the set 
$\Diff^m_{\ap}(\R^n)$ consists of orientation preserving diffeomorphisms.
In view of \eqref{eq:Diff^m_ap}, Lemma \ref{lem:division}, and the Banach algebra property of $C^m_\ap(\R^n,\R)$, 
for a given $\varepsilon>0$ the inequality $\det\big(I+[d_xf]\big)>\varepsilon$ $\forall x\in\R^n$ is an open condition on 
$C^m_\ap(\R^n,\R^n)$. This implies that $\Diff^m_{\ap}(\R^n)$ can be identified with an open set in
$C^m_{\ap}(\R^n,\R^n)$ via the map
\[
{\Diff}^m_{\ap}(\mathbb{R}^n)\to C^m_{\ap}(\mathbb{R}^n,\mathbb{R}^n),\quad
\varphi\mapsto f:=\varphi-\id,
\]
which is bijective onto its image. In this way, the set $\Diff^m_{\ap}(\R^n)$ is a $C^\infty$-smooth Banach manifold 
modeled on $C^m_{\ap}(\R^n,\R^n)$. We will also consider the larger set of maps
\begin{equation}\label{eq:Diff^m}
\diff^m_b(\mathbb{R}^n)\coloneqq\Big\{\varphi(x)=x+f(x)\,\Big|\, f\in c^m_b\,\,\text{s.t.}\,
\det\big(I+[d_xf]\big)>\varepsilon\,\,\text{for some}\,\,\varepsilon>0\Big\}
\end{equation}
where the inequality in \eqref{eq:Diff^m} is satisfied uniformly in $x\in\R^n$.
By the same reasoning as above, $\diff^m_b(\mathbb{R}^n)$ consists of orientation preserving diffeomorphisms of 
$\mathbb{R}^n$ and can be identified with an open set in $c^m_b$ via the map $\diff^m_b(\R^n)\to c^m_b$, 
$\varphi\mapsto f:=\varphi-\id$. Hence, $\diff^m_b(\mathbb{R}^n)$ is a $C^{\infty}$-smooth Banach manifold modeled on 
$c^m_b$. We will need the following generalization of \cite[Lemma 2.2]{MisYon}.

\begin{theorem}\label{th:diff^m-topological_group}
For any real $m\ge 1$ the set $\diff^m_b(\mathbb{R}^n)$ is a topological group with respect to the composition of maps.
Moreover, for any integer $r\ge 0$ the maps
\[
\circ: \diff^{m+r}_b(\mathbb{R}^n)\times\diff^m_b(\mathbb{R}^n)\to\diff^m_b(\mathbb{R}^n),\quad
(\varphi, \psi)\mapsto\varphi\circ\psi,
\]
and
\[
\imath: \diff^{m+r}_b(\mathbb{R}^n)\to\diff^m_b(\mathbb{R}^n),\quad
\varphi\mapsto\varphi^{-1},
\]
are $C^r$-smooth.
\end{theorem}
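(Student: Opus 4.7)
The plan is to work in the natural global chart identifying $\diff^m_b(\R^n)$ with the open set
\[
U^m := \big\{f \in c^m_b(\R^n,\R^n) \,\big|\, \det(I + [d_xf]) > \varepsilon \text{ for some } \varepsilon > 0\big\}
\]
of $c^m_b(\R^n,\R^n)$ via $\varphi \mapsto f := \varphi - \id$. With $\varphi = \id + f$ and $\psi = \id + g$ one has $\varphi \circ \psi = \id + g + f \circ (\id + g)$, so both the composition and (via an implicit function theorem argument) the inversion are governed by the single nonlinear operator
\[
T : c^{m+r}_b \times U^m \to c^m_b, \qquad T(f,g) := f \circ (\id + g),
\]
to be analyzed first for $r=0$ (continuity) and then for $r \ge 1$ ($C^r$-smoothness).

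First I would establish the quantitative estimate $|T(f,g)|_m \le \Lambda(|g|_m)\,|f|_m$ locally in $g$: for smooth $f \in C^\infty_b$ this follows from the Fa\`a di Bruno formula, which expresses $\partial^\beta T(f,g)$ as a polynomial in the derivatives of $g$ with coefficients of the form $(\partial^\alpha f)\circ(\id+g)$, $|\alpha| \le |\beta|$, combined with the Banach algebra property of $C^m_b$ and a direct bound $|F\circ(\id+g)|_\gamma \le C(|g|_1)\,|F|_\gamma$ on the H\"older part $\gamma = m-[m]$. Density of $C^\infty_b$ in $c^m_b$ --- built into the definition of the little H\"older space --- extends the estimate to arbitrary $f \in c^m_b$. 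That same density is the key to joint continuity at a point $(f_0, g_0)$: writing
\[
T(f,g) - T(f_0,g_0) = (f - f_0)\circ(\id+g) + \big(T(f_0,g) - T(f_0,g_0)\big),
\]
one controls the first summand by the preceding bound (uniformly in $g$ in a bounded $c^m_b$-neighborhood of $g_0$), and in the second summand approximates $f_0$ by $f_k \in C^\infty_b$ converging to $f_0$ in $c^m_b$, using the fundamental theorem of calculus
\[
f_k \circ (\id+g) - f_k \circ (\id+g_0) = \Big(\int_0^1 (df_k)\circ(\id+g_0+s(g-g_0))\,ds\Big) \cdot (g-g_0)
\]
together with the Banach-algebra property to obtain continuity at $g_0$ for each $k$; a $k \to \infty$ limit using the uniform bound then transfers continuity in $g$ back to $f_0$.

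With joint continuity ($r=0$) in place, the $C^r$-smoothness of $T$ is proved by induction on $r$. The $f$-derivative $\partial_f T(f,g)\delta f = \delta f \circ (\id+g)$ is linear in $\delta f$ and continuous by the $r=0$ case, while $\partial_g T(f,g)\delta g = \big((df)\circ(\id+g)\big)\cdot \delta g$ is a Banach-algebra product of an element of $c^{m+r-1}_b$ with one of $c^m_b$ and hence lies in $c^m_b$; the induction hypothesis applied to $(f,g) \mapsto (df)\circ(\id+g)$, now viewed as a map $c^{m+r}_b \times U^m \to c^{m+r-1}_b$, yields $C^{r-1}$-smoothness of the derivatives and therefore $C^r$-smoothness of $T$. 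Translating back from the chart proves the composition statement. For inversion I would apply the Banach implicit function theorem to $\Phi(\varphi,h) := h + (\varphi-\id)\circ(\id+h)$, which by what has just been shown is $C^r$-smooth in $(\varphi,h)$ and satisfies $\Phi(\id,0)=0$, $\partial_h\Phi(\id,0)=\id$; the local solution $h(\varphi)$ produced by the IFT is precisely $\varphi^{-1}-\id$ near $\id$, and smoothness at an arbitrary $\varphi_0 \in \diff^{m+r}_b$ follows by right-translating through $\varphi_0^{-1}$.

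The main obstacle is the $r=0$ continuity of $T$: the Fa\`a di Bruno expansion produces terms of the form $(\partial^\alpha f_0)\circ(\id+g)$ with $|\alpha|=[m]$, carrying only H\"older regularity, so continuity in $g$ cannot be read off $f_0$ directly and must be traded for approximation of $f_0$ by smooth maps in the $c^m_b$-norm. This is precisely why the theorem is set in the little H\"older framework $c^m_b$ rather than in $C^m_b$, and it mirrors the role that Proposition \ref{prop:approximation} will play in the almost-periodic counterpart.
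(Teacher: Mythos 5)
Your treatment of the composition map is sound and essentially the paper's own: the uniform local bound $|f\circ(\id+g)|_m\le C|f|_m$, the fundamental-theorem-of-calculus estimate for $f$ one degree smoother, and the three-term density argument in the little H\"older space are exactly Lemmas \ref{lem:almost_lipschitz} and \ref{lem:almost_bilinear} of the paper, and your induction on $r$ for the higher derivatives is a legitimate alternative to the paper's route via the Taylor expansion with integral remainder and the converse to Taylor's theorem. The gap is in the inversion at $r=0$. The implicit function theorem applied to $\Phi(\varphi,h)=h+(\varphi-\id)\circ(\id+h)$ requires $\Phi$ to be at least $C^1$ in $h$, and
\[
\partial_h\Phi(\varphi,h)\,\delta h=\delta h+\big(\,[d(\varphi-\id)]\circ(\id+h)\,\big)\cdot\delta h
\]
maps $c^m_b$ into $c^m_b$ only when $\varphi\in\diff^{m+1}_b(\R^n)$; for $\varphi\in\diff^m_b(\R^n)$ the factor $[d(\varphi-\id)]$ lies merely in $c^{m-1}_b$ and the product drops a derivative. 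So your IFT argument yields the $C^r$-smoothness of $\imath:\diff^{m+r}_b(\R^n)\to\diff^m_b(\R^n)$ only for $r\ge1$ and says nothing about the case $r=0$, which is precisely the topological-group assertion: that $\varphi^{-1}-\id$ belongs to $c^m_b$ at all, and that $\varphi\mapsto\varphi^{-1}$ is continuous from $\diff^m_b(\R^n)$ to itself. Nor can continuity of inversion be deduced from continuity of composition by Montgomery's theorem, since, as the paper points out, $\diff^m_b(\R^n)$ is not separable.

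To close the gap you need the right-composition analogues of your estimates: from the identity $\varphi^{-1}=\id-f\circ\varphi^{-1}$ (where $\varphi=\id+f$) one reduces membership of $\varphi^{-1}$ in $\diff^m_b(\R^n)$ to a bound of the form $|g\circ\varphi^{-1}|_m\le C|g|_m$, uniform for $\varphi$ in a neighborhood of a given $\varphi_0$ (the paper's Lemma \ref{lem:almost_lipschitz_inverse}, whose H\"older-seminorm part rests on a uniform bound for $|d(\varphi^{-1})|_\infty$ obtained from the adjugate formula and the lower bound on $\det(I+[d_xf])$, as in Lemma \ref{lem:almost_lipschitz_inverse'}). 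Continuity of $\imath$ at level $m$ then follows from the same three-term density trick you used for composition, applied to
\[
\big|\varphi_0^{-1}-\varphi^{-1}\big|_m=\big|\big(\varphi_0^{-1}\circ\varphi-\id\big)\circ\varphi^{-1}\big|_m
\le C\,\big|\varphi_0^{-1}\circ\varphi-\id\big|_m
\]
with a smooth $\widetilde\psi$ inserted as an approximation of $\varphi_0^{-1}$ in the $c^m_b$-norm. Once this $r=0$ statement is in place, your IFT argument, together with the left-translation step (legitimate because $\varphi_0^{-1}\in\diff^{m+r}_b(\R^n)$ by the $r=0$ group property at level $m+r$, and left composition by such a diffeomorphism is $C^r$ on $\diff^m_b(\R^n)$ by the composition theorem), does give the cases $r\ge1$.
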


For the proof of this Theorem we refer to Appendix B.

\begin{remark}\label{rem:NB}
Note that if we replace for $m\ge 1$ and $m\notin\Z$ the little H{\"o}lder space $c^m_b$ in definition \eqref{eq:Diff^m} 
by the H{\"o}lder space $C^m_b$, then the composition in Theorem \ref{th:diff^m-topological_group} will {\em not} be 
continuous. In order to see this, take a function $f$ that is $C^2$-smooth everywhere except at $x=0$ where its derivative is 
equal to $\sqrt{|x|}$ in some open neighborhood of zero. Then, one sees from Lemma \ref{lem:charachterization_little_holder} and
Lemma \ref{lem:characterization_little_holder*} in Appendix B that $f\in C^{3/2}_b(\R,\R)$ but $f\notin c^{3/2}_b(\R,\R)$. 
A direct computation involving the definition of the norm in $C^{3/2}_b$ then shows that $|f\circ\tau_c-f|_{3/2}\ge 1$ for any 
$c\notin\Z$ where $\tau_c : x\mapsto x+c$ is translation.
\end{remark}

Since by Theorem \ref{th:almost_periodic_functions_equivalence_m} the space $C^m_{\ap}$ is a closed subspace in 
$c^m_b$, the group $\Diff^m_{\ap}(\R^n)$ is a submanifold in $\diff^m_b(\R^n)$.
First, we prove the following

\begin{Lemma}\label{lem:criteria}
Assume that $\varphi=\id+f\in\diff^m_b(\mathbb{R}^n)$ with $m\ge 1$. 
Then $\varphi\in\Diff^m_{\ap}(\mathbb{R}^n)$ if and only if the set
\[
\mathcal{S}_{\varphi}:=\big\{\varphi_c\in\diff^m_b(\R^n)\,\big|\,\varphi_c(x)\coloneqq x+f(x+c)\big\}_{c\in\mathbb{R}^n}
\]
is precompact in $\diff^m_b(\mathbb{R}^n)$. In this case, the set $\mathcal{S}_{\varphi}$ and its closure in $\diff^m_b(\R^n)$
are contained in $\Diff^m_{\ap}(\R^n)$.
\end{Lemma}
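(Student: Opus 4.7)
The plan is to reduce the statement to a claim about the set $\mathcal{S}_f \subseteq C^m_b(\R^n,\R^n)$, exploiting the chart $\varphi\mapsto f:=\varphi-\id$ that identifies $\diff^m_b(\R^n)$ with an open subset of $c^m_b$, and (by Theorem \ref{th:almost_periodic_functions_equivalence_m}, which gives $C^m_\ap\subseteq c^m_b$) identifies $\Diff^m_\ap(\R^n)$ with the intersection of this open set with $C^m_\ap$. Under this chart $\mathcal{S}_\varphi$ corresponds to $\id+\mathcal{S}_f$, and the Jacobian inequality defining $\diff^m_b$ is translation invariant, so each $\varphi_c$ satisfies $\det\bigl(I+[df_c(x)]\bigr)=\det\bigl(I+[df](x+c)\bigr)>\varepsilon$ with the \emph{same} $\varepsilon>0$ inherited from $\varphi$.

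For the forward direction, assuming $\varphi\in\Diff^m_\ap(\R^n)$, i.e.\ $f\in C^m_\ap$, the set $\mathcal{S}_f$ is precompact in $C^m_b$ by definition \eqref{eq:C^m_ap}. Since $C^m_\ap$ is closed in $C^m_b$ (Theorem \ref{th:almost_periodic_functions_equivalence_m}), the closure $\overline{\mathcal{S}_f}$ in $C^m_b$ lies in $C^m_\ap$. I will then check that $\overline{\mathcal{S}_f}$ also sits inside the open condition defining $\diff^m_b$: since $m\ge 1$, convergence in $C^m_b$ implies uniform convergence of $df_c$ in $C_b$, hence of $\det\bigl(I+[df_c]\bigr)$, so any limit point $f_*$ of a sequence in $\mathcal{S}_f$ satisfies $\det\bigl(I+[df_*]\bigr)\ge\varepsilon$ uniformly. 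This shows simultaneously that $\mathcal{S}_\varphi$ is precompact in $\diff^m_b$ and that its closure in $\diff^m_b$ lies in $\id+C^m_\ap=\Diff^m_\ap(\R^n)$, giving the final ``In this case'' assertion for free.

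For the reverse direction, if $\mathcal{S}_\varphi$ is precompact in $\diff^m_b(\R^n)$, then since $\diff^m_b$ carries the subspace topology inherited from the Banach space $c^m_b$, the set $\mathcal{S}_f$ is precompact in $c^m_b$. Because $c^m_b$ is a closed subspace of $C^m_b$ with the induced $C^m$-norm topology, $\mathcal{S}_f$ is precompact in $C^m_b$ as well. By the definition \eqref{eq:C^m_ap} of $C^m_\ap$, this means (componentwise) $f\in C^m_\ap(\R^n,\R^n)$, whence $\varphi\in\Diff^m_\ap(\R^n)$.

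The only point requiring care is the observation that the open condition $\det\bigl(I+[df]\bigr)>\varepsilon'>0$ is preserved under $C^m_b$-limits of translates of $f$, so that the closure of $\mathcal{S}_\varphi$ does not leak out of the Banach manifold $\diff^m_b$; this is ultimately a consequence of the continuity of $\partial^\beta$ ($|\beta|\le 1$) on $C^m_b$ and of the determinant on matrix-valued bounded continuous functions. Everything else is a direct unwinding of the definitions together with an application of Theorem \ref{th:almost_periodic_functions_equivalence_m} to identify $C^m_\ap$ as a closed subspace of $c^m_b$.
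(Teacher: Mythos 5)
Your proposal is correct and follows essentially the same route as the paper: both arguments rest on the translation invariance of the uniform lower bound $\det\bigl(I+[d_xf_c]\bigr)=\det\bigl(I+[d_{x+c}f]\bigr)>\varepsilon$, its persistence under $C^m_b$-limits (so that $\overline{\mathcal{S}_\varphi}=\id+\overline{\mathcal{S}_f}$ stays inside $\diff^m_b(\R^n)$), and the closedness of $C^m_\ap$ in $C^m_b$ to handle the final inclusion. The paper merely packages the two directions into the single statement $\overline{\mathcal{S}_\varphi}\subseteq\diff^m_b(\R^n)$ before reading off the equivalence, whereas you unwind the forward and reverse implications separately; the content is the same.
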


\begin{remark}\label{rem:c}
It is easy to see that $\varphi\in\diff^m_b(\R^n)$ implies that $\varphi_c\in\diff^m_b(\R^n)$ for any $c\in\R^n$.
\end{remark}

\begin{remark}
Note that the notation $(\cdot)_c$ has different meanings depending on whether we apply it to functions in $C^m_b$ or to
diffeomorphisms. This shall not lead to confusion since the type of the objects will always be clear from the context.
\end{remark}

\begin{proof}[Proof of Lemma \ref{lem:criteria}]
Take $\varphi\in\diff^m_b(\R^n)$. Then $\varphi(x)=x+f(x)$ where $f\in c^m_b$ and there exists
$\varepsilon>0$ so that $\det\big(I+[d_xf]\big)>\varepsilon$ for any $x\in\R^n$.
Clearly, $\mathcal{S}_f\subseteq c^m_b$.
Since for any $c\in\R^n$, $[d_xf_c]=[d_{x+c}f]$ where $f_c(x)=f(x+c)$, we conclude that
\[
\det\big(I+[d_xf_c]\big)>\varepsilon\quad\forall x, c\in\R^n.
\]
This implies that for any $g$ in the closure $\overline{\mathcal{S}_f}$ of the set 
$\mathcal{S}_f$ in $C^m_b(\R^n)$,
\[
\det\big(I+[d_xg]\big)>\varepsilon/2\quad\forall x\in\R^n.
\]
Since $c^m_b$ is a closed subspace in $C^m_b$ we conclude that $\id+\overline{\mathcal{S}_f}\subseteq\diff^m_b(\R^n)$.
By the definition of the Banach manifold structure on $\diff^m_b(\R^n)$,
\begin{equation}\label{eq:shifted_closure}
\overline{\mathcal{S}_\varphi}=\id+\overline{\mathcal{S}_f}\subseteq\diff^m_b(\R^n),
\end{equation}
where $\overline{\mathcal{S}_\varphi}$ is the closure of the set $\mathcal{S}_\varphi$ in $\diff^m_b(\R^n)$.
The first statement of Lemma \ref{lem:criteria} then immediately follows from \eqref{eq:shifted_closure}.
In order to prove the last statement of the Lemma note that the definition of $\mathcal{S}_f$ and 
the fact that $C^m_{\ap}$ is a closed subset in $C^m_b$ imply that $f\in C^m_{\ap}$ if and only if 
$\overline{\mathcal{S}_f}\subseteq C^m_{\ap}(\R^n)$. Combining this with \eqref{eq:shifted_closure} we conclude that 
$\overline{\mathcal{S}_\varphi}\subseteq\Diff^m_{\ap}(\R^n)$.
\end{proof}

The main result of this section is
\begin{theorem}\label{th:group_regularity}
For any real $m\ge 1$ the set ${\Diff}^m_{\ap}(\mathbb{R}^n)$ is a topological subgroup of 
$\diff^m_b(\mathbb{R}^n)$. Moreover, for any integer $r\ge 0$ the maps
\[
\circ: {\Diff}^{m+r}_{\ap}(\mathbb{R}^n)\times{\Diff}^m_{\ap}(\mathbb{R}^n)\to{\Diff}^m_{\ap}(\mathbb{R}^n),\quad
(\varphi, \psi)\mapsto\varphi\circ\psi,
\]
and
\[
\imath: {\Diff}^{m+r}_{\ap}(\mathbb{R}^n)\to{\Diff}^m_{\ap}(\mathbb{R}^n),\quad
\varphi\mapsto\varphi^{-1},
\]
are $C^r$-smooth.
\end{theorem}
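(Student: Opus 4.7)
The plan is to leverage Theorem \ref{th:diff^m-topological_group}, which already establishes the analogous statement on the larger group $\diff^m_b(\R^n)$. Since, by Theorem \ref{th:almost_periodic_functions_equivalence_m}, $C^m_\ap$ is a closed subspace of $c^m_b$ carrying the inherited norm, $\Diff^m_\ap(\R^n)$ is an open subset of the affine subspace $\id+C^m_\ap$ of $\id+c^m_b$, and hence a $C^\infty$-smooth submanifold of $\diff^m_b(\R^n)$. It therefore suffices to prove (i) that composition and inversion preserve the almost periodic class, and (ii) that the restrictions of the corresponding maps retain the $C^r$-smoothness of Theorem \ref{th:diff^m-topological_group}.

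For (i), the key computation is to verify the shift identities
\[
(\varphi\circ\psi)_c=\varphi_c\circ\psi_c,\qquad (\varphi^{-1})_c=(\varphi_c)^{-1},\qquad c\in\R^n,
\]
for the shift action of Lemma \ref{lem:criteria}. Writing $\varphi=\id+f$ and $\psi=\id+g$, both sides of the first identity equal $x\mapsto x+g(x+c)+f(x+c+g(x+c))$; for the second, the substitution $z=y+c$ in $\varphi_c(y)=x$ reduces it to $\varphi(z)=x+c$, whence $(\varphi_c)^{-1}(x)=x+g(x+c)=(\varphi^{-1})_c(x)$, where $\varphi^{-1}=\id+g$. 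Granted these identities, if $\varphi\in\Diff^{m+r}_\ap$ and $\psi\in\Diff^m_\ap$, Lemma \ref{lem:criteria} gives that $\mathcal{S}_\varphi$ and $\mathcal{S}_\psi$ are precompact in $\diff^{m+r}_b$ and $\diff^m_b$ respectively, so the product $\mathcal{S}_\varphi\times\mathcal{S}_\psi$ is precompact in the product manifold. Applying the continuous composition map of Theorem \ref{th:diff^m-topological_group} (case $r=0$) then shows that $\mathcal{S}_{\varphi\circ\psi}=\{\varphi_c\circ\psi_c\}_{c\in\R^n}$ is precompact in $\diff^m_b$, and Lemma \ref{lem:criteria} gives $\varphi\circ\psi\in\Diff^m_\ap$. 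An entirely analogous argument, applied to the continuous inversion map $\imath$, delivers $\varphi^{-1}\in\Diff^m_\ap$.

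For (ii), the inclusion $C^m_\ap\hookrightarrow c^m_b$ is a topological embedding of Banach spaces (they share the same norm). Consequently, any $C^r$-smooth map between open subsets of $c^m_b$ whose image lies in the affine subspace $\id+C^m_\ap$ is automatically $C^r$-smooth as a map into $\Diff^m_\ap$ in its natural chart: at any base point the difference quotients lie in the closed subspace $C^m_\ap$, so by continuity all iterated Fr\'echet derivatives take values in $C^m_\ap$ as well, and the chart-free norm is the restriction of the ambient one. Combining this general principle with step (i) and Theorem \ref{th:diff^m-topological_group} yields the $C^r$-smoothness of $\circ$ and $\imath$ on the almost periodic groups. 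The case $r=0$, in particular, promotes $\Diff^m_\ap(\R^n)$ from a mere subgroup to a \emph{topological} subgroup of $\diff^m_b(\R^n)$.

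The main obstacle is the pair of shift identities of step (i); once they are correctly set up, paying attention to the somewhat non-obvious definition of $\varphi_c$ in Lemma \ref{lem:criteria}, the rest of the argument is a transparent transport of structure from the ambient group. In particular, no new analytic estimates beyond those already proved for $\diff^m_b(\R^n)$ are required, which is why the present theorem is essentially a corollary of Theorem \ref{th:diff^m-topological_group} combined with the characterization in Lemma \ref{lem:criteria}.
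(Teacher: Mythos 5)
Your proposal is correct and follows essentially the same route as the paper: the shift identities $(\varphi\circ\psi)_c=\varphi_c\circ\psi_c$ and $(\varphi^{-1})_c=(\varphi_c)^{-1}$, precompactness of $\mathcal{S}_\varphi\times\mathcal{S}_\psi$ pushed forward through the continuous composition and inversion maps of Theorem \ref{th:diff^m-topological_group}, Lemma \ref{lem:criteria} to conclude closure under the group operations, and inheritance of the $C^r$-regularity from the ambient group via the closed-subspace structure. The only cosmetic difference is that you verify the inversion shift identity by direct substitution, whereas the paper deduces it from the composition identity applied to $\varphi\circ\varphi^{-1}=\id$.
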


\begin{proof}[Proof of Theorem \ref{th:group_regularity}]
We will first prove that $\Diff^m_{\ap}(\mathbb{R}^n)$ is closed under composition and inversion of diffeomorphisms.
Take $\varphi,\psi\in\Diff^m_{\ap}(\R^n)$ and assume that  $\varphi(x)=x+f(x)$ and $\psi(x)=x+g(x)$ for some
$f,g\in C^m_{\ap}$. Then, for any $c, x\in\R^n$ we have
\begin{equation}\label{eq:shift1}
\begin{split}
\big(\varphi_c\circ\psi_c\big)(x)&=x+g(x+c)+f\bigl(x+c+g(x+c)\bigr)\\
&=\Bigl(x+g(x)+f\bigl(x+g(x)\bigr)\Bigr)_c\\
&=\big(\varphi\circ\psi\big)_c(x).
\end{split}
\end{equation}
This shows that 
\begin{equation}\label{eq:shift2}
\mathcal{S}_{\varphi\circ\psi}=\big\{(\varphi\circ\psi)_c\big\}_{c\in\R^n}=
\big\{\varphi_c\circ\psi_c\big\}_{c\in\R^n}\subseteq
\mathcal{S}_\varphi\circ\mathcal{S}_\psi.
\end{equation}
On the other side, 
\begin{equation}\label{eq:shift3}
\mathcal{S}_\varphi\circ\mathcal{S}_\psi=\circ\big(\mathcal{S}_\varphi\times\mathcal{S}_\psi\big)\subseteq\diff^m_b(\R^n)
\end{equation}
where $\circ\big(\mathcal{S}_\varphi\times\mathcal{S}_\psi\big)$ denotes the image of the set 
$\mathcal{S}_\varphi\times\mathcal{S}_\psi$ under the composition map
\begin{equation}\label{eq:composition2}
\circ: \diff^m_b(\mathbb{R}^n)\times\diff^m_b(\mathbb{R}^n)\to\diff^m_b(\mathbb{R}^n),\quad
(\varphi, \psi)\mapsto\varphi\circ\psi.
\end{equation}
Since $\varphi,\psi\in\Diff^m_{\ap}(\R^n)$, we see from Lemma \ref{lem:criteria} that
$\mathcal{S}_\varphi$ and $\mathcal{S}_\psi$ are precompact sets in $\diff^m_b(\R^n)$.
Hence, the direct product $\mathcal{S}_\varphi\times\mathcal{S}_\psi$ is a precompact set in 
$\diff^m_b(\mathbb{R}^n)\times\diff^m_b(\mathbb{R}^n)$. 
This, together with the continuity of the composition map \eqref{eq:composition2} and relation \eqref{eq:shift3}, then
implies that $\mathcal{S}_\varphi\circ\mathcal{S}_\psi$ is precompact in $\diff^m_b(\R^n)$.
Equation \eqref{eq:shift2} then shows that $\mathcal{S}_{\varphi\circ\psi}$ is a precompact set in $\diff^m_b(\R^n)$. 
By Lemma \ref{lem:criteria},  we then conclude that
\[
\varphi\circ\psi\in\Diff^m_{\ap}(\R^n).
\]
Now, take $\varphi\in\Diff^m_{\ap}(\R^n)$ and assume that $\varphi(x)=x+f(x)$ where $f\in C^m_\ap$.
It follows from Theorem \ref{th:diff^m-topological_group} that $\varphi^{-1}\in\diff^m_b(\R^n)$ and hence
$\psi:=\varphi^{-1}(x)=x+g(x)$ for some $g\in c^m_b$. Since $\varphi\circ\psi=\id$, we see from \eqref{eq:shift1} 
that $\psi_c\circ\varphi_c=\id$ and $\psi_c,\varphi_c\in\diff^m_b(\R^n)$ (Remark \ref{rem:c}) for any $c\in\R^n$. 
In particular, $\psi_c=(\varphi_c)^{-1}$ and hence
\begin{equation*}
\imath(\mathcal{S}_{\varphi})=\mathcal{S}_{\varphi^{-1}},
\end{equation*} 
where $\imath: \diff^m_b(\mathbb{R}^n)\to\diff^m_b(\mathbb{R}^n)$ is the inversion map.
Since $\varphi\in\Diff^m_{\ap}(\R^n)$, we conclude from Lemma \ref{lem:criteria} that the set 
$\mathcal{S}_{\varphi}$ is precompact in $\diff^m_b(\mathbb{R}^n)$. 
The continuity of the inversion map $\imath: \diff^m_b(\R^n)\to\diff^m_b(\R^n)$ from 
Theorem \ref{th:diff^m-topological_group} then implies that
$\mathcal{S}_{\varphi^{-1}}=\imath(\mathcal{S}_{\varphi})$ is precompact in $\diff^m_b(\mathbb{R}^n)$. 
Then, by Lemma \ref{lem:criteria},
\[
\varphi^{-1}\in\Diff^m_{\ap}(\mathbb{R}^n).
\]
In this way, we have proved that $\Diff^m_{\ap}(\R^n)$ is a group with respect to the composition of diffeomorphisms.
The continuity and the $C^r$-regularity of the composition and the inversion map on $\Diff^m_\ap(\R^n)$ 
then follow from Theorem \ref{th:diff^m-topological_group}. 
\end{proof}

\begin{remark}\label{rem:composition}
As a consequence, one sees that for any real $m\ge 1$ and for any integer $r\ge 0$ the map
\[
\circ : C^{m+r}_{\ap}(\R^n,\R)\times\Diff^m_{\ap}(\R^n)\to C^m_{\ap}(\R^n,\R),\quad
(f,\varphi)\mapsto f\circ\varphi,
\]
is well-defined and $C^r$-smooth. The same result also holds for the group $\diff^m_b(\R^n)$
and the space $c^m_b(\R^n,\R)$.
\end{remark}

\medskip

Finally, consider the topological group
\begin{equation}\label{eq:Diff^infty_ap}
\Diff^\infty_{\ap}(\R^n):=\bigcap_{k\ge 1, k\text{-integer}}\Diff^k_\ap(\R^n)
\end{equation}
supplied with the Fr\'echet topology coming from $C^\infty_\ap(\R^n,\R)$ as well as the larger Fr\'echet group
\begin{equation}\label{eq:Diff^infty_b}
\Diff^\infty_b(\R^n):=\bigcap_{k\ge 1, k\text{-integer}}\diff^k_b(\R^n).
\end{equation}
It follows from Theorem \ref{th:group_regularity} and Theorem \ref{th:diff^m-topological_group} that the composition and 
the inversion map in these groups are $C^\infty_F$-smooth. 
In this sense, the groups $\Diff^\infty_{\ap}(\R^n)$ and $\Diff^\infty_b(\R^n)$ are {\em Fr\'echet-Lie groups}.

\medskip

\noindent{\em The Lie group exponential map on $\Diff^\infty_\ap(\R^n)$:}
Moreover, it follows from Theorem \ref{th:ode} in Appendix A that for any $u\in C\big([0,T],C^\infty_{\ap}\big)$ there exists 
a unique solution $\varphi\in C^1_F\big([0,T],\Diff^\infty_\ap(\mathbb{R}^n)\big)$ of 
the equation
\begin{equation}\label{eq:ode_frechet}
\left\{
\begin{array}{l}
\dot{\varphi}=u\circ\varphi,\\
\varphi|_{t=0}=\id.
\end{array}
\right. 
\end{equation}
By taking $u\in C^\infty_\ap$ independent of $t$ we get a global solution 
\[
\varphi\in C^1_F\big([0,\infty),\Diff^\infty_\ap(\mathbb{R}^n)\big)
\]
which, in view of Remark \ref{rem:ode_dependence_on_u} in Appendix A, depends $C^1$-smoothly on the
choice of the vector field $u\in C^\infty_\ap$ in the sense that the map
\[
[0,T]\times C^\infty_\ap\to\Diff^\infty_\ap(\R^n),\quad(t,u)\mapsto\varphi(t;u),
\]
is $C^1_F$-smooth. This allows us to define the {\em Lie group exponential map},
\begin{equation}\label{eq:exp_lg}
\Exp_{\rm LG} : C^\infty_\ap\to\Diff^\infty_\ap(\R^n),\quad u\mapsto\varphi(1,u),
\end{equation}
which is $C^1_F$-smooth. One easily sees that for any $t\ge 0$ and for any $u\in C^\infty_\ap$,
\[
\Exp_{\rm LG}(t u)=\varphi(1,t u)=\varphi(t,u)
\]
which implies that
\begin{equation*}%\label{eq:exp_lg_non-degeneracy}
d_0\Exp_{\rm LG}=\id_{C^\infty_\ap}.
\end{equation*}
Now, take a non-zero constant vector $c\in\R^n$ and let $u_0=c\ne 0$. Then $\varphi(t,u_0)=\id+t u_0$ is the solution 
of \eqref{eq:ode_frechet} with $u=u_0$. Take $\delta u\in C^\infty_\ap$ and for any given $t\ge 0$ consider the directional 
derivative in $\Diff^\infty_\ap(\R^n)$,
\[
w(t)\equiv\big(D_2\varphi\big)(t,u_0)\cdot\delta u:=\frac{d}{ds}\Big|_{s=0}\varphi\big(t,u_0+s\,\delta u\big)\in C^\infty_\ap.
\]
It follows directly from \eqref{eq:ode_frechet} that $w\in C^1_F\big([0,\infty),C^\infty_\ap\big)$ satisfies the equation
\[
{\dot w}(t,x)=(\delta u)(x+c t),\quad w|_{t=0}=0,
\]
for any given $x\in\R^n$. This implies that $w(t)=\int_0^t(\delta u)(x+c s)\,ds$ and hence
\begin{equation}\label{eq:d_cExp}
\big(d_c\Exp_{\rm LG}\big)(\delta u)=\big(D_2\varphi\big)(1,u_0)\cdot\delta u=\int_0^1(\delta u)(x+c s)\,ds.
\end{equation}
Now, take 
\begin{equation}\label{eq:delta_u}
(\delta u)(x):=\sum_{k\ne 0}{\hat\nu}_k e^{i \left(\Lambda_k,x\right)_{\R^n}},\quad \Lambda_k:=2\pi k c/|c|^2,
\end{equation}
so that ${\hat\nu}_{-k}=\overline{{\hat\nu}_k}$ and the sequence $({\hat\nu}_k)_{k\in\Z\setminus\{0\}}$ in $\C^n$ is 
chosen so that the series in \eqref{eq:delta_u} converges in $C^\infty_\ap$. For example, one can choose
${\hat\nu}_k=O\big(e^{-|k|}\big)$. Then, in view of \eqref{eq:d_cExp}, we obtain that for $\delta u\in C^\infty_\ap$
given by \eqref{eq:delta_u} we have
\[
\big(d_c\Exp_{\rm LG}\big)(\delta u) = 0.
\]
In this way we proved

\begin{theorem}\label{th:lie_group_exponential_map}
The group $\Diff^\infty_\ap(\R^n)$ has a well-defined Lie group exponential map
$\Exp_{\rm LG} : C^\infty_\ap\to\Diff^\infty_\ap(\R^n)$ such that $d_0\Exp_{\rm LG}=\id_{C^\infty_\ap}$ and 
for any given non-zero vector $c\in\R^n$ there exists a non-zero $\delta u\in C^\infty_\ap$ such that 
$\big(d_c\Exp_{\rm LG}\big)(\delta u) = 0$. The kernel of $d_c\Exp_{\rm LG}$ is infinite dimensional.
\end{theorem}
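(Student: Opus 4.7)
The plan is to follow, in a systematic way, the argument sketched immediately before the theorem. The construction of $\Exp_{\rm LG}$ itself comes from Theorem \ref{th:ode} in Appendix A: for any time-independent $u\in C^\infty_\ap$, that theorem produces a unique global solution $\varphi\in C^1_F\big([0,\infty),\Diff^\infty_\ap(\R^n)\big)$ of $\dot\varphi=u\circ\varphi$, $\varphi|_{t=0}=\id$, and Remark \ref{rem:ode_dependence_on_u} gives $C^1_F$-dependence on $u$. Setting $\Exp_{\rm LG}(u):=\varphi(1,u)$ then defines a $C^1_F$-smooth map into $\Diff^\infty_\ap(\R^n)$. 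To compute $d_0\Exp_{\rm LG}$, linearize the flow equation at the constant solution $\varphi\equiv\id$ corresponding to $u=0$: the variation $w(t)=(D_2\varphi)(t,0)\cdot\delta u$ satisfies $\dot w=\delta u$ with $w|_{t=0}=0$, so $w(1)=\delta u$, i.e.\ $d_0\Exp_{\rm LG}=\id_{C^\infty_\ap}$.

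Next, I would pick a nonzero constant vector field $u_0=c\in\R^n$. The solution of $\dot\varphi=c\circ\varphi$, $\varphi|_{t=0}=\id$, is the translation $\varphi(t,x)=x+tc$, which clearly belongs to $\Diff^\infty_\ap(\R^n)$. Linearizing in the direction $\delta u\in C^\infty_\ap$ and evaluating at $t=1$, one gets the explicit formula
\[
\big(d_c\Exp_{\rm LG}\big)(\delta u)(x)=\int_0^1(\delta u)(x+sc)\,ds,
\]
which is exactly \eqref{eq:d_cExp}.

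The remaining task is to exhibit a nonzero $\delta u\in C^\infty_\ap$ annihilated by this integral operator. This is where the resonance between the direction $c$ and Bohr exponents parallel to $c$ comes in: take $\Lambda_k:=2\pi k c/|c|^2$, so that $(\Lambda_k,c)_{\R^n}=2\pi k$, and set
\[
(\delta u)(x):=\sum_{k\ne 0}{\hat\nu}_k\,e^{i(\Lambda_k,x)_{\R^n}}
\]
with ${\hat\nu}_{-k}=\overline{{\hat\nu}_k}\in\C^n$ (for reality of $\delta u$) and, say, ${\hat\nu}_k=O\big(e^{-|k|}\big)$. Each mode then contributes $e^{i(\Lambda_k,x)_{\R^n}}\int_0^1 e^{2\pi i k s}\,ds=0$ for $k\ne 0$, so $\big(d_c\Exp_{\rm LG}\big)(\delta u)\equiv 0$.

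The main technical point to verify — and the only thing that is not completely mechanical — is that $\delta u$ actually lies in $C^\infty_\ap$ and that the interchange of sum and integral is justified. For this, Corollary \ref{coro:trigonometric_polynomials} shows each partial sum is in $C^m_\ap$ for every real $m\ge 0$, while the estimate $|\partial^\beta e^{i(\Lambda_k,x)_{\R^n}}|\lesssim |k|^{|\beta|}$ combined with the exponential decay of the coefficients ${\hat\nu}_k$ gives absolute convergence in every $C^m$-norm; since $C^m_\ap(\R^n,\R^n)$ is closed in $C^m_b$ by Theorem \ref{th:almost_periodic_functions_equivalence_m}, the limit $\delta u$ lies in $C^m_\ap$ for all $m$, hence in the Fréchet space $C^\infty_\ap$. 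The termwise integration in $s\in[0,1]$ is then harmless.
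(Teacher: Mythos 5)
Your proposal is correct and follows essentially the same route as the paper: the same construction of $\Exp_{\rm LG}$ via Theorem \ref{th:ode} and Remark \ref{rem:ode_dependence_on_u}, the same explicit formula \eqref{eq:d_cExp} for $d_c\Exp_{\rm LG}$ along the translation flow, and the same resonant lacunary series \eqref{eq:delta_u} with frequencies $\Lambda_k=2\pi kc/|c|^2$ killing every mode of the averaging operator. The only cosmetic difference is that you obtain $d_0\Exp_{\rm LG}=\id$ by linearizing the ODE at $u=0$ rather than via the scaling identity $\Exp_{\rm LG}(tu)=\varphi(t,u)$, and you spell out the $C^\infty_\ap$-convergence of the series slightly more explicitly; both points are equivalent to what the paper does.
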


Note that in contrast to the Lie group exponential map on the group of $C^\infty$-smooth diffeomorphisms of the circle
(see e.g. \cite[Counterexample 5.5.2]{Hamilton}), we have {\em no} restrictions on the choice of the non-zero constant 
vector $c\in\R^n$. In this way we see that for any given $c\in\R^n\setminus\{0\}$ the one-parameter family 
$\big\{\Exp_{\rm LG}(t\,c)\big\}_{t>0}$ of diffeomorphisms in $\Diff^\infty_\ap(\R^n)$ consists of critical points of \eqref{eq:exp_lg}.
As a direct consequence of Theorem \ref{th:lie_group_exponential_map} and Remark \ref{rem:non-degeneracy} in Appendix C 
we obtain

\begin{coro}\label{coro:lie_group_exponential_map}
There is {\em no} open neighborhood $V$ of zero in $C^\infty_\ap$ such that the restriction
$\Exp_{\rm LG}\big|_V : V\to\Diff^\infty_\ap(\R^n)$ is a $C^1_F$-diffeomorphism onto its image.
\end{coro}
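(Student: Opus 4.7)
The plan is to argue by contradiction, using Theorem \ref{th:lie_group_exponential_map} together with the fact (recorded in Remark \ref{rem:non-degeneracy} of Appendix C) that a $C^1_F$-diffeomorphism between open subsets of Fr\'echet spaces must have injective differential at every interior point of its domain.

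First, I would suppose for contradiction that some open neighborhood $V$ of zero in $C^\infty_\ap$ exists on which the restriction $\Exp_{\rm LG}\big|_V$ is a $C^1_F$-diffeomorphism onto its image in $\Diff^\infty_\ap(\R^n)$. I then fix an arbitrary non-zero constant vector $c \in \R^n$, regarded as the constant-coefficient vector field $x \mapsto c$ in $C^\infty_\ap(\R^n,\R^n)$. Because $V$ is an open neighborhood of zero, there exists $t_0 > 0$ such that the point $t_0 c$ lies in $V$, and $t_0 c$ is again a non-zero constant vector field.

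Next, I would apply Theorem \ref{th:lie_group_exponential_map} to the non-zero vector $t_0 c \in \R^n$: the theorem produces some non-zero $\delta u \in C^\infty_\ap$ (concretely, of the form \eqref{eq:delta_u} with $c$ replaced by $t_0 c$) such that
\[
\big(d_{t_0 c}\Exp_{\rm LG}\big)(\delta u) = 0.
\]
Thus the differential of $\Exp_{\rm LG}$ at the interior point $t_0 c \in V$ has non-trivial kernel. On the other hand, by Remark \ref{rem:non-degeneracy} in Appendix C, the differential of a $C^1_F$-diffeomorphism between open subsets of Fr\'echet spaces is injective at every point of its domain. This directly contradicts the existence of the non-zero $\delta u$ above, so no such $V$ can exist.

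The argument is essentially a one-line contrapositive of the standard inverse-function-type criterion for Fr\'echet diffeomorphisms, and the only delicate point is to have Remark \ref{rem:non-degeneracy} stated in the right form (injectivity, rather than invertibility, of differentials of $C^1_F$-diffeomorphisms); since the conclusion I need is only the vanishing of a differential on a non-zero vector, the weaker injectivity statement already suffices, so no genuine obstacle is expected.
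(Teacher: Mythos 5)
Your proposal is correct and follows exactly the route the paper intends: the paper gives no separate proof but declares the corollary a direct consequence of Theorem \ref{th:lie_group_exponential_map} and Remark \ref{rem:non-degeneracy}, which is precisely the contradiction you spell out (rescale a non-zero constant vector into $V$, apply the theorem to get a non-trivial kernel of the differential there, and contradict the injectivity of the differential of a $C^1_F$-diffeomorphism).
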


\medskip

\noindent{\em The Burgers' Riemannian exponential map on $\Diff^\infty_\ap(\R)$:}
A Riemannian counterpart of Corollary \ref{coro:lie_group_exponential_map} was discussed in Remark \ref{rem:burgers}.
The geodesic equation corresponding to the scalar product \eqref{eq:scalar_product_introduction} with $\alpha=0$ and $n=1$
is the inviscid Burgers equation
\begin{equation}\label{eq:burgers}
u_t+3u_x u=0,\quad u|_{t=0}=u_0\in C^\infty_\ap(\R,\R).
\end{equation}
By Remark \ref{rem:burgers}, there exists an open neighborhood $U$ of zero in $C^\infty_\ap(\R,\R)$ and $T>0$ such that 
\eqref{eq:burgers} has a unique solution $u\in C^1_F\big([0,T],C^\infty_\ap(\R,\R)\big)$ that depends $C^1_F$-smoothly 
on the initial data $u_0\in U$. Then, the corresponding geodesic $\varphi\in C^1_F\big([0,T],\Diff^\infty_\ap(\R)\big)$ is a solution
(see Theorem \ref{th:ode} and Remark \ref{rem:ode_dependence_on_u} in Appendix A) of the equation 
\begin{equation}\label{eq:ode'}
\left\{
\begin{array}{l}
\dot{\varphi}=u\circ\varphi,\\
\varphi|_{t=0}=\id.
\end{array}
\right. 
\end{equation}
Note that the initial data $u_0=c$ where $c\ne 0$ is a real constant gives the solution $u(t)=c$ of \eqref{eq:burgers} which, 
by \eqref{eq:ode'}, gives the geodesic $\Exp(ct)=\id+ct\in\Diff^\infty_\ap(\R)$, $t\in\R$.
Now, take a variation $\delta u_0\in C^\infty_\ap(\R,\R)$ of the initial data $u_0$ and denote by 
$u(s)\in C^1_F([0,T],C^\infty_\ap(\R,\R))$ the solution of the Burgers equation with initial data $u(s)|_{t=0}=u_0+s\delta u_0$ 
for small values of $s\in\R$. Then, we obtain from \eqref{eq:burgers} that
\begin{equation*}
(\delta u)_t+3c\,(\delta u)_x=0,\quad(\delta u)|_{t=0}=\delta u_0,
\end{equation*}
where $\delta u:=\frac{d}{ds}\big|_{s=0} u(s)$.
This equation has a unique solution
\begin{equation}\label{eq:variation_burgers}
(\delta u)(t,x)=(\delta u_0)(x-3c t).
\end{equation}
By substituting $u(s)$ for $u$ in equation \eqref{eq:ode'} and then taking the derivative $\frac{d}{ds}\big|_{s=0}$ we 
obtain the following equation for the variation $\delta\varphi:=\frac{d}{ds}\big|_{s=0}\varphi(s)$,
\[
(\delta\varphi)_t=(\delta u)(t, x+ct),\quad(\delta\varphi)(t,x)=\int_0^t(\delta u)(s, x+cs)\,ds
\]
Combining this with \eqref{eq:variation_burgers}, we finally obtain
\begin{equation}\label{eq:d_cExp_burgers}
(d_c\Exp)(\delta u_0)=(\delta\varphi)(1,x)=\int_0^1(\delta u_0)(x-2cs)\,ds=
\frac{1}{2c}\int_{x-2c}^x(\delta u_0)(y)\,dy
\end{equation}
which is essentially the same formula as the one in \eqref{eq:d_cExp}.

Now, assume that 
\begin{equation}\label{eq:conjugate_point}
(d_c\Exp)(\delta u_0)=0.
\end{equation} 
This together with \eqref{eq:d_cExp_burgers} implies that $\int_{x-2c}^x(\delta u_0)(y)\,dy=0$ for any $x\in\R$.
By differentiating this identity with respect to $x$ we obtain that
\[
(\delta u_0)(x-2c)=(\delta u_0)(x)
\]
for any $x\in\R$. These considerations easily imply the following

\begin{Lemma}\label{lem:quantization_condition}
$(d_c\Exp)(\delta u_0)=0$ if and only if $\delta u_0\in C^\infty_\ap(\R,\R)$ is a periodic function with
period $2c$ and vanishing mean-value $\frac{1}{2c}\int_0^{2c}(\delta u_0)(x)\,dx=0$.
\end{Lemma}

In the almost periodic case we can construct a solution $0\ne\delta u_0\in C^\infty_\ap(\R,\R)$ of \eqref{eq:conjugate_point}
for any value of the real constant $c\ne 0$. For example, we can take
\[
(\delta u_0)(x):=\sum_{k\ne 0}{\hat\nu_k} e^{2\pi i k x/2c}
\]
where ${\hat\nu}_{-k}=\overline{{\hat\nu}_k}$ and $\hat\nu_k:=O\big(e^{-|k|}\big)$, $k\ne 0$.
In this way, we have

\begin{Prop}\label{eq:burgers_special_geodesics}
For any given $c\in\R$, $c\ne 0$, the geodesic $\Exp(ct)=\id+ct\in\Diff^\infty_\ap(\R)$, $t\in\R$, $t\ne 0$, corresponding to the
Burgers' Riemannian exponential map consists entirely of points conjugate to the identity.
\end{Prop}

\begin{remark}\label{rem:quantization_condition}
Note that if one requires $\delta u_0\in C^\infty_\ap(\R,\R)$ to be 1-periodic then, by Lemma \ref{lem:quantization_condition},
the equation $(d_c\Exp)(\delta u_0)=0$ will have a non-trivial solution if and only if the periods $2c$ and $1$ are rationally dependent or, equivalently,
when $c\in\mathbb{Q}$. This is the reason Proposition \ref{eq:burgers_special_geodesics} does {\em not} hold for the group of
diffeomorphisms of the circle $\Diff^\infty(\mathbb{T})$. Moreover, the Proposition does {\em not} hold for any group of diffeomorphisms of $\R$ with 
infinitesimal generators vanishing at infinity. The inclusion of the constant vector fields as infinitesimal generators of the group of
diffeomorphisms is not sufficient. In fact, a necessary condition for a group of diffeomorphisms of $\R$ to satisfy 
Proposition \ref{eq:burgers_special_geodesics} is that it must have a $c$-periodic infinitesimal generator for any $c\in\R$, $c\ne 0$. 
The analog of this condition on $\R^n$ is that there exists a direction  $c\in\R^n$, $|c|=1$, such that the group of diffeomorphisms of $\R^n$ 
has an $\alpha c$-periodic infinitesimal generator for any real $\alpha>0$.
\end{remark}

\medskip

In the next Section we show that the Riemannian exponential map of an appropriately chosen weak Riemannian metric
on $\Diff^\infty_\ap(\R^n)$ is a $C^1_F$-diffeomorphism onto its image when restricted to a sufficiently small 
open neighborhood of zero in $C^\infty_\ap(\R^n)$.

%%%%%%%%%%%%%%%%%%%%%%%%%%%%%%%%%%%%%%%%%%%%%%%%%%%%%
\section{A Riemannian exponential map}\label{sec:exponential_map}
In this section we study the properties of the Riemannian exponential map on
$\Diff^{\infty}_{\ap}(\mathbb{R}^n)$ corresponding to the right-invariant (weak) Riemannian metric 
on $\Diff^{\infty}_{\ap}(\mathbb{R}^n)$ given at the tangent space at identity 
$T_{\id}\big(\Diff^{\infty}_{\ap}(\R^n)\big)\equiv C^\infty_\ap(\R^n,\R^n)$
by the scalar product $\langle\cdot,\cdot\rangle_\alpha : C^\infty_\ap\times C^\infty_\ap\to\R$,
\begin{equation}\label{eq:scalar_product}
\langle u,v\rangle_\alpha:=\lim_{T\to\infty}\frac{1}{(2T)^n}\int_{[-T,T]^n}\big((I-\alpha^2\Delta)u,v\big)_{\R^n}\,dx,
\end{equation}
where $(\cdot,\cdot)_{\R^n}$ is the Euclidean scalar product in $\R^n$, $\Delta$ is the Euclidean Laplacian
$\Delta=\frac{\partial^2}{\partial x_1^2}+...+\frac{\partial^2}{\partial x_n^2}$ acting component-wise on vector fields on $\R^n$, 
and $\alpha>0$ is a given {\em positive} constant.
Since by Proposition \ref{prop:banach_algebra} the integrand in \eqref{eq:scalar_product}
is an almost periodic function, the limit in \eqref{eq:scalar_product} exists by Theorem \ref{th:almost_periodic_functions_equivalence}. 
By using the product formula 
\begin{equation}\label{eq:product_formula}
\big(\Delta u,v\big)_{\R^n}=-\big(\nabla u,\nabla v\big)_{\R^n}+\Div\big((\nabla u)^Tv\big)
\end{equation}
where $\nabla u:=[du]\equiv\big(\frac{\partial u_k}{\partial x_l}\big)_{1\le k,l\le n}$ is the Jacobian $n\times n$ matrix,
$(\cdot)^T$ is the transpose of a matrix, and 
$\big(\nabla u,\nabla v\big)_{\R^n}:=\sum_{k=1}^n\big(\nabla u_k,\nabla v_k\big)_{\R^n}$, 
we obtain from \eqref{eq:scalar_product}, \eqref{eq:product_formula}, and from the divergence theorem that
\begin{eqnarray}
\langle u,v\rangle_\alpha&=&\lim_{T\to\infty}\frac{1}{(2T)^n}\int_{[-T,T]^n}
\Big((u,v)_{\R^n}+\alpha^2(\nabla u,\nabla v)_{\R^n}\Big)\,dx\nonumber\\
&-&\alpha^2\lim_{T\to\infty}\frac{1}{(2T)^n}\int_{\partial([-T,T]^n)}\Big(v,\frac{\partial u}{\partial\nu}\Big)_{\R^n}\,d\sigma\nonumber\\
&=&\lim_{T\to\infty}\frac{1}{(2T)^n}\int_{[-T,T]^n}\Big( (u,v)_{\R^n}+\alpha^2(\nabla u,\nabla v)_{\R^n}\Big)\,dx,
\label{eq:scalar_product'}
\end{eqnarray}
where $\nu$ is the outward unit normal to the boundary of the cube $[-T,T]^n$, $d\sigma$ is the volume form of the boundary, 
and where we used that the integrand $\Big(v,\frac{\partial u}{\partial\nu}\Big)_{\R^n}$ is uniformly bounded on $\R^n$, to conclude that 
the integral over $\partial([-T,T]^n)$ is of order $O(T^{n-1})$, and hence the corresponding limit vanishes. 
Formula \eqref{eq:scalar_product'} shows that the scalar product \eqref{eq:scalar_product} extends to a positive definite bounded bilinear map 
$C^1_\ap\times C^1_\ap\to\R$ and hence defines a weak scalar product on the tangent space
$T_{\id}\big(\Diff^{\infty}_{\ap}(\R^n)\big)\equiv C^\infty_\ap$.
Recall that a scalar product on a topological vector space $X$ being {\em weak} means that the topology induced on 
the vector space $X$ by this scalar product is weaker than the original topology of $X$.
The scalar product \eqref{eq:scalar_product} defines a (weak) Riemannian metric on the group 
${\Diff}^{\infty}_{\ap}(\R^n)$ by right translations: for any $\varphi\in\Diff^{\infty}_{\ap}(\R^n)$ and for any 
$\xi,\eta\in T_{\varphi}\big(\Diff^{\infty}_{\ap}(\R^n)\big)\equiv C^\infty_\ap$, define
\begin{equation}\label{eq:riemannian_metric}
\nu_\alpha(\xi,\eta):=\big\langle R_{\varphi^{-1}}\xi,R_{\varphi^{-1}}\eta\big\rangle_\alpha
=\lim_{T\to\infty}\frac{1}{(2T)^n}\int\limits_{[-T,T]^n}
\Big(\big(A_\alpha\circ R_{\varphi^{-1}}\big)\xi,R_{\varphi^{-1}}\eta\Big)_{\R^n}\,dy
\end{equation}
where $R_{\varphi^{-1}\xi}:=\xi\circ\varphi^{-1}$, $R_{\varphi^{-1}}\eta:=\eta\circ\varphi^{-1}$,
and $A_\alpha:= I-\alpha^2\Delta$.  By changing the variables in the integral above we obtain
\begin{eqnarray}
\nu_\alpha(\xi,\eta)&=&\lim_{T\to\infty}\frac{1}{(2T)^n}\!\!\!\!\!\int\limits_{\varphi^{-1}([-T,T]^n)}\!\!\!\!
\Big(\big(R_\varphi\circ A_\alpha\circ R_{\varphi^{-1}}\big)\xi,\eta\Big)_{\R^n}\!\!\!
\det[d_x\varphi]\,dx\nonumber\\
&=&\lim_{T\to\infty}\frac{1}{(2T)^n}\int\limits_{[-T,T]^n}
\Big(\big(R_\varphi\circ A_\alpha\circ R_{\varphi^{-1}}\big)\xi,\eta\Big)_{\R^n}\!\!\!
\det[d_x\varphi]\,dx.\label{eq:riemannian_metric'}
\end{eqnarray}
The last equality readily follows from the inclusion of sets
\[
\big[-(T-B),T-B\big]^n\subseteq\varphi^{-1}\big([-T,T]^n\big)\subseteq\big[-(T+B),T+B\big]^n
\]
which holds for any $T>B$ where $B\ge |g|_\infty$ is a constant independent of $T$ and $
\varphi^{-1}=\id+g$ with $g\in C^\infty_\ap$.
The least action principle applied to \eqref{eq:riemannian_metric'}, together with the arguments used in the periodic case
\cite[Appendix C]{KLT}, show that the Euler-Lagrange equation corresponding to the energy functional
\begin{equation}\label{eq:energy_functional}
\mathcal{E}(\varphi):=\frac{1}{2}\int_0^1\nu_\alpha(\dot\varphi,\dot\varphi)\,dt,\quad\quad
\varphi\in C^2_F\big([0,1],\Diff^\infty_\ap(\R^n)\big),
\end{equation}
when written in Eulerian coordinates $u={\dot\varphi}\circ\varphi^{-1}$, takes the form
\begin{equation}\label{eq:ch}
\left\{
\begin{array}{l}
\m_t+(\nabla\m)\cdot u+\big((\Div u)\cdot I+(\nabla u)^T\big)\cdot\m=0,\quad\m := (1-\alpha^2\Delta)u\\
u|_{t=0}=u_0
\end{array}
\right. .
\end{equation}

\begin{remark}
If $n=1$ equation \eqref{eq:ch} is the Camassa-Holm equation (see \cite{CH}).
\end{remark}
In Lagrangian coordinates $(\varphi,v)\in T\big(\Diff^{\infty}_{\ap}(\R^n)\big)\equiv \Diff^{\infty}_{\ap}(\R^n)\times C^\infty_\ap$ 
where $v:=u\circ \varphi$ and
\begin{equation}
\left\{
\begin{array}{l}
\dot{\varphi}=u\circ\varphi\\
\varphi|_{t=0}=\id
\end{array}
\right. 
\end{equation}
one has
\begin{equation}\label{eq:ch'}
(\dot{\varphi},\dot{v})=\big(v,R_{\varphi}\circ A_\alpha^{-1}\circ B_\alpha\circ R_{\varphi^{-1}}(v)\big)\\
\end{equation}
where $(\varphi, v)|_{t=0}=\big(\id, u_0\big)$,
\[
B_\alpha(u):= A_\alpha\big[(\nabla u)\cdot u\big]-\nabla[A_\alpha u]\cdot u-
\big(\Div u\cdot I+(\nabla u)^{T}\big) A_\alpha u,
\] 
and $A^{-1}_{\alpha} : C^\infty_{\ap}\to C^\infty_{\ap}$ is the inverse of of the linear isomorphism of Fr\'echet
spaces $A_\alpha : C^\infty_\ap\to C^\infty_\ap$. Note that for any {\em non-integer} $m\ge 0$ the differential
expression $A_\alpha$ defines a linear isomorphism of Banach spaces
\begin{equation}\label{eq:A_alpha}
A_\alpha : C^{m+2}_\ap\to C^m_\ap
\end{equation}
with a (bounded) inverse $A^{-1}_{\alpha} : C^m_{\ap}\to C^{m+2}_{\ap}$
(see Corollary \ref{coro:A_on_ap_isomorphism} in Appendix A).

\medskip

Denote by $B_{C^m_{\ap}}(\rho)$ the open ball in $C^m_{\ap}$ of radius $\rho>0$ centered at zero. 
One has the following

\begin{theorem}\label{th:ch_m}
Let $m\ge 3$, $m\notin\Z$, and $\rho>0$. Then there exists $T\equiv T_{m,\rho}>0$ such that for
any $u_0\in B_{C^m_{\ap}}(\rho)$, there exists a unique solution 
$u\in C\big([0,T],C^m_{\ap}\big)\cap C^1\big([0,T],C^{m-1}_{\ap}\big)$ of (\ref{eq:ch}) that depends
continuously on the initial data $u_0$ in the sense that the data-to-solution map 
\[
B_{C^m_{\ap}}(\rho)\to C\big([0,T], C^m_{\ap}\big)\cap C^1\big([0,T], C^{m-1}_{\ap}\big),\quad u_0\mapsto u,
\]
is continuous. Moreover, the right hand side of \eqref{eq:ch'} is a $C^1$-smooth vector field on 
$\Diff^m_{\ap}(\R^n)\times C^m_\ap$ and, in particular, there exists a unique solution
\[
(\varphi,v)\in C^1\big([0,T],{\Diff}^m_{\ap}(\R^n)\times C^m_{\ap}\big)
\]
of (\ref{eq:ch'}) such that the data-to-solution map
\begin{equation*}
[0,T]\times B_{C^m_{\ap}}(\rho)\to{\Diff}^m_{\ap}(\R^n)\times C^m_{\ap},\quad(t,u_0)\mapsto\big(\varphi(t,u_0),v(t,u_0)\big),
\end{equation*}
is $C^1$-smooth.
\end{theorem}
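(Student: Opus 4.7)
The plan is the Ebin--Marsden approach: rewrite \eqref{eq:ch} as an ODE on the Banach manifold $\Diff^m_\ap(\R^n) \times C^m_\ap$ governed by the vector field on the right-hand side of \eqref{eq:ch'}, establish $C^1$-smoothness of that vector field, apply the Picard--Lindel\"of theorem, and transfer the resulting Lagrangian solution back to Eulerian coordinates. A direct Picard iteration on \eqref{eq:ch} in $C([0,T], C^m_\ap)$ cannot close because of the apparent derivative loss in $(\nabla \m)\cdot u$ and $(\nabla u)^T\m$; the Lagrangian reformulation exposes the two-derivative gain coming from $A_\alpha^{-1}$, which is exactly what compensates that loss.

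As a preparation, I first check that $B_\alpha : C^m_\ap \to C^{m-2}_\ap$ is real analytic. The crucial point is that the two potentially third-order contributions in $B_\alpha(u)$, namely $-\alpha^2\Delta[(\nabla u)u]$ and $\alpha^2\nabla[\Delta u]\cdot u$, cancel identically, so $B_\alpha(u)$ involves only derivatives of $u$ of order at most two; combined with the Banach algebra property from Proposition \ref{prop:banach_algebra} this yields analyticity. Since $m \geq 3$ with $m \notin \Z$ implies $m-2 \geq 1$ and $m-2 \notin \Z$, Corollary \ref{coro:A_on_ap_isomorphism} provides a bounded linear isomorphism $A_\alpha^{-1} : C^{m-2}_\ap \to C^m_\ap$.

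The heart of the proof is to show that
\[
F(\varphi, v) := \bigl(v,\ R_\varphi \circ A_\alpha^{-1} \circ B_\alpha \circ R_{\varphi^{-1}}(v)\bigr)
\]
is $C^1$-smooth from $\Diff^m_\ap(\R^n) \times C^m_\ap$ into itself. Setting $\widetilde{A}_\alpha(\varphi) := R_\varphi \circ A_\alpha \circ R_{\varphi^{-1}}$ and $\widetilde{B}_\alpha(\varphi, v) := R_\varphi \circ B_\alpha \circ R_{\varphi^{-1}}(v)$, the second component of $F$ equals $\widetilde{A}_\alpha(\varphi)^{-1}\,\widetilde{B}_\alpha(\varphi, v)$. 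Expanding by the chain rule rewrites both $\widetilde{A}_\alpha(\varphi)$ and $\widetilde{B}_\alpha(\varphi, v)$ as polynomial expressions in $v$ and its first and second derivatives, with coefficients polynomial in the entries of $[d\varphi]$, $[d^2\varphi]$ and $[d\varphi]^{-1}$; the composition $(\cdot)\circ\varphi^{-1}$ is eliminated altogether. Proposition \ref{prop:banach_algebra} together with Lemma \ref{lem:division} (applied to $\det[d\varphi]$, which is bounded below by virtue of \eqref{eq:Diff^m_ap}) then makes $\varphi \mapsto \widetilde{A}_\alpha(\varphi) \in L(C^m_\ap, C^{m-2}_\ap)$ and $(\varphi, v) \mapsto \widetilde{B}_\alpha(\varphi, v) \in C^{m-2}_\ap$ real analytic on a neighborhood of $(\id, 0)$. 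Since $\widetilde{A}_\alpha(\id) = A_\alpha$ is invertible and the set of topological isomorphisms is open with real-analytic inversion, $\widetilde{A}_\alpha(\varphi)^{-1}$ is analytic in $\varphi$ near $\id$, so $F$ is at least $C^1$ there.

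Given $C^1$-smoothness of $F$, the Picard--Lindel\"of theorem on Banach spaces yields, for any $\rho > 0$, a uniform time $T_{m,\rho} > 0$ and a unique solution $(\varphi, v) \in C^1([0,T], \Diff^m_\ap(\R^n) \times C^m_\ap)$ of \eqref{eq:ch'} starting at $(\id, u_0)$ for every $u_0 \in B_{C^m_\ap}(\rho)$, depending $C^1$-smoothly on $u_0$. Setting $u := v \circ \varphi^{-1}$ and invoking Remark \ref{rem:composition} gives $u \in C([0,T], C^m_\ap)$; reading $u_t$ off \eqref{eq:ch} as $-A_\alpha^{-1}\bigl[(\nabla\m)\cdot u + ((\Div u) I + (\nabla u)^T)\m\bigr]$, the bracket lies in $C^{m-3}_\ap$ by Proposition \ref{prop:banach_algebra}, and the two-derivative gain of $A_\alpha^{-1}$ places $u_t$ in $C^{m-1}_\ap$. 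Uniqueness and continuity of the Eulerian data-to-solution map follow from the corresponding Lagrangian statements via the regularity of composition and inversion in Theorem \ref{th:group_regularity}. I expect Step 2 to be the main obstacle: in the periodic setting of \cite{KLT, CK, CKKT} invertibility of the conjugated Laplacian is handled by elliptic theory on a compact manifold, whereas here it must instead be reduced to the global almost-periodic invertibility of $A_\alpha$ from Corollary \ref{coro:A_on_ap_isomorphism} combined with the standard perturbation theorem for bounded invertible operators.
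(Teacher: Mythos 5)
Your proposal is correct and follows essentially the same route as the paper: pass to the Lagrangian form \eqref{eq:ch'}, use the cancellation of the third-order terms in $B_\alpha$ together with the Banach algebra property, Lemma \ref{lem:division} and Corollary \ref{coro:A_on_ap_isomorphism} to show the conjugated vector field is $C^1$ (in fact analytic), apply ODE theory in Banach spaces, and transfer back to Eulerian coordinates via Theorem \ref{th:group_regularity}. The only cosmetic difference is that where you invert $R_\varphi\circ A_\alpha\circ R_{\varphi^{-1}}$ by a perturbation argument near $\id$, the paper notes the explicit global inverse $R_\varphi\circ A_\alpha^{-1}\circ R_{\varphi^{-1}}$ (Proposition \ref{prop:smooth_maps}), which is cleaner but changes nothing essential.
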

The proof of this theorem is based on the fact that the right-hand side of \eqref{eq:ch'} is a $C^1$-smooth vector field on
$\Diff^m_{\ap}(\R^n)\times C^m_\ap$. Details of the proof are given in Appendix A.

\begin{remark}\label{rem:n=1}
If $n=1$ then the condition $m\notin\Z$ in Theorem \ref{th:ch_m} is {\em not} needed.
\end{remark}

\begin{remark}\label{rem:higher_smoothness}
In fact, we indicate in Appendix A that the right-hand side of \eqref{eq:ch'} is an analytic vector field on 
$\Diff^m_{\ap}(\R^n)\times C^m_\ap$. The higher regularity of the right-hand side of \eqref{eq:ch'} is used in the proof of
Proposition \ref{prop:onto} and Proposition \ref{prop:exact_regularity} below.
\end{remark}

\begin{remark}\label{rem:independence_of_m}
Proposition \ref{prop:ch_extension} below implies that the time of existence $T>0$ in Theorem \ref{th:ch_m} can
be chosen independent of the regularity exponent $m\ge 3$, $m\notin\Z$.
This Proposition is a variant of the no-loss-no-gain result of Ebin and  Marsden in \cite[Theorem 12.1]{EM}.
\end{remark}

\medskip

Note that if for some $T>0$ the curve $(\varphi,v)\in C^1\big([0,T],{\Diff}^m_{\ap}(\R^n)\times C^m_{\ap}\big)$ is a solution of
\eqref{eq:ch'}, then for any $0<\mu\le T/2$ the curve 
\begin{equation}\label{eq:symmetry}
t\mapsto\big(\varphi(\mu t),\mu\,v(\mu t)\big),\quad [0,2]\to{\Diff}^m_{\ap}(\R^n)\times C^m_{\ap}, 
\end{equation}
is a $C^1$-smooth solution of \eqref{eq:ch'} with initial data $\big(\id,\mu\,v(0)\big)$. Combining this with the second statement of 
Theorem \ref{th:ch_m}, one concludes that for any $m\ge 3$, $m\notin\Z$, there exists an open neighborhood $V_m$ of zero in 
$C^m_\ap$ such that for any $u_0\in V_m$ there exists a unique solution 
\begin{equation}\label{eq:initial_data_m}
(\varphi,v)\in C^1\big([0,2],{\Diff}^m_{\ap}(\R^n)\times C^m_{\ap}\big),\quad(\varphi,v)|_{t=0}=(\id,u_0),
\end{equation}
of (\ref{eq:ch'}) such that the solution map
\begin{equation}\label{eq:ch_solution_[0,2]}
[0,2]\times V_m\to{\Diff}^m_{\ap}(\R^n)\times C^m_{\ap},\quad(t,u_0)\mapsto\big(\varphi(t,u_0),v(t,u_0)\big),
\end{equation}
is $C^1$-smooth. This allows us for any $m\ge 3$, $m\notin\Z$, to define the {\em exponential map}
\begin{equation}\label{eq:Exp_m}
\Exp_m : V_m\to{\Diff}^m_{\ap}(\R^n),\quad u_0\mapsto\varphi(1,u_0),
\end{equation}
which is a $C^1$-map. It follows from the \eqref{eq:symmetry} and the uniqueness of the solutions of \eqref{eq:ch'}
that for any $u_0\in V_m$, $t\in[0,2]$ and $0<\mu\le 1$, we have $\varphi(\mu t,u_0)=\varphi(t,\mu u_0)$ and hence,
$\varphi(\mu,u_0)=\varphi(1,\mu u_0)$ for any $0<\mu\le 1$.
This, together with the continuity of the map \eqref{eq:initial_data_m} implies that for any $u_0\in V_m$ and 
for any $0\le t\le 2$,
\[
\Exp_m(t u_0)=\varphi(1,t u_0)=\varphi(t,u_0)
\]
which implies that
\begin{equation}\label{eq:d_0Exp}
d_0\Exp_m=\id_{C^m_\ap}.
\end{equation}

Now, for a given $0<\gamma<1$, consider the set of H{\"o}lderian exponents
\[
M_\gamma:=\big\{k+\gamma\,\big|k\ge 3, k\in\Z\big\}.
\]
It will follow from Proposition \ref{prop:ch_extension} below that for any $m\in M_\gamma$ the neighborhoods $V_m$ 
can be chosen so that
\begin{equation}\label{eq:V_m}
V_m:=V_{m_3}\cap C^m_{\ap},\quad m_3:=3+\gamma.
\end{equation}
This allows us to define the associated to the Riemannian metric \eqref{eq:riemannian_metric} on ${\Diff}^\infty_{\ap}(\R^n)$
{\em Riemannian exponential map}
\begin{equation}\label{eq:Exp_infinity}
\Exp_\infty:=\Exp_{m_3}|_{C^\infty_\ap} :\,V_{m_3}\cap C^\infty_\ap\to\Diff^\infty_{\ap}(\R^n).
\end{equation}
Note that for any $m\in M_\gamma$ it follows from \eqref{eq:d_0Exp} and the inverse function theorem in Banach spaces that there exists 
an open neighborhood ${\widetilde V}_m\subseteq V_m$ of zero in $C^m_\ap$ such that the restriction of the exponential map \eqref{eq:Exp_m} to this 
neighborhood is a diffeomorphism onto an open neighborhood of the identity in ${\Diff}^m_{\ap}(\R^n)$. 
Of course, it could happen that
\[
\bigcap_{m\in M_\gamma}{\widetilde V}_m=\emptyset.
\]
We will show, however, that this does not happen, and moreover, we have the following

\begin{theorem}\label{th:Exp_infty}
There exist an open neighborhood $V$ of zero in $C^{\infty}_{\ap}$ and an open neighborhood $U$ of $\id$ in 
$\Diff^{\infty}_{\ap}(\R^n)$ such that 
\[
\Exp_\infty :\,V\to U
\]
is a $C^1_F$-diffeomorphism.
\end{theorem}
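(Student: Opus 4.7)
The plan is to apply the Banach-space inverse function theorem at a fixed non-integer base exponent $m_3 = 3+\gamma$ and then bootstrap the resulting diffeomorphism up to the Fr\'echet level. First I would work at regularity $m_3$. By Theorem~\ref{th:ch_m} the map $\Exp_{m_3} : V_{m_3} \to \Diff^{m_3}_{\ap}(\R^n)$ is $C^1$-smooth, and by \eqref{eq:d_0Exp} its differential at $0$ is the identity. The inverse function theorem in Banach spaces then yields open neighborhoods $\widetilde{V}_{m_3} \subseteq V_{m_3}$ of zero in $C^{m_3}_{\ap}$ and $\widetilde{U}_{m_3} \subseteq \Diff^{m_3}_{\ap}(\R^n)$ of $\id$ such that $\Exp_{m_3}|_{\widetilde{V}_{m_3}}$ is a $C^1$-diffeomorphism onto $\widetilde{U}_{m_3}$. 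The candidate neighborhoods are $V := \widetilde{V}_{m_3} \cap C^\infty_{\ap}$ and $U := \widetilde{U}_{m_3} \cap \Diff^\infty_{\ap}(\R^n)$, which are open in the respective Fr\'echet topologies by the continuity of the inclusions $C^\infty_{\ap} \hookrightarrow C^{m_3}_{\ap}$ and $\Diff^\infty_{\ap}(\R^n) \hookrightarrow \Diff^{m_3}_{\ap}(\R^n)$.

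Next I would verify the forward inclusion $\Exp_\infty(V) \subseteq U$ by persistence of regularity along the geodesic flow. For $u_0 \in V$, we have $u_0 \in V_{m_3} \cap C^m_{\ap} = V_m$ for every $m \in M_\gamma$ by \eqref{eq:V_m}. The uniform-in-$m$ time of existence from Remark~\ref{rem:independence_of_m} (i.e.\ Proposition~\ref{prop:ch_extension}) then ensures that the solution of \eqref{eq:ch'} with data $(\id, u_0)$ stays in $\Diff^m_{\ap}(\R^n) \times C^m_{\ap}$ on $[0,1]$ for every $m \in M_\gamma$, so $\Exp_\infty(u_0) = \varphi(1, u_0) \in \bigcap_m \Diff^m_{\ap}(\R^n) = \Diff^\infty_{\ap}(\R^n)$; together with $\Exp_{m_3}(u_0) \in \widetilde{U}_{m_3}$, this places $\Exp_\infty(u_0)$ in $U$.

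The reverse inclusion $U \subseteq \Exp_\infty(V)$ is the no-loss-of-regularity statement, and is the main obstacle. Given $\varphi \in U$, set $u_0 := (\Exp_{m_3})^{-1}(\varphi) \in \widetilde{V}_{m_3} \subseteq C^{m_3}_{\ap}$; one must upgrade to $u_0 \in C^\infty_{\ap}$. For each $m \in M_\gamma$ with $m > m_3$, I would apply the inverse function theorem to $\Exp_m$, using the $C^1$-smoothness (in fact the analyticity, per Remark~\ref{rem:higher_smoothness}) of the vector field in \eqref{eq:ch'} on $\Diff^m_{\ap}(\R^n) \times C^m_{\ap}$. This produces neighborhoods $\widetilde{V}^{\ast}_m, \widetilde{U}^{\ast}_m$ on which $\Exp_m$ is a $C^1$-diffeomorphism. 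Because $\Exp_m$ and $\Exp_{m_3}$ agree on the common domain $V_{m_3} \cap C^m_{\ap}$, the local inverses coincide wherever both are defined, so $(\Exp_{m_3})^{-1}(\varphi) \in C^m_{\ap}$ once $\varphi \in \widetilde{U}_{m_3} \cap \widetilde{U}^{\ast}_m$. The technical crux is thus to show that every $\varphi \in U$ in fact lies in $\widetilde{U}^{\ast}_m$ for all $m \in M_\gamma$; this is precisely where Proposition~\ref{prop:onto} (controlled surjectivity of $\Exp_m$ onto a neighborhood whose size is compatible across $m$) and Proposition~\ref{prop:exact_regularity} (the exact-regularity statement) would be invoked. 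The argument should leverage the analyticity of the Eulerian vector field to propagate regularity along the geodesic segment connecting $\id$ to $\varphi$, thereby obtaining estimates on $u_0$ in $C^m_{\ap}$ that are stable for every $m \in M_\gamma$, and so placing $u_0$ in $C^\infty_{\ap}$.

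Once the bijection $\Exp_\infty : V \to U$ is established, I would check the $C^1_F$-regularity of both $\Exp_\infty$ and its inverse. Smoothness of $\Exp_\infty$ at the Fr\'echet level follows from the $C^1$-smoothness of each $\Exp_m$ on $V_m$, together with the observation that the linearization of \eqref{eq:ch'} inherits the same persistence of regularity, so $d_{u_0}\Exp_\infty$ maps $C^\infty_{\ap}$ into $C^\infty_{\ap}$ continuously. Smoothness of the inverse follows by combining the level-$m$ IFT (which gives invertibility of $d_{u_0}\Exp_m$ on $C^m_{\ap}$ for every $m \in M_\gamma$) with the no-loss-of-regularity of the previous step applied to the linearized equation. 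The whole difficulty of the theorem is therefore concentrated in the backward-regularity argument of the third paragraph, which cannot rely on tame estimates in the Nash--Moser sense and must instead exploit the analyticity of the right-hand side of \eqref{eq:ch'}.
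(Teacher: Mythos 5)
Your proposal is correct and follows essentially the same route as the paper: Banach inverse function theorem at the base level $m_3$, persistence of regularity (Proposition \ref{prop:ch_extension}) for the forward inclusion, Proposition \ref{prop:onto} for surjectivity at each level $m$, and Proposition \ref{prop:exact_regularity} for the differential, which the paper then simply feeds into the abstract Fr\'echet inverse function theorem (Theorem \ref{th:inverse_function_theorem} in Appendix C) rather than unwinding it by hand. One caution on your third paragraph: the reverse inclusion should not be routed through the level-$m$ inverse-function-theorem neighborhoods $\widetilde{U}^{\ast}_m$, whose intersection over $m\in M_\gamma$ could a priori be empty (exactly the danger the paper flags just before the theorem); Proposition \ref{prop:onto} sidesteps this by proving surjectivity of $\Exp_m$ directly onto the \emph{fixed} trace neighborhood $U_m=U_{m_3}\cap\Diff^m_\ap(\R^n)$ via the translation symmetry of the auxiliary map $\widetilde{\mathcal{F}}$, after which injectivity at level $m_3$ already forces $u_0=(\Exp_{m_3})^{-1}(\varphi)\in C^m_\ap$ for every $m$.
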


As mentioned above, the symbol $C^1_F$ denotes the class of {\em Fr\'echet (continuously) differentiable} maps (see Appendix C).
Note also that the non-degeneracy condition \eqref{eq:d_0Exp} is not sufficient for claiming Theorem \ref{th:Exp_infty}. 
The proof of Theorem \ref{th:Exp_infty} is based on a variant of the inverse function theorem 
in Fr\'echet spaces -- see Theorem \ref{th:inverse_function_theorem} in Appendix C.

\begin{remark}\label{rem:Exp_infty}
In fact, it follows from Remark \ref{rem:higher_smoothness} that $\Exp_\infty : V\to U$ is a $C^k_F$-map for any $k\ge 1$.
\end{remark}

\medskip

We first prove the following

\begin{Prop}\label{prop:ch_extension}
Assume that $m\in M_\gamma$. Then for any $u_0\in V_{m_3}\,\cap\,C^m_{\ap}$ there exists a unique solution $(\varphi,v)$
of (\ref{eq:ch'}) that lies in $C^1\big([0,2),\Diff^m_{\ap}(\R^n)\times C^m_{\ap}\big)$ and
depends $C^1$-smoothly on the initial data $u_0\in V_{m_3}\,\cap\,C^m_{\ap}$ in the sense that the data-to-solution map
\begin{equation}\label{eq:data-to-solution_map}
(t,u_0)\mapsto\big(\varphi(t,u_0),v(t,u_0)\big),\quad 
[0,2)\times\big(V_{m_3}\,\cap\,C^m_{\ap}\big)\mapsto\Diff^m_{\ap}(\R^n)\times C^m_{\ap},
\end{equation}
is $C^1$-smooth.
\end{Prop}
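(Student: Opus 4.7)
The plan is a persistence-of-regularity argument based on Theorem \ref{th:ch_m} at the base regularity $m_3 = 3+\gamma$ together with an induction on the integer part of $m$. By Theorem \ref{th:ch_m} and the scaling (\ref{eq:symmetry}), the neighborhood $V_{m_3}$ can be arranged so that every $u_0 \in V_{m_3}$ produces a unique solution $(\varphi_*, v_*) \in C^1\big([0,2], \Diff^{m_3}_\ap(\R^n) \times C^{m_3}_\ap\big)$ with $C^1$-smooth dependence on $u_0$. For $u_0 \in V_{m_3} \cap C^m_\ap$, Theorem \ref{th:ch_m} at regularity $m$ yields a unique local $C^m_\ap$-solution on some maximal interval $[0, T_m^*)$, which by uniqueness at regularity $m_3$ must coincide with $(\varphi_*, v_*)$ throughout.

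The core of the argument is to show $T_m^* \ge 2$. Viewed as an ODE on the Banach manifold $\Diff^m_\ap(\R^n) \times C^m_\ap$, the right-hand side of (\ref{eq:ch'}) is $C^1$-smooth (indeed analytic, by Remark \ref{rem:higher_smoothness})---the two derivatives lost in $B_\alpha$ are regained by the ellipticity of $A_\alpha$---so the standard continuation principle applies: if $T_m^* < 2$, then $|v(t)|_m \to \infty$ as $t \to T_m^*$. We rule this out by induction on the integer $k \ge 3$ with $m = k + \gamma$. The base case $k = 3$ is the defining property of $V_{m_3}$. For the inductive step, assume a uniform bound on $|v(t)|_{m-1}$ over $[0, 2)$ and differentiate (\ref{eq:ch}) $k$ times in $x$. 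Using the cancellation of the apparent top-order term in $(\nabla \m)\cdot u$ against the transport commutator $[\partial^\beta, u\cdot\nabla]\m$, one finds that each $\partial^\beta \m$ with $|\beta| = k$ satisfies a linear transport-type equation
\[
\partial_t\big(\partial^\beta \m\big) + (u \cdot \nabla)\,\partial^\beta \m = L_{\varphi,v}\big(\partial^\beta \m\big) + R_{\varphi,v},
\]
whose coefficients and remainder are controlled in $C^\gamma_\ap$ by $|v(t)|_{m-1}$ alone, thanks to the Banach algebra property of $C^\gamma_\ap$ (Proposition \ref{prop:banach_algebra}). Gronwall's inequality summed over $|\beta| = k$, combined with the inductive bound, yields a uniform bound on $|\m(t)|_{m-2}$ and hence, via $A_\alpha^{-1}$, on $|v(t)|_m$, contradicting blowup.

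For $C^1$-smooth dependence on $u_0$ over the full interval $[0, 2)$, fix $t_0 < 2$ and cover $[0, t_0]$ by finitely many short subintervals on each of which Theorem \ref{th:ch_m} provides $C^1$-smoothness of the local flow at regularity $m$. Using right translation by $\varphi(t_j, u_0)^{-1}$ to reset the initial condition at each step (which is $C^r$-smooth on $\Diff^m_\ap(\R^n)$ by Theorem \ref{th:group_regularity}), one composes finitely many $C^1$-smooth maps to obtain the claimed $C^1$-smoothness of (\ref{eq:data-to-solution_map}).

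The principal obstacle is the cancellation underlying the inductive step: one must verify that after differentiating (\ref{eq:ch}) $k$ times in $x$, every term naively of order $m$ either combines into $(u\cdot\nabla)\partial^\beta \m$ (absorbable by the transport structure) or is controlled in $C^\gamma_\ap$ by $|v|_{m-1}$. The hypothesis $m \notin \Z$ is essential here, since it keeps the estimates within the H\"olderian scale $C^\gamma_\ap$ where the Banach algebra identities of Section \ref{sec:spaces} and the composition rules of Section \ref{sec:groups_almost_periodic} remain in force throughout the induction.
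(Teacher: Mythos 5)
Your strategy is genuinely different from the paper's, and it has real gaps. You propose the classical route: continuation principle plus an a~priori estimate in Eulerian variables obtained by differentiating \eqref{eq:ch} and exploiting a cancellation of top-order terms, closed by Gronwall. The paper instead proves the regularity gain with \emph{no estimates at all}: by induction on $m\in M_\gamma$, it uses the right invariance of \eqref{eq:ch'} (Lemma \ref{lem:symmetry}) to write the solution with translated data $\big(\tau_j(s),u_0\circ\tau_j(s)\big)$ as $\big(\varphi(t)\circ\tau_j(s),v(t)\circ\tau_j(s)\big)$; since the local solution map at regularity $m-1$ is $C^1$ in the initial data and the curve $s\mapsto\big(\tau_j(s),u_0\circ\tau_j(s)\big)$ is $C^1$ into $\Diff^{m-1}_\ap\times C^{m-1}_\ap$ when $u_0\in C^m_\ap$, differentiating in $s$ at $s=0$ shows $\partial_{x_j}\varphi(t),\partial_{x_j}v(t)\in C^{m-1}_\ap$, whence $(\varphi(t),v(t))\in\Diff^m_\ap\times C^m_\ap$ by Theorem \ref{th:almost_periodic_functions_equivalence_m}(ii). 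The $C^1$ dependence then comes directly from the theorem on dependence on parameters for the $C^1$ vector field \eqref{eq:ch'} on $\Diff^m_\ap\times C^m_\ap$. This is exactly the kind of argument the paper advertises in the Introduction as avoiding elliptic and tame estimates.

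The concrete problems with your version are the following. First, the central estimate is asserted, not proved, and as written it is dimensionally inconsistent: $\m=A_\alpha u\in C^{m-2}_\ap$, so for $m=k+\gamma$ you cannot take $\partial^\beta\m$ with $|\beta|=k$ and land in $C^\gamma_\ap$; you would need $|\beta|=k-2$, and the claimed conclusion ``a uniform bound on $|\m(t)|_{m-2}$'' must be what drives the Gronwall argument. Whether the commutator terms $[\partial^\beta,u\cdot\nabla]\m$ and the stretching terms are really controlled by $|v(t)|_{m-1}$ alone in the H\"older scale is precisely the hard content, and you flag it yourself as ``the principal obstacle'' without resolving it. Second, your continuation criterion should also exclude degeneration of $\det\big(I+[d_x(\varphi-\id)]\big)$ and blow-up of $|\varphi(t)-\id|_m$, not only of $|v(t)|_m$. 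Third, in the last paragraph the reset by right translation with $\varphi(t_j,u_0)^{-1}$ is problematic for $C^1$ dependence: by Theorem \ref{th:group_regularity} the inversion $\Diff^m_\ap(\R^n)\to\Diff^m_\ap(\R^n)$ is only continuous (one loses $r$ derivatives to get $C^r$), so $u_0\mapsto\varphi(t_j,u_0)^{-1}$ is not $C^1$ into $\Diff^m_\ap(\R^n)$. This detour is unnecessary: once global existence on $[0,2)$ at regularity $m$ is known, $C^1$ dependence on $[0,t_0]$ follows from the standard ODE theory applied to the $C^1$ vector field of \eqref{eq:ch'}, which is how the paper concludes.
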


Since equation \eqref{eq:ch'} describes the geodesics of the right-invariant Riemannian metric \eqref{eq:riemannian_metric} on
$\Diff^\infty_\ap(\R^n)$ it is natural to suppose that it is invariant with respect to right translations. 
In fact, we have

\begin{Lemma}\label{lem:symmetry}
Let $m\in M_\gamma$, $T>0$, and $\psi\in\Diff^m_\ap(\R^n)$. Then, if 
the curve 
\[
(\varphi,v)\in C^1\big([0,T],\Diff^m_\ap(\R^n)\times C^m_\ap\big)
\] 
is a solution of \eqref{eq:ch'} then the right-translated by $\psi$ curve
\begin{equation}\label{eq:translated_curve}
(\varphi\circ\psi,v\circ\psi)\in C^1\big([0,T],\Diff^m_\ap(\R^n)\times C^m_\ap\big)
\end{equation}
is again a solution of \eqref{eq:ch'}.
\end{Lemma}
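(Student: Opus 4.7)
The plan is to verify directly that the system \eqref{eq:ch'} is invariant under right translations, which is the expected feature since it was derived from a right-invariant Riemannian metric. Setting $\tilde\varphi := \varphi\circ\psi$ and $\tilde v := v\circ\psi$, I would first establish the regularity claim \eqref{eq:translated_curve}. Since $\psi$ is a fixed element of $\Diff^m_\ap(\R^n)$, right composition by $\psi$ is a continuous map on $\Diff^m_\ap(\R^n)$ and a bounded linear map on $C^m_\ap$ (by Theorem \ref{th:group_regularity} with $r=0$ and Remark \ref{rem:composition}); since it is linear and bounded on $C^m_\ap$, it commutes with the $t$-derivative, giving $\dot{\tilde\varphi}=\dot\varphi\circ\psi$ and $\dot{\tilde v}=\dot v\circ\psi$ in $C^m_\ap$, continuously in $t$.

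Next I would check the first component of \eqref{eq:ch'}: $\dot{\tilde\varphi}=\dot\varphi\circ\psi=v\circ\psi=\tilde v$, which is immediate. The substantive observation is that the Eulerian velocity is unchanged by right translation:
\[
R_{\tilde\varphi^{-1}}(\tilde v)=(v\circ\psi)\circ(\psi^{-1}\circ\varphi^{-1})=v\circ\varphi^{-1}=R_{\varphi^{-1}}(v)=:u,
\]
where I used $\tilde\varphi^{-1}=\psi^{-1}\circ\varphi^{-1}$. Consequently the Eulerian quantity $A_\alpha^{-1}\circ B_\alpha(u)$ is the same vector field in $C^\infty_\ap$ whether computed from $(\varphi,v)$ or $(\tilde\varphi,\tilde v)$.

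Applying $R_{\tilde\varphi}$ and using $\tilde\varphi=\varphi\circ\psi$, I get
\[
R_{\tilde\varphi}\bigl(A_\alpha^{-1}B_\alpha u\bigr)
=\bigl(A_\alpha^{-1}B_\alpha u\bigr)\circ\varphi\circ\psi
=R_\psi\bigl(R_\varphi\bigl(A_\alpha^{-1}B_\alpha u\bigr)\bigr)
=R_\psi(\dot v)=\dot v\circ\psi=\dot{\tilde v},
\]
where the third equality is the second component of \eqref{eq:ch'} applied to $(\varphi,v)$. Combined with $\dot{\tilde\varphi}=\tilde v$, this shows $(\tilde\varphi,\tilde v)$ solves \eqref{eq:ch'}.

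There is no real obstacle here: the argument is essentially a bookkeeping check exploiting that the operators $A_\alpha$ and $B_\alpha$ act only on the Eulerian velocity $u=v\circ\varphi^{-1}$, and the pre-composition and post-composition with $R_\psi$ cancel out on $u$. The only point requiring a little care is the regularity of $\tilde v$ and $\dot{\tilde v}$, but this is handled by the linearity of $R_\psi$ on $C^m_\ap$ together with Theorem \ref{th:group_regularity} and Remark \ref{rem:composition}, which keep us inside $\Diff^m_\ap(\R^n)\times C^m_\ap$ throughout.
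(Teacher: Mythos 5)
Your proof is correct and is precisely the ``direct substitution'' argument that the paper itself invokes (the paper states the Lemma follows by substituting the translated curve into \eqref{eq:ch'} and omits the details). Your verification that $R_{\tilde\varphi^{-1}}(\tilde v)=R_{\varphi^{-1}}(v)$ and that $R_\psi$, being bounded linear on $C^m_\ap$, commutes with the $t$-derivative supplies exactly the omitted bookkeeping.
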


The Lemma follows by a direct substitution of the curve \eqref{eq:translated_curve} into \eqref{eq:ch'}. 
We will omit the proof. 

\begin{proof}[Proof of Proposition \ref{prop:ch_extension}]
We will prove Proposition \ref{prop:ch_extension} by induction on the regularity exponent 
$m\in M_\gamma$. If $m=m_3$ the statement of the Proposition follows from the choice of the neighborhood $V_{m_3}$.
Further, assume that $m>m_3$, $m\in M_\gamma$, and that there exists a unique solution 
\[
(\varphi,v)\in C^1\big([0,2),\Diff^{m-1}_{\ap}(\R^n)\times C^{m-1}_{\ap}\big).
\]
of equation \eqref{eq:ch'}. This, together with the fact that, by Theorem \ref{th:ch_m}, the right hand side 
of \eqref{eq:ch'} is a $C^1$-smooth vector field on $\Diff^{m-1}_\ap(\R^n)\times C^{m-1}_\ap$, implies that 
there exists an open neighborhood $\mathcal{O}_{m-1}$ of $(\varphi,v)|_{t=0}=(\id,u_0)$ in 
$\Diff^{m-1}_\ap(\R^n)\times C^{m-1}_\ap$ such that for any initial data $(\varphi_0,v_0)\in\mathcal{O}_{m-1}$
there exists a unique solution of \eqref{eq:ch'} such that the data-to-solution map
\begin{equation}\label{eq:local_solution}
S : [0,2)\times\mathcal{O}_{m-1}\to\Diff^{m-1}_\ap(\R^n)\times C^{m-1}_\ap,\quad
(t,\varphi_0,u_0)\mapsto\big(\varphi(t,\varphi_0,u_0),v(t,\varphi_0,u_0)\big),
\end{equation}
is $C^1$-smooth. For any $1\le j\le n$ denote by $\tau_j : \R\to\Diff^\infty_\ap(\R^n)$ the one-parameter group of 
translations on $\R^n$,
\begin{equation}\label{eq:tau_k}
\tau_j(s)(x):=x+s e_j,\quad s\in\R,
\end{equation}
where $e_j$ is the standard $j$-th basis vector in $\R^n$. Since $u_0\in C^m_\ap$ it follows from 
Theorem \ref{th:group_regularity} and Remark \ref{rem:composition} that for any $1\le j\le n$ and for $\varepsilon>0$ 
sufficiently close to zero, the curve 
\begin{equation}\label{eq:gamma_k}
\gamma_j : (-\varepsilon,\varepsilon)\to\mathcal{O}_{m-1},\quad s\mapsto\big(\tau_j(s),u_0\circ\tau_j(s)\big),
\end{equation}
is $C^1$-smooth. It follows from Lemma \ref{lem:symmetry} and the uniqueness of the solutions of \eqref{eq:ch'} that 
for any $1\le j\le n$, $t\in[0,2)$, and $s\in(-\varepsilon, \varepsilon)$ we have
\[
S\big(t,\gamma_j(s)\big)=\big(\varphi(t)\circ\tau_j(s),v(t)\circ\tau_j(s)\big).
\]
Since the map \eqref{eq:local_solution} and the curve \eqref{eq:gamma_k} are $C^1$-smooth we conclude that
for any given $t\in[0,2)$ the curve 
\[
(-\varepsilon,\varepsilon)\to\Diff^{m-1}_\ap\times C^{m-1}_\ap,\quad
s\mapsto S\big(t,\gamma_j(s)\big)=\big(\varphi(t)\circ\tau_j(s),v(t)\circ\tau_j(s)\big)
\]
is $C^1$-smooth. Taking the derivative of this curve at $s=0$ we see that for any $1\le j\le n$,
\[
\frac{\partial\varphi(t)}{\partial x_j},\,\,\frac{\partial v(t)}{\partial x_j}\in C^{m-1}_\ap,
\]
which, by Theorem \ref{th:almost_periodic_functions_equivalence_m}, implies that $\big(\varphi(t),v(t)\big)$ in 
$\Diff^m_\ap(\R^n)\times C^m_\ap$. This, together with the fact that \eqref{eq:ch'} is a $C^1$-smooth dynamical system on 
$\Diff^m_\ap(\R)\times C^m_\ap$, then shows that
\[
(\varphi,v)\in C^1\big([0,2),\Diff^m_\ap(\R)\times C^m_\ap  \big). 
\]
The fact that the data-to-solution map \eqref{eq:data-to-solution_map} is a $C^1$-map follows from
the theorem on the dependence on parameters of the solutions of ODEs in Banach spaces,
applied to the $C^1$-smooth dynamical system \eqref{eq:ch'}.
\end{proof}

As mentioned above, it follows from the inverse function theorem and the non-degeneracy condition 
\eqref{eq:d_0Exp} with $m=m_3$ that the open neighborhood $V_{m_3}$ of zero in $C^{m_3}_\ap$ can be chosen 
so that there exists an open neighborhood $U_{m_3}$ of $\id$ in $\Diff^{m_3}_\ap(\R^n)$ such that the map
\[
\Exp\equiv\Exp_{m_3} : V_{m_3}\to U_{m_3}
\]
is a $C^1$-diffeomorphism. In what follows we will assume that such choice is made.
In addition to \eqref{eq:V_m} for any $m\in M_\gamma$ denote
\begin{equation}\label{eq:U_m}
U_m:=U_{m_3}\cap\Diff^m_\ap(\R^n).
\end{equation}
As mentioned above, Proposition \ref{prop:ch_extension} implies that for any $m\in M_\gamma$
the exponential map \eqref{eq:Exp_m} is well defined on $V_m$ and
\begin{equation}\label{eq:Exp_m'}
\Exp_m\equiv\Exp\big|_{C^m_\ap} : V_m\to U_m
\end{equation}
is a $C^1$-map. Summarizing the above, we obtain

%%%%%%%%%%%%%%%%%%%%%%%%%%%%%%%%%%%%%
\begin{Prop}\label{prop:C^1-map} There exist an open neighborhood $V_{m_3}$ of zero in $C^{m_3}_\ap$ and an open
neighborhood $U_{m_3}$ of identity in $\Diff^{m_3}_\ap(\R^n)$ such that the (Riemannian) exponential map \eqref{eq:Exp_m},
\[
\Exp\equiv\Exp_{m_3} : V_{m_3}\to U_{m_3},
\] 
is a $C^1$-diffeomorphism. Moreover, for any $m\in M_\gamma$ the map \eqref{eq:Exp_m'} where 
$V_m=V_{m_3}\cap C^m_\ap$ and $U_m=V_{m_3}\cap C^m_\ap$ is a $C^1$-map.
\end{Prop}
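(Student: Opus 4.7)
My plan is to package this proposition as a short consequence of two already-established ingredients: the classical inverse function theorem in Banach spaces applied at the base regularity $m = m_3$, and Proposition \ref{prop:ch_extension} to propagate to higher regularities. The statement is essentially a repackaging, so I expect the proof to be brief.

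First I would establish the base case. The scaling argument \eqref{eq:symmetry} combined with Theorem \ref{th:ch_m} at $m = m_3$ already gives an open neighborhood $V_{m_3}$ of zero in $C^{m_3}_\ap$ on which $\Exp_{m_3}$ is $C^1$-smooth into the Banach manifold $\Diff^{m_3}_\ap(\R^n)$. By \eqref{eq:d_0Exp}, its differential at zero is $\id_{C^{m_3}_\ap}$, which is trivially a Banach space isomorphism. The inverse function theorem in Banach spaces then yields, after possibly shrinking $V_{m_3}$, an open neighborhood $U_{m_3}$ of $\id$ in $\Diff^{m_3}_\ap(\R^n)$ so that $\Exp_{m_3} : V_{m_3} \to U_{m_3}$ is a $C^1$-diffeomorphism.

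Next I would handle arbitrary $m \in M_\gamma$ with $m > m_3$. Since the inclusions $C^m_\ap \hookrightarrow C^{m_3}_\ap$ and $\Diff^m_\ap(\R^n) \hookrightarrow \Diff^{m_3}_\ap(\R^n)$ are continuous, the sets $V_m = V_{m_3} \cap C^m_\ap$ and $U_m = U_{m_3} \cap \Diff^m_\ap(\R^n)$ are open in their respective Banach manifolds. The key point is to show $\Exp$ maps $V_m$ into $U_m$: given $u_0 \in V_m$, Proposition \ref{prop:ch_extension} produces a (unique) solution $(\varphi, v) \in C^1\big([0,2), \Diff^m_\ap(\R^n) \times C^m_\ap\big)$ of \eqref{eq:ch'} with $(\varphi, v)|_{t=0} = (\id, u_0)$. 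Evaluating at $t = 1$ keeps $\Exp(u_0) = \varphi(1, u_0)$ in $\Diff^m_\ap(\R^n)$, while $u_0 \in V_{m_3}$ already places $\Exp(u_0)$ in $U_{m_3}$ by the base case. Combining, $\Exp(u_0) \in U_m$. The $C^1$-smoothness of the restricted map $\Exp_m : V_m \to U_m$ then follows immediately by specializing the data-to-solution map \eqref{eq:data-to-solution_map} to $t = 1$.

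The main obstacle has in fact already been dispatched upstream: Proposition \ref{prop:ch_extension} guarantees a time of existence that is uniform across $m \in M_\gamma$, which is precisely what allows a single neighborhood $V_{m_3}$ (chosen once at the base regularity) to be intersected with every $C^m_\ap$ to produce a valid domain for $\Exp_m$. Without this uniform persistence, one could not rule out $\bigcap_{m \in M_\gamma} \widetilde V_m = \emptyset$. Given Proposition \ref{prop:ch_extension}, nothing remains beyond tracking definitions and invoking the inverse function theorem.
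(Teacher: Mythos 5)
Your proposal is correct and follows essentially the same route as the paper: the inverse function theorem in Banach spaces applied at the base regularity $m_3$ via the non-degeneracy condition \eqref{eq:d_0Exp}, followed by Proposition \ref{prop:ch_extension} to get well-definedness and $C^1$-smoothness of $\Exp_m$ on $V_m=V_{m_3}\cap C^m_\ap$ for all $m\in M_\gamma$. You also correctly read $U_m$ as $U_{m_3}\cap\Diff^m_\ap(\R^n)$ (as in \eqref{eq:U_m}), which is clearly what the statement intends despite its typo.
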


The following two statements are needed for the proof of Theorem \ref{th:Exp_infty}.

\begin{Prop}\label{prop:onto}
For any $m\in M_\gamma$ the map \eqref{eq:Exp_m'} is onto.
\end{Prop}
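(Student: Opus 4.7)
The plan is to argue by induction on $m\in M_\gamma$, with base case $m=m_3$ immediate from Proposition \ref{prop:C^1-map} (which gives that $\Exp_{m_3}:V_{m_3}\to U_{m_3}$ is a $C^1$-diffeomorphism). For the inductive step, assume the statement for $m-1\in M_\gamma$ and take $\varphi\in U_m\subseteq U_{m-1}$. Setting $u_0:=\Exp^{-1}(\varphi)$, the inductive hypothesis yields $u_0\in V_{m-1}=V_{m_3}\cap C^{m-1}_\ap$, so by Proposition \ref{prop:ch_extension} at regularity $m-1$ the geodesic $(\varphi(t),v(t))$ with initial data $(\id,u_0)$ belongs to $C^1\big([0,2),\Diff^{m-1}_\ap(\R^n)\times C^{m-1}_\ap\big)$ and satisfies $\varphi(1)=\varphi$. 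By Theorem \ref{th:almost_periodic_functions_equivalence_m}, to conclude $u_0\in C^m_\ap$ it is enough to show $\partial_j u_0\in C^{m-1}_\ap$ for each $1\le j\le n$.

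Fix $j$ and exploit the right-translation symmetry of Lemma \ref{lem:symmetry}: the curve $\big(\varphi(t)\circ\tau_j(s),v(t)\circ\tau_j(s)\big)$ solves \eqref{eq:ch'} with initial data $\big(\tau_j(s),u_0\circ\tau_j(s)\big)$. Differentiating this one-parameter family at $s=0$ produces the Jacobi field $\xi(t):=\partial_j\varphi(t)$, $\eta(t):=\partial_j v(t)$, which satisfies the linearization of \eqref{eq:ch'} along $(\varphi(t),v(t))$. The relation $\dot\xi=\eta$ (the linearization of the first component of \eqref{eq:ch'}) lets me eliminate $\eta$ and recast the Jacobi system as a single second-order linear ODE for $\xi$ alone, with coefficients depending smoothly on $(\varphi,v)\in\Diff^{m-1}_\ap\times C^{m-1}_\ap$. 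The boundary data are $\xi(0)=e_j\in C^\infty_\ap$ (since $\varphi(0)=\id$) and $\xi(1)=\partial_j\varphi\in C^{m-1}_\ap$ (from the hypothesis $\varphi\in\Diff^m_\ap$).

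The crux is the solvability of this two-point boundary value problem within the $C^{m-1}_\ap$-category. That $\Exp_{m_3}$ is a $C^1$-diffeomorphism says that the endpoint map $\delta u\mapsto\xi(1)$ --- equivalently, the derivative $d_{u_0}\Exp_{m_3}$ --- is an isomorphism of $C^{m_3}_\ap$; a persistence-of-regularity argument for the second-order Jacobi ODE, whose coefficients already lie in $C^{m-1}_\ap$, promotes this invertibility to the $C^{m-1}_\ap$-level via the uniqueness of the $C^{m_3}$-solution. The resulting unique $\xi\in C^2([0,1],C^{m-1}_\ap)$ gives $\partial_j u_0=\dot\xi(0)\in C^{m-1}_\ap$, completing the inductive step.

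The hard part will be the promotion of Jacobi endpoint-map invertibility from the base regularity $m_3$ up to $m-1$. The delicate point is that the forward Jacobi solution with $\big(\xi(0),\dot\xi(0)\big)=(0,\partial_j u_0)$ is only guaranteed to satisfy $\eta(1)=\partial_j v(1)\in C^{m-2}_\ap$, so one cannot simply run the first-order Jacobi system backwards at the higher regularity level. Reducing to the second-order ODE in $\xi$ alone, with boundary data $\xi(0),\xi(1)\in C^{m-1}_\ap$, bypasses this mismatch; what remains is to verify that the associated Jacobi operator is Fredholm of index zero with trivial kernel between the appropriate $C^{m-1}_\ap$ spaces, which follows from Proposition \ref{prop:ch_extension} together with the higher analytic smoothness of the right-hand side of \eqref{eq:ch'} noted in Remark \ref{rem:higher_smoothness}.
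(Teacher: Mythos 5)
Your overall strategy---exploit the right-translation equivariance of the geodesic flow together with the invertibility of $d_{u_0}\Exp$ at the base regularity $m_3$ to bootstrap the regularity of $u_0=\Exp^{-1}(\varphi)$---is the right idea, but the implementation via the Jacobi two-point boundary value problem has a genuine gap, in fact two. First, the very existence of the Jacobi field $\xi(t)=\partial_j\varphi(t)$, $\eta(t)=\partial_j v(t)$ at any usable regularity level is not justified: to differentiate the translated family $s\mapsto\big(\varphi(t)\circ\tau_j(s),v(t)\circ\tau_j(s)\big)$ at $s=0$ in the $C^{m-1}_\ap$ (or even $C^{m-2}_\ap$) topology you need the curve of initial data $s\mapsto\big(\tau_j(s),u_0\circ\tau_j(s)\big)$ to be $C^1$ into $\Diff^{m-1}_\ap\times C^{m-1}_\ap$, which requires $u_0\in C^m_\ap$ --- precisely the conclusion. (This is exactly the hypothesis that makes the analogous differentiation work in the proof of Proposition \ref{prop:ch_extension}, where $u_0\in C^m_\ap$ is \emph{assumed}.) The candidate field $\partial_j\varphi(t)$ that you do have a priori lives only in $C^{m-2}_\ap$, and in the first inductive step ($m=4+\gamma$) the level $m-2=2+\gamma$ lies below the well-posedness threshold $m\ge 3$ of Theorem \ref{th:ch_m}, so the linearized equation is not even defined there; consequently the identification of your BVP solution with the actual derivative of the geodesic, needed to conclude $\dot\xi(0)=\partial_j u_0$, cannot be made by uniqueness at a common regularity level. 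Second, the ``promotion of Jacobi endpoint-map invertibility from $m_3$ up to $m-1$'' that you flag as the hard part is essentially the surjectivity of $d_{u_0}\Exp$ on $C^{m-1}_\ap$, i.e.\ the content of Proposition \ref{prop:exact_regularity}; you defer it to an unproven Fredholm-index-zero claim for which no compactness or index computation is supplied, so the argument as written assumes a statement of the same depth as the one being proved.

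The paper avoids both problems by differentiating in the opposite direction. It constructs the map $\mathcal{F}(\psi,w)=\big(\psi,\mathcal{E}(\psi,w)\big)$, inverts it by the Banach inverse function theorem at level $m_3$ (using that the geodesic vector field is $C^\ell$ for every $\ell$), and obtains a $C^\ell$-smooth map $\widetilde{\mathcal{F}}=\pi_2\circ\mathcal{F}^{-1}$ satisfying $\widetilde{\mathcal{F}}(\varphi_1\circ\tau,\varphi_2\circ\tau)=\widetilde{\mathcal{F}}(\varphi_1,\varphi_2)\circ\tau$. It then differentiates $s\mapsto\widetilde{\mathcal{F}}\big(\tau_\xi(s),\varphi\circ\tau_\xi(s)\big)=u_0\circ\tau_\xi(s)$ up to order $\ell=[m]-3$; here the smoothness in $s$ of the input curve uses only the \emph{hypothesis} $\varphi\in\Diff^m_\ap$ (via Theorem \ref{th:group_regularity}), not any regularity of $u_0$, so there is no circularity, and Theorem \ref{th:almost_periodic_functions_equivalence_m}(ii) converts $[d^ju_0](\xi)\in C^{m_3}_\ap$ into $u_0\in C^m_\ap$ in one step, with no induction and no boundary value problem. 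If you want to salvage your write-up, replace the Jacobi BVP by this ``differentiate the smooth inverse along the translated endpoint'' device.
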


\begin{Prop}\label{prop:exact_regularity}
For any $m\in M_\gamma$ and for any $u_0\in V_{m_3}\cap C^{m+1}_\ap$ one has 
\[
d_{u_0}{\Exp}\big(C^m_{\ap}\setminus C^{m+1}_{\ap}\big)\subseteq
C^m_{\ap}\setminus C^{m+1}_{\ap}.
\]
\end{Prop}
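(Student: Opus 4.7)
The plan is to realize $d_{u_0}\Exp(\delta u_0)$ as the first-coordinate value at time $1$ of the linearization of \eqref{eq:ch'} along the geodesic starting at $(\id,u_0)$, and then to exploit compatibility of the linearized flow across the regularity scales $C^m_\ap$ and $C^{m+1}_\ap$.

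Since $u_0\in V_{m_3}\cap C^{m+1}_\ap$ and $m+1\in M_\gamma$, Proposition \ref{prop:ch_extension} provides the Lagrangian geodesic $(\varphi,v)\in C^1\bigl([0,2);\Diff^{m+1}_\ap(\R^n)\times C^{m+1}_\ap\bigr)$. Writing $F(\varphi,v)$ for the velocity component of the right hand side of \eqref{eq:ch'}, the linearization along $(\varphi,v)$ yields the Cauchy problem
\[
\dot\xi=w,\qquad \dot w=D_\varphi F(\varphi(t),v(t))\,\xi+D_v F(\varphi(t),v(t))\,w,\qquad (\xi,w)\big|_{t=0}=(0,\delta u_0).
\]
By Theorem \ref{th:ch_m} and Remark \ref{rem:higher_smoothness} this vector field is $C^1$-smooth (in fact analytic) on $\Diff^k_\ap(\R^n)\times C^k_\ap$ for each $k\in\{m,m+1\}$, so the linearized system defines continuous propagators $\Phi_k(t)\colon(C^k_\ap)^2\to(C^k_\ap)^2$ that are linear isomorphisms at every $t\in[0,2)$; by uniqueness $\Phi_{m+1}(t)=\Phi_m(t)|_{(C^{m+1}_\ap)^2}$. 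Differentiation of the nonlinear data-to-solution map gives $d_{u_0}\Exp(\delta u_0)=\xi(1)$.

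Setting $E_k(\delta u_0):=\xi(1)$ yields bounded linear maps $E_m\colon C^m_\ap\to C^m_\ap$ and $E_{m+1}\colon C^{m+1}_\ap\to C^{m+1}_\ap$ with $E_{m+1}=E_m|_{C^{m+1}_\ap}$. By Proposition \ref{prop:C^1-map}, $E_{m_3}$ is a linear isomorphism of $C^{m_3}_\ap$, and its restriction $E_m$ is therefore injective. The statement of the Proposition reduces to the surjectivity of $E_{m+1}\colon C^{m+1}_\ap\to C^{m+1}_\ap$: given $\xi(1)\in C^{m+1}_\ap$, pick $\widetilde{\delta u}_0\in C^{m+1}_\ap$ with $E_{m+1}(\widetilde{\delta u}_0)=\xi(1)=E_m(\delta u_0)$; injectivity of $E_m$ then forces $\delta u_0=\widetilde{\delta u}_0\in C^{m+1}_\ap$.

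The main obstacle is thus proving surjectivity of $E_{m+1}$. The cleanest strategy is a persistence-of-regularity argument: show that $w(1)\in C^{m+1}_\ap$ whenever $\xi(1)\in C^{m+1}_\ap$, so that the backward propagator $\Phi_m(1)^{-1}$ applied to $(\xi(1),w(1))\in(C^{m+1}_\ap)^2$ gives $(0,\delta u_0)\in(C^{m+1}_\ap)^2$. This promotion from the first-coordinate regularity to the pair is expected to hinge on the explicit algebraic structure of $D_\varphi F,D_v F$ built from $A_\alpha^{-1}$ (cf. Corollary \ref{coro:A_on_ap_isomorphism}) together with the analyticity of $F$ stressed in Remark \ref{rem:higher_smoothness}. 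A perturbative alternative is to apply the inverse function theorem to $\Exp_{m+1}\colon V_{m+1}\to U_{m+1}$ at $u_0=0$---which yields surjectivity of $E_{m+1}$ in a $C^{m+1}_\ap$-neighborhood of zero---and then to propagate the conclusion throughout $V_{m_3}\cap C^{m+1}_\ap$ via the right-invariance Lemma \ref{lem:symmetry} and the scaling symmetry \eqref{eq:symmetry}.
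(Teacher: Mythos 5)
Your reduction is sound as far as it goes: identifying $d_{u_0}\Exp(\delta u_0)$ with $\xi(1)$ for the linearized flow, noting that $E_m$ is injective as the restriction of the isomorphism $E_{m_3}=d_{u_0}\Exp\colon C^{m_3}_\ap\to C^{m_3}_\ap$, and observing that the Proposition is equivalent to the statement that $(d_{u_0}\Exp)^{-1}$ maps $C^{m+1}_\ap$ into $C^{m+1}_\ap$ (i.e.\ to surjectivity of $E_{m+1}$). This is exactly the reduction the paper makes in the first paragraph of its proof. But that reduction is the easy part, and your argument stops there: neither of the two strategies you offer for the surjectivity of $E_{m+1}$ is carried out, and neither works as stated. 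The ``persistence of regularity'' route ($w(1)\in C^{m+1}_\ap$ whenever $\xi(1)\in C^{m+1}_\ap$) is essentially the assertion to be proved, restated for the second component; saying it is ``expected to hinge on the algebraic structure of $D_\varphi F, D_vF$'' is not an argument, and no cancellation in $A_\alpha^{-1}\circ B_\alpha$ is exhibited that would produce it. The perturbative route fails quantitatively: the inverse function theorem applied to $\Exp_{m+1}$ at $0$ gives invertibility of $d_{u_0}\Exp_{m+1}$ only for $u_0$ in some $C^{m+1}_\ap$-neighborhood of zero \emph{whose size depends on $m$}, whereas the Proposition is claimed on all of $V_{m_3}\cap C^{m+1}_\ap$, the trace of a fixed $C^{m_3}$-neighborhood. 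Your proposed propagation does not bridge this: right translation by $\psi$ (Lemma \ref{lem:symmetry}) moves the initial data $(\id,u_0)$ to $(\psi,u_0\circ\psi)$, not to $(\id,\tilde u_0)$, so it does not reach other points of $V_{m_3}\cap C^{m+1}_\ap$; and the scaling \eqref{eq:symmetry} trades $\Exp(u_0)$ for the time-$1/\mu$ flow of $\mu u_0$, which leaves the interval $[0,2)$ on which the flow is defined as soon as $\mu<1/2$.

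The idea you are missing is the one the paper actually uses: a derivative-gain argument based on translation equivariance. The paper constructs (via the inverse function theorem at the fixed base level $m_3$) the map $\widetilde{\mathcal F}\colon\mathcal W^{m_3}\to C^{m_3}_\ap$, which is $C^\ell$-smooth for every $\ell$ and satisfies $\widetilde{\mathcal F}(\varphi_1\circ\tau,\varphi_2\circ\tau)=\widetilde{\mathcal F}(\varphi_1,\varphi_2)\circ\tau$. Given $\delta\varphi\in C^{m+1}_\ap$ with $\varphi=\Exp(u_0)\in\Diff^{m+1}_\ap(\R^n)$, one differentiates the identity
\[
D_2\widetilde{\mathcal F}\big|_{(\tau_\xi(s),\,\varphi\circ\tau_\xi(s))}\bigl((\delta\varphi)\circ\tau_\xi(s)\bigr)=(\delta u_0)\circ\tau_\xi(s)
\]
repeatedly in the translation parameter $s$; since the left-hand side is a $C^\ell$-curve into $C^{m_3}_\ap$ with $\ell=[m]-2$, all differentials $[d^j(\delta u_0)](\xi)$ lie in $C^{m_3}_\ap$, and Theorem \ref{th:almost_periodic_functions_equivalence_m}\,(ii) upgrades $\delta u_0$ to $C^{m+1}_\ap$. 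Without this (or an equivalent) mechanism for converting smoothness of the flow at a \emph{fixed} regularity level into spatial regularity of $\delta u_0$, the surjectivity of $E_{m+1}$ — and hence the Proposition — remains unproved in your proposal.
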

%%%%%%%%%%%%%%%%%%%%%%%%%%%%%%%%%%%%%%

Before proceeding with the proofs of these two statements we need to define the auxiliary map $\widetilde{\mathcal{F}}$
(see \eqref{eq:F-tilde} below). It follows from Theorem \ref{th:ch_m}, Remark \ref{rem:higher_smoothness}, and 
Proposition \ref{prop:ch_extension},
that for any $u_0\in V_{m_3}$ there exists an open neighborhood $\mathcal{O}_{m_3}$ of $(\id,u_0)$ in 
$\Diff^{m_3}_\ap(\R^n)\times C^{m_3}_\ap$ such that for any initial data $(\psi,w)\in\mathcal{O}_{m_3}$ there exists a unique 
solution of \eqref{eq:ch'} such that for any $\ell\ge 1$ the data-to-solution map
\[
[0,2)\times\mathcal{O}_{m_3}\to\Diff^{m_3}_\ap(\R^n)\times C^{m_3}_\ap,\quad
(t,\psi,w)\mapsto\big(\varphi(t,\psi,w),v(t,\psi,w)\big),
\]
is a $C^\ell$-map. This allows us to define
\[
E : \mathcal{O}_{m_3}\to\Diff^{m_3}_\ap(\R^n),\quad(\psi,w)\mapsto\varphi(1,\psi,w),
\]
which is a $C^\ell$-map such that $E(\id, w)=\Exp(w)$ for any $(\id, w)\in\mathcal{O}_{m_3}$. Now, consider the map
\[
\mathcal{F} : \mathcal{O}_{m_3}\to\Diff^{m_3}_\ap(\R^n)\times\Diff^{m_3}_\ap(\R^n),\quad 
(\psi,w)\mapsto\big(\psi,E(\psi,w)\big).
\]
The differential of $\mathcal{F}$ at the point $(\id,u_0)$ is
\begin{equation}\label{eq:matrix}
d_{(\id,u_0)}\mathcal{F} =
\begin{pmatrix}
\id_{C^{m_3}_\ap} & 0\\
* & d_{u_0}\Exp
\end{pmatrix}.
\end{equation}
Since by construction $\Exp: V_{m_3}\to U_{m_3}$ is a $C^1$-diffeomorphism, the differential 
$d_{u_0}\Exp : C^{m_3}_\ap\to C^{m_3}_\ap$ is 
an isomorphism. This together with \eqref{eq:matrix} implies that 
\[
d_{(\id, u_0)}\mathcal{F} : C^{m_3}_\ap\times C^{m_3}_\ap\to C^{m_3}_\ap\times C^{m_3}_\ap
\] 
is an isomorphism. Then, by the inverse function theorem in Banach spaces, we can shrink the neighborhood $\mathcal{O}_{m_3}$
of $(\id,u_0)$ in $\Diff^{m_3}_\ap(\R^n)\times C^{m_3}_\ap$ if necessary so that the map
\begin{equation}\label{eq:F}
\mathcal{F} : \mathcal{O}_{m_3}\to\mathcal{W}^{m_3},
\end{equation}
where $\mathcal{W}^{m_3}$ is an open neighborhood of $\big(\id,\Exp(u_0)\big)$ in 
$\Diff^{m_3}_\ap(\R^n)\times\Diff^{m_3}_\ap(\R^n)$,
is a $C^\ell$-diffeomorphism. In particular, we see that the map
\begin{equation}\label{eq:F-tilde}
\widetilde{\mathcal{F}}:=\pi_2\circ\mathcal{F}^{-1} : \mathcal{W}^{m_3}\to C^{m_3}_\ap
\end{equation}
where $\pi_2 : \Diff^{m_3}_\ap(\R^n)\times C^{m_3}_\ap\to C^{m_3}_\ap$ is the projection onto the second component,
is $C^\ell$-smooth and
\[
\widetilde{\mathcal{F}}\big(\id,\Exp(u_0)\big)=u_0.
\]
It follows from Lemma \ref{lem:symmetry} and the uniqueness of the solutions of \eqref{eq:ch'} that
for any $(\varphi_1,\varphi_2)\in\mathcal{W}^{m_3}$ and for any $\tau\in\Diff^{m_3}_\ap(\R^n)$ sufficiently close to 
$\id$ in $\Diff^{m_3}_\ap(\R^n)$ we have 
\begin{equation}\label{eq:F_symmetry}
\widetilde{\mathcal{F}}(\varphi_1\circ\tau,\varphi_2\circ\tau)=\widetilde{\mathcal{F}}(\varphi_1,\varphi_2)\circ\tau.
\end{equation}

\begin{remark}\label{rem:geometry}
The map $\widetilde{\mathcal{F}}$ has a simple geometrical meaning: For a given $u_0\in V_{m_3}$, $\widetilde{\mathcal{F}}$ 
assigns to a pair of diffeomorphisms $\varphi_1,\varphi_2\in U_{m_3}\subseteq\Diff^{m_3}_\ap(\R^n)$ such that $\varphi_1$ is 
sufficiently close to $\id$ and $\varphi_2$ is sufficiently close to $\Exp(u_0)$ in $\Diff^{m_3}_\ap(\R^n)$ a vector 
$w\in C^{m_3}_\ap$ such that the geodesic issuing at $\varphi_1$ with velocity $w$ reaches $\varphi_2$ at time $t=1$. 
The main point is that $\widetilde{\mathcal{F}}$ is a $C^\ell$-map for any given 
$\ell\ge 1$. Note also that by construction $\widetilde{\mathcal{F}}\big(\id,\Exp(u_0)\big)=u_0$.
\end{remark}

Now we are ready to proof Proposition \ref{prop:onto} and Proposition \ref{prop:exact_regularity}.

\begin{proof}[Proof of Proposition \ref{prop:onto}]
If $m=m_3$ then Proposition \ref{prop:onto} holds by the construction of the neighborhoods $V_{m_3}$ and $U_{m_3}$. 
Now, assume that $m> m_3$, $m\in M_\gamma$, and take $\varphi\in U_{m_3}\cap C^m_\ap$. Since by construction 
$\Exp : V_{m_3}\to U_{m_3}$ is a $C^1$-diffeomorphism, there exists $u_0\in V_{m_3}$ such that $\Exp(u_0)=\varphi$. 
Hence, in view of \eqref{eq:F_symmetry}, for any $\tau\in\Diff^{m_3}_\ap(\R^n)$ that is sufficiently close to $\id$ in 
$\Diff^{m_3}_\ap(\R^n)$ we have
\begin{equation}\label{F_symmetry'}
\widetilde{\mathcal{F}}(\tau,\varphi\circ\tau)=\widetilde{\mathcal{F}}(\id,\varphi)\circ\tau=u_0\circ\tau.
\end{equation}
Take an arbitrary vector $\xi\in\R^n$ and consider the  one-parameter group of translations $\tau_\xi : \R\to\Diff^\infty_\ap(\R^n)$,
\begin{equation}\label{eq:tau_xi}
\tau_\xi(s)(x):=x+s\,\xi.
\end{equation}
For $\varepsilon>0$ sufficiently close to zero consider the $C^\ell$-smooth curve (see Theorem \ref{th:group_regularity})
\[
\gamma_\xi : (-\varepsilon,\varepsilon)\to\mathcal{W}_{m_3},\quad s\mapsto\big(\tau_\xi(s),\varphi\circ\tau_\xi(s) \big),
\]
where $\ell:=[m]-3\ge 1$. Since the map \eqref{eq:F-tilde} is $C^\infty$-smooth we conclude that the curve
\[
(-\varepsilon,\varepsilon)\to C^{m_3}_\ap,\quad 
s\mapsto\widetilde{\mathcal{F}}\big(\tau_\xi(s),\varphi\circ\tau_\xi(s)\big)=u_0\circ\tau_\xi(s),
\]
is a $C^\ell$-map. The equality above follows from \eqref{F_symmetry'}.
By taking the $j$-th derivative of this curve at $s=0$ successively for $1\le j\le\ell$, we see that 
for any $1\le j\le\ell$ and for any $\xi\in\R^n$,
\[
[d^j u_0](\xi)\in C^{m_3}_\ap,
\]
where $[d^j u_0]$ denotes the $j$-th differential of $u_0$. 
This together with Theorem \ref{th:almost_periodic_functions_equivalence_m} (ii) then implies that $u_0\in C^m_\ap$.
Then, by Proposition \ref{prop:C^1-map}, $\varphi=\Exp(u_0)\in\Diff^m_\ap(\R^n)$.
This completes the proof of Proposition \ref{prop:onto}.
\end{proof}

\begin{proof}[Proof of Proposition \ref{prop:exact_regularity}]
Assume that $m\in M_\gamma$ and $u_0\in V_{m_3}\cap C^{m+1}_\ap$. 
Since $\Exp : V_{m_3}\to U_{m_3}$ is a $C^1$-diffeomorphism and $u_0\in V_{m_3}$ we conclude that
\begin{equation}\label{eq:dExp}
d_{u_0}\Exp : C^{m_3}_\ap\to C^{m_3}_\ap
\end{equation}
is an isomorphism. Moreover, it follows from Proposition \ref{prop:C^1-map} and $u_0\in V_{m_3}\cap C^{m+1}_\ap$ that 
\[
\varphi:=\Exp(u_0)\in\Diff^{m+1}_\ap(\R^n)
\]
and
\[
d_{u_0}\Exp : C^m_\ap\to C^m_\ap.
\]
Hence, Proposition \ref{prop:exact_regularity} will follow if we show that the inverse of \eqref{eq:dExp} maps $C^{m+1}_\ap$
into $C^{m+1}_\ap$. Let us prove this: Take a variation vector $\delta\varphi\in C^{m+1}_\ap$. Then for $\varepsilon>0$ 
sufficiently small and for any given $\xi\in\R^n$ consider the $C^1$-map
\[
(-\varepsilon,\varepsilon)\times(-\varepsilon,\varepsilon)\to\mathcal{W}_{m_3},\quad 
(t,s)\mapsto\Big(\tau_\xi(s),\big(\varphi+t\delta\varphi\big)\circ\tau_\xi(s)\Big),
\]
where $\tau_\xi(s)\in\Diff^\infty_\ap(\R^n)$ is given by \eqref{eq:tau_xi} and $\mathcal{W}_{m_3}$ is the domain of definition 
of the $C^\infty$-map \eqref{eq:F-tilde}. It follows from \eqref{eq:F_symmetry} that for any $t,s\in(-\varepsilon,\varepsilon)$,
\[
\widetilde{\mathcal{F}}\Big(\tau_\xi(s),\big(\varphi+t\delta\varphi\big)\circ\tau_\xi(s)\Big)=
\widetilde{\mathcal{F}}\big(\id,\varphi+t\delta\varphi\big)\circ\tau_\xi(s).
\]
By taking the partial derivative $\frac{\partial}{\partial t}\big|_{t=0}$ of this map we see that for any
$s\in(-\varepsilon,\varepsilon)$,
\begin{equation}\label{eq:symmetry''}
D_2\widetilde{\mathcal{F}}\big|_{\big(\tau_\xi(s),\varphi\circ\tau_\xi(s)\big)}\big((\delta\varphi)\circ\tau_\xi(s)\big)=
(\delta u_0)\circ\tau_\xi(s),
\end{equation}
where $\delta u_0\in C^{m_3}_\ap$ denotes the preimage of the variation vector $\delta\varphi\in C^{m_3}_\ap$ with respect to the
isomorphism \eqref{eq:dExp}. Since the map \eqref{eq:F-tilde} is $C^\infty$-smooth and since, by 
Theorem \ref{th:group_regularity} and the fact that $\varphi\in\Diff^{m+1}_\ap(\R^n)$ and $\delta\varphi\in C^{m+1}_\ap$, 
the curves
\[
(-\varepsilon,\varepsilon)\to\Diff^{m_3}_\ap(\R^n)\times\Diff^{m_3}_\ap(\R^n),\quad
s\mapsto\big(\tau_\xi(s),\varphi\circ\tau_\xi(s)\big)
\]
and
\[
(-\varepsilon,\varepsilon)\to C^{m_3}_\ap,\quad s\mapsto(\delta\varphi)\circ\tau_\xi(s)
\]
are $C^\ell$-smooth with $\ell:=[m]-2$, we conclude from \eqref{eq:symmetry''} that 
\[
(-\varepsilon,\varepsilon)\mapsto(\delta u_0)\circ\tau_\xi(s)
\]
is a $C^\ell$-map. By taking the $j$-th derivative of this curve at $s=0$ successively for $1\le j\le\ell$ we conclude
that for any $1\le j\le\ell$ and for any $\xi\in\R^n$,
\[
\big[d^j(\delta u_0)\big](\xi)\in C^{m_3}_\ap,
\]
where, as above, $\big[d^j(\delta u_0)\big]$ denotes the $j$-th differential of $\delta u_0$.
This together with Theorem \ref{th:almost_periodic_functions_equivalence_m} (ii) implies that
$\delta u_0\in C^{m+1}_\ap$.  This completes the proof of Proposition \ref{prop:exact_regularity}.
\end{proof}

\medskip

Now, we are ready to prove Theorem \ref{th:Exp_infty} and Corollary \ref{coro:gauss_lemma}.

\begin{proof}[Proof of Theorem \ref{th:Exp_infty}]
Theorem \ref{th:Exp_infty} follows directly from Proposition \ref{prop:C^1-map}, Proposition \ref{prop:onto},
Proposition \ref{prop:exact_regularity} proved above, and Theorem \ref{th:inverse_function_theorem} in Appendix C.
\end{proof}

\begin{proof}[Proof of Corollary \ref{coro:gauss_lemma}]
Take $u,\delta u\in C^\infty_\ap$ and for a given $\varepsilon>0$
consider the one-parameter family of geodesics on $\Diff^\infty_\ap(\R^n)$,
\begin{equation*}
\zeta : (-\varepsilon,\varepsilon)\times[0,1]\to\Diff^\infty_\ap(\R^n),\quad
\zeta_s(t)\equiv\zeta(s,t):=\Exp\big(t(u+s\,\delta u)\big),
\end{equation*}
where for simplicity of notation we set $\Exp\equiv\Exp_\infty$. 
Note that these geodesics start at the identity, $\zeta(s,0)=\id$ for all $s\in(-\varepsilon,\varepsilon)$, but
have not necessarily fixed endpoints $\zeta(s,1)$, $s\in(-\varepsilon,\varepsilon)$. 
We set $\zeta_0:=\zeta(0,t)$, $\dot\zeta_0:=\frac{\partial\zeta}{\partial t}(0,t)$, and 
$\delta\zeta_0:=\frac{\partial\zeta}{\partial s}(0,t)$.
Then, we obtain for the variation of the energy functional \eqref{eq:energy_functional}, 
\begin{equation}\label{eq:variation1}
\frac{d}{ds}\Big|_{s=0}\mathcal{E}(\zeta_s)=\int_0^1\text{\rm EL}(\zeta_0,\dot\zeta_0)(\delta\zeta_0)\,dt
+\nu_\alpha(\dot\zeta_0,\delta\zeta_0)\big|_{t=1}-\nu_\alpha(\dot\zeta_0,\delta\zeta_0)\big|_{t=0},
\end{equation}
where for any given $\xi\in C^\infty_\ap$ and for any $t\in[0,1]$,
\[
\text{\rm EL}\big(\zeta_0(t),\dot\zeta_0(t)\big)(\xi):=(D_1L)\big(\zeta_0(t),\dot\zeta_0(t)\big)(\xi)-
\big[(D_2L)\big(\zeta_0(t),\dot\zeta_0(t)\big)(\xi)\big]^{\LargerCdot},
\]
and $L(\varphi,v):=\frac{1}{2}\nu_\alpha(\varphi)(v,v)$, $(\varphi,v)\in\Diff^\infty_\ap(\R^n)\times C^\infty_\ap$, is the 
Lagrangian.\footnote{Here $D_kL$, $k=1,2$, denotes the partial derivative with respect to the $k$-th argument.}
Since, in our framework, the geodesics  are obtained from the least action principle, they satisfies the variational 
Euler-Lagrange equation, i.e. 
${\rm EL}(\zeta_0,\dot\zeta_0)(\xi)=0$ for any $\xi\in C^\infty_\ap$ and for any $t\in[0,1]$.
Hence, by \eqref{eq:variation1},
\begin{eqnarray}
\frac{d}{ds}\Big|_{s=0}\mathcal{E}(\zeta_s)&=&\nu_\alpha\big(\dot\zeta_0(1),\delta\zeta_0(1)\big)
-\nu_\alpha\big(\dot\zeta_0(0),\delta\zeta_0(0)\big)\label{eq:variation2}\\
&=&\nu_\alpha\big(d_u\Exp(u),d_u\Exp(\delta u)\big)\label{eq:variation3},
\end{eqnarray}
where we used that $\delta\zeta_0(0)=0$.
\begin{remark}[Conservation of Energy]\label{rem:conservation_of_energy}
The variational formula \eqref{eq:variation2} holds, with $\zeta$ replaced by $\varphi$, for any $C^2_F$-variation 
$\varphi : (-\varepsilon,\varepsilon)\times[0,1]\to\Diff^\infty_\ap(\R^n)$ of a given geodesic 
$\varphi_0 : [0,1]\to\Diff^\infty_\ap(\R^n)$.\footnote{By Theorem \ref{th:Exp_infty}, the geodesic 
$\varphi_0$ is defined in some open interval of $\R$ containing $[0,1]$.}
By taking $\varphi(s,t):=\varphi_0(t+s)$ we obtain
\[
\mathcal{E}(\varphi_s)=\frac{1}{2}\int_0^1\nu_\alpha(\varphi_0(t+s))\big(\dot\varphi_0(t+s),\dot\varphi_0(t+s)\big)\,dt
=\frac{1}{2}\int_s^{1+s}\!\!\!\!\!\!\!\nu_\alpha(\varphi_0(t))\big(\dot\varphi_0(t),\dot\varphi_0(t)\big)\,dt
\]
which implies that 
\[
\frac{d}{ds}\Big|_{s=0}\mathcal{E}(\varphi_s)=
\frac{1}{2}\Big(\nu_\alpha\big(\dot\varphi_0(1),\dot\varphi_0(1)\big)
-\nu_\alpha\big(\dot\varphi_0(0),\dot\varphi_0(0)\big)\Big).
\]
By comparing this with \eqref{eq:variation2} we conclude that
\[
\nu_\alpha\big(\dot\varphi_0(1),\dot\varphi_0(1)\big)=\nu_\alpha\big(\dot\varphi_0(0),\dot\varphi_0(0)\big).
\]
Then, by rescaling the initial vector of the geodesic $\varphi_0$, we see that the quantity 
$\nu_\alpha\big(\dot\varphi_0(t),\dot\varphi_0(t)\big)$ is independent of $t$ on the domain of definition of $\varphi_0$.
\end{remark}

Since by Remark \ref{rem:conservation_of_energy}, 
$\nu_\alpha\big(\dot\zeta_s(t),\dot\zeta_s(t)\big)=\langle u+s\,\delta u,u+s\,\delta u\rangle_\alpha$ 
for any $s\in(-\varepsilon,\varepsilon)$ and $t\in[0,1]$, we see that 
$\mathcal{E}(\zeta_s)=\frac{1}{2}\langle u+s\,\delta u,u+s\,\delta u\rangle_\alpha$.
This, together with \eqref{eq:variation3}, then implies that
\begin{equation}\label{eq:gauss_lemma}
\nu_\alpha\big(d_u\Exp(u),d_u\Exp(\delta u)\big)=\langle u,\delta u\rangle_\alpha,
\end{equation}
which is nothing else but an analog of the classical Gauss Lemma in finite dimensional Riemannian geometry.
Standard arguments as in the proof of Lemma 10.6 and Theorem 10.4 in \cite{Milnor} then imply 
that any diffeomorphism $\varphi$ from the neighborhood $U$ of $\id$ in $\Diff^\infty_\ap(\R^n)$
given by Theorem \ref{th:Exp_infty} is connected with the identity $\id$ by a unique minimal among the curves lying 
in $U$ geodesic. The Corollary then follows from the right invariance of the weak Riemannian metric $\nu_\alpha$.
\end{proof}

%%%%%%%%%%%%%%%%%%%%%%%%%%%%%%%%%%%%%%%%%%%
\section{Almost periodic diffeomorphisms on Lie groups }\label{sec:group_almost_periodic_general}
In this Section we generalize the group of almost periodic diffeomorphisms from $\R^n$ to any 
finite dimensional Lie group $G$, $\dim G=n$. Note that our aim is only to indicate such a generalization by giving the main
definitions and by proving only the very basic results from Section \ref{sec:spaces} and Section \ref{sec:groups_almost_periodic}.
We do not attempt to generalize each of the statements proved in these sections. In particular, we do not treat the
case of H{\"o}lderian exponents. A full generalization of the results proved in this paper to an arbitrary Lie group $G$ 
will require a separate study.

\medskip

Almost periodic functions on an abstract group $G$ were defined by von Neumann in \cite{vonNeumann}.
By definition, a bounded function $f : G\to\C$ is called {\em almost periodic} (in the sense of von Neumann) if
the family of functions 
\[
\mathcal{S}_f:=\big\{f_c : G\to\C\,\big|\,f_c(x):=f(xc),\,\forall x, c\in G\big\}, 
\]
is precompact with respect to the supremum norm $|f|_\infty=\sup_{x\in G}|f(x)|$. Note that this definition
does not require any topology on $G$. Since our interest to almost periodic functions stems from the analysis 
of almost periodic solutions of PDEs on non-compact manifolds, in what follows we will assume that $G$ 
is a finite dimensional Lie group with Lie algebra $\g$. Although not specifically assumed, we will be mostly
interested in the situation when $G$ is a {\em non-compact} Lie group.
As usual, we will identify the Lie algebra $\g$ with the tangent space at the identity $T_eG$. 
For a given $x\in G$ consider the right translation $R_x : G\to G$, $R_x: g\mapsto g x$. 
Then, the differential of $R_x$ at the identity induces an isomorphism $d_e R_x: \g\to T_xG$ that 
allows us to trivialize the tangent bundle $TG$,
\begin{equation}\label{eq:trivialization}
\tau : G\times\g\to TG,\quad (x,\xi)\stackrel{\tau}{\mapsto}\big(x,(d_eR_x)(\xi)\big).
\end{equation}
We will also fix a positive scalar product $\mu_e : \g\times\g\to\R$ on $\g$ and denote by $\mu$ the corresponding 
right-invariant Riemannian metric on $TG$. Denote by $\nabla$ the Levi-Civita connection of $\mu$.
Let $m\ge 0$, $m\in\Z$. By definition, a $C^m$-smooth vector field $X$ on $G$ belongs to the class $C^m_b(TG)$ if
\begin{equation}\label{eq:boundedness_condition}
\|\nabla^j X\|_\mu<\infty,\quad 0\le j\le m,
\end{equation}
where $\nabla^j$ is the $j$-th power of $\nabla$ and $\|\cdot\|_\mu$ is the induced by the Riemannian metric
supremum norm on the space of tensor fields. We will equip $C^m_b(TG)$ with the norm
\begin{equation}\label{eq:C^m-norm_of_X}
\|X\|_m:=\max_{0\le j\le m}\|\nabla^j X\|_\mu.
\end{equation}
Similarly, a $\g$-valued function $F\in C^m(G,\g)$ belongs to the class 
$C^m_b(G,\g)$ if the vector field
\begin{equation}\label{eq:X_F}
X_F:=\big(\tau^{-1}\big)^*(F)
\end{equation}
belongs to $C^m_b(TG)$. One easily checks that $C^m_b(TG)$ is a Banach spaces and that the map
\begin{equation}\label{eq:tau^*}
\tau^* : C^m_b(TG)\to C^m_b(G,\g),\quad X\mapsto F_X:=\tau^*(X),
\end{equation}
is a bijective linear map. The map \eqref{eq:tau^*} trivially becomes an isomorphism of Banach spaces if 
we equip $C^m_b(G,\g)$ with the norm $|F|_m:=\|X_F\|_m$ induced from the norm
\eqref{eq:C^m-norm_of_X} on $C^m_b(TG)$ via the map \eqref{eq:tau^*}.
Following von Neumann, for a given $F\in C^m_b(G,\g)$ consider the set 
\[
\mathcal{S}_F:=\big\{F_c\,\big|\,F_c(x):=F(xc)\big\}_{c\in G}
\] 
and define the space of {\em $C^m$-almost periodic $\g$-valued functions} on $G$,
\begin{equation}\label{eq:K_alpha-general11}
C^m_{\ap}(G,\g):=\big\{F\in C^m_b(G,\g)\,\big|\,\mathcal{S}_F\,
\text{is precompact in}\,\,\,C^m_b(G,\g)\big\}.
\end{equation}
Similarly, for a given vector field $X$ on $G$ consider the set
\[
\mathcal{S}_X:=\big\{X_c\,\big|\,X_c:=\big[(dR_c)(X)\big]\circ R_{c^{-1}}\big\}_{c\in G}
\]
and define the space of {\em $C^m$-almost periodic vector fields} on $G$,
\begin{equation}\label{eq:5}
C^m_{\ap}(TG):=\big\{X\in C^m_b(TG)\,\big|\,\mathcal{S}_X\,
\text{is precompact in}\,\,\,C^m_b(TG)\big\}.
\end{equation}

We have
\begin{Lemma}\label{lem:equivalent_definitions_ap}
Let $X\in C^m_b(TG)$ and $F_X$ be the $\g$-valued function on $G$ associated to
$X$ by \eqref{eq:tau^*}. Then $F_X\in C^m_{\ap}(G,\g)$ if and
only if $X\in C^m_{\ap}(TG)$.
\end{Lemma}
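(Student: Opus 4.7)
The plan is to reduce the equivalence to an identity that says: pushing forward $X$ by $R_c$ (which is how $X_c$ is defined) corresponds, under the trivialization $\tau^*$, to shifting $F_X$ by $c^{-1}$. Since $\tau^*$ is by construction an isometric isomorphism of Banach spaces $C^m_b(TG) \to C^m_b(G,\g)$, precompactness of the shift families $\mathcal{S}_X$ and $\mathcal{S}_{F_X}$ become equivalent once one verifies that $\tau^*$ maps one onto the other.

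First I would unwind the definitions. From \eqref{eq:trivialization} and \eqref{eq:tau^*}, for a vector field $X$ on $G$ one has $F_X(x)=d_xR_{x^{-1}}\bigl(X(x)\bigr)$, while the shifted vector field is $X_c(x)=d_{xc^{-1}}R_c\bigl(X(xc^{-1})\bigr)$. I would then simply compute $F_{X_c}(x)=d_xR_{x^{-1}}\bigl(X_c(x)\bigr)$ by applying the chain rule and the identity $R_{x^{-1}}\circ R_c=R_{cx^{-1}}$, together with $R_{cx^{-1}}=R_{(xc^{-1})^{-1}}$. This collapses the expression to
\[
F_{X_c}(x)=d_{xc^{-1}}R_{(xc^{-1})^{-1}}\bigl(X(xc^{-1})\bigr)=F_X(xc^{-1}),
\]
i.e., $\tau^*(X_c)=(F_X)_{c^{-1}}$.

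From this pointwise identity the rest is routine. As $c$ ranges over $G$, so does $c^{-1}$, which gives
\[
\tau^*(\mathcal{S}_X)=\bigl\{(F_X)_{c^{-1}}\bigr\}_{c\in G}=\bigl\{(F_X)_d\bigr\}_{d\in G}=\mathcal{S}_{F_X}.
\]
Since $\tau^*:C^m_b(TG)\to C^m_b(G,\g)$ is a bijective bounded linear map (in fact an isometry under the chosen norm $|F|_m:=\|X_F\|_m$), it is a homeomorphism, and so precompactness is preserved in both directions. Hence $\mathcal{S}_X$ is precompact in $C^m_b(TG)$ if and only if $\mathcal{S}_{F_X}$ is precompact in $C^m_b(G,\g)$, which is exactly the claimed equivalence by the definitions \eqref{eq:K_alpha-general11} and \eqref{eq:5}.

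The only mildly nontrivial step is the chain-rule computation showing $\tau^*(X_c)=(F_X)_{c^{-1}}$; all other steps are a direct appeal to the fact that $\tau^*$ is a Banach space isomorphism. The inverse appearing on $c^{-1}$ (rather than $c$) is not an obstruction since the family of shifts is indexed by all of $G$, but it is worth flagging explicitly so the reader sees why the two index sets $\{X_c\}_{c\in G}$ and $\{(F_X)_d\}_{d\in G}$ are matched.
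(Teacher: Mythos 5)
Your proposal is correct and follows essentially the same route as the paper: both establish the key identity $\tau^*(X_c)=(\tau^*(X))_{c^{-1}}$ by the same chain-rule computation (using $(d_eR_x)^{-1}=d_xR_{x^{-1}}$ and $R_{x^{-1}}\circ R_c=R_{(xc^{-1})^{-1}}$), deduce $\tau^*(\mathcal{S}_X)=\mathcal{S}_{F_X}$ since $c\mapsto c^{-1}$ is a bijection of $G$, and conclude via the fact that $\tau^*$ is an isomorphism of Banach spaces. No gaps.
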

\begin{proof}[Proof of Lemma \ref{lem:equivalent_definitions_ap}]
First, we will prove that for any $c\in G$ we have the following commutative diagram
\begin{equation}\label{eq:diagram}
\begin{tikzcd}
C^m_b(TG)\arrow{r}{\tau^*}&C^m_b(G,\g)\\
C^m_b(TG)\arrow{u}{c}\arrow{r}{\tau^*}&C^m_b(G,\g)\arrow{u}{c^{-1}}
\end{tikzcd}
\end{equation}
where the horizontal arrows represent the map \eqref{eq:tau^*},
$C^m_b(TG)\stackrel{c^{-1}}{\to}C^m_b(TG)$, $X\mapsto X_{c^{-1}}$, and
$C^m_b(TG)\stackrel{c}{\to}C^m_b(TG)$, $X\mapsto X_c$.
In fact, by the definition of $X_c$ and $F_c$, we have that for any $x\in G$, 
\begin{eqnarray*}
F_{X_c}(xc)&=&\big(d_e R_{xc}\big)^{-1}\big(X_c(xc)\big)\\
&=&\big(d_e R_{xc}\big)^{-1}\Big(\big(d_x R_c\big)\big(X(x)\big)\Big)\\
&=&\big(d_{xc} R_{(xc)^{-1}}\big)\big(d_x R_c\big)\big(X(x)\big)\\
&=&\big(d_x R_{x^{-1}}\big)\big(X(x)\big)=F(x).
\end{eqnarray*}
Hence, for any $c, x\in G$, $F_{X_c}(x)=F_{c^{-1}}(x)$, or equivalently,
\begin{equation}\label{eq:commutativity}
\tau^*(X_c)=(\tau^*(X))_{c^{-1}}.
\end{equation}
This proves the commutativity of the diagram.
In particular, \eqref{eq:commutativity} shows that
\begin{equation}\label{eq:vectors<->functions}
\tau^*(\mathcal{S}_X)=\mathcal{S}_F.
\end{equation}
Since $\tau^* : C^m_b(TG)\to C^m_b(G,\g)$ is an isomorphism of Banach spaces,
the Lemma follows from \eqref{eq:vectors<->functions} and the definitions 
\eqref{eq:K_alpha-general11} and \eqref{eq:5}.
\end{proof}

\begin{remark}
Lemma \ref{lem:equivalent_definitions_ap} shows that von Neumann's definition of almost periodic functions 
on abstract groups naturally extends to vector fields on Lie groups.
\end{remark}

Our next goal is to define the group of $C^m_b$-diffeomorphisms of  the Lie group $G$.
To this end, we fix a coordinate chart $U\subseteq\R^n$, $0\in U$,
\[
\varkappa : U\to\mathcal{U},
\] 
where $\mathcal{U}$ is an open neighborhood of identity in $G$, $U$ and $\mathcal{U}$ are precompact in 
$\R^n$ and $G$ respectively, and $\varkappa(0)=e$. 
Take a $C^m$-diffeomorphism $\varphi : G\to G$ and for any given $g\in G$ consider the map
\begin{equation}\label{eq:phi_g}
{\widetilde\varphi}_g:=(R_{\varphi(g)}\circ\varkappa)^{-1}\circ\varphi\circ(R_g\circ\varkappa).
\end{equation}
One easily sees that \eqref{eq:phi_g} defines a map that takes an open neighborhood of zero in $\R^n$ onto an
open neighborhood of zero $\R^n$ so that ${\widetilde\varphi}_g(0)=0$.

\begin{definition}\label{def:C^m_b-diffeomorphisms_groups}
A $C^m$-diffeomorphism $\varphi : G\to G$ belongs to the class $\diff^m_b(G)$ if there exists a positive constant
$C>0$ such that
\[
\big|\big(\partial^\beta{\widetilde\varphi}_g\big)(0)\big|<C
\]
for any multi-index $\beta\in\Z_{\ge 0}$, $0\le|\beta|\le m$, and for any $g\in G$.
\end{definition}
This definition is independent of the choice of the coordinate chart. It follows directly from \eqref{eq:phi_g}
that 
\[
\widetilde{(\varphi\circ\psi)}_g={\widetilde\varphi}_{\psi(g)}\circ{\widetilde\psi}_g
\] 
for any $\varphi,\psi\in\diff^m_b(G)$ and for any $g\in G$. This together with the chain rule then implies that
$\diff^m_b(G)$ is a group. Note also that for any given $x,g\in G$, we have $\widetilde{(R_x)}_g=\id$ and hence
$R_x\in\diff^m_b(G)$. Moreover, one can prove that $\diff^m_b(G)$ is a Banach manifold modeled on
$C^m_b(G,\g)$ (or equivalently on $C^m_b(TG)$). We have

\begin{theorem}\label{th:diff^m_b-topological_group}
$\diff^m_b(G)$ is a topological group under the composition of diffeomorphisms.
\end{theorem}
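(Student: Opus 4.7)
The plan is to reduce Theorem \ref{th:diff^m_b-topological_group} to the already-proved Theorem \ref{th:diff^m-topological_group} by working in the fixed coordinate system $\varkappa:U\to\mathcal{U}$ and exploiting two structural facts: (i) the composition identity
\[
\widetilde{(\varphi\circ\psi)}_g=\widetilde{\varphi}_{\psi(g)}\circ\widetilde{\psi}_g
\]
noted immediately after \eqref{eq:phi_g}, and (ii) the right-invariance built into the Riemannian metric $\mu$ used to define $\|\cdot\|_m$ in \eqref{eq:C^m-norm_of_X}, which implies that the bounds in Definition \ref{def:C^m_b-diffeomorphisms_groups} are compatible with the natural right action of $G$ on itself. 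The idea is that at every base point $g\in G$ the local representative $\widetilde{\varphi}_g$ is a diffeomorphism of a fixed neighborhood of $0\in\R^n$, and the $\diff^m_b(G)$-condition is exactly that the Euclidean diffeomorphisms $\{\widetilde{\varphi}_g\}_{g\in G}$ sit uniformly in a $C^m_b$-bounded set. After unwinding the chart, composition and inversion on $\diff^m_b(G)$ become parameter-dependent Euclidean composition and inversion, and the parameter dependence is controlled uniformly in $g$ by the $C^m_b$-regularity.

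First I would make the Banach manifold topology on $\diff^m_b(G)$ explicit near a fixed $\varphi_0$: using the trivialization $\tau$ of \eqref{eq:trivialization} and the chart $\varkappa$, a neighborhood of $\varphi_0$ is parametrized by small $F\in C^m_b(G,\g)$ via $\varphi(g):=\varphi_0(g)\cdot\varkappa(F(g))$, and this chart induces precisely the topology corresponding to uniform convergence (in $g$) of $\widetilde{\varphi}_g$ together with all its derivatives of order $\le m$ at $0$. The crucial property, immediate from the identity above, is that right translation by any $\sigma\in\diff^m_b(G)$ acts on this chart by precomposing each $\widetilde{\varphi}_g$ with $\widetilde{\sigma}_g$; since $\{\widetilde{\sigma}_g\}_{g\in G}$ is $C^m_b$-bounded, right translation is a homeomorphism of $\diff^m_b(G)$ onto itself.

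Continuity of composition then follows by a pointwise-in-$g$ application of the Euclidean result of Theorem \ref{th:diff^m-topological_group}: given $\varphi_n\to\varphi$ and $\psi_n\to\psi$, I would decompose
\[
\widetilde{(\varphi_n\circ\psi_n)}_g-\widetilde{(\varphi\circ\psi)}_g
=\bigl(\widetilde{\varphi_n}_{\psi_n(g)}-\widetilde{\varphi}_{\psi_n(g)}\bigr)\circ\widetilde{\psi_n}_g
+\bigl(\widetilde{\varphi}_{\psi_n(g)}\circ\widetilde{\psi_n}_g-\widetilde{\varphi}_{\psi(g)}\circ\widetilde{\psi}_g\bigr)
\]
in $C^m$ of a fixed neighborhood of $0\in\R^n$. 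The first bracket is uniformly small in $g$ by $\varphi_n\to\varphi$, while the second is handled by combining the uniform convergence $\widetilde{\psi_n}_g\to\widetilde{\psi}_g$, the Euclidean composition continuity of Theorem \ref{th:diff^m-topological_group}, and the continuity of the assignment $h\mapsto\widetilde{\varphi}_h$ into $C^m$ of the chart (which is built into the $C^m_b$-regularity of $\varphi$). Continuity of inversion proceeds analogously from the identity $\widetilde{\varphi^{-1}}_{\varphi(g)}=(\widetilde{\varphi}_g)^{-1}$ and the Euclidean inversion statement of Theorem \ref{th:diff^m-topological_group}.

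The main obstacle I foresee is the uniformity in $g\in G$ when $G$ is non-compact. The algebraic identity $\widetilde{(\varphi\circ\psi)}_g=\widetilde{\varphi}_{\psi(g)}\circ\widetilde{\psi}_g$ is trivial, but controlling the resulting $C^m$-errors uniformly in $g$ requires that the families $\{\widetilde{\varphi_n}_h-\widetilde{\varphi}_h\}_{h\in G}$ tend to zero in $C^m$ uniformly in $h$; for this the right-invariance of $\mu$ is essential, since it identifies the local pictures at different base points on the same footing and turns the uniform-in-$g$ boundedness in Definition \ref{def:C^m_b-diffeomorphisms_groups} into a genuinely global $C^m_b$ norm on the appropriate function space.
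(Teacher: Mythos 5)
First, note that the paper does not actually supply a proof of Theorem \ref{th:diff^m_b-topological_group}: it declares the continuity of composition and inversion ``straightforward and thus omitted.'' So the only meaningful benchmark is the proof of the Euclidean analogue, Theorem \ref{th:diff^m-topological_group}, in Appendix B. Your skeleton matches that template: you use the correct identities $\widetilde{(\varphi\circ\psi)}_g=\widetilde{\varphi}_{\psi(g)}\circ\widetilde{\psi}_g$ and $\widetilde{(\varphi^{-1})}_{\varphi(g)}=(\widetilde{\varphi}_g)^{-1}$, and your two-bracket decomposition is exactly the splitting $\varphi_n\circ\psi_n-\varphi\circ\psi=(\varphi_n\circ\psi_n-\varphi\circ\psi_n)+(\varphi\circ\psi_n-\varphi\circ\psi)$ written in local representatives. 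The first bracket is indeed controlled by a uniform Lemma \ref{lem:almost_lipschitz}--type estimate, and the reduction of the base-point dependence to transition maps of the fixed chart (which are $C^\infty$ with bounds depending only on $h'h^{-1}$, hence uniform in $g$ by right invariance) is sound.

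The genuine gap is in the second bracket, i.e.\ the continuity of $\psi\mapsto\varphi\circ\psi$ for a \emph{fixed} $\varphi$. After stripping off the smooth transition maps, this step requires $\widetilde{\varphi}_{h}\circ w_n\to\widetilde{\varphi}_{h}\circ w$ in $C^m$ \emph{uniformly over} $h=\psi(g)$, $g\in G$, when $w_n\to w$ in $C^m$. You justify this by ``the continuity of the assignment $h\mapsto\widetilde{\varphi}_h$ into $C^m$, which is built into the $C^m_b$-regularity of $\varphi$.'' It is not. Definition \ref{def:C^m_b-diffeomorphisms_groups} gives only uniform \emph{boundedness} of the derivatives of the local representatives; what this step actually needs is a modulus of continuity for the top-order derivatives that is uniform over the non-compact group. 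Right invariance of $\mu$ puts the local pictures on the same footing and yields uniform bounds, but uniform bounds on $\partial^\beta\widetilde{\varphi}_g$ with $|\beta|=m$ do not yield uniform continuity of these derivatives. This is exactly the point where the Appendix B proof is \emph{not} soft: there the group is modeled on the little H\"older space $c^m_b$, and the middle term of the three-$\varepsilon$ estimate is handled by first approximating the outer map by an element of $C^\infty_b$ (density of $C^\infty_b$ in $c^m_b$) and then invoking Lemma \ref{lem:almost_bilinear}, which requires one extra derivative. Your argument supplies no analogue of this approximation, and none is available from Definition \ref{def:C^m_b-diffeomorphisms_groups} alone: already for $G=\R$ and $m=1$, a diffeomorphism $\varphi=\id+f$ with $f'$ bounded but not uniformly continuous (e.g.\ $f'(x)=\tfrac{1}{10}\sin(x^2)$) satisfies the definition, yet $\varphi\circ\tau_{1/n}\not\to\varphi$ in the $C^1_b$-topology (cf.\ the mechanism of Remark \ref{rem:NB}). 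So either the theorem implicitly presupposes a ``little''/uniformly-continuous-derivative refinement of Definition \ref{def:C^m_b-diffeomorphisms_groups}, in which case your proof must state and use that refinement together with a density-of-smooth-diffeomorphisms step, or the second bracket cannot be closed as written. You correctly identified uniformity in $g$ as the main obstacle, but the appeal to right invariance addresses the wrong uniformity (of bounds, not of moduli of continuity).
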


The proof of the continuity of the composition and the inversion map in $\diff^m_b(G)$ is straightforward and thus omitted.

Now, we are ready to define the group of $C^m$-{\em almost periodic diffeomorphisms} on $G$.
For any $c\in G$ and for any $\varphi\in\diff^m_b(G)$ define
\begin{equation}\label{eq:phi_c}
\varphi_c:=R_{c^{-1}}\circ\varphi\circ R_c.
\end{equation}
Since $R_c\in\diff^m_b(G)$ we conclude that $\varphi_c\in\diff^m_b(G)$
for any $c\in G$.

\begin{remark}
Note that definition \eqref{eq:phi_c} is consistent with the definition of the corresponding quantity in
the case of $\R^n$ (see Lemma \ref{lem:criteria}).
\end{remark}

\begin{definition}
A diffeomorphism $\varphi\in\diff^m_b(G)$ is {\em almost periodic}, $\varphi\in\Diff^m_\ap(G)$, if the set 
\[
\mathcal{S}_\varphi:=\big\{\varphi_c\,\big|\,c\in G\big\}
\]
is precompact in $\diff^m_b(G)$.
\end{definition}

We equip $\Diff^m_\ap(G)$ with the topology coming from $\diff^m_b(G)$. Arguing as in Section \ref{sec:spaces} and
Section \ref{sec:groups_almost_periodic}, one proves that $\Diff^m_\ap(G)$ is a Banach manifold modeled on 
$C^m_\ap(G,\g)$ (or equivalently on $C^m_\ap(TG)$).

\begin{theorem}\label{th:diff^m_ap-topological_group}
$\Diff^m_{\ap}(G)$ is a topological subgroup of $\diff^m_b(G)$.
\end{theorem}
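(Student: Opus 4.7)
The plan is to mirror the proof of Theorem~\ref{th:group_regularity}, using Theorem~\ref{th:diff^m_b-topological_group} as the ambient input in place of Theorem~\ref{th:diff^m-topological_group}. The key algebraic observation is that conjugation by the right translation $R_c$ is a group homomorphism of $\diff^m_b(G)$: directly from definition \eqref{eq:phi_c}, inserting $R_c\circ R_{c^{-1}}=\id$ between the two factors gives
\[
(\varphi\circ\psi)_c=R_{c^{-1}}\circ\varphi\circ R_c\circ R_{c^{-1}}\circ\psi\circ R_c=\varphi_c\circ\psi_c,
\]
and similarly $(\varphi^{-1})_c=(\varphi_c)^{-1}$. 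Since $R_x\in\diff^m_b(G)$ for every $x\in G$ (noted after Definition~\ref{def:C^m_b-diffeomorphisms_groups}), all the conjugates $\varphi_c$ remain in $\diff^m_b(G)$, so both identities are valid equalities in the ambient topological group.

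Next I would deduce closure of $\Diff^m_\ap(G)$ under composition. Given $\varphi,\psi\in\Diff^m_\ap(G)$ the identity above yields the inclusion
\[
\mathcal{S}_{\varphi\circ\psi}=\{\varphi_c\circ\psi_c\}_{c\in G}\subseteq\mathcal{S}_\varphi\circ\mathcal{S}_\psi=\circ(\mathcal{S}_\varphi\times\mathcal{S}_\psi).
\]
By hypothesis $\mathcal{S}_\varphi,\mathcal{S}_\psi$ are precompact in $\diff^m_b(G)$, hence $\mathcal{S}_\varphi\times\mathcal{S}_\psi$ is precompact in $\diff^m_b(G)\times\diff^m_b(G)$. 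By Theorem~\ref{th:diff^m_b-topological_group} the composition map is continuous and therefore sends precompact sets to precompact sets, so $\mathcal{S}_{\varphi\circ\psi}$ lies in a precompact subset of $\diff^m_b(G)$ and is thus precompact. This gives $\varphi\circ\psi\in\Diff^m_\ap(G)$.

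For inversion, the identity $(\varphi^{-1})_c=(\varphi_c)^{-1}$ gives $\mathcal{S}_{\varphi^{-1}}=\imath(\mathcal{S}_\varphi)$, where $\imath:\diff^m_b(G)\to\diff^m_b(G)$ is the inversion map. Theorem~\ref{th:diff^m_b-topological_group} asserts that $\imath$ is continuous, hence the image of the precompact set $\mathcal{S}_\varphi$ is precompact, so $\varphi^{-1}\in\Diff^m_\ap(G)$. Thus $\Diff^m_\ap(G)$ is closed under composition and inversion, i.e. a subgroup of $\diff^m_b(G)$.

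Finally, the topological group structure is automatic: $\Diff^m_\ap(G)$ carries the subspace topology from $\diff^m_b(G)$, and since composition and inversion are already continuous on the ambient topological group by Theorem~\ref{th:diff^m_b-topological_group}, their restrictions to $\Diff^m_\ap(G)\times\Diff^m_\ap(G)$ and $\Diff^m_\ap(G)$ are continuous with values in $\Diff^m_\ap(G)$. No genuine obstacle appears; the proof hinges only on the conjugation identity and the general fact that continuous maps and finite direct products preserve precompactness. The only verification that requires minimal care is that the conjugates $\varphi_c$ are legitimate elements of $\diff^m_b(G)$, which follows from $R_c,R_{c^{-1}}\in\diff^m_b(G)$ and the fact, already established in Theorem~\ref{th:diff^m_b-topological_group}, that $\diff^m_b(G)$ is a group.
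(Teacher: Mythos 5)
Your proposal is correct and follows essentially the same route as the paper: the conjugation identities $(\varphi\circ\psi)_c=\varphi_c\circ\psi_c$ and $(\varphi^{-1})_c=(\varphi_c)^{-1}$, the resulting inclusions $\mathcal{S}_{\varphi\circ\psi}\subseteq\circ(\mathcal{S}_\varphi\times\mathcal{S}_\psi)$ and $\mathcal{S}_{\varphi^{-1}}=\imath(\mathcal{S}_\varphi)$, and the preservation of precompactness under the continuous composition and inversion maps of Theorem~\ref{th:diff^m_b-topological_group}. The paper's proof is the same argument, likewise reducing the topological group structure to the ambient group.
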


\begin{proof}[Proof of Theorem \ref{th:diff^m_ap-topological_group}]
In view of Theorem \ref{th:diff^m_b-topological_group}, we have
only to show that $\Diff^m_{\ap}(G)$ is closed under the composition and  the inversion map. 
To this end, take $\varphi,\psi\in\Diff^m_\ap(G)$. It follows directly from \eqref{eq:phi_c} that
for any $c\in G$,
\begin{equation}\label{eq:c-shifts_groups}
(\varphi\circ\psi)_c=\varphi_c\circ\psi_c\quad\text{and}\quad\big(\varphi^{-1}\big)_c=\big(\varphi_c\big)^{-1}.
\end{equation}
Now, we argue as in the proof of Theorem \ref{th:group_regularity}.
In view of \eqref{eq:c-shifts_groups}, we have
\begin{equation}\label{eq:c-shifts_groups'}
\mathcal{S}_{\varphi\circ\psi}\subseteq\mathcal{S}_\varphi\circ\mathcal{S}_\psi\quad\text{and}
\quad\mathcal{S}_{\varphi^{-1}}=\imath(\mathcal{S}_\varphi).
\end{equation}
Since, $\mathcal{S}_\varphi\times\mathcal{S}_\psi$ is precompact in $\diff^m_b(G)\times\diff^m_b(G)$ and since,
by Theorem \ref{th:diff^m_b-topological_group}, the composition 
$\circ : \diff^m_b(G)\times\diff^m_b(G)\to\diff^m_b(G)$ is continuous, we conclude that 
$\mathcal{S}_\varphi\circ\mathcal{S}_\psi=\circ\big(\mathcal{S}_\varphi\times\mathcal{S}_\psi\big)$
is precompact in $\diff^m_b(G)$. This together with the first relation in \eqref{eq:c-shifts_groups'} then
implies that $\mathcal{S}_{\varphi\circ\psi}$ is precompact in $\diff^m_b(G)$. Hence, $\varphi\circ\psi\in\Diff^m_\ap(G)$.
The proof that $\varphi^{-1}\in\Diff^m_\ap(G)$ follows from the second relation in \eqref{eq:c-shifts_groups'}
and by similar arguments.
\end{proof}

%%%%%%%%%%%%%%%%%%%%%%%%%%%%%%%%%%%%%%%%%%%
\section{Appendix A: Well-posedness of the geodesic equation}\label{sec:local-well-posedness}
In this Appendix we summarize several basic results on the well-posedness of the geodesic equations
\eqref{eq:ch} and \eqref{eq:ch'} studied in Section \ref{sec:exponential_map}. 
Some of the statements are part of the first author's Ph.D. thesis.

\medskip\medskip

%%%%%%Alternatively, see Exercise A.2 b in Alonhac-Gerard's book on Nash-Moser%%%%%%
\noindent{\em The inverse of $A_{\alpha}$}:
A result of Muhamadiev \cite[Theorem 1]{Muham1} implies that for any H{\"o}lderian exponent 
$0<\gamma<1$ the operator 
\begin{equation}\label{eq:A_on_holder}
A_\alpha\equiv (I-\alpha^2\Delta) : C^{2+\gamma}_b\to C^{\gamma}_b
\end{equation}
is an isomorphism of Banach spaces. Denote by 
\begin{equation}\label{eq:A-inverse_on_holder}
A_\alpha^{-1} : C^{\gamma}_b\to C^{2+\gamma}_b
\end{equation}
its inverse. Since the operator $A_\alpha$ is {\em translation invariant}, i.e. for any
$f\in C^{2+\gamma}_b$ and for any $c\in\R^n$ we have that $A_\alpha(f_c)=(A_\alpha f)_c$
where $f_c(x)=f(x+c)$, and since \eqref{eq:A_on_holder} is an isomorphism, we conclude that its inverse 
\eqref{eq:A-inverse_on_holder} is translation invariant. This implies that \eqref{eq:A-inverse_on_holder} commutes 
with the partial differentiations which, together with Remark \ref{rem:banach_algebra_C^n_b} and 
Theorem \ref{th:almost_periodic_functions_equivalence_m}, gives

\begin{Lemma}\label{lem:A_isomorphism}
For any integer $k\ge 0$ and for any H{\"o}lderian exponent $0<\gamma<1$ the operator
\begin{equation*}
A_\alpha : c^{k+2+\gamma}_b\to c^{k+\gamma}_b
\end{equation*}
is an isomorphism of Banach spaces.
\end{Lemma}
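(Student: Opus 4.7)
The plan is to first lift Muhamadiev's base case \eqref{eq:A_on_holder}--\eqref{eq:A-inverse_on_holder} to an isomorphism on the full H{\"o}lder scale $C^{k+2+\gamma}_b\to C^{k+\gamma}_b$, and then to transfer the result to the little H{\"o}lder spaces by a density-plus-continuity argument. The commutation of $A_\alpha^{-1}$ with partial derivatives noted just above the statement is the pivotal tool.

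For the H{\"o}lder step I would proceed by induction on $k$. The base case $k=0$ is Muhamadiev's theorem. For the inductive step, injectivity and boundedness of $A_\alpha:C^{k+2+\gamma}_b\to C^{k+\gamma}_b$ are inherited from the inclusion $C^{k+2+\gamma}_b\hookrightarrow C^{2+\gamma}_b$ together with Remark \ref{rem:banach_algebra_C^n_b}. For surjectivity, given $g\in C^{k+\gamma}_b$, set $f:=A_\alpha^{-1}g\in C^{2+\gamma}_b$ (from the base case). For any multi-index $\beta$ with $|\beta|\le k$, translation invariance gives the identity $\partial^\beta f=A_\alpha^{-1}\partial^\beta g$, and by the inductive hypothesis applied to $\partial^\beta g\in C^{k-|\beta|+\gamma}_b$ the right-hand side lies in $C^{k-|\beta|+2+\gamma}_b\subseteq C^{2+\gamma}_b$. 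This forces $\partial^\beta f\in C^{2+\gamma}_b$ for every $|\beta|\le k$, hence $f\in C^{k+2+\gamma}_b$.

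For the little H{\"o}lder statement, well-definedness and continuity of $A_\alpha:c^{k+2+\gamma}_b\to c^{k+\gamma}_b$ are immediate since the little H{\"o}lder spaces are closed under partial differentiation (being closures of $C^\infty_b$ under the corresponding norms), and injectivity is inherited. For surjectivity, pick $g\in c^{k+\gamma}_b$ and a sequence $g_n\in C^\infty_b$ with $g_n\to g$ in $C^{k+\gamma}_b$, and set $f_n:=A_\alpha^{-1}g_n$. Applying the H{\"o}lder-scale isomorphism from the previous paragraph at \emph{every} order $\ell\ge 0$ gives $f_n\in\bigcap_{\ell\ge 0}C^{\ell+2+\gamma}_b\subseteq C^\infty_b$. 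Boundedness of $A_\alpha^{-1}:C^{k+\gamma}_b\to C^{k+2+\gamma}_b$ then yields $f_n\to A_\alpha^{-1}g$ in $C^{k+2+\gamma}_b$, placing $A_\alpha^{-1}g$ in the closure of $C^\infty_b$ in $C^{k+2+\gamma}_b$, i.e.\ in $c^{k+2+\gamma}_b$. Boundedness of the inverse on the little H{\"o}lder side is then the restriction of the ambient bound.

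The main technical obstacle I anticipate is guaranteeing that the approximating preimages $f_n$ genuinely lie in $C^\infty_b$ rather than merely being smooth; this is precisely what necessitates the preliminary lift to arbitrary order on the H{\"o}lder scale, and is where translation invariance of $A_\alpha$ plays the decisive role. Once that ingredient is secured, the remaining steps are either standard density arguments or restrictions of bounded linear maps.
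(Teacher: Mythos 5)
Your argument is correct and rests on the same pivot as the paper's --- translation invariance of $A_\alpha$ forces $A_\alpha^{-1}$ to commute with partial differentiation --- but it is organized differently. The paper stays on the little H\"older scale throughout and reads the conclusion off the derivative characterization of $c^{k+2+\gamma}_b$ (Lemma \ref{lem:charachterization_little_holder}): $\partial^\beta A_\alpha^{-1}g=A_\alpha^{-1}\partial^\beta g\in c^{2+\gamma}_b$ for $|\beta|\le k$, done. You instead make a detour through the full H\"older scale and then descend to the little spaces via the density of $C^\infty_b$, i.e.\ via the \emph{definition} of $c^m_b$ as a closure rather than via its derivative characterization; this buys you a clean transfer step at the cost of having to prove the isomorphism $C^{k+2+\gamma}_b\to C^{k+\gamma}_b$ for arbitrary (big) H\"older data. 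That is the one place where you are glossing over a genuine subtlety: for general $g\in C^{1+\gamma}_b$ the difference quotients $(g_{he_j}-g)/h$ need \emph{not} converge to $\partial_j g$ in the $C^\gamma_b$ norm, since translations do not act continuously on $C^\gamma_b$ --- this failure is precisely what separates $C^\gamma_b$ from $c^\gamma_b$ (cf.\ Lemma \ref{lem:characterization_little_holder*} and Remark \ref{rem:NB}) --- so the identity $\partial^\beta A_\alpha^{-1}g=A_\alpha^{-1}\partial^\beta g$ on the big scale needs either a passage to an auxiliary exponent $\gamma'<\gamma$ or a distributional uniqueness argument for $(I-\alpha^2\Delta)v=0$. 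The most economical repair is to notice that your density step only ever applies the regularity lift to the smooth approximants $g_n\in C^\infty_b$, for which the difference-quotient argument is unproblematic; you then only need the a priori bound $|A_\alpha^{-1}h|_{k+2+\gamma}\le C\,|h|_{k+\gamma}$ for smooth $h$, which follows from the base case \eqref{eq:A_on_holder}--\eqref{eq:A-inverse_on_holder} applied to each $\partial^\beta h$, and the full H\"older-scale isomorphism for arbitrary data becomes unnecessary. With that adjustment your proof is complete and, if anything, more self-contained than the paper's one-line derivation.
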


As a consequence we obtain

\begin{coro}\label{coro:A_on_ap_isomorphism}
For any integer $k\ge 0$ and for any H{\"o}lderian exponent $0<\gamma<1$ the operator
\begin{equation}\label{eq:A_on_ap}
A_\alpha : C^{k+2+\gamma}_\ap\to C^{k+\gamma}_\ap
\end{equation}
is an isomorphism of Banach spaces.
\end{coro}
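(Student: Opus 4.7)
The plan is to bootstrap the statement from Lemma \ref{lem:A_isomorphism} (which already gives that $A_\alpha : c^{k+2+\gamma}_b\to c^{k+\gamma}_b$ is an isomorphism) by exploiting two facts: (a) $C^m_\ap$ sits as a closed subspace of $c^m_b$ by Theorem \ref{th:almost_periodic_functions_equivalence_m}, and (b) both $A_\alpha$ and its inverse are \emph{translation invariant}. The key observation is that translation invariance transports the precompactness of the shift-sets $\mathcal{S}_f$ forwards and backwards under $A_\alpha$.

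First I would verify that $A_\alpha$ maps $C^{k+2+\gamma}_\ap$ into $C^{k+\gamma}_\ap$. Take $f\in C^{k+2+\gamma}_\ap$. Since $A_\alpha$ commutes with translations, one has $A_\alpha(f_c)=(A_\alpha f)_c$ for every $c\in\R^n$, and hence
\[
\mathcal{S}_{A_\alpha f}=A_\alpha\bigl(\mathcal{S}_f\bigr).
\]
By hypothesis $\mathcal{S}_f$ is precompact in $c^{k+2+\gamma}_b$, and $A_\alpha : c^{k+2+\gamma}_b\to c^{k+\gamma}_b$ is continuous, so its image is precompact in $c^{k+\gamma}_b$, hence also in $C^{k+\gamma}_b$. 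Therefore $A_\alpha f\in C^{k+\gamma}_\ap$.

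The main point, and the only step with some content, is the reverse inclusion: that $A_\alpha^{-1}$ sends $C^{k+\gamma}_\ap$ into $C^{k+2+\gamma}_\ap$. Here I would argue that since Lemma \ref{lem:A_isomorphism} provides the bounded inverse $A_\alpha^{-1} : c^{k+\gamma}_b\to c^{k+2+\gamma}_b$, and since the relation $A_\alpha(f_c)=(A_\alpha f)_c$ implies by applying $A_\alpha^{-1}$ to both sides that $A_\alpha^{-1}$ itself is translation invariant, we again get
\[
\mathcal{S}_{A_\alpha^{-1} g}=A_\alpha^{-1}\bigl(\mathcal{S}_g\bigr)
\]
for any $g\in C^{k+\gamma}_\ap$. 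Precompactness of $\mathcal{S}_g$ in $c^{k+\gamma}_b$ and continuity of $A_\alpha^{-1}$ then give precompactness of $\mathcal{S}_{A_\alpha^{-1}g}$ in $c^{k+2+\gamma}_b$, whence $A_\alpha^{-1}g\in C^{k+2+\gamma}_\ap$ by the very definition \eqref{eq:C^m_ap}.

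Combining the two directions, the restriction $A_\alpha : C^{k+2+\gamma}_\ap\to C^{k+\gamma}_\ap$ is bijective, bounded, and has bounded inverse (the restriction of the bounded operator $A_\alpha^{-1}$ from Lemma \ref{lem:A_isomorphism}); it is therefore an isomorphism of Banach spaces, using that $C^{k+2+\gamma}_\ap$ and $C^{k+\gamma}_\ap$ are closed subspaces of the corresponding little H\"older spaces. I do not expect any real obstacle here — the entire argument is a straightforward transfer of the isomorphism from the little H\"older scale to the almost periodic scale via translation invariance, and the compactness bookkeeping through the sets $\mathcal{S}_f$ is the only nontrivial ingredient.
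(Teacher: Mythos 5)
Your proposal is correct and follows essentially the same route as the paper: transporting precompactness of the shift-sets $\mathcal{S}_f$ through the translation-invariant operators $A_\alpha$ and $A_\alpha^{-1}$ of Lemma \ref{lem:A_isomorphism}. The only cosmetic difference is that you verify both directions explicitly and conclude from the boundedness of the restricted inverse, whereas the paper carries out only the $A_\alpha^{-1}$ direction and finishes with the open mapping theorem.
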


\begin{proof}[Proof of Corollary \ref{coro:A_on_ap_isomorphism}]
In view of the translation invariance of the operator
\[
A_\alpha^{-1} : c^{k+\gamma}_b\to c^{k+2+\gamma}_b
\]
we conclude that for any $f\in c^{k+\gamma}_b$ we have
\begin{equation}\label{eq:AS-invariance}
A_\alpha^{-1}(\mathcal{S}_f)=\mathcal{S}_{A_\alpha^{-1}f},
\end{equation}
where $\mathcal{S}_f=\{f_c\}_{c\in\R^n}$ and $\mathcal{S}_f\subseteq c^{k+\gamma}_b$.
Since $\mathcal{S}_f$ is precompact in $c^{k+\gamma}_b$ and since the map 
$A_\alpha^{-1} : c^{k+\gamma}_b\to c^{k+2+\gamma}_b$ is continuous we obtain from \eqref{eq:AS-invariance} 
that $\mathcal{S}_{A_\alpha^{-1}f}$ is precompact in $c^{k+2+\gamma}_b$. 
By the definition of the class of almost periodic functions $C^{k+2+\gamma}_\ap$ we then
conclude that $A_\alpha^{-1}f\in C^{k+2+\gamma}_\ap$. Hence, the map \eqref{eq:A_on_ap} is a bijection onto
its image. By the open mapping theorem we then see that \eqref{eq:A_on_ap} is an isomorphism of Banach spaces.
\end{proof}

\begin{remark}
The statements on the operator $A_\alpha$ above can be also proved directly by using the Littlewood-Paley dyadic 
characterization of the H{\"o}lder space $C^{k+\gamma}_b$ $($see e.g. \cite{AlinhacGerard}$)$.
\end{remark}
%%%%%%%%%%%%%%%%%%%%%%%%%%%%%%%%%%%

\medskip\medskip

\noindent{\em The geodesic equation}:
In what follows we discuss the well-posedness of the geodesic equation \eqref{eq:ch} (and \eqref{eq:ch'}). 
To this end, we first prove

\begin{theorem}\label{th:ode}
Let $m\ge 2$ and $T>0$ be real. Then, for any $u\in C\big([0,T],C^m_{\ap}\big)$ there
exists a unique solution $\varphi\in C^1\big([0,T],\Diff^m_\ap(\mathbb{R}^n)\big)$ of
\begin{equation}\label{eq:ode}
\left\{
\begin{array}{l}
\dot{\varphi}=u\circ\varphi,\\
\varphi|_{t=0}=\id.
\end{array}
\right. 
\end{equation}
\end{theorem}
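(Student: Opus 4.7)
The plan is to view the flow equation $\dot\varphi=u\circ\varphi$ as a nonautonomous ODE in the Banach manifold $\Diff^m_\ap(\R^n)$, which by \eqref{eq:Diff^m_ap} is identified with an open subset of $C^m_\ap(\R^n,\R^n)$ via $\varphi\mapsto f:=\varphi-\id$. In these coordinates the equation becomes $\dot f = u(t)\circ(\id+f)$ with $f|_{t=0}=0$. The principal obstacle is that, for $u(t)\in C^m_\ap$, the vector field $f\mapsto u(t)\circ(\id+f)$ is only continuous and \emph{not} Lipschitz on $\Diff^m_\ap$, since composition at equal regularity loses one derivative (Remark \ref{rem:composition} with $r=0$). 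I therefore bootstrap: first solve at the reduced regularity $m-1$, then upgrade to $m$-regularity via the linear equation satisfied by $[d_x\varphi]$.

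\textbf{Step 1: existence on $[0,T]$ in $\Diff^{m-1}_\ap$.} Since $u(t)\in C^m_\ap = C^{(m-1)+1}_\ap$ and $m-1\ge 1$, Remark \ref{rem:composition} applied with $r=1$ shows that $\varphi\mapsto u(t)\circ\varphi$ is a $C^1$ nonautonomous vector field on $\Diff^{m-1}_\ap$ with $t$-continuous dependence. Picard--Lindel\"of in Banach spaces yields a unique local solution $\varphi\in C^1([0,\tau_0],\Diff^{m-1}_\ap)$. To extend to $[0,T]$ I close a Gronwall estimate on $|f(t)|_{m-1}$ from the integral form $f(t)=\int_0^t u(s)\circ(\id+f(s))\,ds$, using the Banach algebra structure and composition bounds from Proposition \ref{prop:banach_algebra} and Remark \ref{rem:composition}; simultaneously, the pointwise linear matrix ODE derived in Step 2 gives $\det\bigl(I+[d_xf(t,x)]\bigr)\ge e^{-T\|[du]\|_\infty}>0$ uniformly in $(t,x)$, preventing $\varphi(t)$ from leaving $\Diff^{m-1}_\ap$ on $[0,T]$. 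Uniqueness is immediate from Picard.

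\textbf{Step 2: upgrading to $\Diff^m_\ap$.} Differentiating $\dot\varphi = u(t)\circ\varphi$ in $x$ yields the linear matrix ODE
\[
\frac{d}{dt}[d_x\varphi]=\bigl([du](t,\varphi)\bigr)\cdot[d_x\varphi],\qquad [d_x\varphi]\big|_{t=0}=I.
\]
By Remark \ref{rem:composition} with $r=0$ (applied to the entries of $[du]\in C([0,T],C^{m-1}_\ap)$ composed with $\varphi\in C([0,T],\Diff^{m-1}_\ap)$), the matrix coefficient lies in $C([0,T],C^{m-1}_\ap)$. Since $C^{m-1}_\ap$ is a Banach algebra (Proposition \ref{prop:banach_algebra}) containing the constant $I$, Picard iteration for this linear ODE converges in $C^1([0,T],C^{m-1}_\ap)$. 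Consequently $[d_x(\varphi-\id)]=[d_x\varphi]-I\in C([0,T],C^{m-1}_\ap)$, so by characterization (ii) of Theorem \ref{th:almost_periodic_functions_equivalence_m} one concludes $\varphi-\id\in C([0,T],C^m_\ap)$, i.e.\ $\varphi\in C([0,T],\Diff^m_\ap)$. Finally $\dot\varphi = u\circ\varphi\in C([0,T],C^m_\ap)$ by Remark \ref{rem:composition} with $r=0$ at the full regularity level, giving $\varphi\in C^1([0,T],\Diff^m_\ap)$ as claimed. The most delicate point is the global extension in Step 1: Picard alone gives only a short time interval, and the argument must simultaneously control $|f|_{m-1}$ and the uniform positivity of $\det(I+[d_xf])$ throughout $[0,T]$, the latter coming from revisiting the matrix equation of Step 2 within the Step 1 loop.
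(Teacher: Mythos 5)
Your overall architecture coincides with the paper's: solve the flow at the reduced regularity $m-1$ (where, by Theorem \ref{th:group_regularity}, $\varphi\mapsto u(t)\circ\varphi$ is $C^1$ and hence locally Lipschitz on $\Diff^{m-1}_\ap(\R^n)$), then upgrade to $C^m_\ap$ by observing that $[d_x\varphi]$ solves a \emph{linear} ODE in the Banach algebra $C^{m-1}_\ap$ and invoking characterization (ii) of Theorem \ref{th:almost_periodic_functions_equivalence_m}. Step 2 of your proposal is essentially the paper's argument verbatim.

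The genuine gap is in your global extension in Step 1. You propose to close a Gronwall inequality for $y(t):=|f(t)|_{m-1}$ directly from $f(t)=\int_0^t u(s)\circ(\id+f(s))\,ds$. But the composition estimate available here (Lemma \ref{lem:almost_lipschitz} and its proof) is of the form $|u(s)\circ(\id+f)|_{m-1}\le C\,|u(s)|_{m-1}\,(1+|f|_{m-1})^{N}$ with $N>1$ as soon as $m-1>1$ (already $N=1+\gamma$ at regularity $1+\gamma$, and the power grows with $m$). The resulting integral inequality $y(t)\le\int_0^t C(1+y(s))^N\,ds$ is \emph{superlinear}, its comparison ODE blows up in finite time, and Gronwall therefore does not yield a bound on all of $[0,T]$ for arbitrary $T$. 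Your determinant bound controls only the open-set condition, not the size of $|f|_{m-1}$. To repair this you would need the hierarchical derivative-by-derivative estimates (sup norm of $f$, then the linear matrix ODE for $[d_xf]$ with bounded coefficients, then inductively the higher derivatives, each step a genuinely linear Gronwall with already-controlled coefficients). The paper sidesteps a priori estimates entirely: since $F(t,\varphi\circ\psi)=F(t,\varphi)\circ\psi$ and right translation $R_\psi$ is affine in the chart, the local existence time $\varepsilon>0$ for initial datum $\id$ is uniform in $t_0\in[0,T]$ \emph{and} in the initial datum $\psi$ (the solution through $\psi$ is just the right translate of the one through $\id$), so one concatenates local solutions across $[0,T]$ with no norm estimates at all. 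You should either adopt that right-invariance argument or replace your single Gronwall by the hierarchical one; as written, Step 1 does not reach $t=T$.
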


\begin{remark}\label{rem:ode_dependence_on_u}
It will follow from the proof of Theorem \ref{th:ode} together with Theorem \ref{th:group_regularity} and the theorems on 
the dependence on parameters of the solutions of ODEs in Banach spaces, that in the case when $u\in C^m_\ap$ is 
independent of $t$, the solution $\varphi$ of \eqref{eq:ode}, considered as a map $(t,u)\mapsto\varphi(t,u)$ belongs to
$C\big([0,\infty)\times C^m_\ap,\Diff^m_\ap(\R^n)\big)\cap 
C^1\big([0,\infty)\times C^m_\ap,\Diff^{m-1}_\ap(\R^n)\big)$.
Moreover, in this case, for any given $T>0$ the solution $\varphi$ belongs to $C^1\big([0,T],\Diff^m_\ap(\mathbb{R}^n)\big)$ 
and depends continuously on $u\in C^m_\ap$ in the sense that the map
\[
C^m_\ap\to C^1\big([0,T],\Diff^m_\ap(\mathbb{R}^n)\big),\quad u\mapsto\varphi,
\]
is continuous.
\end{remark}

\begin{remark}
The same results hold with $\Diff^m_\ap(\R^n)$ replaced by $\diff^m_b(\R^n)$.
\end{remark}

\begin{proof}[Proof of Theorem \ref{th:ode}]
Take $u\in C\big([0,T],C^m_{\ap}\big)$ where $m\ge 2$ and $T>0$ and consider the map
\begin{equation}\label{eq:F_rhs}
F : [0,T]\times\Diff^{m-1}_\ap(\R^n)\to C^{m-1}_\ap,\quad F : (t,\varphi)\mapsto u(t)\circ\varphi.
\end{equation}
It follows from Theorem \ref{th:group_regularity} that \eqref{eq:F_rhs} is continuous and locally Lipschitz 
continuous in the second argument. By the existence and uniqueness theorem for ODEs in Banach spaces 
(see e.g. \cite{Lang,Dieudonne}) and the compactness of the interval $[0,T]$ one obtains that there exists $\varepsilon>0$ such that
for any $t_0\in[0,T]$ there exists a unique solution 
\[
\varphi\in C^1\big((t_0-\varepsilon,t_0+\varepsilon)\cap[0,T],\Diff^{m-1}_\ap(\R^n)\big),\quad
\varphi|_{t=t_0}=\id,
\]
of the ordinary differential equation $\dot\varphi=F(t,\varphi)$ in $\Diff^{m-1}_\ap(\R^n)$.
Note that for any given $\psi\in\Diff^{m-1}_\ap(\R^n)$ the right-translation by $\psi$,
\[
R_\psi : \Diff^{m-1}_\ap\to\Diff^{m-1}_\ap(\R^n),\quad\varphi\mapsto\varphi\circ\psi,
\]
is an affine linear map in the coordinate chart of $\Diff^{m-1}_\ap(\R^n)$ and hence, 
by Theorem \ref{th:group_regularity} and Remark \ref{rem:composition},
a $C^\infty$-smooth map. This together with the symmetry $F(t,\varphi\circ\psi)=F(t,\varphi)\circ\psi$ implies that
if for some $t_0\in[0,T]$ the curve $\varphi\in C^1\big((t_0-\varepsilon,t_0+\varepsilon)\cap[0,T],\Diff^{m-1}_\ap(\R^n)\big)$ 
is a solution of $\dot\varphi=F(t,\varphi)$ with initial data $\varphi|_{t=t_0}=\id$ then for any given $\psi\in\Diff^{m-1}_\ap(\R^n)$
the right translated curve $\varphi\circ\psi\in C^1\big((t_0-\varepsilon,t_0+\varepsilon)\cap[0,T],\Diff^{m-1}_\ap(\R^n)\big)$
is a solution of this equation with initial data $\varphi\circ\psi|_{t=t_0}=\psi$.
Since $\varepsilon>0$ is independent of the choice of $t_0\in[0,T]$ and $\psi\in\Diff^{m-1}_\ap(\R^n)$, and since
the solutions of $\dot\varphi=F(t,\varphi)$ are unique we obtain that \eqref{eq:ode} has a unique solution
\begin{equation}\label{eq:varphi_m-1_smooth}
\varphi\in C^1\big([0,T],\Diff^{m-1}_\ap(\R^n)\big).
\end{equation}
This together with \eqref{eq:ode} implies that 
\begin{equation}\label{eq:linearized_ode}
[d\varphi(t)]^{\LargerCdot}=[du(t)]\circ\varphi(t)\cdot[d\varphi(t)],\quad[d\varphi]|_{t=0}=I,
\end{equation}
where $[d\varphi(t)]$ is the Jacobian matrix of $\varphi(t,\cdot) : \R^n\to\R^n$ at time $t\in[0,T]$, $[du(t)]$ is the Jacobian 
matrix of $u(t,\cdot) : \R^n\to\R^n$ at time $t\in[0,T]$, and $(\cdot)^{\LargerCdot}$ is the (pointwise) partial 
derivative with respect to $t$. It follows from Theorem \ref{th:group_regularity} that the map
\[
J : [0,T]\to C^{m-1}_\ap,\quad t\mapsto[du(t)]\circ\varphi(t),
\]
is continuous. The Banach algebra property in $C^{m-1}_\ap$ then implies that for any
given $t\in[0,T]$ the $n\times n$ matrix $J(t)$ with elements in $C^{m-1}_\ap$ defines,
by pointwise multiplication, a bounded linear map $J(t) : C^{m-1}_\ap\to C^{m-1}_\ap$ 
such that $J : [0,T]\to\mathcal{L}\big(C^{m-1}_\ap,C^{m-1}_\ap\big)$ is continuous.
This shows that the linear ODE,
\[
{\dot Y}=J(t) Y,\quad Y|_{t-0}=I,
\]
in the Banach space $C^{m-1}_\ap$ has a unique solution $Y\in C^1\big([0,T],C^{m-1}_\ap\big)$. Now, it follows from
\eqref{eq:linearized_ode} and the uniqueness of solutions of the corresponding ODEs that 
\begin{equation}\label{eq:dvarphi_m-1_smooth}
[d\varphi]\in C^1\big([0,T],C^{m-1}_\ap\big).
\end{equation}
The characterization (ii) of the function space $C^m_\ap$ in Theorem \ref{th:almost_periodic_functions_equivalence_m}
together with \eqref{eq:varphi_m-1_smooth} and \eqref{eq:dvarphi_m-1_smooth} then implies that
$\varphi\in C^1\big([0,T],\Diff^m_\ap(\R^n)\big)$.
\end{proof}

\begin{theorem}\label{th:ch_equivalence}
Let $m\ge 3$, $m\notin\Z$, and $T>0$. Then, there exists a one-to-one correspondence between the
solutions 
$(\varphi, v)\in C^1\big([0,T],\Diff^m_{\ap}(\mathbb{R}^n\big)\times C^m_{\ap}\big)$ of 
the initial value problem
\begin{equation}\label{eq:CH'}
(\dot{\varphi},\dot{v})=\big(v,R_{\varphi}\circ A_\alpha^{-1}\circ
B_\alpha\circ R_{\varphi^{-1}}(v)\big),\quad
\big(\varphi,v\big)|_{t=0}=(\id,u_0),
\end{equation}
where $B_\alpha(u)=A_\alpha\big[(\nabla u)\cdot u\big]-
\nabla[A_\alpha u]\cdot u-\big(\Div u\cdot I+(\nabla u)^{T}\big) A_\alpha u$
for $u\in C^m_{\ap}$, and the solutions $u\in C\big([0,T],C^m_\ap\big)\cap C^1\big([0,T],C^{m-1}_{\ap}\big)$ of
the initial value problem
\begin{equation}\label{eq:CH}
\left\{
\begin{array}{l}
\m_t+(\nabla\m)\cdot u+\big(\Div u\cdot I+(\nabla u)^T\big)\cdot\m=0,\quad\m=A_\alpha u\\
u|_{t=0}=u_0
\end{array}
\right.
\end{equation}
given by the relation $u=v\circ\varphi^{-1}$.
\end{theorem}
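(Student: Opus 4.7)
The plan is to establish the correspondence $u = v\circ\varphi^{-1}$ (equivalently $v = u\circ\varphi$) by direct computation via the chain rule, together with the algebraic identity built into the definition of $B_\alpha$. Both directions will rely only on Theorem \ref{th:ode}, the composition/inversion regularity of Theorem \ref{th:group_regularity} and Remark \ref{rem:composition}, and the mapping properties of $A_\alpha$ from Corollary \ref{coro:A_on_ap_isomorphism}.

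For the forward direction, given $(\varphi,v)\in C^1\big([0,T],\Diff^m_\ap(\R^n)\times C^m_\ap\big)$ solving \eqref{eq:CH'}, I set $u:=v\circ\varphi^{-1}$. Since $\imath:\Diff^m_\ap(\R^n)\to\Diff^m_\ap(\R^n)$ is continuous (Theorem \ref{th:group_regularity} with $r=0$), $\varphi^{-1}\in C\big([0,T],\Diff^m_\ap(\R^n)\big)$, and composition yields $u\in C\big([0,T],C^m_\ap\big)$. Differentiating $v=u\circ\varphi$ in $t$ and using $\dot\varphi=v=u\circ\varphi$ from the first component of \eqref{eq:CH'} gives
\[
\dot v=(\dot u)\circ\varphi+\bigl((\nabla u)\cdot u\bigr)\circ\varphi.
\]
Comparing with the second component of \eqref{eq:CH'} and precomposing with $\varphi^{-1}$ produces $\dot u+(\nabla u)\cdot u=A_\alpha^{-1}B_\alpha(u)$. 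Applying $A_\alpha$ and unpacking the definition of $B_\alpha$ immediately yields equation \eqref{eq:CH} with $\m=A_\alpha u$, and the initial condition $u|_{t=0}=u_0$ is immediate.

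For the backward direction, given $u\in C\big([0,T],C^m_\ap\big)\cap C^1\big([0,T],C^{m-1}_\ap\big)$ solving \eqref{eq:CH}, Theorem \ref{th:ode} (applicable since $m\ge 3\ge 2$) produces a unique $\varphi\in C^1\big([0,T],\Diff^m_\ap(\R^n)\big)$ with $\dot\varphi=u\circ\varphi$ and $\varphi|_{t=0}=\id$. I set $v:=u\circ\varphi$. Rewriting \eqref{eq:CH} with the definition of $B_\alpha$ gives exactly $\dot u+(\nabla u)\cdot u=A_\alpha^{-1}B_\alpha(u)$, so the chain rule yields
\[
\dot v=\bigl(\dot u+(\nabla u)\cdot u\bigr)\circ\varphi=R_\varphi\circ A_\alpha^{-1}\circ B_\alpha\circ R_{\varphi^{-1}}(v),
\]
which is \eqref{eq:CH'}. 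Uniqueness in both directions follows from uniqueness in Theorem \ref{th:ode} together with bijectivity of the assignment $v\mapsto v\circ\varphi^{-1}$.

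The main technical point is the regularity of $v$ in the backward direction. Naïvely $\dot v=(\dot u+(\nabla u)\cdot u)\circ\varphi$ lies only in $C^{m-1}_\ap$, but the identity $\dot u+(\nabla u)\cdot u=A_\alpha^{-1}B_\alpha(u)$ recovers the full $C^m_\ap$ regularity once one observes the crucial cancellation in $B_\alpha$: the top-order terms $A_\alpha\bigl[(\nabla u)\cdot u\bigr]$ and $\nabla[A_\alpha u]\cdot u$ both contain $\alpha^2(\nabla\Delta u)\cdot u$, which cancels in the difference, leaving an expression involving at most second derivatives of $u$. Consequently $B_\alpha(u)\in C^{m-2}_\ap$, and Corollary \ref{coro:A_on_ap_isomorphism} (which is precisely where the hypothesis $m\notin\Z$ enters) yields $A_\alpha^{-1}B_\alpha(u)\in C^m_\ap$. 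This same observation guarantees $(\varphi,v)\in C^1\big([0,T],\Diff^m_\ap(\R^n)\times C^m_\ap\big)$, closing the regularity loop.
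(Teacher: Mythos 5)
Your proposal is correct and is essentially the argument the paper delegates to its reference (Proposition 2.1 of \cite{McOT1}): the chain-rule computation in both directions, with the $C^m_\ap$-regularity of $v$ recovered from the cancellation of the third-order terms in $B_\alpha$ and the isomorphism $A_\alpha^{-1}:C^{m-2}_\ap\to C^m_\ap$ of Corollary \ref{coro:A_on_ap_isomorphism}. The one point to tighten is the forward direction, where $u\in C^1\big([0,T],C^{m-1}_\ap\big)$ should be obtained by differentiating $t\mapsto v(t)\circ\varphi(t)^{-1}$ using the $C^1$-smoothness of composition and inversion with one derivative loss (Theorem \ref{th:group_regularity} with $r=1$), rather than by differentiating $v=u\circ\varphi$, which presupposes the time-differentiability of $u$.
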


\begin{proof}[Proof of Theorem \ref{th:ch_equivalence}]
The proof follows the lines of the proof of Proposition 2.1 in \cite{McOT1} (cf. Proposition 7.1 in \cite{McOT3}). 
\end{proof}

\begin{Prop}\label{prop:smooth_maps}
Let $m\ge 3$, $m\notin\Z$. Then the map $\mathcal{A}_{\alpha}$ defined by
\begin{equation}\label{eq:K_alpha-general25}
\mathcal{A}_{\alpha}:{\Diff}^m_{\ap}(\R^n)\times
C^m_{\ap}\to{\Diff}^m_{\ap}(\R^n)\times
C^{m-2}_{\ap},\,\,(\varphi,v)\mapsto\big(\varphi,R_{\varphi}\circ A_\alpha\circ
R_{\varphi^{-1}}(v)\big),
\end{equation}
is a $C^1$-diffeomorphism with inverse given by
\begin{equation}\label{eq:K_alpha-general24}
\mathcal{A}^{-1}_{\alpha}:{\Diff}^m_{\ap}(\R^n)\times
C^{m-2}_{\ap}\to{\Diff}^m_{\ap}(\R^n)\times
C^m_{\ap},\,\,(\varphi,v)\mapsto\big(\varphi,R_{\varphi}\circ A_\alpha^{-1}\circ
R_{\varphi^{-1}}(v)\big).
\end{equation}
\end{Prop}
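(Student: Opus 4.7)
The plan is to rewrite $R_\varphi\circ A_\alpha\circ R_{\varphi^{-1}}$ in a ``Lagrangian'' form that eliminates $\varphi^{-1}$, read off smoothness of $\mathcal{A}_\alpha$ from the Banach-algebra structure together with Lemma \ref{lem:division}, and then apply the inverse function theorem in Banach spaces. For $(\varphi,v)\in\Diff^m_\ap(\R^n)\times C^m_\ap$, set $w:=v\circ\varphi^{-1}\in C^m_\ap$. Working componentwise on $v=w\circ\varphi$, I differentiate once to get $[\nabla w]\circ\varphi=\nabla v\cdot[d\varphi]^{-1}$ and twice to get
\begin{equation*}
\nabla^2 v_j=[d\varphi]^T\bigl([\nabla^2w_j]\circ\varphi\bigr)[d\varphi]+\sum_{k=1}^n\bigl([\partial_kw_j]\circ\varphi\bigr)\nabla^2\varphi_k
\end{equation*}
for $j=1,\dots,n$. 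Solving for $[\nabla^2 w_j]\circ\varphi$ and taking the trace produces, for each $j$,
\begin{equation*}
(\Delta w_j)\circ\varphi=\mathrm{tr}\Bigl\{[d\varphi]^{-T}\Bigl(\nabla^2 v_j-\sum_{k=1}^n\bigl[\nabla v_j\cdot[d\varphi]^{-1}\bigr]_k\nabla^2\varphi_k\Bigr)[d\varphi]^{-1}\Bigr\}.
\end{equation*}
Hence $R_\varphi\circ A_\alpha\circ R_{\varphi^{-1}}(v)=v-\alpha^2(\Delta w)\circ\varphi$ is a rational expression in the entries of $\nabla v$, $\nabla^2 v$, $[d\varphi]$, $[d\varphi]^{-1}$, and $\nabla^2\varphi$, with no occurrence of $\varphi^{-1}$.

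Next, by Proposition \ref{prop:banach_algebra}(i), entries of $\nabla v$ and $[d\varphi]$ lie in $C^{m-1}_\ap$ while entries of $\nabla^2 v_j$ and $\nabla^2\varphi_k$ lie in $C^{m-2}_\ap$. Cramer's rule writes $[d\varphi]^{-1}=(\det d\varphi)^{-1}\mathrm{adj}(d\varphi)$, and $\det d\varphi>\varepsilon>0$ on $\Diff^m_\ap(\R^n)$, so Lemma \ref{lem:division} gives that $\varphi\mapsto[d\varphi]^{-1}$ is real-analytic from $\Diff^m_\ap(\R^n)$ into $n\times n$ matrices with entries in $C^{m-1}_\ap$. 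The Banach-algebra property of $C^{m-2}_\ap$ (Proposition \ref{prop:banach_algebra}(ii)) then forces every product in the expression above to lie in $C^{m-2}_\ap$ and depend real-analytically on $\varphi$ and linearly on $v$. Thus $\mathcal{A}_\alpha:\Diff^m_\ap(\R^n)\times C^m_\ap\to\Diff^m_\ap(\R^n)\times C^{m-2}_\ap$ is a well-defined real-analytic (in particular $C^1$) map.

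At each point $(\varphi,v)$ the differential has the block form
\begin{equation*}
d_{(\varphi,v)}\mathcal{A}_\alpha=\begin{pmatrix}\id&0\\ \ast&A_\alpha^\varphi\end{pmatrix},\qquad A_\alpha^\varphi:=R_\varphi\circ A_\alpha\circ R_{\varphi^{-1}}:C^m_\ap\to C^{m-2}_\ap,
\end{equation*}
where $\ast$ is the (analytic) derivative in $\varphi$. For $k\in\{m,m-2\}$, right translation $R_\varphi:C^k_\ap\to C^k_\ap$ is a bounded linear isomorphism with inverse $R_{\varphi^{-1}}$ (Remark \ref{rem:composition} with $r=0$, noting $\Diff^m_\ap(\R^n)\subseteq\Diff^{m-2}_\ap(\R^n)$). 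Since $m\ge 3$, $m\notin\Z$ gives $m-2\ge 1$, $m-2\notin\Z$, Corollary \ref{coro:A_on_ap_isomorphism} applies and makes $A_\alpha:C^m_\ap\to C^{m-2}_\ap$ an isomorphism. Composing shows $A_\alpha^\varphi$ and hence $d_{(\varphi,v)}\mathcal{A}_\alpha$ are Banach-space isomorphisms, so by the inverse function theorem $\mathcal{A}_\alpha$ is a local $C^1$-diffeomorphism everywhere. The explicit global inverse $(\varphi,v)\mapsto(\varphi,R_\varphi\circ A_\alpha^{-1}\circ R_{\varphi^{-1}}(v))$, well-defined by Corollary \ref{coro:A_on_ap_isomorphism}, then realizes $\mathcal{A}_\alpha$ as a global $C^1$-diffeomorphism with the stated inverse.

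The main technical obstacle is that the naive reading of $R_\varphi\circ A_\alpha\circ R_{\varphi^{-1}}(v)$ would require composing the intermediate $\Delta(v\circ\varphi^{-1})\in C^{m-2}_\ap$ with $\varphi\in\Diff^m_\ap(\R^n)$, and by Remark \ref{rem:composition} this composition is only continuous, not $C^1$-smooth, in $\varphi$. The purpose of the Lagrangian rewrite above is exactly to repackage the loss of derivatives as explicit factors $\nabla^2\varphi$ and $\nabla^2 v$ multiplied by the analytic factor $[d\varphi]^{-1}$, so that the Banach-algebra structure and Lemma \ref{lem:division} together yield real-analyticity of $\mathcal{A}_\alpha$ in $\varphi$.
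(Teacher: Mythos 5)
Your proof is correct and follows essentially the same route as the paper: rewrite the conjugated operator in Lagrangian form so that $\varphi^{-1}$ disappears, deduce smoothness from Lemma \ref{lem:division} and the Banach algebra property, verify that the block-triangular differential is an isomorphism via Corollary \ref{coro:A_on_ap_isomorphism}, and conclude with the inverse function theorem plus the explicit global inverse. The only difference is that you carry out explicitly the chain-rule computation that the paper delegates to the citation of \cite[Lemma 3.2]{McOT1}.
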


The proof of Proposition \ref{prop:smooth_maps} is based on the following Lemma (cf. Lemma 3.3 in \cite{KLT}
for an analog in the Sobolev setting on the 2d torus).

\begin{Lemma}\label{lem:derivatives}
Assume that $w\in C^m_\ap(\R^n,\R)$ and $\varphi\in\Diff_\ap^m(\R^n)$ with $m\ge 2$. Then
\[
\big(R_\varphi\circ\nabla\circ R_{\varphi^{-1}}\big)(w)=\big([d\varphi]^{-1}\big)^T\cdot(\nabla w)
\]
and
\[
\Big(R_\varphi\circ\big(\nabla\nabla^T\big)\circ R_{\varphi^{-1}}\Big)(w)=
\big([d\varphi]^{-1}\big)^T\cdot\nabla\Big(\big(\nabla^T w\big)\cdot[d\varphi]^{-1}\Big)
\]
where $\big(\nabla\nabla^T\big)=\big(\partial^2/\partial x_k\partial x_l\big)_{1\le k,l\le n}$ and 
$\nabla^T=(\partial_{x_1},...,\partial_{x_n})$. The expressions appearing on the right hand sides of these formulas 
have components in $C^{m-2}_\ap(\R^n,\R)$.
\end{Lemma}

\begin{proof}[Proof of Lemma \ref{lem:derivatives}]
The first formula follows directly from the chain rule applied to $w\circ\varphi^{-1}$ and
the fact that $\big[d(\varphi^{-1})\big]=[d\varphi]^{-1}\circ\varphi$.
The second formula follows from the first one and its transpose. The last statement of the Lemma follows from 
Proposition \ref{prop:banach_algebra}, Lemma \ref{lem:division}, and the fact that 
$[d\varphi]^{-1}=\mathop{\rm Adj}[d\varphi]/\det[d\varphi]$ where $\mathop{\rm Adj} M$ denotes 
the cofactor matrix of an $n\times n$ matrix $M$.
\end{proof}

\begin{proof}[Proof of Proposition \ref{prop:smooth_maps}]
It follows from Lemma \ref{lem:derivatives} that $R_{\varphi}\circ A_\alpha\circ R_{\varphi^{-1}}(v)$ is a polynomial in $v$, $\varphi$, 
their derivatives up to order two, and $1/\det[d\varphi]$. Then, in view of Proposition \ref{prop:banach_algebra} and Lemma \ref{lem:division}, 
this polynomial expression defines a real analytic map
\[
\Diff_\ap^m(\R^n)\times C^m_\ap\to C^{m-2}_\ap,\quad(\varphi,v)\mapsto R_{\varphi}\circ A_\alpha\circ R_{\varphi^{-1}}(v).
\]
In particular, the map \eqref{eq:K_alpha-general25} is $C^1$-smooth (cf. Proposition 3.1 in \cite{KLT} for a similar proof in 
the Sobolev setting on the 2d torus).
One easily sees by inspection that the mapping $\mathcal{A}^{-1}_{\alpha}$ defined in 
\eqref{eq:K_alpha-general24} is the inverse of $\mathcal{A}_{\alpha}$. 
For any $(\varphi_0, v_0)\in\Diff^m_{\ap}\times C^m_{ap}$, the differential of $\mathcal{A}_{\alpha}$
at $(\varphi_0,v_0)$ is
\begin{equation}\label{eq:K_alpha-general27}
d_{(\varphi_0,v_0)}\mathcal{A}_{\alpha}(\delta\varphi, \delta v)=
\begin{pmatrix}
\delta\varphi & 0\\
\ast & R_{\varphi_0}\circ A_\alpha\circ
R_{\varphi_0}^{-1}\delta v
\end{pmatrix}
\end{equation}
where $\delta\varphi$ and $\delta v$ are in $C^m_\ap$.
It follows from Corollary \ref{coro:A_on_ap_isomorphism} and Theorem \ref{th:group_regularity} that 
$R_{\varphi_0}\circ A_\alpha\circ R_{\varphi_0}^{-1} : C^m_{\ap}\to C^{m-2}_{\ap}$ 
is bounded and has $R_{\varphi_0}\circ A_\alpha^{-1}\circ R_{\varphi_0}^{-1}$ as its (bounded) inverse. 
Hence, by \eqref{eq:K_alpha-general27}, the differential $d_{(\varphi_0,v_0)}\mathcal{A}_{\alpha}$ is a linear isomorphism. 
By the inverse function theorem, $\mathcal{A}_{\alpha}$ is therefore a local $C^1$-diffeomorphism. Since it is also bijective, 
we obtain that $\mathcal{A}_{\alpha}$ is a $C^1$-diffeomorphism, with inverse given by (\ref{eq:K_alpha-general24}).
\end{proof}

\begin{Prop}\label{prop:smooth_vector_field}
Let $m\ge 3$, $m\notin\Z$.  Then, the map $\mathcal{B}_\alpha$ defined by
\begin{equation}\label{eq:B_alpha}
\mathcal{B}_\alpha :{\Diff}^m_{\ap}(\R^n)\times
C^m_{\ap}\to{\Diff}^m_{\ap}(\R^n)\times
C^{m-2}_{\ap},\,\,(\varphi,v)\mapsto\big(\varphi,R_{\varphi}\circ B_{\alpha}\circ
R_{\varphi^{-1}}(v)\big),
\end{equation}
is $C^1$-smooth. 
\end{Prop}

\begin{proof}[Proof of Proposition \ref{prop:smooth_vector_field}]
First, note that $A_\alpha\big[(\nabla u)\cdot u\big]=\nabla[A_\alpha u]\cdot u+\cdots$ where $\cdots$ stands for 
a polynomial expression of $u$ and its derivatives up to order two. This implies that $B_\alpha(u)$ is a polynomial expression of
$u$ and its derivatives up to order two. Then, arguing as in the proof of Proposition \ref{prop:smooth_maps} we see from Lemma \ref{lem:derivatives} 
that $R_{\varphi}\circ B_\alpha\circ R_{\varphi^{-1}}(v)$ is a polynomial in $v$, $\varphi$, their derivatives up to order two, and $1/\det[d\varphi]$.
Proposition \ref{prop:banach_algebra} and Lemma \ref{lem:division} then imply that the map \eqref{eq:B_alpha} is real analytic and hence $C^1$-smooth.
\end{proof}

As a consequence of Proposition \ref{prop:smooth_maps} and 
Proposition \ref{prop:smooth_vector_field} we obtain

\begin{Prop}\label{prop:smooth_ode}
Let $m\ge 3$, $m\notin\Z$. Then the vector field on the right hand side of \eqref{eq:CH'},
\[
\Diff^m_\ap(\R^n)\times C^m_\ap\to C^m_\ap\times C^m_\ap,\quad 
(\varphi,v)\mapsto\big(v,R_{\varphi}\circ A_\alpha^{-1}\circ
B_\alpha\circ R_{\varphi^{-1}}(v)\big),
\]
is $C^1$-smooth.
\end{Prop}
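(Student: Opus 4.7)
The strategy will be to recognize the given vector field as a composition of maps whose $C^1$-smoothness has already been established in Proposition \ref{prop:smooth_maps} and Proposition \ref{prop:smooth_vector_field}, together with trivial bounded linear maps. The first component, $(\varphi,v)\mapsto v$, is simply a bounded linear projection from $\Diff^m_\ap(\R^n)\times C^m_\ap$ onto $C^m_\ap$, hence $C^\infty$-smooth (in fact real analytic); nothing needs to be said about it. All the substance lies in the second component.

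For the second component, the plan is to write it as $\pi_2\circ\mathcal{A}_\alpha^{-1}\circ\mathcal{B}_\alpha$, where $\pi_2$ denotes the projection onto the second factor of $\Diff^m_\ap(\R^n)\times C^m_\ap$. The key algebraic identity to verify is that the inner $R_{\varphi^{-1}}\circ R_\varphi$ collapses to the identity, so that
\[
\mathcal{A}_\alpha^{-1}\bigl(\mathcal{B}_\alpha(\varphi,v)\bigr)
=\mathcal{A}_\alpha^{-1}\bigl(\varphi,\,R_\varphi\circ B_\alpha\circ R_{\varphi^{-1}}(v)\bigr)
=\bigl(\varphi,\,R_\varphi\circ A_\alpha^{-1}\circ B_\alpha\circ R_{\varphi^{-1}}(v)\bigr),
\]
which upon projection by $\pi_2$ is precisely the second component of the vector field. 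The composition is well-defined because Proposition \ref{prop:smooth_vector_field} outputs a pair in $\Diff^m_\ap(\R^n)\times C^{m-2}_\ap$, which matches exactly the domain of $\mathcal{A}_\alpha^{-1}$ given by Proposition \ref{prop:smooth_maps}. A composition of $C^1$-smooth maps with a bounded linear projection is $C^1$-smooth, which concludes the argument.

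There is essentially no genuine obstacle remaining, since the substantive analytic work has already been packaged into the two preceding propositions: the Banach algebra property of $C^m_\ap$, the elliptic isomorphism $A_\alpha^{-1}:C^{m-2}_\ap\to C^m_\ap$ from Corollary \ref{coro:A_on_ap_isomorphism}, the $C^1$-regularity of right translation from Theorem \ref{th:group_regularity} and Remark \ref{rem:composition}, and the cancellation of the two highest-order derivative terms inside $B_\alpha$ noted in the proof of Proposition \ref{prop:smooth_vector_field}. The hypotheses $m\ge 3$ and $m\notin\Z$ are inherited verbatim from those propositions and ensure in particular that $A_\alpha^{-1}$ genuinely lifts two degrees of regularity within the almost-periodic H\"olderian scale; the non-integrality is needed only because Corollary \ref{coro:A_on_ap_isomorphism} is formulated for H\"olderian exponents. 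In summary, the only task is bookkeeping of the domains and codomains.
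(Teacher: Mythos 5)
Your proposal is correct and follows exactly the route the paper intends: the paper states Proposition \ref{prop:smooth_ode} as a direct consequence of Proposition \ref{prop:smooth_maps} and Proposition \ref{prop:smooth_vector_field}, and the decomposition you spell out, writing the second component as $\pi_2\circ\mathcal{A}_\alpha^{-1}\circ\mathcal{B}_\alpha$ with the inner $R_{\varphi^{-1}}\circ R_\varphi$ cancelling, is precisely the bookkeeping that justifies that deduction.
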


\begin{remark}\label{rem:real_analytic}
The arguments in the proof of Lemma 3.2 and Proposition 4.3 in \cite{McOT1} show that the vector field in
Proposition \ref{prop:smooth_ode} is actually real analytic $($cf. \cite{McOT3}$)$.
Note also that the condition $m\notin\Z$ is not needed in the case when $n=1$.
The reason is that in this case the operator $
A_\alpha\equiv(1-\alpha^2\partial_x^2) : C^{m+2}_b\to C^m_b$ is
an isomorphism of Banach spaces for any real $m\ge 0$.
\end{remark}

\begin{proof}[Proof of Theorem \ref{th:ch_m}]
The Theorem follows from Proposition \ref{prop:smooth_ode}, Theorem \ref{th:ch_equivalence},
the theorems on the existence, uniqueness and dependence on parameters of the solutions of ODE's in Banach spaces 
(see e.g. \cite{Lang,Dieudonne}), and Lemma \ref{lem:general} below.
\end{proof}

\begin{Lemma}\label{lem:general}
Let $X$ and $Y$ be Banach spaces, $U\subseteq X$ an non-empty open set, and let 
$\Gamma : [0,T]\times U\to Y$ be a continuous map. Then the map
\[
U\to C\big([0,T],Y\big),\quad u\mapsto\Gamma(\cdot,u),
\]
is continuous.
\end{Lemma}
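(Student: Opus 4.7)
The plan is to prove continuity at an arbitrary point $u_0\in U$ by a standard compactness argument on the interval $[0,T]$. Fix $u_0\in U$ and $\varepsilon>0$. The goal is to produce $\delta>0$ such that
\[
\sup_{t\in[0,T]}\big\|\Gamma(t,u)-\Gamma(t,u_0)\big\|_Y<\varepsilon
\]
whenever $u\in U$ satisfies $\|u-u_0\|_X<\delta$; this is exactly what is required for continuity of the map $u\mapsto\Gamma(\cdot,u)$ into $C\big([0,T],Y\big)$ equipped with the supremum norm.

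The idea is the tube lemma applied to the ``slice'' $\{(t,u_0)\mid t\in[0,T]\}$, which is compact since $[0,T]$ is compact. More concretely, for each $t\in[0,T]$ the continuity of $\Gamma$ at $(t,u_0)$ yields an open interval $I_t\subseteq[0,T]$ around $t$ and a radius $\delta_t>0$ such that $B_X(u_0,\delta_t)\subseteq U$ and
\[
\big\|\Gamma(s,u)-\Gamma(t,u_0)\big\|_Y<\varepsilon/2
\quad\text{for all}\ (s,u)\in I_t\times B_X(u_0,\delta_t).
\]
By compactness of $[0,T]$, finitely many intervals $I_{t_1},\ldots,I_{t_N}$ cover $[0,T]$. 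I would then set $\delta:=\min_{1\le i\le N}\delta_{t_i}>0$.

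Now for any $t\in[0,T]$ pick $i$ with $t\in I_{t_i}$. If $\|u-u_0\|_X<\delta$ then both $(t,u)$ and $(t,u_0)$ lie in $I_{t_i}\times B_X(u_0,\delta_{t_i})$, so by the triangle inequality
\[
\big\|\Gamma(t,u)-\Gamma(t,u_0)\big\|_Y
\le\big\|\Gamma(t,u)-\Gamma(t_i,u_0)\big\|_Y
+\big\|\Gamma(t_i,u_0)-\Gamma(t,u_0)\big\|_Y
<\varepsilon/2+\varepsilon/2=\varepsilon.
\]
Taking the supremum over $t\in[0,T]$ yields the desired estimate, finishing the proof.

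There is no real obstacle here: the argument is entirely standard and relies only on the compactness of $[0,T]$ together with pointwise continuity of $\Gamma$; no completeness or Banach-space structure of $X$ and $Y$ is actually used beyond the metric/norm topology. The only point to be slightly careful about is that we cannot invoke uniform continuity on $[0,T]\times\overline{B_X(u_0,r)}$ directly, since closed balls in an infinite-dimensional Banach space $X$ need not be compact; this is precisely why the tube-lemma-style finite covering of $[0,T]$ is the right tool.
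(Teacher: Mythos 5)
Your argument is correct and is precisely the compactness-plus-continuity argument the paper alludes to (the paper only remarks that the Lemma ``follows easily from the continuity of $\Gamma$ and the compactness of the interval $[0,T]$'' without writing out the details). Your tube-lemma covering of the compact slice $\{(t,u_0)\mid t\in[0,T]\}$ fills in exactly those details, and your closing caveat about why one cannot simply invoke uniform continuity on a closed ball in infinite dimensions is the right observation.
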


The proof of this Lemma follows easily from the continuity of $\Gamma$ and the compactness
of the interval $[0,T]$. The Lemma is obvious if $X$ is finite dimensional.

%%%%%%%%%%%%%%%%%%%%%%%%%%%%%%%%%%%%%%%%%%%%%%%%%
\section{Appendix B: Auxiliary results}\label{sec:auxiliary_results}
In this Appendix we collect several technical results and proofs used in the main body of the paper.
We first prove the following characterization of the {\em little H{\"o}lder space} $c^m_b(\R^n,\R)$ with real exponent $m\ge 0$. 
Let $\gamma:=m-[m]$ where $[m]$ is the largest integer that is less or equal than $m$. 
Recall that by definition, the space $c^m_b(\R^n,\R)$ is the closure of $C^\infty_b(\R^n,\R)$ 
in $C^m_b(\R^n,\R)$, where $C^m_b(\R^n,\R)$ is the standard  H{\"o}lder space with exponent $m\ge 0$ -- see
Section \ref{sec:spaces}. Let $\chi\in C^\infty_c(\R^n,\R)$ be a cut-off function such that $\chi\ge 0$, 
$\mathop{\rm supp}\chi\subseteq\big\{x\in\R^n\,\big|\,|x|\le 1\big\}$, and $\int_{\R^n}\chi(x)\,dx=1$. 
For any $\varepsilon>0$ denote $\chi_\varepsilon(x):=\chi(x/\varepsilon)/\varepsilon^n$.
We have

\begin{Lemma}\label{lem:charachterization_little_holder}
For any given real exponent $m\ge 0$, $m\notin\Z$, one has that $f\in c^m_b(\R^n,\R)$ if and only if $f\in C^{[m]}_b(\R^n,\R)$ and 
$\partial^\beta f\in c^\gamma_b(\R^n,\R)$ for any multi-index $\beta\in\Z^n_{\ge 0}$ with $|\beta|=[m]$.
In this case, $f*\chi_\varepsilon\in C^\infty_b(\R^n,\R)$ and $f*\chi_\varepsilon\stackrel{C^m_b}{\to} f$ as $\varepsilon\to0+$.
\end{Lemma}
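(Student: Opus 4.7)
The forward direction is routine: if $f \in c^m_b(\R^n,\R)$, pick $(g_k) \subset C^\infty_b(\R^n,\R)$ with $g_k \stackrel{C^m_b}{\to} f$; then $f \in C^{[m]}_b$ automatically, and by definition of the $C^m$-norm we have $\partial^\beta g_k \stackrel{C^\gamma_b}{\to} \partial^\beta f$ for every $|\beta| = [m]$, whence $\partial^\beta f \in c^\gamma_b$. The entire content of the lemma therefore lies in the reverse implication together with the mollifier statement, and my plan is to prove directly that $f * \chi_\varepsilon \stackrel{C^m_b}{\to} f$ whenever $f$ satisfies the right-hand conditions; this simultaneously yields $f \in c^m_b$ (since $f * \chi_\varepsilon \in C^\infty_b$ is automatic) and the last sentence of the lemma.

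The first step is to reduce to the base case $m = \gamma \in [0,1)$. Since $\partial^\beta(f * \chi_\varepsilon) = (\partial^\beta f) * \chi_\varepsilon$ for $|\beta| \le [m]$, and since for $|\beta| < [m]$ the derivative $\partial^\beta f$ is Lipschitz (hence bounded and uniformly continuous) and for $|\beta| = [m]$ it is $\gamma$-H{\"o}lder, the standard mollifier argument gives $(\partial^\beta f) * \chi_\varepsilon \to \partial^\beta f$ uniformly for every $|\beta| \le [m]$. What remains is control of the H{\"o}lder seminorm $[(\partial^\beta f) * \chi_\varepsilon - \partial^\beta f]_\gamma$ for $|\beta| = [m]$, and since $\partial^\beta f \in c^\gamma_b$ by hypothesis, this reduces the problem to showing that $g * \chi_\varepsilon \stackrel{C^\gamma_b}{\to} g$ for every $g \in c^\gamma_b$.

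For this base case I would follow the strategy of Proposition \ref{prop:approximation}: establish that the shift curve $c \mapsto g_c$ is continuous from $\R^n$ into $C^\gamma_b$, after which the chain of estimates already displayed in the proof of that Proposition -- bounding $[g * \chi_\varepsilon - g]_\gamma$ by the supremum of second differences of $g$ over translations of magnitude at most $\varepsilon$, together with the standard $g * \chi_\varepsilon \stackrel{C_b}{\to} g$ -- delivers the desired convergence. Shift continuity on $c^\gamma_b$ I would reduce to shift continuity on the dense subspace $C^\infty_b$ by a $3\varepsilon$-argument using translation invariance of the $C^\gamma_b$-norm; on $C^\infty_b$ itself, a second-difference identity involving $\nabla^2 g$ handles the regime $|h|\le 1$ while a cruder first-order bound $2|c|\,\|\nabla g\|_\infty$ handles $|h|\ge 1$, combining to $[g_c - g]_\gamma \le C|c|(\|\nabla g\|_\infty + \|\nabla^2 g\|_\infty)$.

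The main obstacle is precisely this shift-continuity step: it is the failure of this property for arbitrary elements of $C^\gamma_b$ that distinguishes $c^\gamma_b$ from $C^\gamma_b$ in the non-integer case (cf.\ Remark \ref{rem:NB}); in Proposition \ref{prop:approximation} the authors extracted continuity of shifts from precompactness of $\mathcal{S}_f$, and here it must be extracted instead from density of $C^\infty_b$ via the two-regime split of the second-difference estimate described above. Everything downstream -- the passage from $|\beta| = [m]$ to lower multi-indices, the claim $f * \chi_\varepsilon \in C^\infty_b$, and the observation that $f \in C^m_b$ (which follows from $\partial^\beta f \in C^\gamma_b$ for $|\beta| = [m]$ together with $f \in C^{[m]}_b$) -- is routine bookkeeping.
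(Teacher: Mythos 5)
Your proposal is correct, and its overall skeleton (trivial forward direction, reduction of the converse to the scalar case $m=\gamma\in[0,1)$ via $\partial^\beta(f*\chi_\varepsilon)=(\partial^\beta f)*\chi_\varepsilon$, then a density/$3\varepsilon$-argument over $C^\infty_b$) matches the paper's proof. The one place where you genuinely diverge is the base case: to show $g*\chi_\varepsilon\stackrel{C^\gamma_b}{\to}g$ for $g\in c^\gamma_b$, you first establish continuity of the translation curve $c\mapsto g_c$ in $C^\gamma_b$ (via the quantitative bound $[g_c-g]_\gamma\le C|c|\big(\|\nabla g\|_\infty+\|\nabla^2 g\|_\infty\big)$ on the dense subspace $C^\infty_b$ together with translation invariance of the norm) and then feed this into the second-difference estimate for $[g*\chi_\varepsilon-g]_\gamma$ from the proof of Proposition \ref{prop:approximation}. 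The paper instead runs the $3\varepsilon$-argument directly on the mollification error: it uses that convolution with $\chi_\varepsilon$ is a contraction for both $|\cdot|_\infty$ and $[\cdot]_\gamma$, so that $|f^\beta-f^\beta*\chi_\varepsilon|_\gamma\le 2|f^\beta-f_k|_\gamma+|f_k-f_k*\chi_\varepsilon|_1$ for a smooth approximant $f_k$, and then lets $\varepsilon\to0+$ using only $C^1$-convergence of mollifications of smooth functions. Your route is slightly longer but makes explicit the equivalence between membership in $c^\gamma_b$ and continuity of translations (cf.\ Lemma \ref{lem:characterization_little_holder*} and Remark \ref{rem:NB}), and it unifies this Lemma with the mechanism of Proposition \ref{prop:approximation}, where the same shift-continuity was extracted from precompactness of $\mathcal{S}_f$ rather than from density of $C^\infty_b$; the paper's route is shorter and needs only the elementary contraction property of convolution. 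Both arguments are complete and correct.
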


\begin{proof}[Proof of Lemma \ref{lem:charachterization_little_holder}]
The direct statement of the Lemma follows easily from the definition of the norm in $C^m_b(\R^n,\R)$.
Let us prove the converse statement. 
Assume that 
\begin{equation}\label{eq:two_formulas}
f\in C^{[m]}_b(\R^n,\R)\quad\text{and}\quad\partial^\beta f\in c^\gamma_b(\R^n,\R)
\end{equation}
for any multi-index $\beta\in\Z^n_{\ge 0}$ with $|\beta|=[m]$.
By writing $m=[m]+\gamma$ with $0<\gamma<1$, 
we obtain from \eqref{eq:two_formulas}, the properties of the mollifiers, and the fact that $\partial^\beta f$
is uniformly continuous for $|\beta|\le[m]$ (since $\gamma>0$) that $f*\chi_\varepsilon\in C^\infty_b(\R^n,\R)$,
\begin{equation}\label{eq:approximation_integer_part}
f*\chi_\varepsilon\stackrel{C^{[m]}_b}{\to}f\quad\text{as}\quad\varepsilon\to 0+.
\end{equation}
Assume that for any given multi-index with $|\beta|=[m]$ there exists a sequence $(f_k)_{k\ge 1}$ in 
$C^\infty_b(\R^n,\R)$ such that 
\begin{equation}\label{eq:f^beta_approximation}
f_k\stackrel{C^\gamma_b}{\to}\partial^\beta f\quad\text{as}\quad k\to\infty.
\end{equation}
For any given $k\ge 1$ and $\varepsilon>0$ we have
\begin{eqnarray}
|\partial^\beta f-\partial^\beta f*\chi_\varepsilon|_\gamma&\le&|\partial^\beta f-f_k|_\gamma+|f_k-f_k*\chi_\varepsilon|_\gamma+
|(f_k-\partial^\beta f)*\chi_\varepsilon|_\gamma\nonumber\\
&\le&2|\partial^\beta f-f_k|_\gamma+|f_k-f_k*\chi_\varepsilon|_1\label{eq:f^beta_triangle},
\end{eqnarray}
where we used that for any $g\in C^\gamma_b(\R^n,\R)$, $|g*\chi_\varepsilon|_\infty\le|g|_\infty$ and
\[
[g*\chi_\varepsilon]_\gamma\le
\sup_{h\ne 0,x\in\R^n}\int_{\R^n}\frac{\big|g(x-y+h)-g(x-y)\big|}{|h|^\gamma}\,\chi_\varepsilon(y)\,dy
\le[g]_\gamma.
\]
Since $f_k\in C^2_b(\R^n,\R)$ for any $k\ge 1$, we obtain that $|f_k-f_k*\chi_\varepsilon|_1\to 0$ as $\varepsilon\to 0+$.
This, together with \eqref{eq:f^beta_approximation}, \eqref{eq:f^beta_triangle}, then implies that
\[
\partial^\beta f*\chi_\varepsilon\stackrel{C^\gamma_b}{\to}\partial^\beta f\quad\text{as}\quad\varepsilon\to 0+.
\]
By combining it with \eqref{eq:approximation_integer_part} and 
$\partial^\beta\big(f*\chi_\varepsilon\big)=(\partial^\beta f)*\chi_\varepsilon$
we then conclude that $f*\chi_\varepsilon\stackrel{C^m_b}{\to}f$ as $\varepsilon\to 0+$.
This proves that $f\in c^m_b(\R^n,\R)$ and hence, completes the proof of the equivalence statement in the Lemma. 
It also proves the second statement of the Lemma.
\end{proof}

\begin{remark}\label{rem:integer_exponents}
If  the exponent $m\ge 0$ is integer then $c^m_b(\R^n,\R)=C^m_b(\R^n,\R)$.
One can easily see this by using the Littlewood-Paley dyadic partition of unity on $\R^n$ and the fact that mollifiers (uniformly)
approximate $C^m_b(\R^n,\R)$ functions on compact sets of $\R^n$ (cf. \cite{AlinhacGerard}).
\end{remark}

One has the following complementary characterization of the space $c^\gamma_b(\R^n,\R)$ where $0<\gamma<1$
(cf. e.g. \cite[\S 2]{MisYon}). 

\begin{Lemma}\label{lem:characterization_little_holder*}
For any given $0<\gamma<1$ one has that $f\in C^\gamma_b(\R^b,\R)$ belongs to $c^\gamma_b(\R^n,\R)$ if and only if
\begin{equation}\label{eq:characterization_little_holder*}
\lim_{\delta\to 0+}\sup_{0<|h|<\delta,x\in\R^n}\frac{\big|f(x+h)-f(x)\big|}{|h|^\gamma}\,=\,0.
\end{equation}
\end{Lemma}
For the convenience of the reader we include the proof of this Lemma.

\begin{proof}[Proof of Lemma \ref{lem:characterization_little_holder*}]
For $f\in C^\gamma_b(\R^b,\R)$ denote 
\[
[f]_{\gamma,<\delta}:=\sup_{0<|h|<\delta,x\in\R^n}\frac{\big|f(x+h)-f(x)\big|}{|h|^\gamma}<\infty,
\]
\[
[f]_{\gamma,\ge\delta}:=\sup_{|h|\ge\delta,x\in\R^n}\frac{\big|f(x+h)-f(x)\big|}{|h|^\gamma}<\infty.
\]
These quantities obviously satisfy the triangle inequality and $[f]_\gamma\le [f]_{\gamma,<\delta}+[f]_{\gamma,\ge\delta}$.
Assume that $f\in C^\gamma_b(\R^n,\R)$ and that
\begin{equation}\label{eq:limit_exists1}
f_k\stackrel{C^\gamma_b}{\to} f\quad\text{as}\quad k\to\infty
\end{equation}
for some sequence $(f_k)_{k\ge 1}$ in $C^\infty_b(\R^n,\R)$.
We have 
\begin{eqnarray}
[f]_{\gamma,<\delta}&\le&[(f-f_k)+f_k]_{\gamma,<\delta}\le [f-f_k]_{\gamma,<\delta}+[f_k]_{\gamma,<\delta}\nonumber\\
&\le&[f-f_k]_{\gamma}+|f_k|_1\delta^{1-\gamma}\label{eq:small1}
\end{eqnarray}
where we used the mean value theorem to conclude $[f_k]_{\gamma,<\delta}\le |f_k|_1\delta^{1-\gamma}$.
Now, take $\varepsilon>0$ and by \eqref{eq:limit_exists1} choose $k\ge 1$ such that $[f-f_k]_{\gamma}\le\varepsilon/2$.
Then, take $\delta_0>0$ such that $|f_k|_1\delta_0^{1-\gamma}\le\varepsilon/2$. 
In view of \eqref{eq:small1} we then conclude that $[f]_{\gamma,<\delta}\le\varepsilon$ for any $0<\delta<\delta_0$. 
This proves the direct statement of the Lemma.

Now, assume that $f\in C^\gamma_b(\R^n,\R)$ and \eqref{eq:characterization_little_holder*} holds.
Let $\chi_\lambda$, $\lambda>0$, be the cut-off function defined in the beginning of the Appendix.
Since $f\in C^\gamma_b(\R^n,\R)$ the map $f : \R^n\to\R$ is uniformly continuous and hence, by the properties of the mollifiers,
\begin{equation}\label{eq:limit_exists2}
f*\chi_\lambda\stackrel{C_b}{\to} f\quad\text{as}\quad\lambda\to 0+.
\end{equation}
In addition, we have
\begin{eqnarray}
[f*\chi_\lambda-f]_\gamma&\le&[f*\chi_\lambda-f]_{\gamma,<\delta}+[f*\chi_\lambda-f]_{\gamma,\ge\delta}\nonumber\\
&\le&[f*\chi_\lambda]_{\gamma,<\delta}+[f]_{\gamma,<\delta}+\frac{2}{\delta^\gamma}|f*\chi_\lambda-f|_\infty\nonumber\\
&\le&2[f]_{\gamma,<\delta}+\frac{2}{\delta^\gamma}|f*\chi_\lambda-f|_\infty\label{eq:small2}
\end{eqnarray}
where we used that $[f*\chi_\lambda]_{\gamma,<\delta}\le[f]_{\gamma,<\delta}$ and
$[g]_{\gamma,\ge\delta}\le\frac{2}{\delta^\gamma}|g|_\infty$ for any $g\in C_b$.
Arguing as above, we conclude from \eqref{eq:limit_exists2} and \eqref{eq:small2} that $f*\chi_\lambda\stackrel{C_b^\gamma}{\to} f$
as $\lambda\to 0+$. Since, $f*\chi_\lambda\in C^\infty_b(\R^n,\R)$ we see that $f\in c_b^\gamma(\R^n,\R)$.
\end{proof}

\begin{remark}\label{rem:counterexamples}
As a consequence from Lemma \ref{lem:characterization_little_holder*} one sees that the function
\[
f(x):=
\left\{
\begin{array}{cl}
\sqrt{|x|},&x\in[-1,1],\\
1,&\text{\rm else}.
\end{array}
\right.
\]
belongs to $C^{1/2}_b(\R,\R)$ but {\em not} to $c^{1/2}_b(\R,\R)$. 
This example can be easily modified so that $f$ is $1$-periodic, lies in $C^{1/2}_b(\R,\R)$ but not in $c^{1/2}_b(\R,\R)$.
In particular, by Proposition \ref{prop:approximation}, $f$ is $1$-periodic in $C^{1/2}_b(\R,\R)$ but {\em not} 
$C^{1/2}$-almost periodic.
\end{remark}

\medskip

Further, we prove Theorem \ref{th:diff^m-topological_group} formulated in Section \ref{sec:groups_almost_periodic}.
\begin{proof}[Proof of Theorem \ref{th:diff^m-topological_group}]
Assume that $m$ is real and $m\ge 1$. We will first prove that $\diff^m_b(\R^n)$ is a topological group.
Note that for $0<m<1$ this follows from Lemma 2.1 and Lemma 2.2 in \cite{MisYon}.
Here we give the proof for any $m\ge 1$.

\begin{remark}
Note that Lemma \ref{lem:almost_lipschitz} and Lemma \ref{lem:almost_bilinear} below can be also deduced from 
\cite[Theorem 2.7]{NenRainer}.
\end{remark}

By definition,  the norm in $c^m_b(\R^n,\R^n)$ is given by $|f|_m:=\sup_{1\le k\le n}|f_k|_m$ where $f_k\in c^m_b(\R^n,\R)$ are 
the components of the map $f\in c^m_b(\R^n,\R^n)$ and the norm in $c^m_b(\R^n,\R)$ is defined in \eqref{eq:C^m-norm}.

%For integer $m\ge 1$ the result easily follows e.g. from \cite[Theorem 6.11 (ii)]{delaLlave}).

\begin{Lemma}\label{lem:almost_lipschitz}
For any $\varphi_0\in\diff^m_b(\R^n)$ with $m\ge 1$ real there exist an open neighborhood $U(\varphi_0)$ of $\varphi_0$ in
$\diff^m_b(\R^n)$ and a constant $C>0$ such that for any $\varphi\in U(\varphi_0)$ and for any $g\in c^m_b$
one has $g\circ\varphi\in c^m_b$ and
\[
|g\circ\varphi|_m\le C\,|g|_m.
\]
\end{Lemma}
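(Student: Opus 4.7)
The plan is to prove the lemma by separating it into three tasks: (i) establish the pointwise composition estimate in $C^m_b$ with a constant uniform over a neighborhood of $\varphi_0$, (ii) upgrade from $C^m_b$ to $c^m_b$ using the density characterization of Lemma~\ref{lem:charachterization_little_holder}, and (iii) observe that the neighborhood can be chosen so that all constants stay bounded.

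For the bound in $C^m_b$, I would first shrink the neighborhood to $U(\varphi_0) := \{\varphi = \id + f\,:\,|f - f_0|_m < \delta\}$ with $\delta$ small enough so that the open condition $\det(I + [df]) > \varepsilon_0/2$ persists on $U(\varphi_0)$, where $\varphi_0 = \id + f_0$ with the bound $\varepsilon_0 > 0$. This yields a uniform upper bound $M := \sup_{\varphi \in U(\varphi_0)} |f|_m < \infty$, and hence uniform bounds on all multi-indexed derivatives of $f$ up to order $[m]$ in both sup norm and, for non-integer $m$, in the $C^\gamma_b$ seminorm. For integer $m$ the chain rule (Faà di Bruno) expresses $\partial^\beta(g\circ\varphi)$ with $|\beta| \le m$ as a finite sum of terms of the form $\bigl((\partial^\alpha g)\circ\varphi\bigr)\cdot P_{\alpha,\beta}\bigl(D^{\le m}f\bigr)$ with $|\alpha| \le |\beta|$, where $P_{\alpha,\beta}$ is a universal polynomial. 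The sup norm of each factor is controlled: $\big|(\partial^\alpha g)\circ\varphi\big|_\infty \le |g|_m$ and $|P_{\alpha,\beta}(D^{\le m}f)|_\infty \le Q(M)$ for a fixed polynomial $Q$, giving $|g\circ\varphi|_m \le C(M,m)\,|g|_m$ uniformly on $U(\varphi_0)$.

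For non-integer $m = [m] + \gamma$, the same Faà di Bruno expansion is used for $|\beta| = [m]$, and I must estimate $[\partial^\beta(g\circ\varphi)]_\gamma$. Writing a typical summand as $G \cdot P$ with $G := (\partial^\alpha g)\circ\varphi$ and $P := P_{\alpha,\beta}(D^{\le [m]}f)$, the product rule for the Hölder seminorm gives
\[
[G\cdot P]_\gamma \le [G]_\gamma\,|P|_\infty + |G|_\infty\,[P]_\gamma.
\]
For $|\alpha| = [m]$ I use $[G]_\gamma \le [\partial^\alpha g]_\gamma\,|\nabla\varphi|_\infty^\gamma \le (1 + M)^\gamma |g|_m$; for $|\alpha| < [m]$, $\partial^\alpha g$ is Lipschitz with constant $\le |g|_m$, and composition with the Lipschitz map $\varphi$ yields the analogous bound. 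The Hölder seminorms of $P$ are bounded via the Banach algebra property of $C^\gamma_b$ (Proposition~\ref{prop:banach_algebra} for the ambient space $C^\gamma_b$, cf.\ Remark~\ref{rem:banach_algebra_C^n_b}) in terms of $M$ alone. This produces the required uniform estimate $|g\circ\varphi|_m \le C\,|g|_m$ on $U(\varphi_0)$ for arbitrary $g \in C^m_b$.

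It remains to show that if $g \in c^m_b$ then $g\circ\varphi \in c^m_b$. By Lemma~\ref{lem:charachterization_little_holder} there is a sequence $(g_k) \subset C^\infty_b$ with $g_k \stackrel{C^m_b}{\to} g$, and clearly $g_k \circ \varphi \in C^\infty_b \subset c^m_b$ since $\varphi$ itself is only $c^m_b$-regular---here I first verify that $C^\infty_b \circ \varphi \subseteq c^m_b$ by a direct application of Faà di Bruno combined with Lemma~\ref{lem:charachterization_little_holder} (each factor $(\partial^\alpha g_k)\circ\varphi$ lies in $c^m_b$ because it equals a smooth function of $\varphi$, and the polynomial $P_{\alpha,\beta}(D^{\le [m]}f)$ lies in $c^\gamma_b$ by the algebra property of $c^\gamma_b$). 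Applying the already-established bound to $g - g_k$ gives $|g\circ\varphi - g_k\circ\varphi|_m \le C|g - g_k|_m \to 0$, so $g\circ\varphi$ lies in the closed subspace $c^m_b$ and the estimate passes to the limit.

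The main obstacle is the non-integer case: controlling $[\partial^\beta(g\circ\varphi)]_\gamma$ when $\varphi$ only has $c^m_b$-regularity (so $\nabla^{[m]}\varphi$ is merely $\gamma$-Hölder, not Lipschitz). The product-rule argument above handles this, but care is required to keep all constants expressed in terms of $M$ only, so they remain uniform on the neighborhood $U(\varphi_0)$.
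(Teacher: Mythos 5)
Your argument is correct, but it is organized differently from the paper's. The paper proves the lemma by induction on the regularity exponent: the first-order chain rule $\partial_j(g\circ\varphi)=\sum_{l}(\partial_l g)\circ\varphi\cdot\partial_j\varphi_l$ reduces the order-$m$ bound to the order-$(m-1)$ bound applied to $\partial_l g$, combined with the Banach algebra property of $c^{m-1}_b$; only the base case $1\le m<2$ is estimated by hand, using Lemma \ref{lem:shift_simple}. You instead expand all derivatives at once via Fa\`a di Bruno and estimate each term $\bigl((\partial^\alpha g)\circ\varphi\bigr)\cdot P_{\alpha,\beta}(D^{\le[m]}f)$ directly, with the same two ingredients (the H\"older product rule and $[h\circ\varphi]_\gamma\le[h]_\gamma|d\varphi|_\infty^\gamma$) appearing at the top level rather than inside a recursion. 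The trade-off: the paper's induction delivers $g\circ\varphi\in c^m_b$ essentially automatically through the induction hypothesis and the algebra property of the little H\"older spaces, whereas you need the extra mollification step $g_k\to g$ with $g_k\in C^\infty_b$, plus the observation that $(\partial^\alpha g_k)\circ\varphi$ is Lipschitz and bounded and hence lies in $c^\gamma_b$ --- a point you correctly identify and handle (and which is needed, since $g_k\circ\varphi$ is \emph{not} in $C^\infty_b$ when $\varphi$ is only $c^m_b$-regular). In return, your route makes the dependence of the constant on $M=\sup_{U(\varphi_0)}|f|_m$ explicit without unwinding a recursion, and it yields the estimate for every $g\in C^m_b$, not only for $g\in c^m_b$. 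Both arguments are sound.
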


\begin{proof}[Proof of Lemma \ref{lem:almost_lipschitz}]
We will prove this Lemma by induction in the regularity exponent $m\ge 1$.
Note that  for any $\varphi\in\diff^m_b(\R^n)$, $g\in c^m_b$, and for any two indices $1\le j, k\le n$,
\begin{equation}\label{eq:product_rule}
\partial_j(g_k\circ\varphi)=\sum_{l=1}^n
(\partial_l g_k)\circ\varphi\cdot\partial_j\varphi_l.
\end{equation}
Assume that $m=1+\gamma$ for some $0\le\gamma<1$. 
Then \eqref{eq:product_rule} together with the inequality $|g\circ\varphi|_\infty\le|g|_\infty$ implies that 
\begin{equation}\label{eq:m=1}
|g\circ\varphi|_1\le n |g|_1\big(1+|f|_1\big)
\end{equation}
where $\varphi=\id+f$ and $f\in c^m_b$. In particular, this proves the statement of the Lemma when $m=1$ 
(and $\gamma=0$).
If $0<\gamma<1$ then we obtain from \eqref{eq:product_rule} and Lemma \ref{lem:shift_simple} below that
\begin{eqnarray*}
\big[\partial_j (g_k\circ\varphi)\big]_\gamma&\le&\sum_{l=1}^n\Big(
\big[(\partial_l g_k)\circ\varphi\big]_\gamma|\partial_j\varphi_l|_\infty+
\big|(\partial_l g_k)\circ\varphi\big|_\infty[\partial_j\varphi_l]_\gamma\Big)\\
&\le&\sum_{l=1}^n\Big(
[(\partial_l g_k)]_\gamma\big(1+|f|_1\big)^{\gamma+1}+
|(\partial_l g_k)|_\infty[\partial_j\varphi_l]_\gamma\Big)\\
&\le&n |g|_{1+\gamma}\big(1+|f|_{1+\gamma}\big)^{1+\gamma}.
\end{eqnarray*}
This together with \eqref{eq:m=1} then shows that
\[
|g\circ\varphi|_{1+\gamma}\le 2n |g|_{1+\gamma}\big(1+|f|_{1+\gamma}\big)^{1+\gamma}.
\]
Hence, the statement of the Lemma holds for $m=1+\gamma$ and $0\le\gamma<1$.
Further, assume that $m\ge 2$ and let the statement of the Lemma hold
with $m$ replaced by $m-1$. Then, we obtain from the induction hypothesis, formula \eqref{eq:product_rule}, 
and the Banach algebra property of $c^{m-1}_b$, that for any $1\le j,k\le n$ there exist an open neighborhood 
$U(\varphi_0)$ of $\varphi_0$ in $\diff^m_b(\R^n)$ and positive constants $C$, $C_1$, and $C_2>0$ such that 
for any $\varphi\in U(\varphi_0)$ and for any $g\in c^m_b$ one has
\begin{eqnarray*}
\big|\partial_j (g_k\circ\varphi)\big|_{m-1}&\le&\!\!\!C_1 \sum_{l=1}^n
\big|(\partial_l g_k)\circ\varphi\big|_{m-1}|\partial_j\varphi_l|_{m-1}\\
&\le&C_1 \sum_{l=1}^n C\,|\partial_l g_k|_{m-1}\big(1+|f|_m\big)\nonumber\\
&\le&C_2\,|g|_m.
\end{eqnarray*}
This together with the inequality $|g\circ\varphi|_\infty\le|g|_\infty$ completes the proof of the Lemma.
\end{proof}

The proof of the following Lemma follows directly from the definition of the semi-norm $[\cdot]_\gamma$ and
the mean value theorem in integral form (see e.g. \cite[Lemma 4.1 and 4.3]{MajdaBertozzi}).
\begin{Lemma}\label{lem:shift_simple}
\begin{itemize}
\item[(i)] For any $f,g\in c^\gamma_b(\R^n,\R)$ with $0<\gamma<1$ one has
\[
[fg]_\gamma\le[f]_\gamma|g|_\infty+|f|_\infty[g]_\gamma.
\]
\item[(ii)] For any $\varphi\in\diff^1_b(\R^n)$ and for any $g\in c^\gamma_b(\R^n,\R)$ with $0<\gamma<1$ one has 
\[
[g\circ\varphi]_\gamma\le [g]_\gamma|d\varphi|_\infty^\gamma
\]
where $|d\varphi|_\infty\equiv\max\limits_{1\le j,k\le n}\big|\frac{\partial\varphi_k}{\partial x_j}\big|_\infty$.
\end{itemize}
\end{Lemma}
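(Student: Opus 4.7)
The plan for part (i) is to exploit the standard algebraic identity
\[
(fg)(x+h)-(fg)(x)=\bigl(f(x+h)-f(x)\bigr)g(x+h)+f(x)\bigl(g(x+h)-g(x)\bigr).
\]
Dividing by $|h|^\gamma$, taking absolute values, and using $|g(x+h)|\le|g|_\infty$ and $|f(x)|\le|f|_\infty$ gives the pointwise bound
\[
\frac{\bigl|(fg)(x+h)-(fg)(x)\bigr|}{|h|^\gamma}\le\frac{|f(x+h)-f(x)|}{|h|^\gamma}|g|_\infty+|f|_\infty\frac{|g(x+h)-g(x)|}{|h|^\gamma}.
\]
Taking the supremum over $x\in\R^n$ and $h\neq 0$ on both sides yields the claim, since the supremum of a sum is dominated by the sum of the suprema.

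For part (ii), I would first use the $\gamma$-H\"older regularity of $g$ applied to the pair of points $\varphi(x+h),\varphi(x)\in\R^n$ to write
\[
\bigl|g(\varphi(x+h))-g(\varphi(x))\bigr|\le[g]_\gamma\,|\varphi(x+h)-\varphi(x)|^\gamma.
\]
Next, since $\varphi\in\diff^1_b(\R^n)$, the fundamental theorem of calculus in integral form applied component-wise gives
\[
\varphi(x+h)-\varphi(x)=\int_0^1[d_{x+th}\varphi]\cdot h\,dt,
\]
which leads to the Lipschitz-type bound $|\varphi(x+h)-\varphi(x)|\le|d\varphi|_\infty\,|h|$ under the norm convention used in the statement. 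Substituting into the previous inequality, dividing by $|h|^\gamma$, and taking the supremum over $x$ and $h\ne 0$ produces $[g\circ\varphi]_\gamma\le[g]_\gamma|d\varphi|_\infty^\gamma$.

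There is no substantive obstacle; both items are routine consequences of the triangle inequality and the mean value theorem in integral form. The only mild subtlety is a bookkeeping one in (ii), namely matching the convention for $|d\varphi|_\infty$ with the Lipschitz estimate coming from the integral representation, but this is handled exactly as in the references \cite[Lemmas 4.1 and 4.3]{MajdaBertozzi} cited immediately before the Lemma.
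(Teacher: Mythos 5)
Your argument is correct and is precisely the proof the paper has in mind: the paper omits the details and simply remarks that the Lemma ``follows directly from the definition of the semi-norm $[\cdot]_\gamma$ and the mean value theorem in integral form,'' citing the same Majda--Bertozzi lemmas, which is exactly your product decomposition for (i) and your integral representation of $\varphi(x+h)-\varphi(x)$ for (ii). The only caveat is the one you already flag: with $|d\varphi|_\infty$ defined as the entrywise supremum of the Jacobian and $|\cdot|$ the Euclidean norm on $\R^n$, the Lipschitz bound strictly carries a dimensional constant, a harmless discrepancy that the paper itself glosses over and that is absorbed in all subsequent uses of the Lemma.
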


We will also need the following

\begin{Lemma}\label{lem:almost_bilinear}
For any $\varphi_0\in\diff^m_b(\R^n)$ with $m\ge 1$ real there exist an open neighborhood $U(\varphi_0)$ of $\varphi_0$ in
$\diff^m_b(\R^n)$ and a constant $C>0$ so that for any $g\in c^{m+1}_b$ and for any $\varphi\in U(\varphi_0)$ 
one has
\[
\big|g\circ\varphi-g\circ\varphi_0\big|_m\le C |g|_{m+1}|\varphi-\varphi_0|_m.
\]
\end{Lemma}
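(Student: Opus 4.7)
The plan is to write the difference as an integral via the fundamental theorem of calculus along the straight-line interpolation $\varphi_s := (1-s)\varphi_0 + s\varphi$, $s\in[0,1]$. Differentiating pointwise in $s$ gives
\[
g(\varphi(x)) - g(\varphi_0(x)) = \int_0^1 \bigl([dg]\circ\varphi_s\bigr)(x)\cdot\bigl(\varphi(x)-\varphi_0(x)\bigr)\,ds,
\]
and the goal reduces to estimating the $c^m_b$-norm of the integrand uniformly in $s$, and then pulling the norm under the integral.

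First I would shrink $U(\varphi_0)$ so that for every $\varphi\in U(\varphi_0)$ the entire family $\{\varphi_s\}_{s\in[0,1]}$ is contained in $\diff^m_b(\R^n)$. Since $\varphi_s-\id=(1-s)(\varphi_0-\id)+s(\varphi-\id)$ lies in $c^m_b$ by convexity, and since the open condition $\det\bigl(I+[d(\varphi_s-\id)]\bigr)>\varepsilon'$ for some $\varepsilon'>0$ holds along the convex combination whenever $\varphi$ is close enough to $\varphi_0$ in $c^m_b$, a sufficiently small $c^m_b$-ball around $\varphi_0$ suffices. Next, I would shrink $U(\varphi_0)$ further, if necessary, so that a single constant $C_1>0$ from Lemma \ref{lem:almost_lipschitz} works simultaneously for every $\varphi_s$, $s\in[0,1]$. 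This uniformity can be obtained either by a compactness argument applied to the continuous curve $s\mapsto\varphi_s$ in $\diff^m_b(\R^n)$ combined with a finite subcover by neighborhoods produced by Lemma \ref{lem:almost_lipschitz}, or by inspecting the proof of that lemma and noting that its constant depends only on $|\varphi_s-\id|_m$ and on a lower bound for $\det(I+[df_s])$, both of which are manifestly uniformly controlled along the family.

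With these preparations, the estimate proceeds as follows. Since $g\in c^{m+1}_b$, every entry of the Jacobian matrix $[dg]$ belongs to $c^m_b$, and Lemma \ref{lem:almost_lipschitz} gives $\bigl|[dg]\circ\varphi_s\bigr|_m\le C_1\,|[dg]|_m\le C_1\,|g|_{m+1}$ uniformly in $s\in[0,1]$. The Banach algebra property of $c^m_b$ (applied componentwise to the matrix-vector product, cf.\ Remark \ref{rem:banach_algebra_C^n_b}) then furnishes an absolute constant $C_2>0$ with
\[
\bigl|[dg]\circ\varphi_s\cdot(\varphi-\varphi_0)\bigr|_m\le C_1 C_2\,|g|_{m+1}\,|\varphi-\varphi_0|_m
\]
for every $s\in[0,1]$. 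The integrand is continuous in $s$ with values in $c^m_b$ by Theorem \ref{th:diff^m-topological_group}, so its Bochner integral over $[0,1]$ coincides with the pointwise integral above, and bringing the norm inside the integral yields $|g\circ\varphi-g\circ\varphi_0|_m\le C\,|g|_{m+1}\,|\varphi-\varphi_0|_m$ with $C:=C_1 C_2$. I expect the main obstacle to be the uniform control of the constant from Lemma \ref{lem:almost_lipschitz} along the family $\{\varphi_s\}_{s\in[0,1]}$; once this uniformity is settled, the remaining steps are routine.
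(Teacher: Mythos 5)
Your proposal is correct and follows essentially the same route as the paper: the paper also writes the difference as $\bigl(\int_0^1[d_yg]|_{y=\varphi_0+t\delta\varphi}\,dt\bigr)\cdot\delta\varphi$ along the straight-line interpolation, bounds the integrand uniformly via Lemma \ref{lem:almost_lipschitz} after shrinking to a convex ball around $\varphi_0-\id$ in $c^m_b$ (which automatically keeps all $\varphi_t$ in the good neighborhood), and finishes with the Banach algebra property. The uniformity issue you flag is handled in the paper exactly by this convexity observation, so your argument needs no further repair.
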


\begin{proof}[Proof of Lemma \ref{lem:almost_bilinear}]
Take $\varphi_0\in\diff^m_b(\R^n)$ and chose an open ball  $B(f_0)$ of $f_0\equiv\varphi_0-\id$ in 
$c^m_b$ such that $\varphi_0+B(f_0)\subseteq\diff^m_b(\R^n)$.
Then, for any given $x\in\R^n$,
\begin{eqnarray}
g\big(\varphi(x)\big)-g\big(\varphi_0(x)\big)&=&\int_0^1[d_yg]|_{y=\varphi_0(x)+t\delta\varphi(x)}\cdot\delta\varphi(x)\,dt\nonumber\\
&=&\Big(\int_0^1[d_yg]|_{y=\varphi_0(x)+t\delta\varphi(x)}\,dt\Big)\cdot\delta\varphi(x)\label{eq:mean_value}
\end{eqnarray}
where $\delta\varphi:=\varphi-\varphi_0\in c^m_b$.
Since, $g\in c^{m+1}_b$ and $\varphi,\varphi_0\in c^m_b$ we conclude that 
\[
(t,x)\mapsto[d_yg]|_{y=\varphi_0(x)+t\delta\varphi(x)},\quad[0,1]\times\R^n\to\text{\rm Mat}_{n\times n}(\R),
\]
is a $C^{[m]}$-map. This allows us to differentiate with respect to the $x$-variables under the integral sign and 
to conclude from Lemma \ref{lem:almost_lipschitz} that the open ball $B(f_0)$ in $c^m_b$ can be shrank 
if necessary so that for any $\varphi\in\varphi_0+B(f_0)\subseteq\diff^m_b(\R^n)$ we have
\[
\Big|\int_0^1[d_yg]|_{y=\varphi_0(x)+t\delta\varphi(x)}\,dt\Big|_m\le
\int_0^1\big|[d_yg]|_{y=\varphi_0(x)+t\delta\varphi(x)}\big|_m\,dt\le C\,|g|_{m+1}.
\]
This together with \eqref{eq:mean_value} and the Banach algebra property of $c^m_b$ then implies that
there exists $C>0$ such that for any $\varphi\in\varphi_0+B(f_0)$  and for any
$g\in c^m_b$ one has
\begin{equation}
\big|g\circ\varphi-g\circ\varphi_0\big|_m\le C |g|_{m+1}|\delta\varphi|_m.
\end{equation}
This completes the proof of the Lemma.
\end{proof}

Now, the proof of the continuity of the composition in $\diff^m_b(\R^n)$ is straightforward.
In fact, take $\varphi_0\in\diff^m_b(\R^n)$ and $g_0\in c^m_b$. In view of Lemma \ref{lem:almost_lipschitz}
and Lemma \ref{lem:almost_bilinear} there exists an open neighborhood $U(\varphi_0)$ of $\varphi_0$ in
$\diff^m_b(\R^n)$ and a constant $C>0$ such that inequalities in these Lemmas hold  for any $\varphi\in U(\varphi_0)$
and for any choice of $g\in c^m_b$ respectively $g\in c^{m+1}_b$. Take $\tilde{g}\in C^\infty_b$. Then for any
$\varphi\in U(\varphi_0)$ and for any $g\in c^m_b$ we have
\begin{eqnarray*}
\big|g\circ\varphi-g_0\circ\varphi_0\big|_m&\le&\big|g\circ\varphi-\tilde{g}\circ\varphi\big|_m+
\big|\tilde{g}\circ\varphi-\tilde{g}\circ\varphi_0\big|_m+\big|\tilde{g}\circ\varphi_0-g_0\circ\varphi_0\big|_m\\
&\le&C\,\big|g-\tilde{g}\big|_m+C\,|\tilde{g}|_{m+1}\big|\varphi-\varphi_0\big|_m+C\,\big|\tilde{g}-g_0\big|_m\\
&\le&C\,\big|g-g_0\big|_m+C\,|\tilde{g}|_{m+1}\big|\varphi-\varphi_0\big|_m+2C\,\big|\tilde{g}-g_0\big|_m.
\end{eqnarray*}
Now, take $\varepsilon>0$ and choose a non-zero $\tilde{g}\in C^\infty_b$ such that $|\tilde{g}-g_0|_m\le\varepsilon/(6C)$.
Then, for any $\varphi\in U(\varphi_0)$ and for any $g\in c^m_b$ such that
$|\varphi-\varphi_0|_m\le\varepsilon/(3C|\tilde{g}|_{m+1})$ and $|g-g_0|_m\le\varepsilon/(3C)$ we have
\[
\big|g\circ\varphi-g_0\circ\varphi_0\big|_m\le\varepsilon.
\]
This shows that the map
\[
c^m_b(\R^n,\R)\times\diff^m_b(\R^n)\to c^m_b(\R^n,\R),\quad(\varphi,g)\mapsto g\circ\varphi,
\]
is continuous. Moreover, it follows from the definition of $\diff^m_b(\R^n)$ and the chain rule that for any 
$\varphi,\psi\in\diff^m_b(\R^n)$ there exists $\varepsilon>0$ so that $\det\big[d_x(\varphi\circ\psi)\big]>\varepsilon$ uniformly 
in $x\in\R^n$. Hence, the composition in $\diff^m_b(\R^n)$,
\begin{equation}\label{eq:composition_continuous}
\diff^m_b(\R^n)\times\diff^m_b(\R^n)\to\diff^m_b(\R^n),\quad(\varphi,\psi)\mapsto\varphi\circ\psi,
\end{equation}
is well-defined and continuous.

\begin{remark}
Note that since $\diff^m_b(\R^n)$ is not separable we {\em cannot} apply the result of Montgomery $($see Theorem 2 in \cite{Mont}$)$
to conclude that the continuity of the inversion map $\diff^m_b(\R^n)\to\diff^m_b(\R^n)$, $\varphi\mapsto\varphi^{-1}$,
follows from the continuity of the composition.
\end{remark}

We will prove the continuity of the inversion map in $\diff^m_b(\R^n)$ directly, by using argument similar to the ones
given above. To this end, we will need the following analog of Lemma \ref{lem:almost_lipschitz}.

\begin{Lemma}\label{lem:almost_lipschitz_inverse}
For any $\varphi_0\in\diff^m_b(\R^n)$ with $m\ge 1$ real there exist an open neighborhood $U(\varphi_0)$ of $\varphi_0$ in
$\diff^m_b(\R^n)$ and a constant $C>0$ such that for any $\varphi\in U(\varphi_0)$ and for any $g\in c^m_b$
one has $g\circ\varphi^{-1}\in c^m_b$ and
\[
\big|g\circ\varphi^{-1}\big|_m\le C\,|g|_m.
\]
\end{Lemma}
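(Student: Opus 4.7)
The plan is to mirror the inductive strategy used in the proof of Lemma \ref{lem:almost_lipschitz}, with the inverse function formula
\[
[d_x\varphi^{-1}] = A(\varphi)(\varphi^{-1}(x)), \qquad A(\varphi) := [d\varphi]^{-1},
\]
providing the additional ingredient needed to handle the inverse. The first task is to establish a uniform $c^{m-1}_b$-bound on $A(\varphi)$. By Cramer's rule, the entries of the matrix $A(\varphi)$ are polynomial expressions in the entries of $[d\varphi]$ divided by $\det[d\varphi]$; since $\det[d\varphi]>\varepsilon>0$ is an open condition on $\diff^m_b(\R^n)$, after shrinking $U(\varphi_0)$ this inequality holds uniformly. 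The Banach algebra property of $c^{m-1}_b$ and the $c^m_b$-analog of Lemma \ref{lem:division} (see the remark after that lemma) then give
\[
\sup_{\varphi\in U(\varphi_0)} |A(\varphi)|_{m-1} \le M < \infty.
\]
From $\varphi^{-1}(x)-x = -f(\varphi^{-1}(x))$ with $f:=\varphi-\id$ one also obtains $|\varphi^{-1}-\id|_\infty\le|f|_\infty$ and $|d\varphi^{-1}|_\infty\le|A(\varphi)|_\infty\le M$, uniformly in $\varphi\in U(\varphi_0)$.

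With these preliminaries, the estimate $|g\circ\varphi^{-1}|_m\le C|g|_m$ follows by induction on $m$, starting from the chain-rule identity
\[
\partial_j(g\circ\varphi^{-1}) = H_j\circ\varphi^{-1}, \qquad H_j := \sum_{l=1}^n (\partial_l g)\cdot A(\varphi)_{lj}.
\]
For the base case $m=1+\gamma$ with $0\le\gamma<1$, the Banach algebra property of $c^\gamma_b$ gives $|H_j|_\gamma\le n\,|g|_{1+\gamma}\,|A(\varphi)|_\gamma \le C|g|_{1+\gamma}$ uniformly over $U(\varphi_0)$. When $\gamma=0$ this already yields $|\partial_j(g\circ\varphi^{-1})|_\infty\le C|g|_1$; when $0<\gamma<1$, Lemma \ref{lem:shift_simple}(ii) additionally gives $[H_j\circ\varphi^{-1}]_\gamma\le[H_j]_\gamma\,|d\varphi^{-1}|_\infty^\gamma\le C'|g|_{1+\gamma}$, which together with $|g\circ\varphi^{-1}|_\infty=|g|_\infty$ covers the regime $1\le m<2$.

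For the inductive step $m\ge 2$, I would assume the lemma at regularity $m-1$ and apply it to $H_j\in c^{m-1}_b$ (shrinking $U(\varphi_0)$ if necessary so it lies inside the neighborhood given by the inductive hypothesis). Combined with the Banach algebra bound $|H_j|_{m-1}\le n\,|g|_m\,|A(\varphi)|_{m-1}\le nM\,|g|_m$, this yields $|\partial_j(g\circ\varphi^{-1})|_{m-1}\le C|H_j|_{m-1}\le C'|g|_m$, and together with $|g\circ\varphi^{-1}|_\infty=|g|_\infty$ it closes the induction.

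The main obstacle is the preliminary uniform $c^{m-1}_b$-bound on $A(\varphi)$. This reduces to checking that the geometric-series expansion used in the proof of Lemma \ref{lem:division} converges in $c^{m-1}_b$ uniformly in $\det[d\varphi]$ as $\varphi$ ranges over $U(\varphi_0)$. This follows by shrinking $U(\varphi_0)$ so that $\det[d\varphi]>\varepsilon$ with a common $\varepsilon>0$ and so that $|\det[d\varphi]-\det[d\varphi_0]|_{m-1}$ is uniformly small, both of which are possible since $\det[d\varphi]$ depends continuously on $\varphi$ in $c^{m-1}_b$ by the Banach algebra property together with the polynomial nature of $\det$.
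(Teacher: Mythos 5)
Your proposal is correct and follows essentially the same route as the paper: an induction on $m$ mirroring Lemma \ref{lem:almost_lipschitz}, with the new ingredient being the uniform control of $[d(\varphi^{-1})]=\big[(d\varphi)\circ\varphi^{-1}\big]^{-1}$ on a neighborhood of $\varphi_0$ via the adjugate/determinant formula and the uniform lower bound $\det[d\varphi]>\varepsilon$, which is exactly the content of the paper's Lemma \ref{lem:almost_lipschitz_inverse'}. The uniform $c^{m-1}_b$-bound on $[d\varphi]^{-1}$ obtained from Cramer's rule, the Banach algebra property and Lemma \ref{lem:division} is the same device the paper uses (cf. also the proof of the $C^r$-smoothness of the inversion map later in Appendix B).
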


The proof of this Lemma is similar to the proof of Lemma \ref{lem:almost_lipschitz} and is based on
the following analog of Lemma \ref{lem:shift_simple} (ii).

\begin{Lemma}\label{lem:almost_lipschitz_inverse'}
For any $\varphi_0\in\diff^1_b(\R^n)$, $0<\gamma<1$, there exist an open neighborhood 
$U(\varphi_0)$ in $\diff^1_b(\R^n)$ and $C>0$ such that for any $\varphi\in U(\varphi_0)$ and
for any $g\in c^\gamma_b(\R^n,\R)$ one has that $g\circ\varphi^{-1}\in c^\gamma_b(\R^n,\R)$ and
\[
\big[g\circ\varphi^{-1}\big]_\gamma\le C\,[g]_\gamma.
\]
\end{Lemma}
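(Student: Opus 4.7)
The plan is to reduce the estimate to a uniform bilipschitz bound for $\varphi^{-1}$ on a neighborhood of $\varphi_0$, from which a change of variables delivers the seminorm inequality, and an application of Lemma \ref{lem:characterization_little_holder*} yields membership in the little H\"older class. First I would establish the existence of an open neighborhood $U(\varphi_0)\subseteq\diff^1_b(\R^n)$ and a constant $L\ge 1$ such that, for every $\varphi=\id+f\in U(\varphi_0)$,
\[
L^{-1}|u-v|\;\le\;|\varphi(u)-\varphi(v)|\;\le\; L\,|u-v|\qquad\forall\,u,v\in\R^n.
\]
The upper bound is the standard mean-value bound with constant $1+|f|_1$, which is uniformly controlled near $f_0$. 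For the lower bound, I would use that $\det(I+[d_xf_0])>\varepsilon>0$ uniformly in $x$ by the definition of $\diff^1_b(\R^n)$; since the determinant is a polynomial in the Jacobian entries and $c^1_b$-convergence controls $[d_xf]$ in $L^\infty$ uniformly in $x$, this bound persists uniformly in $x$ on a sufficiently small $c^1_b$-neighborhood of $f_0$. Combined with the uniform $L^\infty$ bound on $I+[d_xf]$, the cofactor formula produces a uniform estimate $\bigl|(I+[d_xf])^{-1}\bigr|_\infty\le L$; the chain-rule identity $d_x\varphi^{-1}=\bigl(d_{\varphi^{-1}(x)}\varphi\bigr)^{-1}$ together with the mean-value theorem then equips $\varphi^{-1}$ with an $L$-Lipschitz constant, giving the lower bound on $|\varphi(u)-\varphi(v)|$.

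With the bilipschitz estimate in hand, the substitution $u=\varphi^{-1}(x)$, $v=\varphi^{-1}(y)$ produces
\[
[g\circ\varphi^{-1}]_\gamma
=\sup_{u\ne v}\frac{|g(u)-g(v)|}{|\varphi(u)-\varphi(v)|^\gamma}
\;\le\; L^\gamma\,[g]_\gamma,
\]
uniformly for $\varphi\in U(\varphi_0)$, while $|g\circ\varphi^{-1}|_\infty\le|g|_\infty$ is trivial, so at the very least $g\circ\varphi^{-1}\in C^\gamma_b(\R^n,\R)$. To upgrade this to membership in the little H\"older class, I would apply Lemma \ref{lem:characterization_little_holder*}: for any $x\in\R^n$ and $h\ne 0$, setting $u:=\varphi^{-1}(x+h)$, $v:=\varphi^{-1}(x)$ gives $|u-v|\le L|h|$, so the bilipschitz bound yields
\[
\sup_{0<|h|<\delta,\,x\in\R^n}\frac{|g(\varphi^{-1}(x+h))-g(\varphi^{-1}(x))|}{|h|^\gamma}
\;\le\; L^\gamma\sup_{0<|u-v|<L\delta,\,v\in\R^n}\frac{|g(u)-g(v)|}{|u-v|^\gamma}.
\]
Because $g\in c^\gamma_b(\R^n,\R)$, Lemma \ref{lem:characterization_little_holder*} forces the right-hand side to tend to $0$ as $\delta\to 0+$, and invoking the converse direction of that lemma concludes that $g\circ\varphi^{-1}\in c^\gamma_b(\R^n,\R)$.

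The step I expect to be the main obstacle is producing the bilipschitz bound \emph{uniformly} in the $c^1_b$-neighborhood $U(\varphi_0)$ rather than merely for each fixed $\varphi$. Since $c^1_b$ only controls $[d_xf]$ in the $L^\infty$-sense, the argument must propagate the lower bound on $\det(I+[d_xf])$ evenly across all $x\in\R^n$ simultaneously, and then pass through matrix inversion while retaining that uniformity. This is a routine continuity-of-the-determinant argument combined with the cofactor representation of the inverse matrix, but it is the only place where genuine care is required; once it is in place, the two estimates above are essentially mechanical, with the final constant being $C=L^\gamma$.
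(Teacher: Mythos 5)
Your proof is correct and follows essentially the same route as the paper's: both hinge on propagating the uniform lower bound on $\det\big(I+[d_xf]\big)$ from $\varphi_0$ to a $c^1_b$-neighborhood and using the adjugate formula to bound $\big|d(\varphi^{-1})\big|_\infty$ uniformly there, after which the seminorm estimate $\big[g\circ\varphi^{-1}\big]_\gamma\le C\,[g]_\gamma$ is exactly the change-of-variables computation (which the paper packages as Lemma \ref{lem:shift_simple}\,(ii) applied to $\varphi^{-1}$). The only cosmetic difference is that you certify membership in $c^\gamma_b(\R^n,\R)$ via the modulus-of-continuity characterization of Lemma \ref{lem:characterization_little_holder*}, whereas the paper leaves this to the approximation argument implicit in its citation of Lemma \ref{lem:shift_simple}\,(ii); both are fine.
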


\begin{proof}[Proof of Lemma \ref{lem:almost_lipschitz_inverse'}]
In view of Lemma \ref{lem:shift_simple} (ii), for any $\varphi\in\diff^1_b(\R^n)$ and for any $g\in c^\gamma_b(\R^n,\R)$,
\begin{equation}\label{eq:almost_done}
\big[g\circ\varphi^{-1}\big]_\gamma\le [g]_\gamma\big|d(\varphi^{-1})\big|_\infty^\gamma.
\end{equation}
We set $\varphi_0=\id+f_0$ and  $\varphi=\id+f$ where $f_0 , f\in c^1_b$.
Since $\varphi_0\in\diff^1_b(\R^n)$ there exists $\varepsilon>0$ such that $\det\big(I+[d_xf_0]\big)>\varepsilon$
uniformly in $x\in\R^n$. This implies that there exists an open neighborhood $U(\varphi_0)$ in $\diff^1_b(\R^n)$ such that
for any $\varphi\in U(\varphi_0)$,
\[
\det\big(I+[d_xf]\big)>\varepsilon,
\]
uniformly in $x\in\R^n$. Then, we have $\big[d(\varphi^{-1})\big]=\big[(d\varphi)\circ\varphi^{-1}\big]^{-1}$, and
\[
\big|d(\varphi^{-1})\big|_\infty=\left|\frac{\mathop{\rm Adj}\big(I+[df]\big)}{\det\big(I+[df]\big)}\circ\varphi^{-1}\right|_\infty
\le (n-1)!\big(1+|df|_\infty\big)^{n-1}/\varepsilon
\]
where $\mathop{\rm Adj} M$ denotes the cofactor matrix of an $n\times n$ matrix $M$ and 
$|M|_\infty:=\max_{1\le k,j\le n}|M_{kj}|_\infty$ when the elements of $M$ are functions on $\R^n$.
This implies that there exists $C>0$ such that $\big|d(\varphi^{-1})\big|_\infty\le C$ uniformly in $\varphi\in U(\varphi_0)$.
Then, the Lemma follows from \eqref{eq:almost_done}.
\end{proof}

\medskip

Now, take $\varphi\in\diff^m_b(\R^n)$ and set $\varphi=\id+f$, $f\in c^m_b$, and
\[
\varphi^{-1}=\id+g.
\] 
Then, we obtain from the relation $\varphi\circ\varphi^{-1}=\id$ that $g=-f\circ\varphi^{-1}$.
Since $f\in c^m_b$, we then conclude from Lemma \ref{lem:almost_lipschitz_inverse} that
\begin{equation}\label{eq:g_in_c^m_b}
g\in c^m_b.
\end{equation}
Moreover, one easily sees from the formula
\[
\det\big[d_y(\varphi^{-1})\big]=1/\det\big(I+[d_xf]\big)\big|_{x=\varphi^{-1}(y)},\quad y\in\R^n,
\]
and the fact that $f\in c^m_b$ that there exists $\varepsilon>0$ such that
$\det[d_y(\varphi^{-1})]>\varepsilon$ uniformly in $y\in\R^n$. This together with \eqref{eq:g_in_c^m_b}
then implies that
\[
\varphi^{-1}\in\diff^m_b(\R^n).
\]
Hence, $\diff^m_b(\R^n)$ is a group such that the composition \eqref{eq:composition_continuous} is continuous. 
Now, we will prove that the inversion map
\begin{equation}\label{eq:inversion_map_continuous}
\diff^m_b(\R^n)\to\diff^m_b(\R^n),\quad\varphi\mapsto\varphi^{-1},
\end{equation}
is continuous. Take $\varphi_0\in\diff^m_b(\R^n)$ and $\widetilde\psi\in\diff^\infty_b(\R^n)$, $\widetilde\psi=\id+\widetilde{g}$. 
Then it follows from Lemma \ref{lem:almost_lipschitz_inverse} and Lemma \ref{lem:almost_lipschitz} that there exists
and open neighborhood $U(\varphi_0)$ of $\varphi_0$ in $\diff^m_b(\R^n)$ and positive constants $C$ and $C_1>0$
such that for any $\widetilde\psi\in\Diff^\infty_b(\R^n)$ and for any $\varphi\in U(\varphi_0)$,
\[
\begin{array}{l}
\big|\varphi_0^{-1}-\varphi^{-1}\big|_m=\big|\big(\varphi_0^{-1}\circ\varphi-\id\big)\circ\varphi^{-1}\big|_m
\le C\big|\varphi_0^{-1}\circ\varphi-\id\big|_m\\
\le C\big|\varphi_0^{-1}\circ\varphi-\widetilde\psi\circ\varphi\big|_m+
C\big|\widetilde\psi\circ\varphi-\widetilde\psi\circ\varphi_0\big|_m+
C\big|\widetilde\psi\circ\varphi_0-\varphi_0^{-1}\circ\varphi_0\big|_m\\
\le C_1\big|\widetilde\psi-\varphi_0^{-1}\big|_m+
C_1|\widetilde{g}|_{m+1}\big|\varphi-\varphi_0\big|_m.
\end{array}
\]
Now, take $\varepsilon>0$ and choose a non-zero $\widetilde{g}\in C^\infty_b$ such that $\widetilde\psi=\id+\widetilde{g}$
lies in $\Diff^\infty_b(\R^n)$ and satisfies the inequality $|\widetilde\psi-\varphi_0^{-1}|_m\le\varepsilon/(2 C_1)$. 
Then, for any $\varphi\in U(\varphi_0)$ such that 
$|\varphi-\varphi_0|_m\le\varepsilon/\big(2 C_1|\widetilde{g}|_{m+1}\big)$
we have
\[
\big|\varphi^{-1}-\varphi_0^{-1}\big|_m\le\varepsilon.
\]
This proves the continuity of the inversion map \eqref{eq:inversion_map_continuous}.
Hence, $\diff^m_b(\R^n)$ is a topological group.

\medskip

Further, we prove the $C^r$-regularity of the composition
\begin{equation}\label{eq:composition_regularity}
\circ : \diff^{m+r}_b(\R^n)\times\diff^m_b(\R^n)\to\diff^m_b(\R^n),\quad(\varphi,\psi)\mapsto\varphi\circ\psi,
\end{equation}
for any $r\ge 0$. We will follow the arguments in the proof of \cite[Proposition 2.9]{IKT}.
Assume that $r\ge 1$ and take $g,\delta g\in c^{m+r}_b$, $\varphi\in\diff^m_b(\R^n)$, and $\delta\varphi\in c^m_b$ so that 
$\varphi+\delta\varphi\in\id+B(\widetilde\varphi)$ where $B(\widetilde\varphi)$ is an open ball centered at 
$\widetilde\varphi\equiv\varphi-\id$ in $c^m_b$ and such that $\id+B(\widetilde\varphi)\subseteq\diff^m_b(\R^n)$. 
One sees from Taylor's formula with remainder in integral form that for any $x\in\R^n$,
\begin{equation}\label{eq:taylor_expansion1}
g\big(\varphi(x)+\delta\varphi(x)\big)=\sum_{|\beta|\le r}\frac{1}{\beta!}
\big(\partial^\beta g\big)\big(\varphi(x)\big){\big(\delta\varphi(x)\big)^\beta}+
\mathcal{R}_1\big(g,\varphi,\delta\varphi\big)(x)
\end{equation}
where the remainder $\mathcal{R}_1\big(g,\varphi,\delta\varphi\big)(x)$ is given by
\begin{equation}\label{eq:remainder1}
\sum_{|\beta|=r}\frac{r}{\beta!}\int_0^1(1-t)^{r-1}
\Big(\big(\partial^\beta g\big)\big(\varphi(x)+t\delta\varphi(x)\big)-\big(\partial^\beta g\big)\big(\varphi(x)\big)\Big)\,dt
\cdot\big(\delta\varphi(x)\big)^\beta.
\end{equation}
Similarly, for any $\varphi$, $\delta\varphi$, and $\delta g$ as above and for any $x\in\R^n$ we have
\begin{equation}\label{eq:taylor_expansion2}
\delta g\big(\varphi(x)+\delta\varphi(x)\big)=\sum_{|\beta|\le r-1}\frac{1}{\beta!}
\big(\partial^\beta \delta g\big)\big(\varphi(x)\big){\big(\delta\varphi(x)\big)^\beta}+
\mathcal{R}_2\big(\varphi,\delta g,\delta\varphi\big)(x)
\end{equation}
where $\mathcal{R}_2\big(\varphi,\delta g,\delta\varphi\big)(x)$ is given by
\begin{equation}\label{eq:remainder2}
\sum_{|\beta|=r}\frac{r}{\beta!}\int_0^1(1-t)^{r-1}
\big(\partial^\beta\delta g\big)\big(\varphi(x)+t\delta\varphi(x)\big)\,dt
\cdot\big(\delta\varphi(x)\big)^\beta.
\end{equation}
It follows from the continuity of the composition and the Banach algebra property of $c^m_b$ that 
the integrals in \eqref{eq:remainder1} and \eqref{eq:remainder2} have convergent Riemann sums in $c^m_b$
and that the equalities \eqref{eq:taylor_expansion1} and \eqref{eq:taylor_expansion2} hold in $c^m_b$.
Summing up these two equalities, we see that for any $g,\delta g\in c^{m+r}_b$, $\varphi\in\diff^m_b(\R^n)$, and for any
$\delta\varphi\in c^m_b$ as above,
\begin{equation}\label{eq:expansion3}
(g+\delta g)\circ(\varphi+\delta\varphi)=g\circ\varphi+\sum_{k=1}^r \frac{1}{k!}P_k\big(g,\varphi\big)(\delta g,\delta\varphi)+
\mathcal{R}\big(g,\varphi,\delta g,\delta\varphi\big)
\end{equation}
where, for $1\le k\le r$,
\[
\frac{1}{k!}\,P_k\big(g,\varphi\big)(\delta g,\delta\varphi):=
\sum_{|\beta|=k}\frac{1}{\beta!}\big(\partial^\beta g\big)\circ\varphi\cdot(\delta\varphi)^\beta+
\sum_{|\beta|=k-1}\frac{1}{\beta!}\big(\partial^\beta \delta g\big)\circ\varphi\cdot(\delta\varphi)^\beta,
\]
and
\begin{eqnarray}
\mathcal{R}\big(g,\varphi,\delta g,\delta\varphi\big)&:=&\mathcal{R}_1(g,\varphi,\delta\varphi\big)+
\mathcal{R}_2(\varphi,\delta\varphi,\delta g\big)\nonumber\\
&=&\sum_{|\beta|=r}\rho_{\beta}(g,\varphi,\delta g,\delta\varphi)\cdot(\delta\varphi)^\beta\label{eq:remainder3}
\end{eqnarray}
with
\begin{eqnarray}
\rho_{\beta}(g,\varphi,\delta g,\delta\varphi)&:=&\frac{r}{\beta!}\int_0^1(1-t)^{r-1}
\Big(\big(\partial^\beta g\big)\circ\big(\varphi+t\delta\varphi\big)-\big(\partial^\beta g\big)\circ\varphi(x)\Big)\,dt\nonumber\\
&+&\frac{r}{\beta!}\int_0^1(1-t)^{r-1}
\big(\partial^\beta\delta g\big)\circ\big(\varphi+t\delta\varphi\big)\,dt\label{eq:remainder3'}
\end{eqnarray}
for any multi-index $\beta\in\Z_{\ge 0}^n$ with $|\beta|=r$.
In view of the Banach algebra property of $c^m_b$ for any $g\in c^{m+r}_b$ and 
$\varphi\in\diff^m_b(\R^n)$,
\[
P_k(g,\varphi)\in\mathcal{P}^k\big(c^{m+r}_b\times c^m_b,c^m_b\big),\quad 1\le k\le r,
\]
where $\mathcal{P}^k(X,Y)$ denotes the Banach space of polynomial maps of degree $k$ from a normed
space $X$ to a Banach space $Y$ supplied with the uniform norm.
Moreover, using again the Banach algebra property of $c^m_b$ and Lemma \ref{lem:almost_bilinear} one 
easily sees that for any $1\le k\le r$ the map
\[
P_k : c^{m+r}_b\times\diff^m_b(\R^n)\to\mathcal{P}^k\big(c^{m+r}_b\times c^m_b,c^m_b\big)
\]
is continuous. For any $\varphi\in\diff^m_b(\R^n)$ denote by $B_\bullet(\widetilde\varphi)$ the ball centered at 
$\widetilde\varphi\equiv\varphi-\id_{R^n}$ in $c^m_b$ of maximal radius so that $
\id+B_\bullet(\widetilde\varphi)\subseteq\diff^m_b(\R^n)$.
Consider the open set $V$ of elements $(g,\varphi,\delta g,\delta\varphi)$ in 
$c^{m+r}_b\times\diff^m_b(\R^n)\times c^{m+r}_b\times c^m_b$,
\[
V:=\Big\{(g,\delta g)\in c^{m+r}_b\times c^{m+r}_b,\varphi\in \diff^m_b(\R^n),\delta\varphi\in c^m_b\,\Big|\,
\varphi+\delta\varphi\in\id+B_\bullet(\widetilde\varphi)\Big\}.
\]
It follows from \eqref{eq:remainder3'} and the continuity of the composition that 
for any $\beta$ with $|\beta|=r$ the map $\rho_\beta : V\to c^m_b$ is continuous. 
This together with \eqref{eq:remainder3} and the Banach algebra property of $c^m_b$ then implies that
\[
\mathcal{R} : V\to \mathcal{P}^k\big(c^{m+r}_b\times c^m_b,c^m_b\big).
\]
Since $\mathcal{R}\big(g,\varphi,0,0\big)=0$ we then conclude from \eqref{eq:expansion3} and the converse to 
Taylor's theorem (see e.g. \cite[Theorem 2.4.15]{abraham2012manifolds}) that \eqref{eq:composition_regularity} is a $C^r$-map.

\medskip

The $C^r$-regularity of the inverse,
\[
\diff^{m+r}_b(\R^n)\to\diff^m_b(\R^n),\quad\varphi\mapsto\varphi^{-1},
\]
follows from an inverse function theorem argument as in the proof of \cite[Proposition 2.13]{IKT}.
In fact, assume that $r\ge 1$ and consider the $C^r$-smooth composition map
\[
\Phi: {\diff}_b^{m+r}(\R^n)\times{\diff}_b^m(\R^n)\to{\diff}_b^m(\R^n),\quad
(\varphi,\psi)\mapsto\varphi\circ\psi.
\]
For any given $\varphi\in\diff^{m+r}_b(\R^n)$ we have
\begin{equation*}
\Phi\big(\varphi,\varphi^{-1}\big)=\id.
\end{equation*}
Denote by $D_2\Phi\big|_{(\varphi,\varphi^{-1})} : c^m_b\to c^m_b$ the partial derivative of $\Phi$
with respect to its second argument at the point $(\varphi,\varphi^{-1})$. A direct computation shows that
\begin{equation}\label{eq:differential1}
D_2\Phi\big|_{(\varphi,\varphi^{-1})} (\delta\psi)=\big[d\varphi\circ\varphi^{-1}\big](\delta\psi)
\end{equation}
for any variation $\delta\psi\in c^m_b$. Since $\varphi\in\diff^{m+r}_b(\R^n)$ we conclude that
$[d\varphi]\in c^m_b$ and there exists $\varepsilon>0$ such that $\det[d_x\varphi]>\varepsilon$
for any $x\in\R^n$. This together with Lemma \ref{lem:division} and the Banach algebra property of $c^m_b$ 
then implies that the inverse of the Jacobian matrix $[d\varphi\circ\varphi^{-1}]$ belongs to $c^m_b$.
Formula \eqref{eq:differential1} and the Banach algebra property of $c^m_b$ then show that the map
\[
D_2\Phi\big|_{(\varphi,\varphi^{-1})} : c^m_b\to c^m_b
\]
is a linear isomorphism. By the implicit function theorem in Banach spaces, there exists an open neighborhood 
$U(\varphi)$ of $\varphi$ in $\diff^{m+r}_b(\R^n)$ and a $C^r$-smooth map
\[
\Gamma : U(\varphi)\to\diff^m_b(\R^n)
\] 
such that 
\[
\Phi\big(\psi,\Gamma(\psi)\big)=\id
\]
for any $\psi\in U(\varphi)$. By the uniqueness of the inverse diffeomorphism we then conclude that 
$\Gamma(\psi)=\psi^{-1}$ for any $\psi\in U(\varphi)$. This shows that the inversion map
\[
\imath: \diff^{m+r}_b(\R^n)\to\diff^m_b(\R^n),\quad\varphi\mapsto\varphi^{-1},
\]
is $C^r$-smooth.
\end{proof}

%%%%%%%%%%%%%%%%%%%%%%%%%%%%%%%%%%%%%%%%%%%%%%%%%%%%%
\section{Appendix C: Calculus in Fr\'{e}chet spaces}\label{sec:frechet_calculus}
In this Appendix we recall some of the main concepts of the calculus in Fr\'{e}chet spaces. 
For a detailed treatment, we refer the reader to \cite{Hamilton} and Appendix A of \cite{KLT}.  

\begin{definition}
A pair $\big(X,\big\{\|\cdot\|_m\big\}_{m\ge 0}\big)$ consisting of a topological
vector space $X$ and a countable system of seminorms
$\big\{\|\cdot\|_m\big\}_{m\ge 0}$ is called a {\em Fr\'{e}chet space} if the topology of $X$ is induced by
$\big\{\|\cdot\|_m\big\}_{m\ge 0}$ and $X$ is Hausdorff and complete.
\end{definition}

\begin{definition} 
Let $f: U\subseteq X\to Y$ be a map from an open subset $U$ of a Fr\'{e}chet space to 
a Fr\'{e}chet space $Y$. For given $x\in U$ and $h\in X$ we say that $f$ is {\em differentiable at $x$ in the direction $h$} if the limit
\[
d_xf(h):=\mathop{\lim}_{\varepsilon\to 0}\Big(f(x+\varepsilon h)-f(x)\Big)/\varepsilon
\]
exists in $Y$. In this case, $d_xf(h)$ is called the {\em directional derivative} of $f$ at $x$ in the direction of $h$.
\end{definition}

\begin{definition}
A map $f:U\to Y$ where $U$ is an open set in $X$ is called {\em $C^1_F$-smooth} if $d_xf(h)$ exists for any 
$x\in U$ and $h\in X$ and the map
\[
df : U\times X\to Y,\quad (x, h)\mapsto d_xf(h),
\]
is continuous. 
\end{definition}

The class of $C^1_F$-smooth maps $f : U\to V$ is denoted by $C^1_F(U,V)$.

\begin{definition}
A map $f:U\to V$ where $U$ and $V$ are open sets in $X$ and $Y$ respectively is called a {\em $C^1_F$-diffeomorphism} if 
it is a homeomorphism and $f$ and its inverse $f^{-1} : V\to U$ are $C^1_F$-smooth.
\end{definition}

\begin{remark}\label{rem:non-degeneracy}
One easily sees from the chain rule that if $f : U\to V$ is a $C^1_F$-diffeomorphism then
for any $x\in U$ the map $d_xf : X\to Y$ is a linear isomorphism.
\end{remark}

The space $C^k_F(U,Y)$ for $k\ge 2$ is defined inductively: If $f\in C^{k-1}_F(U,Y)$ then for
given $(h_1,...,h_k)\in X^k$ one defines the {\em $k$-th (directional) derivative} by
\[
d^k_xf(h_1,...h_k):=\lim_{\varepsilon\to 0}\Big(   
d^{k-1}_{x+\varepsilon h_1}f(h_2,...,h_k)-d^{k-1}_xf(h_2,...,h_k)
\Big)/\varepsilon
\]
provided that the limit exists in $Y$. Then by definition, $f\in C^k_F(U,Y)$ if the directional derivative 
$d^k_xf(h_1,...,h_k)$ exists for any $x\in U$ and $(h_1,...,h_k)\in X^k$ and the map
\[
d^k f : U\times X^k\to Y,\quad
(x,h_1,...,h_k)\mapsto d_x^kf(h_1,...,h_k),
\]
is continuous. We refer to \cite{Hamilton} for details. Here we only note that, as in the classical calculus in Banach spaces, 
$f\in C_F^k(U,Y)$ implies that for any $x\in U$ the $k$'th derivative $d^k_xf : X^k\to Y$ is
a $k$-linear and symmetric. 

\begin{definition}
A sequence of Banach spaces $\big\{\big(X_m, \|\cdot\|_m\big)\big\}_{m\ge 0}$ is called a {\em Banach approximation} of 
a given Fr\'{e}chet space $X$ if
\[
X=\bigcap_{m=0}^\infty X_m,\quad X_0\supseteq X_1\supseteq X_2\supseteq\cdots\supseteq X,
\]
$\|x\|_m\le\|x\|_{m+1}$ for any $x\in X$ and $m\ge 0$, and the sequence of norms $\big\{\|\cdot\|_m\big\}_{m\ge 0}$ 
induces the topology on $X$. 
\end{definition}

For Fr\'{e}chet spaces that admit Banach approximation we have the following version of the inverse function theorem.

\begin{theorem}\label{th:inverse_function_theorem}
Let $X$ and $Y$ be Fr\'{e}chet spaces that admit Banach approximations $\big\{\big(X_m,\|\cdot\|_m\big)\big\}_{m\ge 0}$ 
respectively $\big\{\big(Y_m,|\cdot|_m\big)\big\}_{m\ge 0}$. Let $f : V_0\to U_0$ be a map where $V_0$ and 
$U_0$ are open sets in $X_0$ respectively $Y_0$. For any $m\ge 0$ set 
\[
V_m:= V_0\cap X_m,\quad U_m:= U_0\cap Y_m,
\]
and $V:=V_0\cap X$, $U:=U_0\cap Y$. Finally, assume that for any $m\ge 0$ the following properties hold:
\begin{enumerate}
\item $f: V_0\to U_0$ is a bijective $C^1$-map and for any $x\in V$ the map
$d_xf: X_0\to Y_0$ is a linear isomorphism;
\item $f(V_m)\subseteq Y_m$ and the restriction $f|_{V_m}: V_m\to Y_m$ is a $C^1$-map;
\item the map $f|_{V_m}: V_m\to U_m$ is onto;
\item for any $x\in V$, $d_xf(X_m\setminus X_{m+1})\subseteq Y_m\setminus Y_{m+1}$.
\end{enumerate}
Then $f(V)\subseteq U$ and the map $f_{\infty}:= f|_{V}: V\to U$ is a $C^1_F$-diffeomorphism.
\end{theorem}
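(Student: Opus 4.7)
My plan is to first reduce the problem to verifying bijectivity of $f_\infty:V\to U$, then establish the two Fr\'echet-smoothness statements in turn, the second one being the heart of the argument. I note that $V=V_0\cap X=\bigcap_{m\ge 0}V_m$ and $U=\bigcap_{m\ge 0}U_m$, so the inclusion $f(V)\subseteq U$ follows from iterating hypothesis (2) at every level, while injectivity of $f_\infty$ comes from the global injectivity in (1). For surjectivity of $f_\infty$, given $y\in U\subseteq U_m$, hypothesis (3) supplies $x_m\in V_m$ with $f(x_m)=y$, and the global injectivity from (1) forces these $x_m$ to coincide as one element $x\in\bigcap_m V_m=V$.

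Next, to see that $f_\infty$ is $C^1_F$-smooth, I would appeal to hypothesis (2) at each level $m$: the map $(x,h)\mapsto d_xf(h)$ is continuous $V_m\times X_m\to Y_m$. Since the Fr\'echet topology on $V\times X$ is finer than the one induced from any $V_m\times X_m$, and the Fr\'echet topology on $Y$ is the initial topology for the inclusions into the $Y_m$, continuity as a map $V\times X\to Y$ in the Fr\'echet topologies follows immediately, so $f_\infty\in C^1_F(V,Y)$.

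The core of the proof lies in the invertibility. Given $y_0\in U$ and $x_0:=f_\infty^{-1}(y_0)\in V$, I would first prove that the differential $d_{x_0}f$ restricts to a \emph{Banach isomorphism} $X_m\to Y_m$ for every $m$. Injectivity and continuity into $Y_m$ come from (1) and (2) respectively, while surjectivity is the crux: given $y\in Y_m$, let $h\in X_0$ be its unique preimage under the $X_0\to Y_0$ isomorphism of (1); if $h\notin X_m$, then $h\in X_k\setminus X_{k+1}$ for some $k<m$, whence (4) yields $y=d_{x_0}f(h)\in Y_k\setminus Y_{k+1}$, contradicting $y\in Y_m\subseteq Y_{k+1}$. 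Continuity of the inverse then follows from the open mapping theorem. With this in hand, the classical Banach-space inverse function theorem applied to $f|_{V_m}$ at $x_0$ yields a local $C^1$-inverse that agrees with the global $f^{-1}$ by the injectivity from (1); hence $g_m:=f^{-1}|_{U_m}$ is $C^1$ near $y_0$ as a map $U_m\to V_m$ for every $m$.

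Finally, I would assemble these level-wise $C^1$-inverses into a $C^1_F$-map. Since each $d_{y_0}g_m$ is the inverse of the common Banach isomorphism $d_{x_0}f:X_m\to Y_m$, these inverses restrict compatibly, and I can define $d_{y_0}g_\infty(k)$ to be this common value on $Y\subseteq Y_m$. Simultaneous convergence in every seminorm $\|\cdot\|_m$ furnishes the Fr\'echet directional derivative, and continuity of $(y,k)\mapsto d_yg_\infty(k)$ into $X$ in the Fr\'echet topology is checked one Banach seminorm at a time, invoking the initial-topology description of $X$. The main obstacle is precisely the Banach-isomorphism statement for $d_{x_0}f$ at each level $m$: without (4), the preimage of an element of $Y_m$ under the $X_0$-level inverse could fail to lie in $X_m$, which is the classical ``loss of regularity'' that typically defeats Fr\'echet inverse function theorems. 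Condition (4) is the tameness assumption that rules this out and allows a clean level-by-level reduction to the Banach case.
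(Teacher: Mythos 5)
Your argument is correct and is essentially the standard level-by-level reduction to the Banach inverse function theorem that the paper itself delegates to the proof of Theorem A.5 in Appendix A of \cite{KLT}: bijectivity of $f|_V$ from (1) and (3), $C^1_F$-smoothness checked one seminorm at a time via (2), and the key use of (4) to show that $d_{x_0}f$ restricts to a Banach isomorphism $X_m\to Y_m$ so that no regularity is lost in inverting. Your identification of (4) as the hypothesis that prevents the usual loss of derivatives is exactly the point of the cited proof.
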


\begin{proof}[Proof of Theorem \ref{th:inverse_function_theorem}]
We refer the reader to the proof of Theorem A.5 in Appendix A of \cite{KLT} (cf. \cite{CKKT}).
\end{proof}
%%%%%%%%%%%%%%%%%%%%%%%%%%%%%%%%%%%%%%%%%%%%%%%%%%%

%\addcontentsline{toc}{chapter}{Bibliography}

\end{document}